\documentclass[10pt]{amsart}
\usepackage{amsthm, amsmath, hyperref, cleveref, amsfonts, amssymb, enumerate, textcmds, tensor, enumitem, mathdots}
\usepackage{pst-node}
\usepackage{tikz-cd} 
\usepackage{graphicx,tikz}
\usepackage{enumerate}
\usepackage{pinlabel}
\usepackage[colorinlistoftodos]{todonotes}
\usepackage[normalem]{ulem}
\usepackage{commath}
\usepackage[Bourbaki-arrow]{dynkin-diagrams}
\usepackage{placeins}
\usepackage{mathtools}

\let\svthefootnote\thefootnote
\newcommand\freefootnote[1]{%
  \let\thefootnote\relax%
  \footnotetext{#1}%
  \let\thefootnote\svthefootnote%
}

\newcommand{\R}{\mathbb R}

\newtheorem{thm}{Theorem}[section]
\newtheorem{lem}[thm]{Lemma}
\newtheorem{prop}[thm]{Proposition}
\newtheorem{cor}[thm]{Corollary}
\newtheorem{que}[thm]{Question}

\DeclareMathOperator{\C}{\mathbb{C}}

\allowdisplaybreaks

\theoremstyle{definition}
\newtheorem{defn}[thm]{Definition}
\newtheorem{lemma}[thm]{Lemma}

\newtheorem{remark}[thm]{Remark}

\begin{document}
\title{Holomorphic realizations of pairs of foliations on Riemann surfaces}
\author{Nathaniel Sagman}
\address{Nathaniel Sagman, University of North Carolina at Chapel Hill, NC, United States}
\email{nsagman@unc.edu}
\author{Dragomir \v Sari\' c}
\address{Dragomir \v Sari\' c, Mathematics PhD. Program, Graduate Center of the City University of New York, 365 Fifth Avenue, New York, NY 10016-4309}
\address{Dragomir \v Sari\' c, Department of Mathematics, Queens College of the City University of New York, 65--30 Kissena Blvd., Flushing, NY 11367}
\email{Dragomir.Saric@qc.cuny.edu}

\begin{abstract}
Let $X$ be a hyperbolic Riemann surface and let $\mu$ and $\nu$ be laminations on $X$ homotopic to measured foliations with finite Dirichlet integral. We prove that $\mu$ and $\nu$ are filling if and only if there exists a homeomorphism to another Riemann surface $f:X\to Y$ and an integrable holomorphic quadratic differential $q$ on $Y$, unique up to the natural equivalence, such that the push-forward laminations are homotopic to the horizontal and vertical foliations of $q$ respectively. This extends a classical theorem of Gardiner-Masur from closed surfaces to arbitrary surfaces. As well, the dual $\R$-tree interpretation yields the solution of an asymptotic Plateau problem for minimal surfaces in a product of two $\R$-trees. We construct examples such that $f:X\to Y$ is not homotopic to a quasiconformal map, and we present sufficient conditions that ensure it is.  We deduce applications to main inequalities for locally quasiconformal maps, harmonic maps between surfaces, and big mapping class groups.
\end{abstract}
\maketitle
\tableofcontents

\section{Introduction}

A result of Hubbard-Masur states that on a compact Riemann surface $S$ of genus at least two, for any measured foliation $\mathcal{F}$ there exists a unique holomorphic quadratic differential $q$ on $S$ whose horizontal measured foliation $\mathcal{F}_h(q)$ is homotopic to $\mathcal{F}$ (see \cite{HMmain} and also Kerckhoff \cite{Ker} and Wolf \cite{Wolf} for alternative proofs). When $X$ is an arbitrary Riemann surface with a conformally hyperbolic metric (possibly of infinite area), one considers a generalization of singular measured foliations called partial measured foliations. Every partial measured foliation $\mathcal{F}$ on a Riemann surface has a Dirichlet integral $\mathcal{D}(\mathcal{F})$, which, when the foliation arises from a quadratic differential $q$, is the $L^1$ norm of $q$ and the extremal length (see \cite{Ker}, \cite{SaricShima}). In \cite{Saric24,Saric25}, the second named author extended the Hubbard-Masur theorem to arbitrary surfaces: it was proved that a proper measured foliation $\mathcal{F}$ is homotopic to the horizontal foliation of a unique $L^1$ holomorphic quadratic differential if and only if the Dirichlet integral of $\mathcal{F}$ is finite. 

A result of Gardiner-Masur \cite{GarMas} says that on a compact Riemann surface $S$ of genus at least two, any pair of filling measured foliations $\mathcal{F}_1$ and $\mathcal{F}_2$ can be realized by the horizontal and vertical foliations of a holomorphic quadratic differential $q_1$ on a unique Riemann surface $S_1$ that is quasiconformal to $S$. In other words, the Teichm\"uller space $T(S)$ contains a unique point at which this holomorphic realization $q_1$ of $\mathcal{F}_1$ and $\mathcal{F}_2$ is defined. In fact, they showed that there is a unique Teichm\"uller geodesic on which the product of extremal lengths of $\mathcal{F}_1$ and $\mathcal{F}_2$ is minimized. Wentworth \cite{Wen} gave another proof in the case of a compact surface where the point in $T(S)$ which supports $q_1$ is the unique minimum over $T(S)$ of the sum of the extremal lengths of $\mathcal{F}_1$ and $\mathcal{F}_2$ (also called the energy functional). 

The main result of this paper is the extension of the Gardiner-Masur theorem in the full generality of any Riemann surface with a conformally hyperbolic metric. 
Our result is best stated in terms of laminations; there is a straightening map that takes proper partial measured foliations to measured geodesic laminations, and if a foliation $\mathcal{F}$ straightens to a lamination $\mu$, we say that they are homotopic. Using the straightening, one can rephrase the results of Hubbard-Masur and Gardiner-Masur in terms of laminations.


In the context of Gardiner-Masur, a pair of measured geodesic laminations $\mu$ and $\nu$ is called filling if for any third measured lamination $\xi$, the intersection numbers satisfy $i(\mu,\xi)+i(\nu,\xi)>0$. To properly formulate a realization theorem for infinite area surfaces, one has to first decide upon a definition of filling, and then prove that the horizontal and vertical foliations of an integrable quadratic differential straighten to filling laminations. Our definition is $i(\mu,\gamma)+i(\nu,\gamma)>0$ for every complete simple geodesic $\gamma$.

\begin{thm}\label{thm: main}
    Let $X$ be any Riemann surface and let $\mu$ and $\nu$ be measured geodesic laminations on $X$ homotopic to partial measured foliations with finite Dirichlet integral. 

Then, $\mu$ and $\nu$ are filling if and only if there exists a homeomorphism to another Riemann surface $f:X\to Y$, whose lift to the universal covers extends to a homeomorphism of the ideal boundaries, and an integrable holomorphic quadratic differential $q$ on $Y$ such that the push-forward laminations $f_*\mu$ and $f_*\nu$ are homotopic to the horizontal and vertical foliations of $q$ respectively. The data $(f,Y,q)$ is unique up to isomorphism, and the corresponding pair of measured foliations is energy minimizing. 
\end{thm}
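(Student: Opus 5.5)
We treat the two directions separately. The existence direction carries most of the weight.

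\emph{Necessity.} Suppose $(f,Y,q)$ exists. Since the lift of $f$ extends to a homeomorphism of ideal boundaries, $f$ induces a bijection on complete simple geodesics (via endpoints) and on measured laminations. This bijection preserves intersection numbers, since intersection is detected by linking of endpoints at infinity. It therefore suffices to show that the horizontal and vertical foliations of an integrable $q$ on $Y$ straighten to a filling pair. Let $\gamma$ be a complete simple geodesic on $Y$ with $i(\mathcal{F}_h(q),\gamma)=i(\mathcal{F}_v(q),\gamma)=0$. We would realize $\gamma$, in the universal cover, by a $q$-geodesic with the same endpoints. Zero intersection with both foliations forces this $q$-geodesic to have zero $|{\rm Re}\sqrt q|$- and $|{\rm Im}\sqrt q|$-length, so it has zero $q$-length. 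This is impossible for a bi-infinite curve unless it escapes into a region where $q\equiv 0$, and an integrable nonzero holomorphic $q$ has no such region. The delicate point is the infinite-area case, where a geodesic might exit through ends. Here we would use the $L^1$ bound together with the length--area argument to show that a bi-infinite $q$-geodesic leaving every compact set must cross a horizontal or vertical strip of positive transverse measure.

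\emph{Sufficiency (existence).} Following Wentworth's energy approach, we minimize $E(Z)=\mathcal{D}(\mu_Z)+\mathcal{D}(\nu_Z)$ over Riemann surfaces $Z$ with a marking $X\to Z$ whose lift extends to a boundary homeomorphism. By \cite{Saric24,Saric25}, on each such $Z$ with finite Dirichlet integrals there are unique integrable $q_\mu^Z,q_\nu^Z$ realizing $\mu,\nu$ horizontally, so $E(Z)=\|q_\mu^Z\|+\|q_\nu^Z\|$. Since markings need not be quasiconformal, we cannot minimize over $T(X)$. Instead we would exhaust $X$ by compact subsurfaces $X_n$, or take finite-area approximations $\mu_n,\nu_n$ that remain filling on $X_n$ up to boundary. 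On each we apply Gardiner--Masur (or its bordered version) to get $(f_n,Y_n,q_n)$ with $\|q_n\|$ bounded by $\sqrt{\mathcal{D}(\mu)\mathcal{D}(\nu)}$-type energy bounds; here minimality gives $\|q_n\|\le E$ on any competitor, in particular on $X$ itself. We then extract a limit via the dual $\mathbb{R}$-tree picture. The $q_n$ give equivariant harmonic (minimal) maps $\tilde Y_n\to T_\mu\times T_\nu$ into the product of dual trees, with uniformly bounded energy. Minimal surfaces in the product of trees are built from flat pieces glued along singular points, so they induce flat structures $|q_n|$. We pass to a limit of the pulled-back pseudometrics (Korevaar--Schoen compactness) to get a limiting surface $Y$ with flat metric $|q|$ and an equivariant map whose horizontal and vertical projections recover $\mu$ and $\nu$. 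Filling is used exactly here. It guarantees that the limiting pseudometric $d_\mu+d_\nu$ is a genuine metric on $\tilde X$ up to collapsing nothing, i.e. no complete geodesic gets zero length, so $Y$ is a Riemann surface homeomorphic to $X$. Boundary extension follows from the fact that $\mu$ and $\nu$ determine the same endpoints.

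\emph{Uniqueness and minimality.} Given two realizations $(f_i,Y_i,q_i)$, we compare them using the Gardiner--Masur/Wentworth inequality $\mathcal{D}(\mu_Z)+\mathcal{D}(\nu_Z)\ge 2\,i(\mu,\nu)$. Here $i(\mu,\nu)$ is the total intersection, finite since it is $\le\sqrt{\mathcal{D}(\mu)\mathcal{D}(\nu)}$. Equality holds iff $\mu,\nu$ are realized orthogonally by a single quadratic differential. Both realizations attain $\|q_i\|=i(\mu,\nu)$, so both are minimizers. The equality case forces $f_2\circ f_1^{-1}$ to be homotopic to an isometry of the flat structures, hence conformal, which gives uniqueness and energy minimization. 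We expect the main obstacle to be the compactness step in the existence proof for infinite type. There we must prevent the limiting flat surface from degenerating or losing topology at the ends, and we must verify that the limit marking still extends to the ideal boundary; both use the filling hypothesis in its complete-geodesic form.
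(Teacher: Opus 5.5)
The existence step is where your proposal breaks down. The exhaustion/compactness scheme is not actually available. It is unclear how truncations of $\mu,\nu$ to compact $X_n$ stay filling, or which bordered version of Gardiner--Masur would apply. Energy bounds alone do not control the varying conformal structures of the $Y_n$; the paper shows the realization genuinely need not be quasiconformal to $X$. And proving that a limit of pseudometrics is a Riemann surface homeomorphic to $X$ carrying an integrable holomorphic $q$ is the entire problem, which you defer. Your claim that filling makes $\tilde d_\mu+\tilde d_\nu$ collapse nothing is also false. This pseudo-distance vanishes on every complementary region of $|\tilde\mu|\cup|\tilde\nu|$, and on every arc of a leaf of one lamination that no leaf of the other crosses. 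The idea you are missing is that no limit is needed, because this pseudo-distance already lives on $\tilde X$. The paper blows up atomic leaves into flat strips. It then partitions $\hat{\mathbb{C}}$ into closures of complementary regions, uncrossed arcs, and points (precisely the zero-sets of $\tilde d_\mu+\tilde d_\nu$), checks upper semicontinuity, and applies Moore's theorem. Filling enters in two ways. Compact complementary regions are finite-sided polygons with sides alternating between $\tilde\mu$ and $\tilde\nu$, so they collapse to finite-prong singularities. Non-compact ones meet $S^1$ in a single point, so the quotient of $\overline{\mathbb{D}}$ is again a closed disk, with a free, properly discontinuous $\Gamma$-action on the interior. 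The laminations descend to transverse measured foliations, and flow boxes are replaced by Euclidean rectangles glued by translations. Minsky's inequality gives $\|q\|=i(\mu,\nu)<\infty$, and the boundary homeomorphism comes from monotonicity and density of endpoints.

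Uniqueness has a gap as well. Equality in $\mathcal{D}(\mu_Z)+\mathcal{D}(\nu_Z)\ge 2\,i(\mu,\nu)$ only shows that each minimizer is \emph{some} holomorphic realization. Two realizations $Y_1,Y_2$ both attain it, nothing in the inequality produces a map between them, and there is no common Teichm\"uller space in which to invoke strict convexity. The paper builds the comparison map through the product of dual trees. For any realization, the conformal harmonic map into $T_\mu\times T_\nu$ is a bijection onto the same core $\mathcal{C}$. The image contains $\mathcal{C}$ because intersecting trajectories tighten to linked geodesics, linking is preserved by the boundary map, and the pushed geodesics are tightenings of trajectories that must again meet. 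The reverse inclusion is symmetric, and injectivity is the Teichm\"uller lemma. Hence one map composed with the inverse of the other is an isometry of flat structures, i.e.\ a biholomorphism, with the correct boundary values by density of endpoints. The fact that the harmonic map attached to any competitor $Z$ has image containing $\mathcal{C}$, together with energy $\ge$ area, gives energy minimization including its equality case. Your Minsky plus AM--GM argument yields only the non-strict inequality.

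Finally, in the necessity direction you assume a bi-infinite $|q|$-geodesic with prescribed ideal endpoints that realizes both intersection numbers at once. For integrable $q$ on $\mathbb{D}$, the $|q|$-metric is incomplete and a.e.\ point of $S^1$ is at finite distance, so no such geodesic need exist. The real danger is that the two intersection numbers are approximated along different curves. The paper instead uses Strebel's lemma that distinct points of $S^1$ have positive $q$-distance. It then builds by hand a single curve nearly realizing both intersection numbers, by successively replacing subarcs cut off by horizontal, then vertical, arcs with both endpoints on the curve. On general $X$ it localizes to collars, funnels and half-planes, since the lifted differential is not integrable on $\mathbb{D}$.
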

If the conditions of the theorem are met, we say that $(\mu,\nu)$ is \textit{holomorphically realizable}, and the data $(f,Y,q)$ is referred to as the holomorphic realization. In the theorem, two realizations $(f,Y,q)$ and $(g,Z,r)$ being isomorphic means that $h:=g\circ f^{-1}$ is homotopic relative to the ideal boundaries to a biholomorphism that pushes $q$ forward to $r$. The map $f$ need not be homotopic to a quasiconformal map (more on this below). In Theorem \ref{thm: main}, it is interesting that even in this rather wild setting where we lack analytic controls, the Riemann surface $Y$ and the holomorphic differential $q$ are unique.

\textit{Energy minimizing} for the pair of horizontal and vertical foliations $(\mathcal{F}_h(q),\mathcal{F}_v(q))$ means that if $g:X\to Z$ is a homeomorphism whose lift extends to the ideal boundaries, and $\mathcal{F}$ and $\mathcal{G}$ are foliations homotopic to $g_*\mu$ and $g_*\nu$ respectively, then $$\mathcal{D}(\mathcal{F}_h(q))+\mathcal{D}(\mathcal{F}_v(q))\leq \mathcal{D}(\mathcal{F})+\mathcal{D}(\mathcal{G}),$$ with equality if and only if $\mathcal{F}$ and $\mathcal{G}$ come from an isomorphic triple $(Z,g,r)$. Since $g$ is just a homeomorphism, the right-hand side might be infinite. This is in fact equivalent to the line $ (t\mathcal{F}_h(q),t^{-1}\mathcal{F}_v(q)), t>0,$ minimizing the product functional $(\mathcal{F},\mathcal{G})\mapsto \mathcal{D}(\mathcal{F})\mathcal{D}(\mathcal{G})$ (see \S \ref{sec: without R trees}). Hence, we have both minimization results as in \cite{GarMas} and \cite{Wen}. Another viewpoint is that $Y$ and $(\mathcal{F}_h(q),\mathcal{F}_v(q))$ yield a strict lower bound for certain energy functionals over every possible (quasiconformal) Teichm{\"u}ller space $T_{qc}(Z)$ (see below), where $Z$ is homeomorphic to $X$. See also the question posed in \S \ref{sec: a question}.

For showing that filling pairs admit holomorphic realizations, our method of proof is novel in that we do not use a minimization of an energy functional on the Teichm\"uller space (which would not work, see below). Instead, we carry out a hands-on, geometric, and simultaneous construction of the Riemann surface $Y$ and the holomorphic quadratic differential $q$. This also provides a third proof of the Gardiner-Masur theorem for compact surfaces. 

Taking the perspective from \cite{Wolf} and \cite{Wen}, a holomorphic realization of a pair of foliations is equivalent to a harmonic mapping into a product of two $\R$-trees whose image is locally area minimizing. We rigorously develop this point of view in the infinite area setting, and use our framework to establish the uniqueness of the holomorphic realization and the energy minimization result. 

Stepping back from the main result, one of the underlying contributions of this work is the development of new tools to study infinite Riemann surfaces and holomorphic quadratic differentials that lie outside the quasiconformal context.

\subsubsection{Further results}
We prove that for very general surfaces, there exist holomorphic realization maps $f:X\to Y$ such that the Riemann surface $Y$ is not quasiconformal to $X$ (Theorem \ref{thm: non qc}). In certain contexts, we give sufficient conditions that ensure $f$ is homotopic to a quasiconformal map (Theorems \ref{thm: sufficient, first kind} and \ref{thm: sufficient, disk}). It would be interesting to understand if there is a precise criterion on $\mu$ and $\nu$ that guarantees quasiconformality.

As a complement to the realization theorem, we study the asymptotic behaviour of harmonic maps to $\R$-trees, which leads naturally to an asymptotic Plateau problem for a product of $\R$-trees. Minimal surfaces in products of $\R$-trees have attracted interest recently due to their connection with a now settled conjecture of Labourie (see \cite{MarkovicSagmanSmillie}, \cite{SagmanSmillie}). In a very general context, Theorem \ref{thm: main} implies existence and uniqueness for this asymptotic Plateau problem (Theorem \ref{thm: plateau_disk}). 

The energy minimization aspect of Theorem \ref{thm: main} has applications. We use it to extend the recently introduced {\it new main inequality} (see \cite{MarkovicMinimalDiffeomorphisms}) to locally quasiconformal mappings (Theorem \ref{thm: new main}). As a consequence, we recover Markovi{\'c}-Mateljevi{\'c}'s most general version of the Reich-Strebel inequality on the unit disk (see \cite{MarkovicMateljevic}), and extend it to arbitrary Riemann surfaces (Corollary \ref{cor: generalized main}). Inequalities of this type have been used to prove uniqueness theorems for harmonic maps and minimal surfaces; we use our new inequalities to prove new uniqueness theorems (Corollaries \ref{cor: uniqueness_harmonic} and \ref{cor: minimal}).

Finally, the realization theorem implies that a big mapping class with a global pseudo-Anosov structure has a unique axis (which is the Teichm\" uller geodesic corresponding to a quadratic differential)
in a quasiconformal Teichm\" uller space (Corollary \ref{cor:big-affine-realization}).

All of these results are discussed in more detail in the sections below.

\subsubsection{Overview of the proof of Theorem \ref{thm: main}}\label{subsubsec: overview}
The proof of Theorem \ref{thm: main} is broken up as follows: Theorems \ref{thm:necessary_realization} and \ref{thm:necessary_realization_first_kind} (filling is necessary for realization), Theorems \ref{thm:realization_sufficient} and \ref{thm:realization_first_kind} (filling is sufficient for realization), Theorem \ref{thm: uniquness} (uniqueness, see also \S \ref{sec: without R trees}), and Theorem \ref{thm: energy minimizing} (energy minimizing). In between proving the ``if and only if" portion in \S \ref{sec: filling, necessary}  and \S \ref{sec: filling, sufficient} and proving uniqueness and energy minimization in \S \ref{sec: trees}, we address the quasiconformal realization problem in \S \ref{sec: qc realizations}.

To prove that foliations of $L^1$ quadratic differentials are filling, we first treat the unit disk, and then the general case. On the disk, a key input is a result of Strebel \cite[Lemma 19.6]{Strebel}, which says that if we have an $L^1$ quadratic differential $q$ on $\mathbb{D}$ with singular flat metric $|q|$, then the $|q|$-distance between any two distinct points on $S^1$ is positive (this uses Hardy spaces on the disk). This is a linchpin in our work; it is unknown whether this property holds for quadratic differentials in other function spaces (for instance, the Bers $L^\infty$ space). For each complete geodesic $\gamma$, we then construct a curve with the same endpoints on $S^1$ that nearly realizes the sum of the intersection numbers; in general, one cannot just replace it with a $|q|$-geodesic, so we go by hand. For a general surface $X$, we study the intersections of the two foliations with different types of geodesics (geodesics that stay in the convex core, cross-cuts outside the convex core, etc.). For some cases we modify the argument from the case $X=\mathbb{D}$.

Our proof of sufficiency is very direct. After lifting to the universal cover and modifying the (lifted) laminations near their atomic geodesics, we superimpose the supports of the two laminations and collapse each complementary component, together with the maximal subarcs of one lamination that do not meet the other. We prove using Moore's theorem that the quotient is (abstractly) homeomorphic to the original surface. The two laminations descend to a pair of transverse measured foliations on the quotient, which we then divide into rectangular flow boxes for the foliations. Replacing every flow box by a Euclidean rectangle whose side lengths are prescribed by the transverse measures, and gluing by translations, we produce a holomorphic disk equipped with a quadratic differential. The construction respects the deck group action on $\Tilde{X}$, and the quotient of all this data gives the holomorphic realization. 

For uniqueness and energy we use $\R$-trees. Any measured lamination (or foliation) has a dual $\R$-tree, and the Hopf differential of an equivariant harmonic map to the $\R$-tree is the Hubbard-Masur differential (this is Wolf's proof of Hubbard-Masur \cite{Wolf}). For a holomorphic realization, the two component maps have opposite Hopf differentials, so their product is a conformal harmonic map (this is the starting point in \cite{Wen}). For uniqueness, we prove that if we have two holomorphic realizations, then the two conformal harmonic maps $f_1$ and $f_2$ are embeddings with the same image. The biholomorphism $f_2\circ f_1^{-1}$ then yields an isomorphism of realizations. For energy minimization, we prove that the image of any equivariant harmonic map contains the image of the unique conformal harmonic map, and then we use that energy dominates area. It is worth noting that the images of the conformal harmonic maps are not in general convex; we are really exploiting the $2$-dimensional nature of the problem.

We also found alternative proofs of uniqueness and energy minimization without using $\R$-trees. We sketch them in \S \ref{sec: without R trees}. We went with $\R$-trees for the full proofs because this viewpoint is geometrically revealing and leads into the asymptotic Plateau problem (see \S \ref{subsec: intro asymptotic plateau}), and it might be of interest to the $\R$-tree/harmonic map community to see it all fleshed out.

\begin{remark}
      For a closed surface, the image of the conformal harmonic map is exactly the Guirardel core (see \cite{GuirardelCore}). For arbitrary surfaces, the hands-on construction of a realization of a filling pair can be regarded as an explicit construction of a generalized Guirardel core (see also Remark \ref{rem: equivalence relation and trees} and Definition \ref{def: core}). 
\end{remark}

\subsection{Quasiconformal realizations}
The (quasiconformal) Teichm\" uller space $T_{qc}(X)$ of an arbitrary Riemann surface $X$ consists of equivalence classes of quasiconformal maps from $X$ to variable Riemann surfaces $Y$. Two quasiconformal maps $f_1:X\to Y_1$ and $f_2:X\to Y_2$ are {\it equivalent} if there exists a biholomorphic map $c:Y_1\to Y_2$ such that $f_2^{-1}\circ c\circ f_1:X\to X$ is homotopic to the identity modulo points on the ideal boundary, if any. For $[f:X\to Y], [g:X\to Z]\in T_{qc}(X)$, the Teichm\" uller distance is
$$
d_{T_{qc}(X)} ([f],[g])=\inf_{h\in [g^{-1}\circ f]}\frac{1}{2}\log K(h),
$$
where $K(h)$ is the quasiconformal constant of $h$.

In the original Gardiner-Masur theorem starting on a compact Riemann surface $X$, any homeomorphism $f:X\to Y$ is homotopic to a quasiconformal map $g:X\to Y$. Thus, the holomorphic quadratic differential $q$ coming from a filling pair defines 
a Teichm\"uller geodesic in $T_{qc}(X)$ (i.e., a geodesic for $d_{T_{qc}(X)}$) through $[f:X\to Y]$, and there is an injective correspondence between Teichm\" uller geodesics in $T_{qc}(X)$ and pairs of filling measured laminations on $X$.

On an arbitrary Riemann surface, a homeomorphism is, in general, not homotopic to a quasiconformal map. Therefore, $f:X\to Y$ might not represent a point in $T_{qc}(X)$, i.e., the pair is realized in $T_{qc}(Y)\neq T_{qc}(X)$. 
Motivated by the above result for compact Riemann surfaces, we study the question of when a pair of filling laminations is realized in the quasiconformal class of the initial Riemann surface $X$, i.e., when $T_{qc}(X)=T_{qc}(Y)$. 
\begin{defn}
    Let $\mu$ and $\nu$ be measured laminations on a Riemann surface $X$ realizable by tightening partial measured foliations with finite Dirichlet integral. We say that the pair $(\mu,\nu)$ is \textit{quasiconformally holomorphically realizable} if the homeomorphism $f:X\to Y$ produced by Theorem \ref{thm: main} is homotopic to a quasiconformal map.
\end{defn}
For convenience, we will abbreviate to \textit{quasiconformally realizable}.

Theorem \ref{thm: non qc} below shows that quasiconformal realizations should not be expected in general, and Theorems \ref{thm: sufficient, first kind} and \ref{thm: sufficient, disk} give sufficient criteria for certain types of laminations to admit quasiconformal realization. We focus on Riemann surfaces arising from Fuchsian groups of the first kind and elementary Fuchsian groups (of the second kind). We expect that analogous results hold for Fuchsian groups of the second kind, but the missing ingredient is a robust criterion for a single lamination on a general surface to arise from tightening an integrable partial measured foliation. Pursuing such a result would take us too far afield from the themes of the current paper.

\begin{thm}\label{thm: non qc}
    Let $X=\mathbb{D}/\Gamma$, where either
    \begin{enumerate}
        \item $\Gamma$ is elementary (i.e., $X=\mathbb{D}$ or $X$ is an annulus), or
        \item $\Gamma$ is infinitely generated and of the first kind.
    \end{enumerate}
    Then, there exists a pair of filling laminations $\mu$ and $\nu$, arising from tightening integrable partial measured foliations, that are not quasiconformally realizable.
\end{thm}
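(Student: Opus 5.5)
Since Theorem \ref{thm: main} guarantees a realization $(f,Y,q)$ for any filling pair with finite Dirichlet integral, the task is to choose $(\mu,\nu)$ so that the realizing surface $Y$ has a geometric invariant that $X$ lacks but that quasiconformal maps preserve. The plan is to build $Y$ directly from flat pieces, in the spirit of the rectangle-gluing construction in the proof of sufficiency. We choose the pair so that the flat structure of $q$ contains infinitely many disjoint cylinders or rectangles $R_n$ whose moduli tend to $\infty$ (or to $0$), while $\|q\|_{L^1}=\sum_n \text{area}(R_n)<\infty$. The core curves (or cross-cuts) $\gamma_n$ of these pieces correspond to curves on $X$ whose extremal lengths stay bounded above and below. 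A $K$-quasiconformal map distorts extremal length by at most a factor of $K$, so $X$ and $Y$ cannot be quasiconformally equivalent by any map homotopic to $f$.

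\textbf{Case (2): $\Gamma$ infinitely generated of the first kind.} Choose infinitely many disjoint simple closed geodesics $\gamma_n$ on $X$ with hyperbolic lengths $\ell_X(\gamma_n)$ bounded above and below. Such a family should exist on any infinitely generated surface of the first kind, perhaps after passing to a subsequence of curves from a pants decomposition; if needed, we use a fixed collar-type configuration near the ends. Then take
\[
\mu=\sum_n a_n\gamma_n+\mu', \qquad \nu=\sum_n b_n\delta_n+\nu',
\]
where $\delta_n$ are curves crossing the $\gamma_n$ and $\mu',\nu'$ are added so that the pair fills in the sense of complete geodesics.

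The weights are chosen so that in the realization, the cylinder with core $\gamma_n$ has height $\approx a_n$ and circumference $\approx b_n\, i(\delta_n,\gamma_n)$. Its modulus $a_n/b_n\to\infty$, while the area $a_nb_n$ is summable. Finiteness of the Dirichlet integral of both $\mu$ and $\nu$ on $X$ must be checked; we would assemble the foliations from model foliations on pairs of pants and collars with controlled Dirichlet integral, using the bounded geometry of the $\gamma_n$. The modulus of the $q$-cylinder lower-bounds the modulus of the maximal annulus about $f(\gamma_n)$, so $\ell_Y(f(\gamma_n))\to 0$ while $\ell_X(\gamma_n)$ stays bounded below. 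This contradicts quasiconformality of any map homotopic to $f$, since such a map moves hyperbolic lengths boundedly (Wolpert's inequality).

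\textbf{Case (1): $\Gamma$ elementary.} For $X=\mathbb{D}$, we use the $q$-geometry of Strebel-type differentials on a simply connected $Y$ built from a staircase of rectangles. The pair is defined on $\mathbb{D}$ by foliations whose leaves land at boundary points $e^{i\theta_n}$ accumulating nowhere bad. The quantity to compare is the extremal distance between pairs of boundary arcs, equivalently cross-ratios of quadruples of boundary points. In $Y$, the quadrilateral determined by the rectangle $R_n$ has modulus tending to $\infty$. In $X$, the corresponding quadrilaterals have fixed cross-ratio, because we choose the quadruples to be Möbius images of one another. A map homotopic to $f$ that is quasiconformal would extend quasisymmetrically to the boundary and bound these moduli, which gives a contradiction. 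The annulus case is handled the same way in the universal cover, or by using boundary arcs on one boundary component.

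\textbf{Main obstacle.} The hard step is verifying that the chosen $\mu$ and $\nu$ arise by tightening partial measured foliations with finite Dirichlet integral on the original $X$, while keeping the filling condition. I would first try to construct the foliations on $X$ explicitly, piece by piece in the bounded-geometry collars, and bound the Dirichlet integral by summing local contributions that scale like $a_n^2$ and $b_n^2$. This requires choosing, for example, $a_n=n^{-1}$ and $b_n=n^{-2}$. With these weights $\sum a_n^2$ and $\sum b_n^2$ converge, the areas $a_nb_n$ are summable, and the moduli $a_n/b_n=n\to\infty$.
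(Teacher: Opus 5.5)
\textbf{Part (2).} Your strategy matches the paper's: an atom of one lamination forces a flat cylinder in $Y$ whose modulus is large compared with that curve's geometry on $X$. The implementation, however, rests on a false claim. It is not true that every infinitely generated surface of the first kind carries infinitely many disjoint simple closed geodesics with lengths bounded above and below.

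Take $X$ glued from pants along cuffs $\alpha_n$ with $\ell_X(\alpha_n)\to 0$. Cuff lengths bounded above give a complete surface with no funnels or half-planes, so $X$ is of the first kind. A simple closed geodesic that neither equals nor crosses some $\alpha_n$ with $n>N$ lies in one of the finitely many finite-type pieces of $X\setminus\bigcup_{n>N}\alpha_n$ containing $\alpha_1,\dots,\alpha_N$. These pieces hold only finitely many disjoint geodesics. So all but finitely many members of any disjoint family either are cuffs $\alpha_n$ with $n>N$ (length $\to 0$) or cross such a cuff (length at least its collar width, $\to\infty$).

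On such $X$ your contradiction ``$\ell_Y(f(\gamma_n))\to 0$ while $\ell_X(\gamma_n)$ stays bounded below'' is unavailable. Moreover, $a_n=n^{-1}$, $b_n=n^{-2}$ no longer guarantees integrability, because Strebel's criterion reads $\sum_n a_n^2/\mathrm{mod}(R_{\gamma_n})<\infty$ and the collar moduli are now unbounded in both directions.

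The missing idea is to compare extremal lengths directly and let the weights depend on the geometry of $X$. The paper observes that a realization has a vertical cylinder about $f(\beta_k)$ of height $m(\beta_k)$ and circumference $i(\mu,\beta_k)$. Hence $\mathrm{Ext}_Y(f(\beta_k))\le i(\mu,\beta_k)/m(\beta_k)$, and a $K$-quasiconformal realization would force $m(\beta_k)\mathrm{Ext}_X(\beta_k)/i(\mu,\beta_k)\le K$. On a locally finite filling pair of multicurves (Morales--Valdez), the paper then sets $m(\beta_k)=\epsilon_k\sqrt{\mathrm{mod}(R_{\beta_k})}$ and takes $m(\alpha_n)$ to be the minimum of $\epsilon_n\sqrt{\mathrm{mod}(R_{\alpha_n})}$ and $\epsilon_n\min_m \epsilon_m^2\mathrm{Ext}_X(\beta_m)/(i(\alpha_n,\beta_m)\sqrt{\mathrm{mod}(R_{\beta_m})})$, with $\sum\epsilon_n<\infty$. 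This yields integrability and a ratio at least $C^{-1}\epsilon_k^{-1}\to\infty$ for arbitrary geometry.

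The locally finite filling pair also replaces your unspecified $\mu',\nu'$. Otherwise you must control their contributions to $i(\nu,\gamma_n)$, which is exactly the circumference of your cylinders.

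\textbf{Part (1).} This is not yet an argument: you never define $\mu,\nu$ on $\mathbb{D}$. The step you defer, that they are tightenings of foliations with finite Dirichlet integral on $X$, is the entire content, since Dirichlet integrals are not preserved under the non-quasisymmetric boundary correspondence you want. Filling comes for free if you pull back the two foliations of some $q$ by a homeomorphism, and uniqueness in Theorem~\ref{thm: main} then makes that pull-back the realization. Integrability, however, must be checked by hand.

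The paper does this with a single-point construction and no modulus estimates. On $R=[-1,1]\times[0,1]$ with $q=dz^2$, pull back by $f(x,y)=(x^2,y)$ for $x\ge 0$ and $f=\mathrm{id}$ for $x<0$. The boundary map has one-sided derivatives $0$ and $1$ at $(0,0)$, so it is not quasisymmetric. The foliation defined by $y$ is unchanged. The one defined by $x$ pulls back to the one defined by $x^2$ for $x\ge 0$ and $x$ for $x<0$, whose Dirichlet integral is $7/3$.
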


Our first sufficiency result concerns filling multicurves on Riemann surfaces of the first kind. Given a curve $\alpha$ on a surface, let $\delta_\alpha$ be the Dirac lamination.

\begin{thm}\label{thm: sufficient, first kind}
    Let $X=\mathbb{D}/\Gamma$, where $\Gamma$ is of the first kind. Let $\{\alpha_n\}_{n=1}^\infty$ and $\{\beta_k\}_{k=1}^\infty$ be a locally finite and filling collection of multicurves, where the $\alpha_n$'s form the cuffs of a pants decomposition. Assume that for some $M>0$ and all $n,k\in \mathbb{N}$, $$\frac{1}{M}\leq \ell_X(\alpha_n), \ell_X(\beta_k)\leq M.$$ Let
    $(m(\alpha_n))_{n=1}^\infty$ and $(m(\beta_k))_{k=1}^\infty$ be square-summable sequences of positive numbers, and define measured laminations on $X$ by
    $$\mu=\sum_{n=1}^\infty m(\alpha_n)\delta_{\alpha_n}, \qquad \nu=\sum_{k=1}^\infty m(\beta_k)\delta_{\beta_k}.$$ If there exists $L>0$ such that for all $n,k\in \mathbb{N}$ with $i(\alpha_n,\beta_k)\neq 0$ we have $$\frac{1}{L}\leq \frac{m(\alpha_n)}{m(\beta_k)}\leq L,$$ then $\mu$ and $\nu$ are quasiconformally realizable.  
\end{thm}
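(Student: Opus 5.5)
The plan is to build $Y$ and $q$ explicitly from the two multicurves, and then write down a quasiconformal map $X\to Y$ in the homotopy class of the realization $f$ given by Theorem \ref{thm: main}.

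\textbf{Step 1: realizability.} Since the $\alpha_n$ and $\beta_k$ have lengths in $[1/M,M]$, each carries a standard collar of definite modulus. A Strebel-type construction on these collars gives foliations homotopic to $\mu$ and $\nu$ with Dirichlet integral bounded by a constant times $\sum m(\alpha_n)^2$ and $\sum m(\beta_k)^2$. These sums are finite by square-summability. Filling is part of the hypothesis. So Theorem \ref{thm: main} applies and produces $(f,Y,q)$.

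\textbf{Step 2: the square complex.} I would make the sufficiency construction concrete for multicurves. After passing to geodesic representatives, the union $\bigcup\alpha_n\cup\bigcup\beta_k$ is a locally finite graph on $X$. Its vertices are the transverse intersection points, and it is filling, so every complementary region is a disk or a once-punctured disk. Dualizing gives a cell structure on $X$ with one square for each intersection point $\alpha_n\cap\beta_k$. The realization $Y$ is then obtained by replacing each such square by a Euclidean rectangle of size $m(\alpha_n)\times m(\beta_k)$, carrying $q=dz^2$, and gluing by translations. The cone points come from the complementary polygons, and punctures become poles of order at most one. By uniqueness in Theorem \ref{thm: main}, this flat surface is the realization, and $f$ is the natural cellular homeomorphism.

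\textbf{Step 3: quasiconformality.} I will build a map $X\to Y$ cell by cell. Three inputs are needed:
\begin{enumerate}[label=(\roman*)]
\item the moduli of all rectangles are bounded above and below, because $m(\alpha_n)/m(\beta_k)\in[1/L,L]$ whenever the curves intersect;
\item the combinatorics is uniformly bounded;
\item the hyperbolic geometry of the cells on $X$ is uniformly controlled.
\end{enumerate}
For (ii), note that $i(\alpha_n,\beta_k)$ is bounded above by a constant depending only on $M$, by the collar lemma and the length bounds. Moreover, each $\beta_k$ meets only boundedly many $\alpha_n$. Hence each complementary polygon has a bounded number of sides. The angles at intersection points are bounded away from $0$, since two geodesics of bounded length cannot meet at a tiny angle without one winding around the other's collar many times. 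For (iii), one picks a point in each complementary polygon, or uses the cusp in a punctured one, and joins it to the vertices. This triangulates $X$ into finitely many shapes up to uniformly bilipschitz distortion. The corresponding triangulation of $Y$ has uniformly shaped pieces as well. Here a compactness argument over the bounded-geometry data should work. Piecewise affine maps between these pieces, adjusted by a standard model near each cusp, are $K$-quasiconformal with $K$ depending only on $M$ and $L$. The maps agree on shared edges, so they glue to a global quasiconformal map homotopic to $f$.

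\textbf{Main obstacle.} I expect the hardest part to be Step 3, uniformly across infinitely many pieces. One has to show that the rescaled rectangle structure is uniformly comparable to the hyperbolic structure even though the absolute sizes $m(\cdot)\to 0$. Since quasiconformality is scale-invariant, I would handle this by normalizing each star of a vertex in $Y$ by the local scale $m(\alpha_n)$. On a star, the ratio condition makes all incident $m$'s comparable, with factor at most $L^2$ across a bounded number of steps. The second delicate point is the cusp regions, where the model map must be shown to be quasiconformal near the puncture. There I would use the standard fact that a map between a hyperbolic cusp neighborhood and a flat cone neighborhood is quasiconformal with controlled constant.
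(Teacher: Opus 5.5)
Your plan is sound, but it takes a genuinely different route from the paper.

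\textbf{The paper's route.} The paper never writes down a map. From the rectangle structure of $Y$ it extracts only one fact: $f(\alpha_n)$ and $f(\beta_k)$ have annular neighbourhoods built from at most $D$ rectangles whose moduli are controlled by the ratio condition. Hence $\mathrm{Ext}_Y(f(\alpha_n)),\mathrm{Ext}_Y(f(\beta_k))\le C(L)$. Maskit's comparison turns this into upper bounds on $\ell_Y$. The collar lemma gives lower bounds, since every $f(\alpha_n)$ is crossed by some $f(\beta_k)$ and vice versa. Bounded $\ell_Y(f(\beta_k))$ then bounds the twists. Shiga's theorem finishes: bounded Fenchel--Nielsen data on both sides implies the marking is homotopic to a quasiconformal map.

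\textbf{Comparison.} The paper's argument is short and needs only extremal-length estimates for the $q$-cylinders. However, the pants hypothesis is indispensable there, and the dilatation bound is hidden inside Shiga's theorem. Your cell-by-cell construction is self-contained and gives an explicit $K=K(M,L)$. It uses the pants hypothesis only to bound the complexity of the complementary polygons. The price is that you must actually prove uniform hyperbolic geometry for the dual cell structure on $X$, together with a uniform cusp model.

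\textbf{Points that need repair.} Several of your justifications for that geometry do not work as written.
\begin{enumerate}
\item You never bound the side lengths of the complementary polygons from below, and this is essential. A very short side makes the corresponding triangle (chosen centre of the polygon plus that side) degenerate, while its partner in $Y$ has shape controlled by $L$. The bound does hold: distinct lifts to $\mathbb{D}$ of components of either multicurve are separated by their lifted collars. These have half-width at least $w(M)$, where $\sinh w(M)\sinh(M/2)=1$. So consecutive crossing points along any curve are at distance at least $2w(M)$.
\item A bound on the number of sides does not follow directly from bounded intersection numbers. Instead, each complementary region lies in a single pair of pants of area $2\pi$. Gauss--Bonnet with interior angles in $[\theta_0,\pi-\theta_0]$ then gives $N\le 4\pi/\theta_0$, and $N\le 2\pi/\theta_0$ for once-punctured polygons.
\item The angle bound is not a winding phenomenon. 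An arc of $\beta_k$ crossing the collar of $\alpha_n$ (half-width $w\ge w(M)$) at angle $\theta$ has length $2\sinh^{-1}(\sinh w/\sin\theta)$. This arc embeds in $\beta_k$, so it has length at most $M$, whence $\sin\theta\ge \sinh w(M)/\sinh(M/2)$.
\item To invoke uniqueness, you should record that $\int_Y|q|=\sum_{n,k} i(\alpha_n,\beta_k)\,m(\alpha_n)m(\beta_k)<\infty$. This follows from Cauchy--Schwarz, square-summability, and the bounded intersection counts.
\end{enumerate}
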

The definition and existence of locally finite filling pairs are discussed in \S \ref{sec: qc, infinite}. The square-summability condition guarantees that $\mu$ and $\nu$ arise from integrable partial measured foliations (see Theorem \ref{thm: realization criteria modulus}).

Our second sufficiency result is about a certain family of laminations on the unit disk, which we call \textit{dual regular polygonal laminations}. The construction and definition are intuitive, but we defer everything to \S \ref{subsubsec: qc realizations} so as to not take too much space in the introduction. All we will say here is that they are specified by partitions of $S^1$ into $n\geq 3$ arcs of equal length, say $I_1,\dots, I_n$, and $J_1,\dots, J_n$, and $L^2$ functions $h_j: I_j\to [0,\infty)$, $h_k': J_k\to [0,\infty)$ symmetric about the midpoints that determine measures on each $I_j$ and $J_k$, which we call endpoint measures.

\begin{thm}\label{thm: sufficient, disk}
   Let $\mu$ and $\nu$ be dual regular polygonal laminations on $\mathbb{D}$. If the endpoint measures are uniformly comparable to the Lebesgue measure, 
then $(\mu,\nu)$ admits a quasiconformal realization.
\end{thm}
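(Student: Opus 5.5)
We will prove Theorem \ref{thm: sufficient, disk} by building the holomorphic realization $(f,Y,q)$ from Theorem \ref{thm: main} explicitly, and then exhibiting a quasiconformal map $\mathbb{D}\to Y$ homotopic to $f$ relative to the ideal boundary. The realization construction (\S\ref{sec: filling, sufficient}) superimposes the two laminations, collapses the complementary regions and non-crossing subarcs, and replaces each flow box by a Euclidean rectangle whose side lengths are the transverse measures. For dual regular polygonal laminations the picture is essentially explicit. The leaves of $\mu$ are geodesics nested around the $n$ sides of an ideal $n$-gon with endpoints on the $I_j$. The leaves of $\nu$ are nested around the arcs $J_k$, which are rotated by half an arc.

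Consequently $q$ has a single zero of order $n-2$ at the image of the "center." The flat surface $(Y,|q|)$ is a union of $2n$ Euclidean half-infinite or finite strips/quadrants. Each piece corresponds to an overlap $I_j\cap J_k$, with coordinates $(x,y)$ given by the cumulative endpoint measures $H_j(t)=\int h_j$ and $H'_k(s)=\int h'_k$. Symmetry of $h_j,h'_k$ about the midpoints makes the gluing by translations consistent. The $L^2$ hypothesis is exactly what makes the total area $\sum \int\!\!\int$ finite, that is, $q$ integrable.

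Step 1 is to make the realization concrete. We write $Y$ as the union of $2n$ sectors $S_{jk}=[0,A_j]\times[0,B_k]$ (possibly with an unbounded direction, depending on how the atomic parts behave). Here a point with flat coordinates $(x,y)$ corresponds to the intersection of the $\mu$-leaf with endpoints at $\mu$-measure $x$ from the midpoint of $I_j$ and the $\nu$-leaf at $\nu$-measure $y$ from the midpoint of $J_k$.

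Step 2 is to define a model map $F:\mathbb{D}\to Y$ that does the same thing with Lebesgue measure. We send the hyperbolic geodesic with endpoints symmetric about the midpoint of $I_j$ at Euclidean arclength $t$ to the horizontal leaf at height $H_j(t)$, and likewise for $\nu$. Equivalently, $F=\Phi\circ F_0$, where $F_0$ realizes the Lebesgue-measure pair and $\Phi$ is the piecewise map $(x,y)\mapsto (H_j(x),H'_k(y))$ on each sector. The Lebesgue pair is the rotationally symmetric one, and its realization $F_0$ is a quasiconformal (indeed, up to symmetry, conformal-type) identification of $\mathbb{D}$ with the flat surface; we check this directly via a conformal model. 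Under the uniform comparability hypothesis $c\le h_j,h'_k\le C$, the map $\Phi$ is bi-Lipschitz on each sector in flat coordinates, with constants depending only on $c,C$. The maps match along sector boundaries, since adjacent sectors share the same $H_j$ or $H'_k$. Hence $\Phi$ is a global $K$-quasiconformal homeomorphism of the flat surfaces away from the cone point, and therefore quasiconformal.

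Step 3 is to check that $F$ is homotopic to $f$ relative to the ideal boundary. Both maps push $\mu,\nu$ to laminations homotopic to $\mathcal{F}_h(q),\mathcal{F}_v(q)$ and induce the same boundary correspondence, determined by the endpoint measures. The uniqueness in Theorem \ref{thm: main} then identifies them.

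The main obstacle is Step 2 near the ideal boundary. We must ensure that the flat coordinates near the ends, where the strips may be infinite or where $H_j$ approaches endpoints of $I_j$ and leaves accumulate at the vertices of the polygon, are uniformly quasiconformally equivalent to the hyperbolic picture. Concretely, we need that $F_0$ is quasiconformal up to the boundary near the $2n$ vertices where $I_j$ and $J_k$ meet. I would handle this by a local computation in a half-plane model at each vertex, comparing the family of nested geodesics with horizontal and vertical lines in a quadrant via $z\mapsto z^2$-type maps. Comparability of $h$ with Lebesgue measure is then used to control the distortion uniformly up to the vertex.
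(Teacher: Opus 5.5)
Your factorization $F=\Phi\circ F_0$ is a different route from the paper's. The paper works directly with the true surface $E$: it shows that the endpoint correspondence $S^1\to\partial E$, followed by a Riemann map, is quasisymmetric. It does this by bounding moduli of quadrilaterals over triples of adjacent equal arcs, using that the curved sides of the pieces $E_{j,j_1}$ are bi-Lipschitz graphs. Your reduction can be made to work, but Step 1 misdescribes $Y$ and the decisive step is missing.

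Take a $\mu$-leaf of $H_j$ with endpoints $m_j\pm s$, where $m_j$ is the midpoint of $I_j$. Take a $\nu$-leaf of an adjacent $H_k'$ with endpoints $p\pm u$, where $p$ is the endpoint of $I_j$ that is the midpoint of $J_k$. These two leaves meet if and only if $s+u>|I_j|/2$. So the pieces are not products $[0,A_j]\times[0,B_k]$. None of them is unbounded either, since there are no atoms and all masses are finite. Each piece is bounded by two legs and a decreasing graph, the graph being the ideal boundary. For Lebesgue weights it is the right isosceles triangle $\{x,y\ge 0,\ x+y<|I_j|/2\}$. Also, a map sending geodesics to leaves, as in your Step 2, collapses the central polygon and the regions crossed by only one lamination. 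So it is not a homeomorphism, and only its boundary values are meaningful.

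Your $\Phi$ does survive if you define it leafwise, as $(x_0,y_0)\mapsto(\alpha_k(x_0),\beta_j(y_0))$, where $\alpha_k$ and $\beta_j$ convert Lebesgue mass into $\nu$- and $\mu$-mass. Because meeting is a condition on pairs of leaves, $\Phi$ carries each Lebesgue triangle onto the true piece. It agrees on shared legs. It is bi-Lipschitz when $1/L\le h_j,h_k'\le L$, hence quasiconformal, since the cone point is removable.

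The gap is that the whole content of the theorem now sits in the claim that the Lebesgue realization $F_0$ is quasiconformal, and you do not prove it. You call it conformal-type, but it is not conformal: its boundary map has vanishing derivative at $2n$ points. You then defer it as the main obstacle, analysed through infinite strips that do not occur.

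What must be shown is the following. The Lebesgue surface $Y_0$ is the flat cone disk made of $2n$ right isosceles triangles glued along their legs around a cone point of angle $n\pi$. Its boundary consists of $2n$ hypotenuses meeting at angle $\pi/2$, at the images of the $m_j$ and of the endpoints of the $I_j$. The endpoint correspondence $S^1\to\partial Y_0$ is affine on each arc $I_j\cap J_k$. You need that a Riemann map $\psi:Y_0\to\mathbb{D}$, composed with this correspondence, is quasisymmetric on $S^1$.

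By Schwarz reflection, $\psi$ is analytic with nonzero derivative across each open hypotenuse. At a corner $v$, take a flat coordinate $w$ centered at $v$. Straightening the corner with $\zeta=w^2$ and reflecting gives $\psi=\psi(v)+aw^2+O(|w|^4)$ with $a\neq 0$. So near each corner the boundary map is $\sigma\mapsto\sigma|\sigma|$ up to $C^1$ changes of variable, and that map is quasisymmetric. Without this computation, or the paper's modulus estimates, quasiconformality of the realization is not established.
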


\subsection{Minimal surfaces in products of trees}\label{subsec: intro asymptotic plateau}

After completing the proof of Theorem \ref{thm: main}, we consider the dual minimal surface, i.e., the image of the conformal harmonic map in the product of $\R$-trees.
One motivation comes from minimal surfaces in non-positively curved locally symmetric spaces, which have recently become objects of intense study, especially in relation with higher Teichm{\"u}ller theory (see \cite{LabourieCyclicSurfaces}). Large area minimal surfaces in locally symmetric spaces approximate minimal surfaces in affine buildings (see the discussion in \cite{SagmanSmillie}). An important case is when the locally symmetric space is a product of $n$ hyperbolic spaces; the affine building is then a product of $n$ $\R$-trees. One of the main takeaways from the paper \cite{MarkovicSagmanSmillie} is that stability of minimal surfaces in products of closed hyperbolic surfaces is completely determined by minimal surfaces in products of $\R$-trees (this paper led to \cite{SagmanSmillie} about general symmetric spaces and Labourie's conjecture).

The minimal surfaces arising from Theorem \ref{thm: main} are best understood in the context of (mixed) asymptotic Plateau problems. The asymptotic Plateau problem in a Riemannian manifold $M$ with a suitable ``boundary at infinity" $\partial_\infty M$ asks: given a Jordan curve $\gamma$ in $\partial_\infty M$, what can we say about the existence, regularity, and multiplicity of minimal surfaces asymptotic to $\gamma$? A mixed asymptotic Plateau problem allows for $\gamma$ to be in $M\cup \partial_\infty M$. The basic example is $M=\mathbb{H}^3$, so that $\partial_\infty M=\mathbb{CP}^1$, which was first studied in detail by Anderson in \cite{AndersonCompleteMinimalHypersurfaces}; existence holds and uniqueness fails. Another asymptotic Plateau problem concerns $M=\mathbb{H}^2\times \mathbb{H}^2$, where $\partial_\infty M$ is the Furstenberg boundary $S^1\times S^1$. In this context, results from a series of papers together show that for any homeomorphism $f:S^1\to S^1$, the graph asymptotically bounds a unique minimal surface (see \cite{BonsanteSchlenker}, \cite{SeppiSmithToulisse}, \cite{TrebeschiCMCHypersurfaces}). 

Given the connection between minimal surfaces in $\mathbb{H}^2\times \mathbb{H}^2$ and in products of two $\R$-trees, the results on $\mathbb{H}^2\times \mathbb{H}^2$ suggest (along with the proof from \cite{Wen}) that some corresponding asymptotic Plateau problem in some product of $\R$-trees admits a unique solution. In \S \ref{sec: plateau}, we take up the task of properly formulating an asymptotic Plateau problem, and Theorem \ref{thm: main} is used to prove the sought existence and uniqueness theorem.

To keep things transparent in the introduction, we state our result (Theorems \ref{thm: asymptotic plateau existence} and \ref{thm: asymptotic plateau uniqueness}, about general Riemann surfaces) here just in the case of the unit disk. Let $(\mu,\nu)$ be a pair of laminations on $X=\mathbb{D}$ homotopic to foliations with finite Dirichlet integral, and let $P$ be the product of $\R$-trees. By taking the Gromov boundaries of the individual $\R$-trees, $P$ has a natural boundary $\partial_\infty P$. The pair $(\mu,\nu)$ induces a canonical boundary map $$\beta: S^1\to P\cup \partial_\infty P$$ (Proposition \ref{prop: canonical boundary map}). Minimal surfaces do not come with parametrizations of their boundary data, so the asymptotic Plateau problem should concern conformal harmonic maps asymptotic to a reparameterization of $\beta$ (see also Remark \ref{rem: non-tangential limits}).
\begin{thm}\label{thm: plateau_disk}
   Let $\mu$ and $\nu$ be measured geodesic laminations on the unit disk $\mathbb{D}$ homotopic to partial measured foliations with finite Dirichlet integral, and let $P$ be the corresponding product of $\R$-trees. The following are equivalent.
    \begin{enumerate}
        \item $\mu$ and $\nu$ are filling.
        \item The boundary map $\beta: S^1 \to P\cup \partial_\infty P$ is injective.
        \item There exists a conformal harmonic map $\Pi: \mathbb{D}\to P$ with Korevaar-Schoen trace map represented by $\beta\circ g^{-1}$ for some homeomorphism $g$ of $S^1$.
    \end{enumerate}
    When the condition holds, the map $\Pi$ is unique up to pre-composition with M{\"o}bius transformations, and energy minimizing among all $W^{1,2}$ maps from $\mathbb{D}$ to $P$ with Korevaar-Schoen trace maps of the form $\beta\circ h^{-1}$ with $h:S^1\to S^1$ a homeomorphism.
\end{thm}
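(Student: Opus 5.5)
The plan is to derive Theorem \ref{thm: plateau_disk} from Theorem \ref{thm: main} specialized to $X=\mathbb{D}$. The bridge is the dictionary between holomorphic realizations and conformal harmonic maps into products of $\R$-trees. Let $T_\mu$ and $T_\nu$ be the dual trees and $P=T_\mu\times T_\nu$. Fix a realization $(f,Y,q)$ with $Y=\mathbb{D}$, which is possible because $Y$ is simply connected and hyperbolic. Wolf's correspondence gives harmonic maps $\pi_h:\mathbb{D}\to T_\mu$ and $\pi_v:\mathbb{D}\to T_\nu$, namely the leaf-space projections of $\mathcal{F}_h(q)$ and $\mathcal{F}_v(q)$, with Hopf differentials $q/4$ and $-q/4$ up to normalization. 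Their product $\Pi=(\pi_h,\pi_v)$ is therefore conformal and harmonic. The boundary extension of $f$ is a homeomorphism $g$ of $S^1$. Since $f_*\mu,f_*\nu$ are homotopic to the horizontal and vertical foliations, the non-tangential limits of $\Pi$ at $g(\xi)$ agree with $\beta(\xi)$, which is read off from Proposition \ref{prop: canonical boundary map}. Finiteness of the Dirichlet integral makes $\Pi\in W^{1,2}$, and the Korevaar--Schoen trace is represented by these limits. So the trace of $\Pi$ is $\beta\circ g^{-1}$.

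I would prove the cycle (1)$\Rightarrow$(3)$\Rightarrow$(2)$\Rightarrow$(1).
\begin{enumerate}
\item[(1)$\Rightarrow$(3):] This is Theorem \ref{thm: main} together with the paragraph above.
\item[(3)$\Rightarrow$(2):] Take a conformal harmonic $\Pi$ with trace $\beta\circ g^{-1}$. Its Hopf differentials $\phi_1,\phi_2$ satisfy $\phi_1+\phi_2=0$, and each is integrable because the energy is finite. Set $q=4\phi_1$. The pullback pseudometric of $\Pi$ is comparable to $|q|^{1/2}|dz|$, since each factor is the $1$-Lipschitz projection onto a leaf space. Strebel's lemma \cite[Lemma 19.6]{Strebel} then says distinct points of $S^1$ are at positive $|q|$-distance. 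I would convert this into: the trace points differ in at least one tree coordinate, or both lie at infinity in different directions. For that I use that two boundary points with equal images in both trees would be joined by arcs of arbitrarily small horizontal and vertical measure. This gives injectivity of $\beta\circ g^{-1}$, hence of $\beta$.
\item[(2)$\Rightarrow$(1):] Argue contrapositively. If $\gamma$ is a complete geodesic with $i(\mu,\gamma)=i(\nu,\gamma)=0$, its two endpoints project to the same point (or end) of each tree, so $\beta$ identifies them.
\end{enumerate}

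Uniqueness has two parts. First, a conformal harmonic $\Pi$ with trace $\beta\circ g^{-1}$ yields an integrable $q$ on $\mathbb{D}$. The identification of the traces shows that $\mathcal{F}_h(q)$ and $\mathcal{F}_v(q)$ straighten, via $g$, to $\mu$ and $\nu$: equivariance is trivial here, and the homotopy class is determined by endpoint data on $S^1$. Thus $\Pi$ arises from a realization $(g,\mathbb{D},q)$. Second, the uniqueness part of Theorem \ref{thm: main} gives a biholomorphism of $\mathbb{D}$, i.e.\ a M\"obius map, relating two such data. Minimality follows from the energy minimizing clause of Theorem \ref{thm: main}, using that a $W^{1,2}$ map with trace $\beta\circ h^{-1}$ has energy at least the sum of the Dirichlet integrals of the pulled-back foliations.

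The main obstacle is the trace identification. One has to show that Korevaar--Schoen traces into an $\R$-tree, possibly escaping to the Gromov boundary, are exactly the endpoint data of the laminations. It must also hold conversely: a trace equal to $\beta\circ g^{-1}$ must force the homotopy classes of the foliations. I would first handle non-atomic endpoints via radial limits of the leaf projections, using Strebel's Hardy-space estimates. Atomic leaves, where $\beta$ may take values in $P$ rather than at infinity, would be treated separately by hand.
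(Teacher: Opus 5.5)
\textbf{There is a genuine gap.} Your (1)$\Rightarrow$(3) and your contrapositive (2)$\Rightarrow$(1) are fine. In fact Lemma \ref{lem: injectivity lemma} gives $d(\beta(x),\beta(y))^2=\mu(G(x,y))^2+\nu(G(x,y))^2$, so on $\mathbb{D}$ the equivalence (1)$\Leftrightarrow$(2) is immediate and needs no detour through (3).

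\textbf{Where it fails.} Every step that starts from an \emph{arbitrary} conformal harmonic $\Pi=(\pi_1,\pi_2)$ with trace $\beta\circ g^{-1}$ is unsupported. This covers your (3)$\Rightarrow$(2), the first half of your uniqueness argument, and the minimality claim. Each of them assumes two things: that $\pi_1,\pi_2$ are the leaf-space projections of $\mathcal{F}_h(q),\mathcal{F}_v(q)$ for the Hopf differential $q$, and that these foliations straighten to $g_*\mu,g_*\nu$. Nothing you cite gives this.
\begin{itemize}
\item Away from zeros, a harmonic map to an $\R$-tree is locally a height function in natural coordinates. Globally, however, it only factors through the leaf-space tree of $q$ by a $1$-Lipschitz map that may fold; this is exactly the ``folding'' issue raised in \S\ref{subsec: asymptotic plateau}.
\item Consequently, knowing the pullback metric is $|q|$ only gives $d_P(\Pi(x),\Pi(y))\le C\,d_q(x,y)$. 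That is the wrong direction for transferring Strebel's positivity to the trace points.
\item Your claim that equal images in both trees force arcs of small horizontal and vertical $q$-measure is precisely the no-folding statement, so it cannot be used to prove it.
\item The trace is only defined almost everywhere, so it cannot detect non-injectivity of $\beta$ at two specific points.
\item ``The homotopy class is determined by endpoint data'' presupposes that the leaf endpoints of $q$'s foliations, with their transverse measures, are those of $g_*\mu,g_*\nu$, which is what has to be shown.
\item A general $W^{1,2}$ competitor in the minimization has no ``pulled-back foliations'' to compare.
\end{itemize}
Your plan to use radial limits and Hardy-space estimates does not touch this problem. Those estimates control leaf projections of a known differential, not an unknown harmonic map that might fold. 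A minor point: non-tangential limits are not available here, only a.e.\ radial limits via the Korevaar--Schoen trace theorem, though radial limits would suffice.

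\textbf{The missing ingredient} is uniqueness of finite-energy harmonic maps into an $\R$-tree with prescribed trace on $\mathbb{D}$ (Lemma \ref{lem: uniqueness for harmonic maps}). The argument runs as follows.
\begin{itemize}
\item Let $q_1,q_2$ be the integrable differentials on $\mathbb{D}$ realizing $g_*\mu,g_*\nu$ (Theorem \ref{thm: HMS}). Viewed as a map into $T_\mu$ as in \S\ref{sec: maps from foliations}, $\pi_{q_1}$ has trace $\beta_\mu\circ g^{-1}$ by Proposition \ref{prop: trace map agrees}.
\item The function $x\mapsto d(\pi_1(x),\pi_{q_1}(x))$ is subharmonic by convexity of the distance in an NPC space. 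It has finite Dirichlet energy and zero trace, so it vanishes.
\item Hence $\pi_1=\pi_{q_1}$ and likewise $\pi_2=\pi_{q_2}$. Conformality then forces $q_1=-q_2$, so $(g,\mathbb{D},q_1)$ is a holomorphic realization.
\end{itemize}
Only after this do the remaining claims follow:
\begin{itemize}
\item (3)$\Rightarrow$(1) follows from Theorem \ref{thm:necessary_realization}.
\item Uniqueness up to M\"obius transformations follows from Theorem \ref{thm: uniquness}.
\item Minimality follows from Theorem \ref{thm: energy minimizing}. First replace each component of a $W^{1,2}$ competitor by the energy minimizer with the same trace; by the same lemma, this minimizer is the corresponding $\pi_{q_i}$, and its energy is $\frac12$ of the Dirichlet integral of the associated foliation.
\end{itemize}
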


We note that a finite Plateau problem in a product of two $\R$-trees was considered in \cite[\S 3.2 and \S 3.3]{Sagman} and used to study the asymptotic Plateau problem for a product of disks with variable curvature. The proofs in the current paper are different from those of \cite[\S 3.2 and \S 3.3]{Sagman} (in fact, trying to do the proof from \cite{Sagman} in our context seems difficult), and Theorem \ref{thm: plateau_disk} implies the main results from \cite[\S 3.2 and \S 3.3]{Sagman}.

A different but similar way of looking at Theorem \ref{thm: plateau_disk} is as a uniformization theorem for a region bounded by $\beta(S^1)$. Recently, there have been developments on uniformization problems for metric spaces homeomorphic to a surface (see, for instance, \cite{MeierWenger}). Most results in this direction are for boundary curves with $C^0$ or $W^{1,2}$ parametrizations, while our parametrizations are in some sense just $L^2$. We hope our results here can give insight in this direction.

\subsection{Further applications}

\subsubsection{Main inequalities}

 The Reich-Strebel inequality for quasiconformal maps, also called the main inequality, is a generalization of the classical Groetzch argument. The result is a key ingredient in the proof of Teichm{\"u}ller’s uniqueness theorem for Teichm{\"u}ller geodesics. The Reich-Strebel inequality is valid for quasiconformal maps between Riemann surfaces, and in \cite{MarkovicMateljevic}, Markovi{\'c}-Mateljevi{\'c} extended it to locally quasiconformal homeomorphisms. 
 
 In \cite{MarkovicMinimalDiffeomorphisms}, the so-called \textit{new main inequality}, which implies the ordinary Reich-Strebel inequality, is introduced as a tool to study minimal surfaces in products of closed hyperbolic surfaces. In \cite{MarkovicSagmanMainInequality}, a direct relation is established between the new main inequality and minimal surfaces in products of $\R$-trees. In \cite[Theorem B]{Sagman}, the new main inequality is proved for pairs of quasiconformal maps on the unit disk (using $\R$-trees). The theorem below, which extends \cite[Theorem B]{Sagman} to locally quasiconformal maps on arbitrary surfaces, is a direct consequence of the energy minimizing property from Theorem \ref{thm: main}.
\begin{thm}\label{thm: new main}
    Let $X$ be a conformally hyperbolic Riemann surface and let $f_1,f_2:X\to Y$ be locally quasiconformal homeomorphisms with Beltrami forms $\mu_1$ and $\mu_2$ and whose lifts to the universal covers have a common boundary extension $\partial_\infty \Tilde{X}\to \partial_\infty \Tilde{Y}$. For every $L^1$ holomorphic quadratic differential $q$ on $X$, the new main inequality holds:
\begin{equation}\label{eq: new main}
    \textrm{Re}\int_X q\cdot \Big (\frac{ \mu_1}{1-|\mu_1|^2} - \frac{ \mu_2}{1-|\mu_2|^2}\Big )\leq \int_X |q|\cdot \Big (\frac{|\mu_1|^2}{1-|\mu_1|^2}+\frac{|\mu_2|^2}{1-|\mu_2|^2}\Big).
\end{equation}
\end{thm}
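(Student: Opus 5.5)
We derive Theorem \ref{thm: new main} from the energy minimizing property in Theorem \ref{thm: main}, following the route of \cite[Theorem B]{Sagman} and \cite{MarkovicSagmanMainInequality}. Both sides of \eqref{eq: new main} are real-linear in $q$ apart from the $\mathrm{Re}$, and rotating $q$ by $e^{i\theta}$ only changes the left side, so it suffices to prove the inequality for every $q$ in a dense class. We therefore first reduce to $q$ with finitely many zeros on compact sets; in fact all $L^1$ differentials will do, since the realization theorem is stated in full generality. Fix $q\neq 0$ on $X$, and let $\mu,\nu$ be the straightenings of $\mathcal{F}_h(q)$ and $\mathcal{F}_v(q)$. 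The necessity part of Theorem \ref{thm: main} (Theorems \ref{thm:necessary_realization}, \ref{thm:necessary_realization_first_kind}) says $(\mu,\nu)$ is filling. Its holomorphic realization is $(\mathrm{id},X,q)$ itself.

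\textbf{Step 1: competitors from $f_1,f_2$.} Set $h=f_2\circ f_1^{-1}:Y\to Y$. Its lift extends to the identity on $\partial_\infty\Tilde Y$, because the lifts of $f_1$ and $f_2$ have a common boundary extension. Transport the horizontal foliation by $f_1$ and the vertical foliation by $f_2$: on $Y$ consider $\mathcal{F}=(f_1)_*\mathcal{F}_h(q)$ and $\mathcal{G}=(f_2)_*\mathcal{F}_v(q)$. Since $f_2$ and $f_1$ agree at infinity, $\mathcal{G}$ is homotopic to $(f_1)_*\mathcal{F}_v(q)$, i.e.\ its straightening is $(f_1)_*\nu$. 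So $(\mathcal{F},\mathcal{G})$ is an admissible competitor for the homeomorphism $g=f_1:X\to Y$. Local quasiconformality makes the foliations locally $W^{1,2}$, so their Dirichlet integrals make sense (possibly infinite, in which case the inequality is trivial).

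\textbf{Step 2: compute energies.} Locally $\mathcal{F}_h(q)$ is given by $|\mathrm{Im}\,\zeta|$ with $\zeta=\int\sqrt q$. A direct computation of the Dirichlet energy of $\mathrm{Im}\,\zeta\circ f_1^{-1}$, changing variables back to $X$, gives
$$\mathcal{D}(\mathcal{F})=\int_X |q|\,\frac{|1-\mu_1 q/|q||^2}{1-|\mu_1|^2}\cdot\tfrac12 .$$
More precisely, this yields $\int_X|q|\frac{1+|\mu_1|^2-2\mathrm{Re}(\mu_1 q/|q|)}{2(1-|\mu_1|^2)}$ after normalizing. Similarly
$$\mathcal{D}(\mathcal{G})=\int_X|q|\frac{1+|\mu_2|^2+2\mathrm{Re}(\mu_2q/|q|)}{2(1-|\mu_2|^2)},$$
with the sign flipped because the vertical foliation corresponds to $-q$. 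Meanwhile $\mathcal{D}(\mathcal{F}_h(q))+\mathcal{D}(\mathcal{F}_v(q))=\|q\|_{L^1}$, up to the same normalization.

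\textbf{Step 3: apply minimization.} Energy minimality gives $\|q\|\le \mathcal{D}(\mathcal{F})+\mathcal{D}(\mathcal{G})$. Writing $\frac{1+|\mu|^2}{1-|\mu|^2}=1+\frac{2|\mu|^2}{1-|\mu|^2}$ and subtracting $\|q\|$, this rearranges exactly to \eqref{eq: new main}; the normalization constants cancel. The sign conventions should be checked against the energy computations in \S\ref{sec: trees}, and the $\mathrm{Re}$ pairing $q\mu$ should be matched with $\mathrm{Re}(\mu q/|q|)|q|$.

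\textbf{Main obstacle.} The delicate step is Step 1's justification that pushed-forward foliations under merely locally quasiconformal maps are legitimate competitors. They are partial measured foliations homotopic to the required laminations, and their Dirichlet integral equals the pulled-back integral computed pointwise, even though $f_i$ need not be globally quasiconformal and $Y$ may not be in $T_{qc}(X)$. I would handle this by working on the universal cover with the $\R$-tree formulation. Composing the equivariant harmonic projections $\Tilde X\to T_\mu$, $\Tilde X\to T_\nu$ with $\Tilde f_1^{-1}$ and $\Tilde f_2^{-1}$ gives equivariant $W^{1,2}_{loc}$ maps to the trees. Their boundary behaviour agrees with that of $\Tilde f_1^{-1}$ by the common extension. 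Their energy is computed locally by the chain rule for Sobolev maps under locally quasiconformal changes of variable. I would then invoke the energy minimizing statement (Theorem \ref{thm: energy minimizing}) in its tree form.
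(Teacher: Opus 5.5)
Your proposal is correct and follows the paper's proof. Push $\mathcal{F}_h(q)$ forward by $f_1$ and $\mathcal{F}_v(q)$ by $f_2$, treat these as competitors for the realization $(\mathrm{id},X,q)$ in the energy-minimizing part of Theorem \ref{thm: main} (on the tree side, $E(X,\Pi_q)\le E(Y,\pi_q\circ f_1^{-1})+E(Y,\pi_{-q}\circ f_2^{-1})$), and rearrange; your pointwise chain-rule computation plays the role of the Reich--Strebel formula (\ref{RSorig}).

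The only slip is a sign in Step 2. Since $\pi_q$ has Hopf differential $-\frac14 q$, the horizontal pushforward actually has Dirichlet density $|q|\,|1+\mu_1 q/|q||^2/(1-|\mu_1|^2)$, and the vertical one has $|q|\,|1-\mu_2 q/|q||^2/(1-|\mu_2|^2)$. Your rearrangement therefore yields the inequality with $\mu_1,\mu_2$ interchanged. This is harmless, because the statement holds for every $q$: apply the argument to $-q$, or swap the roles of $f_1$ and $f_2$.
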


In (\ref{eq: new main}), we allow both sides to be infinite, in which case the inequality is trivial. Taking $f_2:X\to X$ to be the identity, and $f_1$ to be any locally quasiconformal homeomorphism that extends to the identity on $\partial_\infty X$, we obtain that the Reich-Strebel inequality is valid for locally quasiconformal maps; when $X=\mathbb{D}$, this recovers Markovi{\'c}-Mateljevi{\'c}'s result from \cite{MarkovicMateljevic}.
\begin{cor}\label{cor: generalized main}
    Let $X$ be a conformally hyperbolic Riemann surface and let $f:X\to X$ be any locally quasiconformal homeomorphism with Beltrami form $\mu$ and whose lift to the universal cover extends to the identity map $\partial_\infty \Tilde{X}\to \partial_\infty \Tilde{X}$. For every $L^1$ holomorphic quadratic differential $q$ on $X$, 
\begin{equation}\label{eq: generalized main}
    \textrm{Re}\int_X q\cdot \frac{ \mu}{1-|\mu|^2} \leq \int_X |q|\cdot \frac{|\mu|^2}{1-|\mu|^2}.
\end{equation}
\end{cor}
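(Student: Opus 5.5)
Corollary \ref{cor: generalized main} will be deduced as the special case $f_2=\mathrm{id}_X$ of Theorem \ref{thm: new main}, so we assume that theorem as proved.

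\emph{Setting up the specialization.} Put $f_1=f$ and $f_2=\mathrm{id}_X$, both as maps $X\to Y:=X$. Both are locally quasiconformal homeomorphisms. The lift of $\mathrm{id}_X$ can be taken to be $\mathrm{id}_{\Tilde X}$, which extends to the identity of $\partial_\infty\Tilde X$. By hypothesis the lift of $f$ also extends to the identity, so the two boundary extensions agree, as Theorem \ref{thm: new main} requires. One point needs care: we must use the particular lift of $f$ whose extension is the identity. This lift is equivariant for the deck group with the trivial induced automorphism, so $f$ and $\mathrm{id}$ induce the same marking. This is exactly the setting of Theorem \ref{thm: new main}.

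\emph{Applying the inequality.} The Beltrami form of the identity is $\mu_2\equiv 0$. Therefore
\[
\frac{\mu_2}{1-|\mu_2|^2}=0 \qquad\text{and}\qquad \frac{|\mu_2|^2}{1-|\mu_2|^2}=0 .
\]
Substituting these into (\ref{eq: new main}) with $\mu_1=\mu$ gives (\ref{eq: generalized main}) at once.

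\emph{The infinite case.} Since $f$ is only locally quasiconformal, $\|\mu\|_\infty$ may equal $1$, and then the right-hand side of (\ref{eq: generalized main}) may be $+\infty$. In that case the inequality is trivial, matching the convention stated after Theorem \ref{thm: new main}. If the right-hand side is finite, then $q\mu/(1-|\mu|^2)$ is integrable, because
\[
\frac{|q|\,|\mu|}{1-|\mu|^2}\le \frac{|q|\,|\mu|^2}{1-|\mu|^2} \quad\text{on } \{|\mu|\ge 1/2\},
\]
while the integrand is bounded by a multiple of $|q|\in L^1$ on $\{|\mu|<1/2\}$. So the left-hand side is well defined.

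\emph{Main obstacle.} There is no real difficulty beyond Theorem \ref{thm: new main} itself. The only step that needs checking is that ``common boundary extension'' is understood with the identity lift, as in the boundary-matching step above.
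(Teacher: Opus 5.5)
Your proposal is correct and is exactly the paper's argument: take $f_1=f$ and $f_2=\mathrm{id}_X$ (with the identity lift) in Theorem \ref{thm: new main}, so that $\mu_2\equiv 0$ and both $\mu_2$-terms drop out. One harmless slip in your integrability aside: on $\{|\mu|\ge 1/2\}$ the correct bound is $\frac{|q|\,|\mu|}{1-|\mu|^2}\le 2\,\frac{|q|\,|\mu|^2}{1-|\mu|^2}$; without the factor $2$ it is false, since $|\mu|>|\mu|^2$ there, but your conclusion is unchanged.
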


\begin{remark}\label{rem: main other form}
    By a well-known calculation, (\ref{eq: generalized main}) is equivalent to $\int_X |q|\leq \int_X |q|T_\mu q ,$ where $T_\mu q = \frac{|1-\mu\frac{q}{|q|}|^2}{1-|\mu|^2}$. This is the inequality stated in \cite{MarkovicMateljevic}. 
\end{remark}

\subsubsection{Harmonic maps and minimal surfaces}

In \cite[Theorems 2 and 3]{MarkovicMateljevic}, (\ref{eq: generalized main}) is used to prove new uniqueness theorems for harmonic diffeomorphisms of the unit disk. Similarly, in \cite[Theorem A]{Sagman}, (\ref{eq: new main}) (for quasiconformal maps) is used to prove a uniqueness result for an asymptotic Plateau problem in a product of disks (extending \cite{MarkovicMinimalDiffeomorphisms}, which concerns closed surfaces). The theorems apply to harmonic diffeomorphisms and minimal immersions with $L^1$ Hopf differentials, which is a restriction, but the trade-off is that there is no assumption on the curvature or completeness of the targets. The flexibility with the curvature is especially notable for minimal surface problems because all results prior to \cite{Sagman} on the asymptotic Plateau problem in $\mathbb{H}^2\times \mathbb{H}^2$ (see \cite{BonsanteSchlenker}, \cite{SeppiSmithToulisse}, \cite{TrebeschiCMCHypersurfaces}) made strong use of the fact that the curvature of both factors is constant: they all rely on a duality between $\mathbb{H}^2\times \mathbb{H}^2$ and $3d$ anti-de Sitter space. In particular, if we rescale the metric on one of the factors but not the other, the anti-de Sitter proofs have no chance of going through. In fact, in variable negative curvature and without the $L^1$ assumption, existence and uniqueness are totally open.

Using (\ref{eq: generalized main}) and (\ref{eq: new main}) and following the proofs from \cite{MarkovicMateljevic} and \cite{Sagman}, we prove uniqueness results on general surfaces. Note that if the surfaces are of the first kind, the ideal boundary conditions are vacuous.
\begin{cor}\label{cor: uniqueness_harmonic}
    Let $X$ be a conformally hyperbolic Riemann surface, let $(Y,\sigma)$ be a Riemannian surface, and let $f,g:X\to (Y,\sigma)$ be harmonic diffeomorphisms extending to the same boundary homeomorphism $\partial_\infty f=\partial_\infty g$. If the Hopf differentials of $f$ and $g$ are $L^1$, then $f=g$.
\end{cor}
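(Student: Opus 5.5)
We follow the Markovi\'c--Mateljevi\'c argument for harmonic diffeomorphisms of the disk, with Corollary \ref{cor: generalized main} (or Theorem \ref{thm: new main}) as the main input. We may assume $X$ is conformally hyperbolic. Write $\phi=\mathrm{Hopf}(f)$ and $\psi=\mathrm{Hopf}(g)$; both are $L^1$ holomorphic quadratic differentials on $X$. Since harmonic diffeomorphisms between surfaces are orientation preserving and locally quasiconformal, the maps $h=g^{-1}\circ f$ and $h^{-1}=f^{-1}\circ g$ are locally quasiconformal homeomorphisms $X\to X$. Because $\partial_\infty f=\partial_\infty g$, their lifts to $\Tilde X$ extend to the identity on $\partial_\infty \Tilde X$. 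So Corollary \ref{cor: generalized main} applies to $h$ and to $h^{-1}$, for any $L^1$ holomorphic quadratic differential.

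\textbf{Step 1: energy comparison.} Locally, write $f^*\sigma=e(f)\,|dz|^2$-type data in the standard form. Concretely, $f^*\sigma = \phi\,dz^2 + e\,dz\,d\bar z+\bar\phi\,d\bar z^2$ up to normalization, and similarly for $g$, with $g^*\sigma$ pulled back by $h$ giving $f^*\sigma = h^*(g^*\sigma)$. Harmonicity of $g$ means $g^*\sigma$ has holomorphic $(2,0)$-part $\psi$.

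The key computation, as in \cite{MarkovicMateljevic}, is a pointwise identity. It expresses the difference of energy densities $e(f)-e(g)$ (after the change of variables by $h$) as a term linear in the Beltrami coefficient $\mu_h$ paired with $\psi$, plus a term quadratic in $|\mu_h|$, weighted by $|\psi|$ and by the Jacobian. Integrating, and using the $L^1$ hypothesis so that the relevant integrals of $|\psi|$ and $|\phi|$ make sense (densities $e-2|\phi|=J$ integrate to area), one obtains an inequality of the form
\[
\int_X (e(f)-e(g)) \;\ge\; \int_X |\psi|\,T_{\mu}\psi - \int_X |\psi| \;\ge\; 0,
\]
after invoking the inequality $\int|q|\le \int |q|T_\mu q$ of Remark \ref{rem: main other form} with $q=\psi$, $\mu=\mu_{h^{-1}}$. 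Energies may be infinite, so rather than comparing them directly we work with the renormalized quantity $e-J=2|\text{Hopf}|$ plus area terms. The area terms cancel because $f$ and $g$ are diffeomorphisms onto the same $Y$. The inequality should therefore be phrased as
\[
\int_X |\psi| \le \int_X |\psi|\,T_{\mu_{h^{-1}}}\psi,
\]
together with the pointwise relation, which reads $\|\phi\|_{L^1}\ge \|\psi\|_{L^1}+\text{(nonnegative defect)}$. Symmetrically, applying the same argument to $h$ with $q=\phi$ gives the reverse inequality. Hence both defects vanish.

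\textbf{Step 2: rigidity.} Vanishing of the defect forces equality in the pointwise estimates, and equality forces $\mu_h\equiv 0$ almost everywhere. So $h$ is a conformal automorphism of $X$ whose lift is the identity on $\partial_\infty\Tilde X$. For conformally hyperbolic $X$, such an $h$ lifts to a M\"obius transformation fixing $\partial_\infty \Tilde X$ pointwise. If $X$ is of the first kind, this boundary set is all of $S^1$; otherwise it contains at least three points. In either case $h=\mathrm{id}$, so $f=g$.

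\textbf{Main obstacle.} The hardest part is Step 1: setting up the pointwise algebra correctly and justifying the integrations when energies are infinite. I would follow \cite[Theorems 2, 3]{MarkovicMateljevic} verbatim. The work is to check that only the $L^1$ norms of $\phi$ and $\psi$ enter the integrated identity, with Jacobian terms cancelling by a change of variables. Once that is done, the inequality input is exactly Corollary \ref{cor: generalized main}.
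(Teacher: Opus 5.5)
Your Step 1 rests on the identity ``$e-2|\mathrm{Hopf}|=J$'', and that identity is false. With $e(f)=(\sigma\circ f)(|f_z|^2+|f_{\bar z}|^2)$, $\phi=(\sigma\circ f)f_z\overline{f_{\bar z}}$ and area density $(\sigma\circ f)J_f=(\sigma\circ f)(|f_z|^2-|f_{\bar z}|^2)$, the correct relations are
\[
e(f)-(\sigma\circ f)J_f=2(\sigma\circ f)|f_{\bar z}|^2=2|\phi|\,|\mu_f|,\qquad e(f)-2|\phi|=(\sigma\circ f)\bigl(|f_z|-|f_{\bar z}|\bigr)^2 .
\]
So the asserted comparison $\|\phi\|_{L^1}\ge\|\psi\|_{L^1}+(\text{defect})$ is not what the computation gives.

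The more serious problem is that Step 2, as you set it up, has no source of rigidity. In the Reich--Strebel identity, the term quadratic in $\nu:=\mu_{h^{-1}}$ is weighted by $e(g)$, not by $|\psi|$. If it were weighted by $|\psi|$, the whole remainder would be exactly $2|\psi|(T_\nu\psi-1)$. Equality in the main inequality does not force $\nu=0$: take $g$ conformal, so $\psi\equiv0$. Finally, ``the area terms cancel'' cannot be done globally, since $Y$ may have infinite area. The cancellation has to be pointwise, via $(\sigma\circ f)J_f=\bigl((\sigma\circ g)J_g\bigr)\circ h\cdot J_h$.

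The idea can be repaired, and the repaired version is a proof genuinely different from the paper's. Use the renormalized energy
\[
\tilde E(f)=\int_X\bigl(e(f)-(\sigma\circ f)J_f\bigr)=2\int_X|\phi|\,|\mu_f|\le 2\|\phi\|_{L^1}<\infty .
\]
Subtract the Jacobian identity from the pointwise Reich--Strebel identity and change variables by the diffeomorphism $h$. This gives
\[
\tilde E(f)-\tilde E(g)=\int_X 2|\psi|\bigl(T_\nu\psi-1\bigr)+\int_X 2\bigl(e(g)-2|\psi|\bigr)\frac{|\nu|^2}{1-|\nu|^2}.
\]
Both integrands are in $L^1$: their sum is a difference of two $L^1$ densities, the second is nonnegative, and the first is $\ge -2|\psi|$.

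Corollary \ref{cor: generalized main}, in the form of Remark \ref{rem: main other form}, applied to $h^{-1}$ makes the first integral nonnegative. Swapping $f$ and $g$ and adding forces the second integral to vanish. Since $e(g)-2|\psi|=(\sigma\circ g)(|g_w|-|g_{\bar w}|)^2>0$ for a diffeomorphism, you get $\nu\equiv0$, so $h$ is conformal and your last step applies. This is the mechanism of the proof of Corollary \ref{cor: minimal}, namely strictness of $e>2|\mathrm{Hopf}|$ for immersions, made finite by subtracting area.

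It is not the Markovi\'c--Mateljevi\'c argument, which is what the paper follows. There:
\begin{itemize}
\item one applies Corollary \ref{cor: generalized main} to $g^{-1}\circ f$ with $q=\mathrm{Hopf}(f)$;
\item one uses the harmonic alignment $\mu_f=|\mu_f|\,\bar q/|q|$ to bound $T_{\mu_{g^{-1}\circ f}}q$ pointwise by $\frac{1-|\mu_f|}{1+|\mu_f|}\cdot\frac{1+|\mu_g|}{1-|\mu_g|}$;
\item one pushes $|q|$ forward to $\sigma\frac{|\mu_f|}{1-|\mu_f|^2}$ on $Y$ and symmetrizes;
\item one concludes from the pointwise inequality $\alpha\ge\beta$ (equality iff $|\mu_f|=|\mu_g|$), followed by an equality analysis giving $\mu_f=\mu_g$.
\end{itemize}
Your repaired route replaces that algebra with the integrability bookkeeping above. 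Separately, the paper uses \cite[Theorem 3.6]{Saric} to see that the lifts of $f$ and $g$ agree on the limit set; you build this into your reading of $\partial_\infty f=\partial_\infty g$.
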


\begin{remark}
When $X$ and $Y$ are of the first kind and $\sigma$ has pinched negative curvature, uniqueness is already known for quasiconformal harmonic diffeomorphisms.
On arbitrary Riemann surfaces, harmonic maps with $L^1$ quadratic differentials are not always quasiconformal (see \cite[Theorem 13]{WanCMC}).
\end{remark}
 For minimal surface uniqueness, Theorem \ref{thm: new main} does not appear to be enough to fully remove a quasiconformal condition (but we can weaken it slightly, see Remark \ref{rem: weaken quasiconformal}).
\begin{cor}\label{cor: minimal}
    Let $(Y,\sigma_1)$ and $(Z,\sigma_2)$ be conformally hyperbolic Riemannian surfaces and let $m:\partial_\infty \Tilde{Y}\to \partial_{\infty} \Tilde{Z}$ be an equivariant homeomorphism. Then, up to Teichm{\"u}ller equivalence, there exists at most one Riemann surface $X$ with minimal immersion $$F=(f,g):X\to (Y\times Z,\sigma_1\oplus \sigma_2)$$ such that $f$ and $g$ are quasiconformal (harmonic) diffeomorphisms with $L^1$ Hopf differentials and $\partial_\infty g\circ (\partial_\infty f)^{-1}=m$.
\end{cor}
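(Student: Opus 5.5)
We prove Corollary \ref{cor: minimal} with the strategy of \cite[Theorem A]{Sagman} and \cite{MarkovicMinimalDiffeomorphisms}, using Theorem \ref{thm: new main} in place of the quasiconformal new main inequality.

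\emph{Setup.} Suppose $X_1,X_2$ carry minimal immersions $F_i=(f_i,g_i):X_i\to Y\times Z$ with the stated properties. Minimality of $F_i$ means $F_i$ is conformal and harmonic, so the Hopf differentials satisfy $\mathrm{Hopf}(f_i)=-\mathrm{Hopf}(g_i)=:q_i$, and $q_i$ is $L^1$ by assumption.

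\emph{Step 1: comparison maps.} Define $\phi=f_2^{-1}\circ f_1:X_1\to X_2$ and $\psi=g_2^{-1}\circ g_1:X_1\to X_2$. These are quasiconformal diffeomorphisms, being compositions of quasiconformal diffeomorphisms. The boundary condition $\partial_\infty g_i\circ(\partial_\infty f_i)^{-1}=m$ for $i=1,2$ gives $\partial_\infty\phi=\partial_\infty\psi$. Our goal is to show $\phi$ is a biholomorphism, which gives the Teichm{\"u}ller equivalence.

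\emph{Step 2: apply the inequality.} Apply Theorem \ref{thm: new main} to the pair $\phi,\psi$ and to the $L^1$ differential $q_1$ on $X_1$. Here we use the standard computation relating the Beltrami form of $f_2^{-1}\circ f_1$ to the Hopf differentials and energy densities of the harmonic maps $f_1,f_2$, as in \cite{MarkovicMinimalDiffeomorphisms}. Since the sign of $q_1$ flips between the $f$ and $g$ factors, this computation turns the two sides of (\ref{eq: new main}) into an expression of the form
\begin{equation*}
\int_{X_1} \big(\text{nonnegative pointwise terms}\big)\le 0,
\end{equation*}
plus a symmetric counterpart obtained by swapping the roles of $X_1$ and $X_2$.

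To carry out this step rigorously we need integrability. The terms are controlled by $|q_1|$ times functions of the dilatations, which are bounded because the maps are quasiconformal. So all integrals are finite and the algebra is legitimate. This is where quasiconformality enters, as indicated in Remark \ref{rem: weaken quasiconformal}.

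\emph{Step 3: conclude rigidity.} Summing the two inequalities (for $q_1$ on $X_1$ with $\phi,\psi$, and for $q_2$ on $X_2$ with $\phi^{-1},\psi^{-1}$) forces the pointwise terms to vanish. Hence the Beltrami forms of $\phi$ and $\psi$ vanish, so $\phi$ is conformal. A conformal diffeomorphism is a biholomorphism, so $X_1$ and $X_2$ are Teichm{\"u}ller equivalent via $\phi$, compatibly with the boundary data.

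\emph{Main obstacle.} The hardest step is the pointwise algebraic identity in Step 2: the Beltrami forms of $\phi$ and $\psi$ must be written in terms of $q_i$ and the energy densities so that the difference of the two sides of (\ref{eq: new main}) becomes a sum of squares. I would transcribe the computation of \cite[\S 4]{Sagman} directly, checking that every step is pointwise and uses only local quasiconformality and integrability.
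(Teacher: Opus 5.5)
Your overall architecture matches the paper's: compare $\phi=f_2^{-1}\circ f_1$ and $\psi=g_2^{-1}\circ g_1$, which share boundary values. Apply the Reich--Strebel formula (Proposition~\ref{prop:RS}) to each factor, use $\mathrm{Hopf}(g_1)=-\mathrm{Hopf}(f_1)$ to feed the $q$-terms into Theorem~\ref{thm: new main}, and then swap roles.

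However, Step~2 has a genuine gap: the claim that ``all integrals are finite'' is false. The Reich--Strebel formula compares total energies $E(X_2,f_2)-E(X_1,f_1)$. It also produces the terms $2\int e(f_1)\frac{|\mu_\phi|^2}{1-|\mu_\phi|^2}$, which are controlled by the energy density $e(f_1)$, not by $|q_1|$. The targets $(Y,\sigma_1)$ and $(Z,\sigma_2)$ may have infinite area, and the energy of a diffeomorphism dominates the area of its image. So $E(X_i,f_i)$ and $E(X_i,g_i)$ are in general infinite; only the Hopf differentials are assumed integrable. Proposition~\ref{prop:RS}, which assumes finite energy, therefore cannot be applied on all of $X_1$. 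Likewise, ``summing the two inequalities'' in Step~3 so that the energy differences cancel amounts to adding $\infty-\infty$.

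The missing idea is an exhaustion argument with a strict gap, which is what the paper does.
\begin{itemize}
\item Take compact subsurfaces $X_n\nearrow X_1$ and apply the formula on $X_n$, where everything is finite.
\item Because $f_1$ and $g_1$ are immersions, $2e>4|q|$ holds strictly pointwise. So if $\mu_\phi$ or $\mu_\psi$ is nonzero on a set of positive measure, then $\int_{X_n}(2e(f_1)-4|q_1|)\frac{|\mu_\phi|^2}{1-|\mu_\phi|^2}+\int_{X_n}(2e(g_1)-4|q_1|)\frac{|\mu_\psi|^2}{1-|\mu_\psi|^2}$ is increasing in $n$. Hence it is at least some $\epsilon>0$ for large $n$.
\item The remaining $q$-terms have integrands bounded by $C|q_1|\in L^1$, precisely because $\|\mu_\phi\|_\infty,\|\mu_\psi\|_\infty<1$. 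This, not finiteness of all integrals, is where quasiconformality enters.
\item By dominated convergence these $q$-terms converge to the corresponding integrals over $X_1$, which are $\geq 0$ by Theorem~\ref{thm: new main}.
\end{itemize}
Consequently $\Delta_n:=E(\phi(X_n),f_2)+E(\psi(X_n),g_2)-E(X_n,f_1)-E(X_n,g_1)$ is bounded below by a positive constant for all large $n$. Running the same argument for $\phi^{-1},\psi^{-1}$ on the exhaustions $\phi(X_n),\psi(X_n)$ of $X_2$, with $q_2$ in place of $q_1$, shows that $-\Delta_n$ is also bounded below by a positive constant. This is a contradiction, so $\mu_\phi=\mu_\psi=0$.

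Note that the strict inequality $2e>4|q|$ is also what turns ``the pointwise terms vanish'' into $\mu_\phi=\mu_\psi=0$, even when energies are finite. Your proposal never states it, and it should.
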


\subsubsection{Pseudo-Anosov elements in big mapping class groups}
Let \(S\) be an oriented topological surface. Its
orientation-preserving mapping class group is
\[
   \operatorname{MCG}(S)
   =
   \operatorname{Homeo}^{+}(S)/
   \operatorname{Homeo}_{0}(S),
\]
where \(\operatorname{Homeo}_{0}(S)\) denotes the subgroup of homeomorphisms
isotopic to the identity. When \(S\) has infinite topological type, this group
is commonly called a \emph{big mapping class group}. In contrast with the
finite-type case, big mapping class groups generally do not admit a global
Nielsen--Thurston trichotomy: a single mapping class may exhibit different
dynamical behaviour on different subsurfaces or near different ends.

Nevertheless, some elements of big mapping class groups possess
a pseudo-Anosov-like structure (with applications to hyperbolic $3$-manifolds). See, for instance, \cite{CantwellConlonFenley}, \cite{HooperGridGraphs}, \cite{LandryMinskyTaylorEndperiodic}.
Theorem \ref{thm: main} suggests a new class of elements and yields an affine realization criterion. 
\begin{cor}
\label{cor:big-affine-realization}
Let $S$ be an oriented surface and let
$f:S\to S$ be an orientation preserving homeomorphism.
Suppose that there is a hyperbolic Riemann surface $X$ homeomorphic to $S$ and carrying measured geodesic
laminations \(\mu\) and \(\nu\) such that             
\begin{enumerate}
   \item for some \(\lambda>1\),
        $f_{*}\mu=\lambda\mu$
           and $f_{*}\nu=\lambda^{-1}\nu$,
   \item $\mu$ and $\nu$ are homotopic to partial measured foliations of finite
         Dirichlet integral, and
    \item  \(\mu\) and \(\nu\) are filling.
\end{enumerate}
Let $h:X\to Y$ be the homeomorphism and $q$ the integrable quadratic differential for $\mu$ and $\nu$ obtained through Theorem \ref{thm: main}. Then, the conjugate
\[
   F=h\circ f\circ h^{-1}:Y\to Y
\]
is homotopic to a map that is affine in the natural parameters of \(q\), expanding the horizontal direction by \(\lambda\) and contracting the
vertical direction by \(\lambda^{-1}\). 
\end{cor}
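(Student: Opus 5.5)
The idea is to use the uniqueness part of Theorem \ref{thm: main} to identify two holomorphic realizations: the one $(h,Y,q)$ attached to $(\mu,\nu)$, and a second one built from $F$ and a rescaled differential.

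\emph{Step 1: both realizations exist.} By hypotheses (2) and (3), Theorem \ref{thm: main} gives the realization $(h,Y,q)$, so $h_*\mu\sim\mathcal{F}_h(q)$ and $h_*\nu\sim\mathcal{F}_v(q)$. Set $q_\lambda$ to be the quadratic differential with
\[
\mathcal{F}_h(q_\lambda)=\lambda\,\mathcal{F}_h(q), \qquad \mathcal{F}_v(q_\lambda)=\lambda^{-1}\mathcal{F}_v(q).
\]
Concretely, $q_\lambda$ is $q$ pushed forward by the affine stretch $x+iy\mapsto \lambda^{-1}x+i\lambda y$ in natural coordinates, because horizontal transverse measure is $|dy|$ and vertical is $|dx|$. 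Let $A:Y\to Y_\lambda$ be this affine (Teichm\"uller) map. The Riemann surface $Y_\lambda$ carries $q_\lambda$, which is integrable with $\|q_\lambda\|=\|q\|$.

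\emph{Step 2: the second realization of $(\mu,\nu)$.} Consider the map $g:=A\circ h\circ f^{-1}:X\to Y_\lambda$. Using (1), i.e. $f^{-1}_*\mu=\lambda^{-1}\mu$ and $f^{-1}_*\nu=\lambda\nu$, we get
\[
g_*\mu=\lambda^{-1}A_*h_*\mu\sim\lambda^{-1}A_*\mathcal{F}_h(q)=\lambda^{-1}\cdot\lambda\,\mathcal{F}_h(q)=\mathcal{F}_h(q_\lambda)\cdot\lambda^{-1}\cdot\lambda^{\,}\ldots
\]
The bookkeeping must be arranged so the scalings cancel. More precisely, I would define $A$ to be whichever of the two stretches makes $g_*\mu\sim\mathcal{F}_h(q')$ and $g_*\nu\sim\mathcal{F}_v(q')$ for the resulting differential $q'$. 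So $(g,Y_\lambda,q')$ is another holomorphic realization of $(\mu,\nu)$, provided the lift of $g$ extends to the ideal boundary. That extension holds because it holds for $h$ and for $A$ (which is quasiconformal), and for $f$. For $f$, I take the representative of the mapping class of $f$ that is a homeomorphism of the hyperbolic surface $X$; any lift of a homeomorphism of a hyperbolic surface extends to the boundary circle by the standard quasi-isometry argument on the lift of a compact exhaustion, or one may simply assume it, as in the setup of Theorem \ref{thm: main}.

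\emph{Step 3: uniqueness gives the conjugate.} By the uniqueness in Theorem \ref{thm: main}, $g\circ h^{-1}=A\circ h\circ f^{-1}\circ h^{-1}=A\circ F^{-1}$ is homotopic, relative to the ideal boundary, to a biholomorphism $c:Y\to Y_\lambda$ with $c_*q=q'$. Hence $F\simeq c^{-1}\circ A$. Since $c$ is an isometry of natural coordinates, up to sign and translation, carrying $q$ to $q'$, and $A$ is affine stretching horizontal measure by $\lambda$ and vertical by $\lambda^{-1}$, the composition $c^{-1}\circ A:Y\to Y$ is affine in natural parameters of $q$ with the stated stretch factors.

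\emph{Main obstacle.} The main difficulty is the measure-direction bookkeeping in Step 2: one must check that "expanding the horizontal direction by $\lambda$" corresponds to multiplying $\mu$ by $\lambda$ with the correct convention for transverse measures. A second point is checking that homotopy relative to the ideal boundary in the uniqueness statement yields an ordinary homotopy on $Y$; this follows by equivariance of the homotopy on the universal covers.
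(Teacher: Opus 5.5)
Your argument is the paper's: both build a second holomorphic realization from the Teichm\"uller-stretched differential and conclude by uniqueness in Theorem~\ref{thm: main}. The paper compares $(A\circ h,Y_\lambda,q_\lambda)$ and $(h\circ f^{-1},Y,q)$ as realizations of $(\lambda\mu,\lambda^{-1}\nu)$, which is your comparison precomposed with $f$, and it cites injectivity of the heights map where you transport the foliations by $A$ directly. The bookkeeping you left open works out as follows: your Step~1 stretch $x+iy\mapsto\lambda^{-1}x+i\lambda y$ gives $A_*\mathcal{F}_h(q)=\lambda^{-1}\mathcal{F}_h(q_\lambda)$, so $g=A\circ h\circ f^{-1}$ would push $\mu$ to $\lambda^{-2}\mathcal{F}_h(q_\lambda)$ and Step~2 fails as written. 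You must instead use $A'\colon x+iy\mapsto\lambda x+i\lambda^{-1}y$, for which $g_*\mu\sim\mathcal{F}_h(q')$ and $g_*\nu\sim\mathcal{F}_v(q')$, and then $F\simeq c^{-1}\circ A'$ does expand horizontally by $\lambda$. Also drop the quasi-isometry justification for extending $\tilde f$ to $S^1$: lifts of homeomorphisms of infinite-type surfaces need not be quasi-isometries, and near funnels they need not extend homeomorphically. That extension is instead implicit in the meaning of $f_*\mu$ in hypothesis~(1), exactly as in the paper.
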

Informally, we start with a map $f$ on a topological surface $S$ that represents a big mapping class with global pseudo-Anosov behaviour (in the corollary, we had to have a hyperbolic structure to talk about \textit{geodesic} laminations and the filling property, but (1) and (3) are morally topological properties). The pair \((\mu,\nu)\) plays the role of
the stable and unstable measured laminations, while \(\lambda\) is the
stretch factor. Corollary \ref{cor:big-affine-realization} says that if we can find a Riemann surface structure on which $\mu$ and $\nu$ satisfy an integrability condition (i.e., condition (2) holds), then changing the marking provides a unique Teichm\" uller space $T_{qc}(Y)$ in which a Teichm\"uller geodesic through the basepoint is the translation axis of the obtained element $[F]\in \mathrm{MCG}(Y)$ (which has translation length $\log \lambda$).

The justification for corollary \ref{cor:big-affine-realization} is simple: let $t\mapsto (Y_t,q_t)$ be the Teichm{\"u}ller geodesic ray associated with $q$. Since the intersection numbers of $\lambda \mu$ and $\lambda^{-1} \nu$ agree with that of $\mathcal{F}_h(q_\lambda)$ and $\mathcal{F}_v(q_\lambda)$ respectively, by injectivity of the heights map (see \cite{Saric22}) and uniqueness in Theorem \ref{thm: main}, the holomorphic realization $(\lambda \mu, \lambda^{-1}\nu)$ on $X$ is $(h,Y_\lambda, q_\lambda)$. The claim for $F=h\circ f \circ h^{-1}$ follows.

\begin{remark}
   In Corollary \ref{cor:big-affine-realization}, the mapping
class \([f]\) has infinite order, and no positive power of
\([f]\) fixes the isotopy class of an essential simple closed curve. Indeed, the first claim follows from $\lambda>1$, and the second from invariance of geometric
intersection number and Definition \ref{def:filling} (we leave the simple argument for the reader). In particular, $[f]$ is neither periodic nor reducible in the sense of ordinary mapping class groups. 
\end{remark}

\subsection{Acknowledgments}
Both authors thank Kasra Rafi for numerous helpful discussions. The second named author thanks Camillo De Lellis for  insightful discussions.  

The first named author gratefully acknowledges that most of this research was completed while he was a visiting scholar at the Fields Institute for Research in Mathematical Sciences. The second named author was partially supported by National Science Foundation award DMS-2521870, Simons Collaboration Grant award 00012869, and 
 PSC-CUNY grant 67366-00 55. Most of the results in the paper were obtained while the second named author
was a member of the Institute for Advanced Study during the academic year 2025-2026. The second named author  
 is grateful to the Institute for Advanced Study for their hospitality and AMIAS
Member Fund for their support.

\section{Preliminaries}
\label{sec:prel}

\subsection{Quadratic differentials and foliations} 
Throughout the paper if $X$ is a Riemann surface, we denote the universal cover by $\Tilde{X}$. We write $\partial_\infty X$ and $\partial_\infty \Tilde{X}$ for ideal boundaries. When an identification between $\Tilde{X}$ and the unit disk $\mathbb{D}$ is fixed, we just write $S^1$ for $\partial_\infty \Tilde{X}$. Under an identification, $\partial_\infty X$ is the quotient by $\Gamma$ of the complement of the limit set of $\Gamma$.

From now on in this section, $X=\mathbb{D}/\Gamma$. We recall that the Fuchsian group $\Gamma$ is of the first kind if the limit set of $\Gamma$ is equal to $S^1$, and otherwise $\Gamma$ is of the second kind. In general, $X$ is the union of its convex core with some number of hyperbolic funnels and hyperbolic half-planes; if $X$ has at least one funnel or half-plane, then $\Gamma$ is of the second kind. 
The geodesic half-planes in $X$ appear when the limit set of $\Gamma$ on $S^1$ has a complementary interval component whose stabilizer is trivial (see \cite{BS}). Equivalently, a disjoint sequence of simple closed geodesics accumulates on an open geodesic in $X$ and this open geodesic is the boundary of the attached geodesic half-plane.

A holomorphic quadratic differential $q$ on $X$ is a holomorphic section of the square of the canonical (cotangent) bundle of $X$. Locally, $q$ is written $q=\varphi(z)dz^2$, where $\varphi$ is holomorphic. The quadratic differential $q$ is integrable (equivalently, finite-area, or $L^1$), if $$\int_X |q|<\infty,$$ where $|q|=|\varphi(z)||dz|^2$. 

The horizontal foliation $\mathcal{F}_h(q)$ of $q$ is the singular measured foliation on $X$ whose non-singular leaves are the integral curves of the line field on $X\backslash q^{-1}(0)$ on which $q$ returns a positive real number. The singular points are the zeros of $q$; a zero of order $k>0$ corresponds to a standard $(k+2)$-prong in $\mathcal{F}_h(q)$. The foliation comes with a density $|\textrm{Im}\sqrt{q}|$ that determines a transverse measure on the horizontal foliation $\mathcal{F}_h(q)$. Around a point $p$ where $q(p)\neq 0$, one can choose a natural parameter $$w(z)=u+iv =\int_{z_0}^z \sqrt{q},$$ in which $q$ is represented as $q=dw^2$. Any other such coordinate $w_1$ can be written $w_1=\pm w+ c,$ for some $c\in \C$. The horizontal coordinate lines $\{v=\textrm{ constant}\}$ realize leaves of the horizontal foliation, and the transverse measure of a nearby transverse curve is the vertical coordinate distance between the endpoints.

Arcs in the horizontal foliation are called horizontal arcs, and an in-extendable subset of the foliation parametrized by a continuous map $\R\to X$ is called a horizontal trajectory. A horizontal trajectory can either be a regular trajectory, which means it is a maximal horizontal arc, or it can be a generalized trajectory, which means it passes through zeros of $q$ (see also \cite[\S 3.1]{Saric22}).

The vertical foliation $\mathcal{F}_v(q)$ is the horizontal foliation of $-q$. Similar to above, its non-singular leaves are integral curves of the line field on $X\backslash q^{-1}(0)$ on which $q$ gives a negative number, the density is $|\textrm{Re}\sqrt{q}|$, and in a natural coordinate patch for $q$, its leaves are vertical lines.

We recall the notion of a partial measured foliation from \cite{Saric24}, which generalizes the notion of an ordinary singular measured foliation, and whose definition is motivated by the paper of Gardiner and Lakic \cite{GLPartial}.
\begin{defn}
    A partial measured foliation $\mathcal{F}$ on $X$ is a collection $\{(U_i,v_i)\}_{i\in I}$, where $U_i\subset X$ is a closed Jordan domain with piecewise $C^1$ boundary, and $v_i:U_i\to \R$ are $C^1$ maps with surjective derivative at each point in the interior of $U_i$ with the property that for any $i,j$ such that $U_i\cap U_j\neq \emptyset$, $v_i=\pm v_j + c$ for some $c\in \R$.
\end{defn}
By the implicit function theorem, the interior of every $U_i$ is foliated by $C^1$ arcs $v_i^{-1}(c)$, $c\in v_i(\textrm{int}(U_i))\subset \R$. A horizontal arc for $\mathcal{F}$ is a connected union of arcs $v_i^{-1}(c_i)$, $c_i\in \R$, for some finite or infinite number of choices of $i$ and $c_i$. A horizontal trajectory is a maximal horizontal arc. A horizontal trajectory that limits to a point in $X$ is called a singular trajectory. The densities $|dv_i|$ define a transverse measure on $X$, supported in $\cup_i U_i$. Of course, the foliation associated with a holomorphic quadratic differential gives a partial measured foliation such that the transverse measures coincide: one takes a covering of $X\backslash q^{-1}(0)$ by natural coordinate disks $U_i$, and the $v_i$'s are the natural coordinate imaginary part projections. 
\begin{defn}
    A partial measured foliation $\mathcal{F}$ on $X$ is proper if, except for countably many singular trajectories, the lift to the universal cover $\mathbb{D}$ of each horizontal trajectory accumulates to two distinct ideal endpoints on $S^1$.
\end{defn}
We always assume that $\{(U_i)\}_{i\in I}$ is locally finite. That is, every compact subset of $X$ intersects at most finitely many $U_i$'s. Given $(U_i,v_i)$, the Dirichlet integral of $v_i$ is $$\int_{U_i} |\nabla v_i|^2=\int_{U_i} \Big|\frac{\partial v_i}{\partial x}\Big|^2+\Big|\frac{\partial v_i}{\partial y}\Big|^2 dxdy.$$  Note that for any $U_j$ intersecting $U_i$, $|\nabla v_j|^2=|\nabla v_i|^2.$ Using a partition of unity on $X$, the Dirichlet integral $\mathcal{D}(\mathcal{F})$ is defined by patching together the Dirichlet integrals of the $v_i$'s.
\begin{defn}
  A partial measured foliation $\mathcal{F}$ on $X$ is integrable if $\mathcal{D}(\mathcal{F})<\infty$.
\end{defn}
When $\mathcal{F}$ arises from the horizontal foliation of a quadratic differential $q$, it is easily seen that the density $|\nabla v_i|^2$ is just $|q|$, and that $\mathcal{D}(\mathcal{F})$ is the $L^1$-norm $\int_X |q|$.

\subsection{Realizable laminations}\label{sec: realizable laminations}
Let $X=\mathbb{D}/\Gamma$ be a Riemann surface as above.  We recall that a geodesic current on $X$ is a $\Gamma$-invariant Radon measure on the set of unoriented unparametrized complete geodesics $\mathcal{G}(\mathbb{D})$ of $\Tilde{X}$. $\mathcal{G}(\mathbb{D})$ is parametrized and topologized by the quotient of $(S^1)^2\backslash \{(x,y)\in (S^1)^2: x=y\}$ by the swapping map $(x,y)\mapsto (y,x)$. In this paper, we view a lamination on $X$ as both (1) a closed set of geodesics equipped with a transverse measure and (2) as a geodesic current.

Let $\mathcal{F}$ be a proper partial measured foliation on $X$ and let $\tilde{\mathcal{F}}$ be the $\Gamma$-invariant lift to the universal cover. By definition, every non-singular horizontal trajectory has exactly two ideal endpoints on $S^1$; such a trajectory can then be pulled tight to a complete geodesic on $\mathbb{D}$ with the same ideal endpoints. We denote by $\mathcal{G}(\Tilde{\mathcal{F}})$ the set of geodesics obtained in this fashion. As explained in 
\cite{Saric24}, $\mathcal{G}(\tilde{\mathcal{F}})$ is a geodesic lamination.

In the case of a quadratic differential, we also have tightening for non-singular trajectories: by Marden and Strebel \cite{MardenStrebel1}, it follows that every generalized trajectory tightens to a unique geodesic. As well, if $g$ in the support of the lamination is not the lift of a simple closed geodesic, then there is a unique leaf tightening to $g$ \cite[Theorem 2(c)]{MardenStrebel1}. If $g$ is the lift of a simple closed geodesic, then we have the lift of a ring domain tightening to $g$.  The two boundary trajectories of this set are generalized trajectories and lifts of the boundary components of the downstairs ring domain.

By repeating the arguments in \cite{Saric24} (which is specialized to quadratic differentials, but goes through in general), the transverse measure for $\mathcal{F}$ induces a transverse measure on $\tilde{\mathcal{F}}$ which in turn induces a transverse measure on $\mathcal{G}(\Tilde{\mathcal{F}})$. We denote by $\mu_{\Tilde{\mathcal{F}}}$ the induced measured lamination in $\mathbb{D}$, which is invariant under the action of $\Gamma$. By $\Gamma$-invariance, $\mu_{\Tilde{\mathcal{F}}}$ descends to a measured lamination $\mu_{\mathcal{F}}$ on $X$ that is called the straightening of $\mathcal{F}$. 

Straightening to the same lamination can be seen as an equivalence relation on proper partial measured foliations. In \cite{Saric24,Saric25}, it is proved that, under the integrability hypothesis, every equivalence class is uniquely realized. This generalizes a classical theorem of Hubbard and Masur \cite{HMmain}.
\begin{defn}
A measured lamination $\mu$ is realizable by an integrable partial measured foliation if there exists an integrable proper partial measured foliation $\mathcal{F}$ such that $\mu=\mu_{\mathcal{F}}.$
\end{defn}
\begin{thm}[\cite{Saric24,Saric25}]\label{thm: HMS}
    Let $X$ be any Riemann surface. For any measured lamination $\mu$ realizable by an integrable partial measured foliation, there exists a unique integrable holomorphic quadratic differential $q$ on $X$ such that $\mu_{\mathcal{F}_h(q)}=\mu.$
\end{thm}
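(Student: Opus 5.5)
The plan is to prove uniqueness by a length–area (Grötzsch-type) argument adapted to infinite area, and existence by exhausting $X$ with compact subsurfaces and passing to a limit. The link between the two is the quantity $i(\mu,\gamma)$ for transverse arcs $\gamma$: the heights of $\mu$, which record the transverse measure of $\mu$ along arcs joining points of $S^1$ or closed curves. Both steps use that the heights of $\mathcal{F}_h(q)$ dominate, and on "most" arcs equal, the $|\textrm{Im}\sqrt{q}|$-length of vertical trajectories.

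\textbf{Uniqueness.} Suppose $q_1,q_2$ are integrable with $\mu_{\mathcal{F}_h(q_1)}=\mu_{\mathcal{F}_h(q_2)}=\mu$.
\begin{enumerate}
\item Lift to $\mathbb{D}$. For almost every vertical trajectory $\beta$ of $q_1$, the $|q_1|$-height of $\beta$ equals the $\mu$-mass of the geodesics separating its two ideal endpoints. This uses that vertical trajectories of an $L^1$ differential are quasi-transversal and, by Strebel's Hardy-space estimate \cite[Lemma 19.6]{Strebel}, have distinct endpoints.
\item Since $\mathcal{F}_h(q_2)$ straightens to the same $\mu$, the $|\textrm{Im}\sqrt{q_2}|$-length of $\beta$ is at least that $\mu$-mass, because every leaf of $\mathcal{F}_h(q_2)$ tightening to a separating geodesic must cross $\beta$.
\item Integrate over the vertical foliation of $q_1$, working first on a fundamental domain and then downstairs by Fubini in natural coordinates. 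Combined with Cauchy--Schwarz, this gives
$$\|q_1\|\le \int_X |q_1|^{1/2}|q_2|^{1/2}\le \|q_1\|^{1/2}\|q_2\|^{1/2}.$$
\item By symmetry $\|q_1\|=\|q_2\|$, and the equality case forces $|q_1|=|q_2|$ with parallel horizontal directions. Hence $q_1=q_2$.
\end{enumerate}
The subtle point here is justifying the Fubini step when $X$ has infinite area and $q_1$ has infinitely many zeros. I would handle it by an exhaustion $X_n\uparrow X$ and control the error by $\int_{X\setminus X_n}|q_i|\to 0$.

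\textbf{Existence.} Let $\mathcal{F}$ be an integrable proper partial measured foliation with $\mu_{\mathcal{F}}=\mu$.
\begin{enumerate}
\item Exhaust $X$ by compact bordered subsurfaces $X_n$. On the double of $X_n$, take the restriction of $\mathcal{F}$ together with its reflection.
\item By Hubbard--Masur, or by Strebel's minimal norm property on bordered surfaces, obtain $q_n$ whose horizontal foliation is equivalent to this foliation on the double.
\item The minimal norm property gives $\|q_n\|\le 2\mathcal{D}(\mathcal{F}|_{X_n})\le 2\mathcal{D}(\mathcal{F})$. The $q_n$ therefore form a normal family, and a subsequence converges locally uniformly to a holomorphic $q$ on $X$ with $\|q\|\le \mathcal{D}(\mathcal{F})$.
\item Show that $\mu_{\mathcal{F}_h(q)}=\mu$. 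Heights are lower semicontinuous under locally uniform convergence, and the reverse inequality comes from the Dirichlet bound on $\mathcal{F}$. Because of the injectivity of the heights map \cite{Saric22}, it suffices to compare heights on the countable family of arcs between points of $S^1$.
\end{enumerate}

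\textbf{Main obstacle.} I expect the key difficulty to be this last identification of the limit lamination. Mass of $\mu$ could escape to the ideal boundary, or concentrate near ends of $X$, in the limit $n\to\infty$. My first attempt would be to bound the height deficit on an arc by an extremal-length estimate in terms of $\mathcal{D}(\mathcal{F}|_{X\setminus X_n})$, which tends to $0$, and to use Strebel's positivity of $|q|$-distance between boundary points to keep limiting trajectories proper.
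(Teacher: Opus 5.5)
The paper does not prove this statement; it imports it from \cite{Saric24,Saric25}, and the uniqueness part is the injectivity of the heights map from \cite{Saric22}. Judged on its own merits, your existence half has a genuine gap, and it sits exactly where the cited works are needed.

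Doubling a compact bordered $X_n$ with free boundary keeps only two kinds of height data: heights of closed curves in $X_n$, and heights of arcs whose endpoints slide on $\partial X_n$. The masses $\tilde\mu(G(a,b))$ for $a,b\in S^1$ are never imposed at any finite stage, and for $X=\mathbb{D}$ these masses are all of $\mu$. Concretely, if $X=\mathbb{D}$ is exhausted by closed disks, every double is a sphere, so $q_n\equiv 0$ and your limit is $q=0$.

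Even on surfaces of the first kind, the identification of the limit is unsupported. Heights are an infimum of functionals that are continuous under locally uniform convergence. So what you get for free, by fixing a competitor closed curve in some $X_m$ and letting $n\to\infty$, is $h_q\geq h_{\mathcal{F}}$. That is upper semicontinuity, not lower. The inequality $h_q\leq h_{\mathcal{F}}$ is precisely where near-optimal curves and $|q_n|$-mass can escape through $\partial X_n$. The bound $\|q\|\leq\mathcal{D}(\mathcal{F})$ cannot supply it: on a closed surface, $(1+\epsilon)q_\mu$ has heights $\geq h_\mu$ and norm $\leq\mathcal{D}(\mathcal{F})$ whenever $\mathcal{D}(\mathcal{F})>(1+\epsilon)\|q_\mu\|$. (Reducing to arcs between boundary points relies on $\tilde\mu$ being determined by the masses $\tilde\mu(G(a,b))$, not on injectivity of $q\mapsto\mu_{\mathcal{F}_h(q)}$.) Your ``main obstacle'' paragraph names this problem but gives no mechanism that resolves it.

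Your uniqueness half is the Marden--Strebel minimal-norm argument, which is the right mechanism, but step 3 fails as written. If $q_1$ has recurrent vertical trajectories (as already happens for a minimal foliation on a closed surface), their lifts have infinite $|q_1|$-length. Steps 1--2 then only say $\infty\leq\infty$, and $X$ has no Fubini decomposition into whole vertical trajectories. This has nothing to do with hyperbolic area or infinitely many zeros, since the $|q_i|$-areas are finite. Exhaustion does not help either: cutting a trajectory at an interior point breaks step 2, because the $q_2$-horizontal leaf through that point in general tightens to a different geodesic than the $q_1$-leaf.

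What is missing is a segment estimate with an endpoint error. Take the leaf-space maps $\pi_{q_1},\pi_{q_2}:\mathbb{D}\to T_\mu$ of \S\ref{sec: maps from foliations}, and set $\delta(x)=d_\mu(\pi_{q_1}(\tilde x),\pi_{q_2}(\tilde x))$, which is finite and $\Gamma$-invariant. Then the $q_1$-vertical segment $\beta_L(x)$ of length $L$ satisfies $L\leq\int_{\beta_L(x)}|\textrm{Im}\sqrt{q_2}|+\delta(x)+\delta(\phi_L x)$. Integrate this against $|q_1|$, using that $|q_1|$ is invariant under the vertical flow $\phi_t$, and divide by $L$. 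Cap $\delta$ at $L$ and let $L\to\infty$; dominated convergence then removes the error term. Pieces of trajectories with finite length that exit to $\partial_\infty X$ are handled by your exact whole-trajectory comparison. Strebel's closing-up of long recurrent segments is an alternative route.

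A smaller point: the distinct endpoints of lifted trajectories come from \cite{MardenStrebel1}. Strebel's Lemma 19.6 concerns integrable differentials on $\mathbb{D}$, and the lift of $q_1$ is not integrable on $\mathbb{D}$.
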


It will be useful to have the relations between $\mathcal{F}$ and $\mu_{\mathcal{F}}=:\mu$ made explicit. The transverse distance between $x,y\in\mathbb{D}$ for the lifted foliation $\Tilde{\mathcal{F}}$ is the infimal transverse measure over all paths connecting $x$ and $y$. Let $x$ and $y$ be points in $\mathbb{D}$ lying on leaves $\ell_x$ and $\ell_y$, which tighten to geodesics $g_x$ and $g_y$ respectively. Then, viewing $\mu$ as a current, with lift $\Tilde{\mu}$ if $g_x$ and $g_y$ are not atomic, the transverse distance between $x$ and $y$ is equal to the $\Tilde{\mu}$-measure of the set of all geodesics separating $g_x$ and $g_y$. If one of the geodesics, say $g_x$, is atomic, we have to be a bit more careful. First note that the transverse measure of the set of all leaves $L_{g_x}$ homotopic to $g_x$ is $\Tilde{\mu}(\{g_x\})$. If a transverse path from $x$ to $y$ does not intersect $L_{g_x}$, then we have the same equality as above. If a transverse path passing through all of $L_{g_x}$, then the transverse distance is equal to the sum of $\Tilde{\mu}(\{g_x\})$ and the $\mu$-measure of the set of all geodesics separating $g_x$ and $g_y$. The analogous results hold if $g_y$ is also atomic.

These characterizations lead to equalities of intersection numbers. Viewing $\mu_{\mathcal{F}}$ as a set of geodesics with a transverse measure, for any compact arc $\gamma$ in $X$, the intersection number $i(\mu,\gamma)$ is defined to be the infimum of the transverse measures of arcs with the same endpoints as $\gamma$. If $\gamma$ is a closed loop, $i(\mu,\gamma)$ is the infimum of the transverse measures of homotopic loops. We recall that a {\it geodesic cross-cut} is a complete geodesic that leaves compact subsets of $X$ at both ends. For a simple cross-cut $\gamma$, $i(\mu,\gamma)$ is the infimum of transverse measures of cross-cuts with the same ideal endpoints and the infimum is achieved by the unique geodesic cross-cut homotopic to $\gamma$. For a general complete simple geodesic $\gamma$, $i(\mu,\gamma)$ is defined by lifting to the universal cover and taking the transverse measure of the corresponding cross-cut; if $\gamma$ was already a cross-cut, this is the same definition because its stabilizer in $\Gamma$ is trivial. All of these definitions are made in the same way for $\mathcal{F}$. By definition, if $\gamma$ is a compact arc, $$i(\mathcal{F},\gamma)=i(\mu,\gamma).$$ It follows from \cite[Lemma 3.3 and Lemma 3.4]{Saric22} and Theorem \ref{thm: HMS} that if $\gamma$ is a simple closed curve or a complete simple geodesic, then $$i(\mathcal{F},\gamma)=i(\mu,\gamma).$$
The intersection number can be defined in even a greater generality. Assume we are given two proper measured foliations $\mathcal{F}$ and $\mathcal{G}$ of $X$. The intersection of one leaf of $\mathcal{F}$ with the measured foliation $\mathcal{G}$ is the infimum of all intersections of curves that share the endpoints with the leaf and the foliation $\mathcal{G}$. The quantity $i(\mathcal{F},\mathcal{G})$ is obtained by integrating these intersection numbers over the set of leaves of $\mathcal{F}$ with respect to the transverse measure of $\mathcal{F}$. Equivalently, we can replace $\mathcal{F}$ and $\mathcal{G}$ with the homotopic measured laminations $\mu$ and $\nu$, and then we define $i(\mathcal{F},\mathcal{G})$ to be equal to $i(\mu ,\nu )$, which is defined in the standard fashion.

\subsubsection{Atoms}
We will need a description of the atoms of a realizable lamination. Let $q$ be an integrable holomorphic quadratic differential on $X$ with horizontal foliation $\mathcal{F}_h(q)$ and realizing the lamination $\mu$. For $X=\mathbb{D}$, i.e, for $\Gamma$ trivial, it is proved in \cite[Proposition 4.4]{HakSar} that the transverse measure of $\mathcal{F}_h(q)$ on the set of geodesics $\{ a\}\times [c,d]\subset G(\mathbb{D})$ is zero for all $a,c,d\in S^1$ in the counterclockwise order. In particular, the measured lamination corresponding to $\mathcal{F}_h(q)$ has no atoms. On an arbitrary surface, the measured (geodesic) lamination $\mu$ obtained by straightening $\mathcal{F}_h(q)$ can have atoms because the foliation $\mathcal{F}_h(q )$ may have cylinders foliated by closed leaves (see \cite[Theorem 13.1]{Strebel}).
We show that if we remove the cylinders, then \cite[Proposition 4.4]{HakSar} goes through in general.

\begin{lem}\label{lem:asymptotic}
Let $\mathcal{F}$ be a proper integrable partial measured foliation on a Riemann surface $X=\mathbb{D}/\Gamma$, tightening to geodesic lamination $\mu$, and let $\tilde{\mu}$ be the lift of $\mu$ to $\mathbb{D}$. For $p\in S^1$, let $E_p$ be the set of geodesics with one endpoint at $p$. Then, either:
\begin{enumerate}
    \item $\tilde{\mu}(E_p)=0$, or 
    \item $p$ is the endpoint of a geodesic $g$ in the support of $\tilde{\mu}$ that descends to a simple closed curve on $X$. Moreover, in this case, $\tilde{\mu}(E_p)=\tilde{\mu}(\{g\})>0$.
\end{enumerate}
\end{lem}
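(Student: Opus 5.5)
We will prove the dichotomy by replacing $\mathcal{F}$ with the horizontal foliation of the integrable holomorphic quadratic differential $q$ given by Theorem \ref{thm: HMS}, so that $\mu_{\mathcal{F}_h(q)}=\mu$. Then we reduce to the disk case \cite[Proposition 4.4]{HakSar} by lifting $q$ to $\mathbb{D}$ and controlling what is lost in the lift. Since $\tilde\mu$ is the same whether computed from $\mathcal{F}$ or $\mathcal{F}_h(q)$, it suffices to work with $\tilde q$, the lift of $q$ to $\mathbb{D}$. Note that $\tilde q$ is not integrable on $\mathbb{D}$ unless $\Gamma$ is trivial, so the disk result cannot be quoted directly. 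Instead we will rerun its mechanism: a length–area estimate applied on a fundamental domain, or more precisely on the portion of $X$ swept by leaves whose lifts end at $p$.

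\emph{Step 1 (atomic part).} By the structure theorem for trajectories of integrable differentials (\cite[Theorem 13.1]{Strebel} together with Marden--Strebel \cite{MardenStrebel1}), a leaf tightening to a geodesic $g$ is unique unless $g$ is the lift of a simple closed geodesic. In that case the leaves tightening to $g$ are exactly the lift of a ring domain $R$ of closed trajectories. Hence atoms of $\tilde\mu$ occur only at lifts of core geodesics of cylinders, with mass equal to the height of the cylinder.

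\emph{Step 2 (remove cylinders).} Let $E_p'$ be the set of non-atomic geodesics in $E_p$. The plan is to show $\tilde\mu(E_p')=0$. Then $\tilde\mu(E_p)$ equals the total atomic mass at $p$. Since leaves of a lamination are disjoint, at most two atomic geodesics (the two sides in the circular order) could share endpoint $p$. We must then rule out two distinct atoms at $p$. If two lifts of closed geodesics share an endpoint $p$, then $p$ is fixed by two hyperbolic elements of a discrete group. Those elements would then share both fixed points, so the geodesics coincide. This yields alternative (2) with $\tilde\mu(E_p)=\tilde\mu(\{g\})$.

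\emph{Step 3 (main obstacle: non-atomic mass at $p$ vanishes).} Suppose $\tilde\mu(E_p')=m>0$. Leaves of $\tilde{\mathcal{F}}_h(\tilde q)$ outside the lifted cylinders that tighten to geodesics in $E_p'$ fill out a horizontal strip $S\subset\mathbb{D}$ of $\tilde q$-height $m$. Every leaf of $S$ converges to $p$ at one end. We then split into cases according to the stabilizer of $p$.

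If $p$ is not fixed by any nontrivial element of $\Gamma$, or is fixed only by a parabolic or loxodromic element whose axis is not in the support of $\tilde\mu$, we aim to show the projection $S\to X$ is injective on a sub-strip of definite height. Distinct translates $\gamma S$ end at $\gamma p\neq p$, so they have disjoint leaf sets. Then Strebel's length–area argument applies downstairs: take vertical segments crossing $S$ far out toward $p$. Their $|q|$-lengths tend to $0$ (integrability plus Cauchy--Schwarz over a family of such transversals of height $m$). This forces the horizontal leaves to collapse in $|q|$-distance, contradicting the fact that the transverse measure between the two extreme leaves is $m>0$. This is \cite[Lemma 19.6]{Strebel} transplanted to the quotient.

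If $p$ is a fixed point of a hyperbolic $\gamma$ whose axis $g$ lies in the support but carries no atom, then $\gamma$-invariance of the mass near $p$ together with local finiteness of $\tilde\mu$ on compact sets of geodesics gives infinite mass in a compact family. That is a contradiction. This case analysis, and making the length–area argument rigorous for strips that may accumulate on parabolic cusps, is where we expect the main work to lie.
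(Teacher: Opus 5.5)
Your architecture is sound: pass to $\mathcal{F}_h(q)$, locate atoms via Marden--Strebel, show the non-atomic mass $\tilde\mu(E_p')$ vanishes, and handle hyperbolic fixed points by the iteration/Radon argument. For the main case, however, your route differs genuinely from the paper's.

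For $p$ not fixed by $\Gamma$, the paper does the following:
\begin{itemize}
\item It builds the Jordan domain $\Omega$ bounded by a short vertical arc and two leaves ending at $p$.
\item It shows, by the Teichm\"uller lemma and Marden--Strebel, that $\Omega$ contains no zeros.
\item It uses Strebel's trichotomy (closed, cross-cut, spiral) to exclude each leaf type in turn: closed leaves because $p$ is not fixed, cross-cuts by a prime-end count, and spirals by a first-return rectangle argument that would produce a closed leaf or a zero.
\end{itemize}
Punctures are then treated via the local form of $q$ at a removable singularity or simple pole, and hyperbolic points by the Radon argument.

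Your key observation is different. Leaves ending at $p$ are pairwise $\Gamma$-inequivalent, except for the countably many whose other endpoint lies in $\Gamma p$, and these carry no mass. So their union $S$, viewed as a measurable union of leaves rather than a genuine strip, embeds a.e.\ in $X$, giving $\int_S|\tilde q|\le\|q\|_{L^1(X)}$. One length--area estimate then finishes. This bypasses the zero-free analysis, prime ends and spirals entirely. With a fundamental arc for $\mathrm{Stab}(p)$, it even treats parabolic and hyperbolic points uniformly. The paper's route yields extra geometric structure (a zero-free region foliated by rays into $p$) that the lemma itself does not need.

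As written, though, three steps fail and must be repaired.

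\emph{The transversals.} They cannot be vertical segments: a vertical arc crossing leaves of total transverse measure $m$ has $|q|$-length at least $m$, so it cannot shrink. Use instead, as in Strebel's Lemma 19.6, the arcs $C_r=\{|z-p|=r\}\cap\mathbb{D}$, with $L_S(r)=\int_{C_r\cap S}|\tilde q|^{1/2}$.
\begin{itemize}
\item By Cauchy--Schwarz, $\int_0^\delta L_S(r)^2\,dr/r\le \pi\int_S|\tilde q|<\infty$, so $L_S(r_n)\to 0$ along some sequence $r_n\to 0$.
\item For $r<\delta$, every leaf of $S$ whose other endpoint is at Euclidean distance $>\delta$ from $p$ crosses $C_r$ inside $S$.
\item Hence $L_S(r)$ is at least the transverse measure of those leaves, which is positive for small $\delta$. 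This is the contradiction.
\end{itemize}

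\emph{Injectivity.} Your justification ``$\gamma S$ ends at $\gamma p\neq p$'' is false when $p$ is a parabolic or hyperbolic fixed point: the stabilizer maps $S$ onto itself. Restrict to leaves whose other endpoint lies in a compact fundamental arc for $\mathrm{Stab}(p)$ acting on $S^1$ minus its fixed points. This subfamily has positive measure if $\tilde\mu(E_p')>0$, embeds a.e.\ in $X$, and has its other endpoints uniformly away from $p$, which is exactly what the length--area step needs.

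\emph{A missing case.} Your Step 3 omits a hyperbolic fixed point whose axis is an atom. That is precisely alternative (2), and there you still need $\tilde\mu(E_p')=0$. The Radon argument applies verbatim, since it only uses a compact $K\subset E_p\setminus\{g\}$, so apply it to every hyperbolic fixed point, as the paper does.

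Minor: disjoint geodesics can share an endpoint in arbitrary number, so ``at most two atoms at $p$'' is wrong. Your discreteness argument makes that claim unnecessary anyway.
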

\begin{proof}

We replace the partial measured foliation $\mathcal{F}$ with the homotopic horizontal foliation $\mathcal{F}_h(q)$ of a unique finite-area holomorphic quadratic differential $q$ on $X$ (see \cite[Theorem 1.1]{Saric25}). Then $\mathcal{F}_h(q)$ is homotopic to $\mu$ and it suffices to prove the lemma for $\mathcal{F}_h(q)$. Let $\widetilde{\mathcal{F}_h(q)}$ be the lift of $\mathcal{F}_h(q)$ to $\mathbb{D}$, equivalently the horizontal foliation of the lift $\tilde{q}$ of the quadratic differential ${q}$.

We first assume that $p$ is not a fixed point of an element in $\Gamma\setminus\{ id\}$, and we prove $\tilde{\mu}(E_p)=0$. Let $\beta$ be a short closed vertical arc of $\tilde{q}$ (containing no zeros of $\Tilde{q}$) such that the two horizontal leaves $\ell_p^1$ and $\ell_p^2$ of $\widetilde{\mathcal{F}_h(q)}$ through $\partial \beta$ have a common ideal endpoint $p\in S^1$. (If no such curve exists, then $\Tilde{\mu}(E_p)=0$ and there's nothing to prove.) The union of $\beta$, the point $p$, and the two rays of $\ell_p^1$ and $\ell_p^2$ starting at $\beta$ and ending at $p$ forms the boundary of a Jordan domain $\Omega$. Any horizontal ray of $\tilde{q}$ that starts at the interior of $\beta$ and enters $\Omega$ accumulates to $p\in S^1$ since it cannot intersect the finite boundary of $\Omega$ by the Teichm\"uller lemma \cite[\S 14.1]{Strebel}. By slightly perturbing the original $\ell_p^1$ and $\ell_p^2$, we can assume that they form regular trajectories, i.e., they never pass through zeros of $\Tilde{q}$.

If $z\in\Omega$ is a zero of $\tilde{q}$ then there are at least three horizontal rays of $\tilde{q}$ issued from $z$. At most one of the rays can intersect $\beta$. Indeed, if two rays from $z$ intersect $\beta$ we have a contradiction by the Teichm\"uller lemma. Therefore, at least two rays end at $S^1$ and their limit points on $S^1$ are distinct by \cite[Theorem 1]{MardenStrebel1}. This is a contradiction since $\Omega$ has only one point of $S^1$ on its boundary. Therefore, the quadratic differential $\tilde{q}$ has no zeros in the closure of $\Omega$, and $\Omega$ is foliated by horizontal rays starting at the interior points of $\beta$ and limiting to $p\in S^1$.

By \cite[Theorem 13.1]{Strebel}, almost every leaf of $\mathcal{F}_h(q)$ is either closed, a spiral (i.e., the limit set of any ray along the trajectory contains more than one point), or a cross-cut. The leaves through $\beta$ are not lifts of closed leaves of $\mathcal{F}_h(q)$ since their unique limit point $p$ is not fixed by a hyperbolic element of $\Gamma$. If a set of trajectories through $\beta$ of positive area are cross-cuts, then almost every trajectory of this set has a finite length. We map $\Omega$ into the natural parameter of $\tilde{q}$ and note that the map is conformal since $\tilde{q}$ has no zeros. Each trajectory of finite length corresponds to two prime ends for the image of $\Omega$, one prime end corresponding to a point of $\beta$ and the other prime end corresponding to the point $p$. The prime ends corresponding to two different horizontal trajectories are different prime ends of the image of $\Omega$. We obtain an uncountable set of prime ends corresponding to a single point $p$ on the boundary of $\Omega$. This is a contradiction since the Carath\' eodory theorem establishes a homeomorphism between the spaces of prime ends.

Thus, almost all trajectories of $\tilde{q}$ through $\beta$ are lifts of spiral trajectories on $X$.
By decreasing $\beta$ if necessary, we can assume that the (universal covering) projection $\pi :\beta\to \pi (\beta )$ is a homeomorphism. We can further assume that the trajectory through one endpoint $\pi (z_1)$ of $\pi (\beta )$ is a spiral. Let $\pi (r_1)$ be the ray from $\pi (z_1)$ such that its lift $r_1$ from $z_1\in\partial\beta$ is part of the boundary of $\Omega$ (that is, after decreasing $\beta$ and up to relabeling, $r_1$ is one of the trajectories through $\ell_p^1$ and $\ell_p^2$). The ray $\pi (r_1)$ intersects $\pi (\beta )$ in countably many points. Consider the first point $\pi (z_1')$ of intersection $\pi (r_1)\cap\pi (\beta )$ after $\pi (z_1)$. We decrease $\beta$ even further such that $\pi (\beta )$ has boundary points $\pi (z_1)$ and $\pi (z_1')$. There are two possibilities for the ray $\pi (r_1)$: either the germ of $\pi(r_1)$ approaching $z_1'$ lies on the same side of $\pi(\beta)$ as the germ of $\pi(r_1)$ leaving $z_1$, or this germ lies on the opposite side.

\begin{figure}[htb]
    \centering
    \includegraphics[width=12 cm]{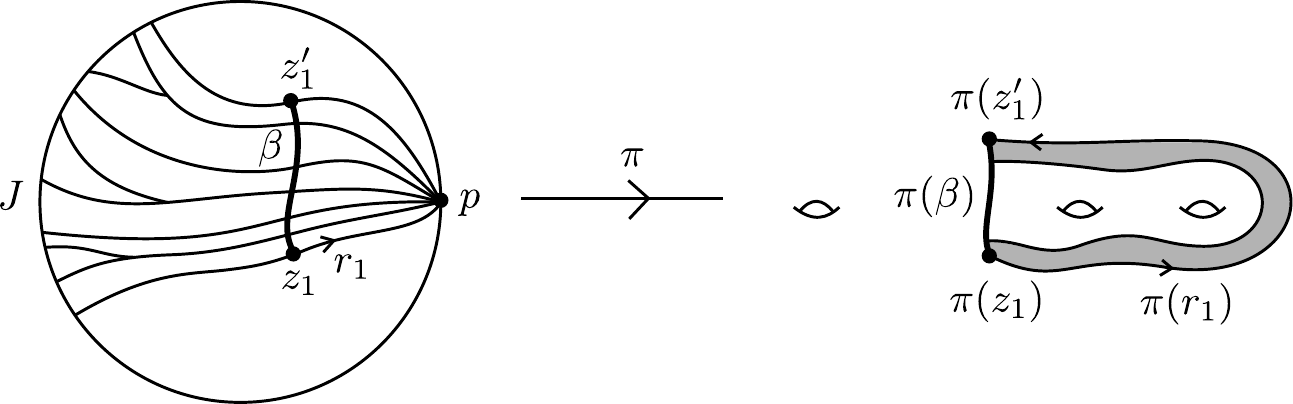}
    \caption{No unexpected atoms in the lift.}
    \label{fig:E_p}
\end{figure}

Assume that we are in the case where the germ of $\pi (r_1)$ approaching $\pi (\beta )$ at the point $\pi (z_1')$ is on the same side as the germ of $\pi (r_1)$ leaving $\pi(z_1)$ (see Figure \ref{fig:E_p}). Since $\Omega$ contains no zeros of $\tilde{q}$, there is a rectangle in the natural coordinate of $q$ whose one horizontal side lies on the ray $\pi (r_1)$ and is to the left of $\pi (r_1)$, and the two vertical sides lie on $\pi (\beta )$. The vertical sides on $\pi (\beta )$ of the rectangle are of the same length and they lie on the same side of $\pi (\beta )$. Therefore the vertical sides are disjoint. The horizontal side $h$ of the rectangle that does not lie on $\pi (r_1)$ has the 
same properties as the side on $\pi (r_1)$. Thus, there exists a rectangle on the left of $h$ with two vertical sides on $\pi (\beta )$. The new rectangle extends the original rectangles and this extension is possible since $\Omega$ has no zeros of $\tilde{q}$. The union of all such extensions is a half-open rectangle with one horizontal side on $\pi (r_1)$ and two vertical sides equal to two half-open arcs of $\pi (\beta )$ with endpoints $z_1$ and $z_1'$, and the open end at the midpoint of $\pi (\beta )$. The horizontal sides of these extension of the original rectangle are converging to a horizontal arc of $q$. Since the endpoints of the horizontal sides lie on $\pi (\beta )$ and they have equal non-zero $|q|$-length, it follows that the limiting horizontal arc is parametrized by that length with start and endpoint at the midpoint of $\pi (\beta )$. By our assumption, no horizontal trajectory through $\pi (\beta )$ is closed. This forces the horizontal trajectory to pass through a zero of $q$. However, this trajectory is the image under the covering map $\pi$ of a horizontal ray in $\Omega$ and $\Omega$ contains no zeros of $\tilde{q}$. This is a contradiction since the zeros of $q$ are precisely the image of zeros of $\tilde{q}$ under the covering map $\pi$.

Assume now that $\pi (r_1)$ meets $\pi (\beta )$ at $z_1'$ from the opposite side compared to where it started at $z_1$. The next intersection $z_1''$ of $\pi (r_1)$ with $\pi (\beta )$ intersects $\pi (\beta )$ from the same side as the intersection at $z_1$ or at $z_1'$. We decrease $\pi (\beta )$ so that it's endpoints at the two intersections are on the same side and note that the horizontal arcs in the given direction are covered under the covering map $\pi$ with the horizontal rays in $\Omega$. Then the same argument as in the previous case gives a contradiction. We conclude that $E_p$ cannot consist of lifts of spirals, which completes the proof that $\Tilde{\mu}(E_p)=0$ in the case that $p$ is not a fixed point of an element of $\Gamma\setminus\{ id\}$. 

Now assume that $p$ is the fixed point of a parabolic element in $\Gamma\setminus\{ id\}$. Since $q$ is a finite-area holomorphic quadratic differential, it extends to the corresponding puncture in $X$ either holomorphically or with a pole of order $1$. If $q$ is holomorphic, then it has a zero of order $k\geq 0$, and there are $k+2$ horizontal and $k+2$ vertical trajectories ending at the puncture. If $q$ has a pole of order $1$, then there is a single horizontal and a single vertical trajectory ending at the puncture. Since the straightening of trajectories fixes the puncture and any simple geodesic entering a neighborhood of the puncture must limit to the puncture (thus it is isolated), we conclude that $\mu$ cannot have geodesics ending at any puncture of $X$ (because the transverse measure of the straightening of a single infinite leaf is zero). Therefore, there are no geodesics upstairs limiting to $p$, and we deduce $\Tilde{\mu}(E_p)=0.$

Finally, assume that $p$ is the fixed point of a hyperbolic element $\gamma \in \Gamma\setminus\{ id\}$. After possibly exchanging $\gamma$ with its inverse, we can assume that $p$ is the attracting fixed point. Let $g$ be the axis of $\gamma$. We prove that $\tilde{\mu }(E_p\backslash\{g\})=0$.  Toward this, suppose for contradictions sake that $\Tilde{\mu}(E_p\backslash \{g\})>0$. Pick a compact subset $K\subset E_p\backslash \{g\}$ with positive $\Tilde{\mu}$-measure. For $n$ sufficiently large, $\gamma^{-n} K$ is disjoint from $K$, and moreover for all unequal $j,k>0$, $\gamma^{-jn}K\cap \gamma^{-kn} K = \emptyset.$ Since the iterates $\gamma^{-n} K$ converge to $\{g\}$ in the Hausdorff sense, the disjoint union $$C=(\cup_{j\geq 0} \gamma^{-jn}K)\cup \{g\}$$ is compact in the space of geodesics $\mathcal{G}(\mathbb{D})$. Since every element in the above union is disjoint and $\Tilde{\mu}$ is $\Gamma$-invariant, $C$ has infinite measure, which contradicts the Radon property of $\Tilde{\mu}$. Therefore, we indeed have $\tilde{\mu }(E_p\backslash\{g\})=0$.

Since $\tilde{\mu }(E_p\backslash\{g\})=0$, either (1) $\Tilde{\mu}(E_p)=0$ if $g$ is not in the support of $\Tilde{\mu}$, or (2)  $\Tilde{\mu}(E_p)=\Tilde{\mu}(\{g\})>0$ if it is.
\end{proof}

\subsection{Energy and harmonic maps}\label{sec: intro harmonic maps}
The content of this subsection will not come into play until \S \ref{sec: trees}. Throughout, let $X$ be a Riemann surface equipped with a conformal hyperbolic metric, which we use to measure distances.

A segment in a metric space is the isometric image of a real interval. A metric space is called geodesic if any two points can be connected by a segment. Korevaar-Schoen developed a theory of $W_{\textrm{loc}}^{1,2}$ maps from a Riemannian manifold to complete and geodesic metric spaces \cite{KorevaarSchoen}. We don't need the precise definitions in this paper, and we content ourselves with mentioning that locally Lipschitz maps are $W_{\textrm{loc}}^{1,2}$

For any complete and non-positively curved geodesic metric space $(M,d)$ and $W_{\textrm{loc}}^{1,2}$ map $h:X\to (M,d)$, by \cite[Theorem 2.3.2]{KorevaarSchoen}, we can associate a locally $L^1$ measurable metric $g(h)$, defined on pairs of Lipschitz vector fields. If $h$ is a $C^1$ map to a smooth manifold $M$ and the distance $d$ is induced by a Riemannian metric $\sigma$, then $g(h)$ is represented by the pullback metric $h^*\sigma$. The energy density form is the locally $L^1$ form, defined with respect to a holomorphic coordinate $z=x+iy$ on $X$ by
\begin{equation}\label{en}
    e(h)=\frac{1}{2}(\textrm{trace}( g(h)))dx dy,
\end{equation}
where the trace is just the ordinary trace on symmetric $2$-tensors on $\R^2$. The total energy is $$E(h) = \int_X e(h),$$ which we allow to be infinite.

\begin{defn}\label{harmdef}
$h$ is harmonic if for all relatively compact subsets $U\subset X$, $h|_U$ is a critical point of $f\mapsto \mathcal{E}(U,f)$ relative to its boundary values.
\end{defn}
See \cite{KorevaarSchoen} for the precise notion of boundary values. In the coordinate $z=x+iy$, let $g_{xx}(h), g_{yy}(h)$, and $g_{xy}(h)$ be the components of $g(h)$. 
\begin{defn}
    The Hopf differential of $h$ is the measurable quadratic differential given by
\begin{equation}\label{mhopf}
q(h)(z)=\frac{1}{4}(g_{xx}(h)(z)-g_{yy}(h)(z)-2ig_{xy}(h)(z))dz^2.
\end{equation}
\end{defn}
When $h$ is harmonic, the Hopf differential is represented by a holomorphic quadratic differential. In the Riemannian setting, $h$ is weakly conformal if and only if $q(h)=0.$ Since a harmonic immersion to a Riemannian manifold is conformal if and only if it is minimal, we refer to a harmonic map $h$ to a metric space with $q(h)=0$ as a minimal map.

Directly from the definitions (\ref{en}) and (\ref{mhopf}), the energy density and Hopf differential of a map into a product of spaces is the sum of the energies and Hopf differentials of the component maps. Consequently, a map into a product of spaces is harmonic if and only if every component is harmonic.

Finally, we will often be concerned with maps from the universal cover $\Tilde{X}$ to $(M,d)$ that are equivariant for the deck group action of $\pi_1(X)$ on $\Tilde{X}$ and some action by isometries $\rho: \pi_1(X)\to \textrm{Isom}(M,d)$. In this case, it is easily verified that $g(h)$, and consequently $e(h)$ and $q(h)$, descend to objects on $X$. The energy and area are defined by integration over $X$ instead of $\Tilde{X}$.

\subsubsection{Maps to $\R$-trees}
In \S \ref{sec: trees} and \S \ref{sec: plateau}, the target metric space will always be an $\R$-tree or a product of $\R$-trees. In the context of  (well-behaved) maps to $\R$-trees, $e(h)$ and $q(h)$ can be defined without going through the whole theory of \cite{KorevaarSchoen}. We recall one of the many equivalent definitions of an $\R$-tree.

\begin{defn}
    An $\R$-tree is a geodesic metric space with the property that if two segments intersect in a single point, which is an endpoint of both, then their union is a segment.
\end{defn}
The completion of an $\R$-tree is also an $\R$-tree (see \cite[Theorem II.1.9]{MorganShalenDegenerationsII}), so we always implicitly complete our trees in order to apply the Korevaar-Schoen theory.

Let $\pi:\Tilde{X}\to (T,d)$ be a map to an $\R$-tree. Let $B$ denote the subset of points $p\in\Tilde{X}$ such that every neighbourhood of $p$ maps into multiple segments of $T$. Assume that $B$ has measure $0$ (this is the case when $\pi$ is harmonic). Then, around every point in $\Tilde{X}\backslash B$, we can represent $\pi$ by a map into a real interval. The map $\pi$ is $W_{\textrm{loc}}^{1,2}$ if and only if these local representations are $W_{\textrm{loc}}^{1,2}$. Moreover, $g(\pi)$ is obtained by taking the pullback metric $g(f)$ of any local representation $f$ of $\pi$ as a map to an interval; since any two identifications of a segment with an interval differ at most by a reflection and translation, this definition of $g(\pi)$ is independent of choices.
\begin{remark}
    When $\pi$ is harmonic, the set $B$ is just the zero set of the Hopf differential.
\end{remark}
For a map to a product of $n$ $\R$-trees with the same local property as above, almost everywhere we have a local representation as a map to $\R^n$. In the same fashion, one can define the measurable metric by pulling back the Euclidean metric through local representatives.

\section{The filling condition and its necessity}\label{sec: filling, necessary}

In this section, we state our filling condition and prove that straightening the horizontal and vertical measured foliations of an $L^1$ holomorphic quadratic differential on a conformally hyperbolic Riemann surface $X=\mathbb{D}/\Gamma$ gives a filling pair. We then prove some general properties of filling pairs (without assuming holomorphicity) in \S \ref{sec: properties}, which we use in \S \ref{sec: filling, sufficient}.
\begin{defn}
\label{def:filling}
Let $X=\mathbb{D}/\Gamma$ be a Riemann surface. Two measured laminations $\mu$ and $\nu$ on $X$ are {\it filling} if for every complete simple geodesic $\eta$, $$
i(\mu ,\eta )+i(\nu ,\eta )>0.
$$
\end{defn}

\subsection{The unit disk}
To make the ideas easier to follow, we first do the case $X=\mathbb{D}$ separately. For general surfaces, at some points we will re-use and modify the argument for the disk. 
\begin{thm}
\label{thm:necessary_realization}
Let $q$ be a non-trivial finite-area holomorphic quadratic differential on the unit disk $\mathbb{D}$ and let $\mu$ and $\nu$ be the measured laminations realized by the horizontal and vertical foliations respectively. Then $\mu$ and $\nu$ are filling.
\end{thm}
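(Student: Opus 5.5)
We argue by contradiction. Suppose there is a complete geodesic $\eta$ in $\mathbb{D}$ with endpoints $a\neq b$ on $S^1$ and $i(\mu,\eta)+i(\nu,\eta)=0$. The goal is to show that the $|q|$-distance between $a$ and $b$ is zero, contradicting Strebel's result \cite[Lemma 19.6]{Strebel} that distinct points of $S^1$ are at positive $|q|$-distance for $q\in L^1$.

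\textbf{Step 1: restate the hypothesis in terms of leaves.} Since $\mu$ and $\nu$ have no atoms on $\mathbb{D}$ (Lemma \ref{lem:asymptotic}), the quantity $i(\mu,\eta)$ is the $\tilde\mu$-measure of the set of geodesics crossing $\eta$. Because $\mathcal{F}_h(q)$ is homotopic to $\mu$, this equals the infimal transverse $\mathcal{F}_h(q)$-measure of cross-cuts from $a$ to $b$. The same holds for $\nu$ and $\mathcal{F}_v(q)$. So for every $\epsilon>0$ there are cross-cuts $\sigma_h$ and $\sigma_v$ from $a$ to $b$ with
\[
\int_{\sigma_h}|\mathrm{Im}\sqrt q|<\epsilon, \qquad \int_{\sigma_v}|\mathrm{Re}\sqrt q|<\epsilon.
\]
However, a single curve controlling both quantities at once is not directly available. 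Note that $|q|$-length is $\int|\sqrt q|\le\int(|\mathrm{Re}\sqrt q|+|\mathrm{Im}\sqrt q|)$.

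\textbf{Step 2: build one curve with small $|q|$-length.} Geodesics of $\mu$ crossing $\eta$ have $\tilde\mu$-measure zero. Hence the geodesics of $\mu$ with one endpoint in each of the two arcs of $S^1\setminus\{a,b\}$ carry zero measure, and so do those of $\nu$. Near $a$ we choose a short arc $J_a$ of the horocycle or circle around $a$. The aim is to find horizontal leaves $\ell_a,\ell_b$ and vertical leaves $m_a,m_b$ that accumulate at or near $a$ and $b$ and are connected to each other.

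Concretely, pick points $x,y$ close to $a,b$ respectively. Consider the horizontal leaf through $x$: after tightening, it does not separate $a$ from $b$ in a significant way, so its two ends land in a small arc near $a$. Running along horizontal leaves costs nothing transversely for $\mathcal{F}_h$ but costs $|q|$-length.

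The cleaner route is to build a staircase path alternating horizontal and vertical segments from near $a$ to near $b$. Horizontal segments contribute only to $\int|\mathrm{Re}\sqrt q|$, which is bounded by the $\nu$-measure of the vertical leaves they cross. Vertical segments contribute only to $\int|\mathrm{Im}\sqrt q|$, bounded by the $\mu$-measure crossed. One then argues that the total crossed measure of both is at most roughly the $\tilde\mu$- and $\tilde\nu$-measure of geodesics separating small neighborhoods of $a$ and $b$. By the vanishing hypothesis together with the absence of atoms at the endpoints (the $E_p$ statement of Lemma \ref{lem:asymptotic}), this tends to $0$ as the neighborhoods shrink.

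\textbf{Step 3: the main obstacle.} The difficulty is making the staircase rigorous. Each segment must be quasi-transverse, meaning it does not cross the same leaf twice. This lets its cost be computed from the laminations via the transverse-distance characterization in \S\ref{sec: realizable laminations}. The path must also actually reach $a$ and $b$. Zeros of $q$ and infinitely many turns are the dangers.

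I would first try to use the fact that the horizontal and vertical foliations of $\tilde q$ on the disk have no closed leaves and form a tree-like structure, by the Teichmüller lemma. This gives that the transverse distance between two points equals the measure of the leaves separating them. With that, I would assemble the path from finitely many maximal horizontal and vertical arcs. The endpoint limits $a$ and $b$ would be controlled using $\tilde\mu(E_a)=\tilde\nu(E_a)=0$, so the pieces near the ends have arbitrarily small cost. Letting $\epsilon\to0$ gives $|q|$-distance $0$ between $a$ and $b$, which contradicts Strebel's lemma.
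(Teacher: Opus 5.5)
Your overall strategy is the paper's: Proposition \ref{prop:inter-positive} proves $d_q(\xi_1,\xi_2)\le i(\eta,\mathcal{F}_v(q))+i(\eta,\mathcal{F}_h(q))$ and combines it with Strebel's lemma. You also correctly identify the crux: the two intersection numbers are a priori realized by different curves, so one needs a \emph{single} curve that controls $\int|\mathrm{Re}\sqrt q|$ and $\int|\mathrm{Im}\sqrt q|$ simultaneously. But Steps 2--3 never produce that curve. The sentence ``one then argues that the total crossed measure of both is at most roughly the $\tilde\mu$- and $\tilde\nu$-measure of geodesics separating small neighborhoods of $a$ and $b$'' is exactly the statement that needs proof, and quasi-transversality of each individual segment does not give it. A horizontal segment of your staircase has $|q|$-length equal to the $\mathcal{F}_v(q)$-measure of the vertical leaves it crosses. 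Those leaves need not cross $\eta$. A vertical leaf with both endpoints in one arc of $S^1\setminus\{a,b\}$ can be crossed twice by the path, by different segments going out and coming back. Its measure is then invisible to $i(\nu,\eta)$, which is $0$ under your hypothesis. What is needed is a \emph{global} property: up to transverse measure $O(\epsilon)$, every horizontal and every vertical trajectory meets the whole path at most once. Only then does a trajectory meeting it have to either cross one of the short arcs near $a$, $b$, or join the two complementary boundary arcs and hence cross $\eta$. Your preliminary inference from ``the leaf through $x$ near $a$ does not cross $\eta$'' to ``both its ends lie near $a$'' is also false. On $[-1,1]\times[0,1]$ with $dz^2$, take $a,b$ the midpoints of the vertical sides. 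The horizontal leaves just below $a$ end near $a$ and near $b$, and none of them crosses $\eta$.

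The paper gets the global property by surgery, not by assembling a staircase.
\begin{itemize}
\item Fix circular arcs around $\xi_1,\xi_2$ of $|q|$-length $<\epsilon$; these exist by integrability. Take any piecewise $\theta$-arc curve $\gamma$ joining them.
\item Consider the horizontal arcs with both endpoints on $\gamma$. Each cuts off a Jordan domain $\Omega_\delta$, and these arcs fall into countably many chains ordered by inclusion.
\item For each chain, replace the cut-off subarc of $\gamma$ by the maximal arc. If there is no maximal arc, use one whose super-arcs have measure $<\epsilon/2^i$. Afterwards the returning horizontal arcs have total measure $<\epsilon$.
\item Repeat with vertical arcs. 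The Teichm\"uller lemma shows the two surgeries do not interfere: a vertical arc ending on an inserted $\delta^i$ extends into $\Omega_{\delta^i}$ until it hits the original $\gamma$.
\end{itemize}
This gives $|\widehat\gamma|_{vert}\le i(\eta,\mathcal{F}_h(q))+3\epsilon$ and $|\widehat\gamma|_{hor}\le i(\eta,\mathcal{F}_v(q))+3\epsilon$ for the same curve $\widehat\gamma$. The replacements are in general countably many, so your plan to use finitely many maximal arcs is not available. Assuming both intersection numbers vanish also does not simplify any of this.
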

Throughout, we write $q=\varphi(z)dz^2$. If $\gamma$ is a curve in $\mathbb{D}$, define the $q$-length of $\gamma$ by
$$
|\gamma |_q=\int_{\gamma}|\varphi (z)|^{1/2}|dz|.
$$
The $q$-distance between two points $w_1,w_2\in\mathbb{D}$ is given by
$$
d_q(w_1,w_2)=\inf_{\gamma} \int_{\gamma}|\varphi (z) |^{1/2}|dz| =\inf_{\gamma} |\gamma |_q,
$$
where the infimum is over all rectifiable curves $\gamma$ connecting $w_1$ and $w_2$ in $\mathbb{D}$ (see \cite{Strebel}). The $q$-distance extends to the unit circle. For $\xi_1,\xi_2\in S^1$, define
$$
d_q(\xi_1,\xi_2)=\liminf_{n\to\infty} d(z_1^n,z_2^n),
$$
where the infimum is over all sequences $z_i^n\in\mathbb{D}$ with $\lim_{n\to\infty}z_i^n=\xi_i^n$ for $i=1,2$. 

\begin{lemma}[Strebel \cite{Strebel}, Lemma 19.6]
\label{lem:positive-dist}
Any two distinct points on $S^1$ have positive $q$-distance. 
\end{lemma}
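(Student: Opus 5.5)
We would argue by contradiction, following the Hardy-space route behind \cite[Lemma 19.6]{Strebel}; the final step is the uncertain one. Suppose $\xi_1\neq \xi_2$ in $S^1$ and $d_q(\xi_1,\xi_2)=0$. Then there are rectifiable curves $\gamma_n$ joining $z_1^n\to\xi_1$ to $z_2^n\to \xi_2$ with $|\gamma_n|_q\to 0$.

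The first step is a localization. Pass to a subsequence so that the $\gamma_n$ converge in the Hausdorff sense to a continuum $K\subset\overline{\mathbb{D}}$ joining $\xi_1$ to $\xi_2$. We claim $K\subset S^1$. If not, $K$ contains a continuum of positive diameter inside some compact disk $\{|z|\le r\}$. On that disk $\varphi$ has finitely many zeros, so $|\varphi|^{1/2}$ is bounded below by a positive constant off small disks around those zeros. Then every $\gamma_n$ with $n$ large would have to travel a definite Euclidean distance through a region where $|\varphi|^{1/2}\ge c>0$, which forces $|\gamma_n|_q\ge c'>0$, a contradiction. Hence $K$ is one of the two closed arcs $J\subset S^1$ from $\xi_1$ to $\xi_2$. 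After shrinking, the curves $\gamma_n$ lie in arbitrarily thin collars $\{1-\epsilon_n<|z|<1\}$ over $J$. Moreover, each $\gamma_n$ crosses every radius $R_\theta=\{re^{i\theta}\}$ with $\theta$ in the interior of $J$.

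The second step controls the radial $q$-lengths using only integrability. By Cauchy--Schwarz and Fubini,
$$\int_J\Big(\int_{1/2}^1|\varphi(re^{i\theta})|^{1/2}dr\Big)^2 d\theta\le C\int_{\mathbb{D}}|\varphi|<\infty.$$
So for a.e.\ $\theta$ the radius $R_\theta$ has finite $q$-length, and its $q$-length in the collar $\{1-\epsilon<|z|<1\}$ tends to $0$ as $\epsilon\to0$. Choose two such good angles $\theta_1,\theta_2$ in the interior of $J$, close to $\xi_1$ and $\xi_2$ respectively. Let $Q_n$ be the region cut out of the collar by $R_{\theta_1}$, $R_{\theta_2}$, the subarc $J'$ of $J$ between them, and the piece of $\gamma_n$ between its crossings with $R_{\theta_1}$ and $R_{\theta_2}$. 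The outcome is a family of quadrilaterals $Q_n$, each with a side on $S^1$, whose three interior sides have total $q$-length tending to $0$, while $Q_n$ shrinks to the arc $J'$.

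The third step, which we expect to be the main obstacle, is to show that this cannot happen. The idea is to develop $Q_n$ by the natural parameter $W=\int\sqrt{\varphi}\,dz$. Where $\sqrt\varphi$ has a single-valued branch on $Q_n$, the image $W(Q_n)$ is a region whose finite boundary has Euclidean length $o(1)$. We would then aim to prove that $\partial W(Q_n)$ also contains the image of the arc $J'$ as a set of positive diameter, bounded below uniformly in $n$. This lower bound is what yields the contradiction.

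The mechanism we have in mind for the lower bound is as follows. Conformally map $Q_n$ to the disk and use that $\sqrt{\varphi}$, pulled back to a fixed neighbourhood of $J'$, is in $L^2$ of area. A Hardy-space or F.\ and M.\ Riesz-type argument should show that the boundary correspondence along $J'$ is absolutely continuous with nonzero derivative on a set of positive measure, where the derivative is the radial limit of $\sqrt{\varphi}$. In particular, these radial limits cannot vanish almost everywhere on $J'$, since $\varphi\not\equiv 0$.

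Two technical points remain. First, zeros of $\varphi$ may accumulate along $J'$, so $\sqrt{\varphi}$ may not be single-valued on $Q_n$. To handle this we would pass to the branched double cover near $J'$, or apply the argument to $\varphi$ divided by a suitable Blaschke-type factor on the subarc. Second, the Hardy-space input itself is exactly the ingredient Strebel uses, and we would follow \cite[\S 19]{Strebel} for it.
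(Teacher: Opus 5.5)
The paper gives no argument for this lemma; it quotes it from \cite[Lemma 19.6]{Strebel}. Your Steps~1 and~2 are fine, up to two small repairs. The Hausdorff limit $K$ need only \emph{contain} an arc from $\xi_1$ to $\xi_2$. And $J$ should be the arc whose radii the $\gamma_n$ actually cross, after passing to a subsequence.

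The gap is Step~3. It carries the whole content, you leave it as an expectation, and the mechanism you propose does not work as stated.
\begin{itemize}
\item Locally, $\sqrt{\varphi}$ is only in the Bergman space $A^2$. Bergman functions need not have radial limits on any set of positive measure: take $\varphi=g^2$ with $g$ a Hadamard lacunary series in $A^2\setminus H^2$. So there is no ``radial limit of $\sqrt\varphi$'' to act as a boundary derivative.
\item The $q$-length of a boundary arc can be infinite, so the image of $J'$ under the developing map need not be rectifiable. Hence no F.~and M.~Riesz argument and no absolutely continuous boundary correspondence are available.
\end{itemize}

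The Hardy-space fact that actually works concerns the primitive, not the integrand. Suppose $g$ is single-valued and holomorphic on $\mathbb{D}$ with $|g|\le|\varphi|^{1/2}$ and $g\not\equiv 0$. Then $W=\int g$ has Dirichlet integral at most $\int_{\mathbb{D}}|\varphi|<\infty$, so $W\in H^2$. Along your good radii, $W$ has limits $W^*(e^{i\theta})$ with $|W^*(e^{i\theta})-W(re^{i\theta})|\le\int_r^1|\varphi(se^{i\theta})|^{1/2}\,ds$. Also $|W(p_n)-W(p_n')|\le|\gamma_n|_q$, where $p_n,p_n'$ are the crossings of $\gamma_n$ with two good radii. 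Hence $W^*$ is a.e.\ constant on $J'$. The F.~and M.~Riesz uniqueness theorem then forces $W$ to be constant, so $g\equiv 0$, a contradiction. In this form the quadrilaterals $Q_n$ and the uniform-in-$n$ diameter bound are unnecessary; note also that $W|_{J'}$ does not depend on $n$ at all.

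The real obstacle is producing such a $g$, and neither of your fixes does it.
\begin{itemize}
\item Dividing by a Blaschke-type factor goes the wrong way. Since $|\varphi/B|^{1/2}\ge|\varphi|^{1/2}$, increments of the new primitive are no longer controlled by $q$-lengths.
\item The right move is $g=\sqrt{\varphi B}$, with $B$ vanishing at the odd-order zeros. This needs those zeros to satisfy the Blaschke condition near $J'$, and integrability of $\varphi$ does not give that. Zero sets of Bergman-space functions, even with all zeros simple, can violate the Blaschke condition near every boundary arc (Horowitz's examples).
\item The double cover branched over the infinitely many odd zeros near $J'$ is not simply connected. On it, $\sqrt{\varphi}\,dz$ generally has nonzero periods, so there is neither a single-valued $W$ nor a disk on which to run Hardy-space theory.
\end{itemize}

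So your argument reduces the lemma to exactly the input you defer to Strebel. That matches the paper, which treats the statement as a black box, but it is not a proof. To close it you must either cite the result outright, as the paper does, or supply a genuine way to handle odd-order zeros accumulating on $J'$.
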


Using the notion of the distance between points on $S^1$, we prove that the intersection between an arbitrary hyperbolic geodesic $\eta$ and either $\mathcal{F}_h(q)$ or $\mathcal{F}_v(q)$ is positive. Since $$
i(\mathcal{F}_h(q),\eta)+i(\mathcal{F}_v(q),\eta)= i(\mu ,\eta)+i(\nu ,\eta)>0 ,
$$ 
Theorem \ref{thm:necessary_realization} follows immediately from the proposition below.

\begin{prop}
\label{prop:inter-positive}
Let $q=\varphi (z)dz^2\not\equiv 0$ be a finite-area holomorphic quadratic differential and let $\eta$ be an arbitrary hyperbolic geodesic in $\mathbb{D}$ with endpoints $\xi_1,\xi_2\in S^1$. Then
$$
0<d_q(\xi_1,\xi_2)\leq i(\eta,\mathcal{F}_v(q))+i(\eta,\mathcal{F}_h(q)).
$$
\end{prop}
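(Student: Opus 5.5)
The left inequality is Lemma \ref{lem:positive-dist}, so the task is the right inequality: for every $\epsilon>0$ I will build a curve $\sigma$ in $\mathbb{D}$ from (near) $\xi_1$ to (near) $\xi_2$ with
$$|\sigma|_q\le i(\eta,\mathcal{F}_v(q))+i(\eta,\mathcal{F}_h(q))+\epsilon.$$
The key pointwise fact is that $|\sqrt{\varphi}\,dz|\le |\mathrm{Re}\sqrt{\varphi}\,dz|+|\mathrm{Im}\sqrt{\varphi}\,dz|$. Hence, for any rectifiable curve, its $q$-length is at most the sum of its transverse measures with respect to $\mathcal{F}_v(q)$ and $\mathcal{F}_h(q)$. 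So it is enough to find one curve whose transverse measures with respect to both foliations are simultaneously close to the two intersection numbers.

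\textbf{Step 1: approximating the intersection numbers along $\eta$.} Both numbers equal laminar quantities: $i(\eta,\mathcal{F}_h(q))=\tilde\mu(\mathcal{G}_\eta)$, where $\mathcal{G}_\eta$ is the set of geodesics of the lamination crossing $\eta$, and similarly for $\nu$. By Lemma \ref{lem:asymptotic} there are no atoms at $\xi_1,\xi_2$. I therefore truncate: choose subarcs near $\xi_1$ and $\xi_2$ on $S^1$ such that the geodesics with an endpoint in these small arcs carry $\mu$- and $\nu$-mass less than $\epsilon$. I then pick points $z_1^n\to\xi_1$ and $z_2^n\to\xi_2$ on $\eta$ such that the horizontal leaves through them, and likewise the vertical leaves, land in these small neighbourhoods of $\xi_i$. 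Choosing such points requires properness of the foliations together with Marden--Strebel (each trajectory has two distinct endpoints).

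\textbf{Step 2: building the curve by hand.} This is the main obstacle, because a single curve cannot in general be $q$-geodesic and realize both intersection numbers at once. Between $z_1^n$ and $z_2^n$, the transverse measure for $\mathcal{F}_h$ of a curve is at least the horizontal transverse distance, and equality needs the curve to be monotone transverse to $\mathcal{F}_h$. My plan is to replace the hyperbolic segment $[z_1^n,z_2^n]\subset\eta$ by a $q$-geodesic $\sigma_n$ between these two points in $\mathbb{D}$ (which exists by Strebel, since $\mathbb{D}$ is $|q|$-complete away from $S^1$ in the relevant sense, or else I take a near-minimizer). A $q$-geodesic consists of saddle connections and straight segments, and it is quasi-transverse to both foliations: each leaf of $\tilde{\mathcal{F}}_h$ meets it in a connected set, by the Teichm\"uller lemma/Gauss--Bonnet for flat metrics with nonpositive cone angles. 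Consequently its $\mathcal{F}_h$-transverse measure equals the $\mu$-measure of the leaves separating $z_1^n$ from $z_2^n$, and that is at most $\tilde\mu(\mathcal{G}_\eta)+\epsilon$ by Step 1. The same holds for $\mathcal{F}_v$. If it is only known that a near-minimizing curve exists, I would instead perform a local surgery: whenever a curve crosses a leaf twice, replace the intermediate piece by the leaf segment. This does not increase either transverse measure, and it works in the finite-area disk because there are no closed leaves.

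\textbf{Step 3: combining.} Step 2 gives
$$d_q(z_1^n,z_2^n)\le|\sigma_n|_q\le i(\eta,\mathcal{F}_v(q))+i(\eta,\mathcal{F}_h(q))+2\epsilon.$$
Taking $\liminf_{n\to\infty}$, using the definition of $d_q$ on $S^1$, and then letting $\epsilon\to0$ yields the claim.
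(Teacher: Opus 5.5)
Your overall strategy is the paper's: produce one curve whose two transverse measures are simultaneously close to $i(\eta,\mathcal{F}_h(q))$ and $i(\eta,\mathcal{F}_v(q))$, then integrate $|\sqrt{\varphi}\,dz|\le|\mathrm{Re}\sqrt{\varphi}\,dz|+|\mathrm{Im}\sqrt{\varphi}\,dz|$. But Step 1 is a genuine gap.

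\textbf{Step 1 cannot be carried out.} Properness and Marden--Strebel only tell you that each leaf has two distinct endpoints. They say nothing about where the leaf through a point close to $\xi_1$ ends, and in general no choice of $z_1^n\in\eta$ has the property you want. For an example, push $dz^2$ from the square $(0,1)^2$ to $\mathbb{D}$ by a Riemann map, and let $\xi_1,\xi_2$ be the midpoints of the top and bottom sides. By symmetry $\eta$ is the vertical midline. The vertical leaf through every point of $\eta$ is $\eta$ itself. The horizontal leaf through a point of $\eta$ near $\xi_1$ is a full horizontal segment whose endpoints tend to the two top corners, never to $\xi_1$.

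Your truncation is also wrong as stated. The geodesics with an endpoint in a small arc around $\xi_1$ can carry infinite mass: nested leaves at the ends of thin, infinitely long, finite-area dead-end channels attached at points accumulating at $\xi_1$ do this. Only the part of that family crossing $\eta$ is guaranteed to be small.

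Without Step 1, the bound $d_h(z_1^n,z_2^n)\le i(\eta,\mathcal{F}_h(q))+\epsilon$ in Step 2 is unsupported. The transverse distance counts every leaf separating $z_1^n$ from $z_2^n$. That includes leaves meeting $\eta$ twice with both endpoints on one side of $\eta$, and nothing in your argument controls their mass.

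\textbf{The missing idea.} By integrability, choose $r_i$ so that the crosscuts $c_i=\{|z-\xi_i|=r_i\}\cap\mathbb{D}$ have $q$-length $<\epsilon$, and put your endpoints on $c_1$ and $c_2$ rather than on $\eta$. A leaf separating a point of $c_1$ from a point of $c_2$ falls into one of two cases:
\begin{enumerate}
\item it meets $c_1\cup c_2$, and such leaves have total mass $<2\epsilon$ for each foliation;
\item it avoids $c_1\cup c_2$, in which case a short planar-topology argument shows it has one endpoint in each complementary arc of $S^1$, so it crosses $\eta$ essentially.
\end{enumerate}
This gives both transverse-distance bounds at once, with error $2\epsilon$.

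\textbf{Step 2 also needs repair.} The $|q|$-metric on $\mathbb{D}$ is incomplete. Minimizers between interior points can run into $S^1$ (think of a reflex corner of a polygon with $dz^2$), so a $q$-geodesic inside $\mathbb{D}$ need not exist, and the surgery must carry the argument. That surgery is the technical core of the paper's proof, not a one-liner:
\begin{enumerate}
\item there can be countably many nested families of double-crossing arcs;
\item some families have no maximal element, and the paper picks approximants leaving mass $<\epsilon/2^i$;
\item one must check, via the Teichm\"uller lemma, that the horizontal and vertical rounds do not undo each other.
\end{enumerate}
No near-minimizer is needed. Any piecewise $\theta$-arc curve joining $c_1$ to $c_2$ works once the surgery is done, and this is exactly how the paper argues.
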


\begin{proof}
Let $d_q(\xi_1,\xi_2)=d >0$. It is not immediate that $d_q>0$ implies that at least one of the two intersections $i(\eta,\mathcal{F}_h(q))$, $i(\eta,\mathcal{F}_v(q))$ is non-zero because the two intersection numbers may be approximated over two different sequences of curves homotopic to $\eta$. 

Let $\epsilon>0$ be small, and let $1>r_1,r_2>0$ be two radii such that the $q$-lengths of 
$\{ z:|z-\xi_1|=r_1\}\cap\mathbb{D}$ and $\{ z:|z-\xi_2|=r_2\}\cap\mathbb{D}$ are both less than $\epsilon$. Such choice of radii exist because $\int_{\mathbb{D}}|\varphi (z)|dxdy<\infty$.  
Let $\gamma$ be an arc connecting $\{ z\in\mathbb{D}:|z-\xi_1|=r_1\}$ and $\{ z\in\mathbb{D}:|z-\xi_2|=r_2\}$ in $\mathbb{D}\setminus (\{ z\in\mathbb{D}:|z-\xi_1|\leq r_1\}\cup \{ z\in\mathbb{D}:|z-\xi_2|\leq r_2\})$ that is a concatenation of finitely many $\theta$-arcs of $q$, i.e., $\gamma$ is piecewise straight. 

Consider the set of all horizontal arcs that have both endpoints on $\gamma$. If $\delta$ is one such arc, then $\delta$ together with a subarc $\gamma (\delta )$ of $\gamma$ is a closed Jordan curve which bounds a Jordan domain $\Omega_{\delta}$ (see Figure \ref{fig:partial-order}). Then any horizontal ray that starts on $\gamma (\delta )$ and enters $\Omega_{\delta}$ meets $\gamma (\delta )$ in another point (see Figure \ref{fig:partial-order}). There is a partial order on these horizontal arcs. For a horizontal arc $\delta_1$ with both endpoints on $\gamma$, we say that $\delta \prec \delta_1$ if $\Omega_{\delta}\subset\Omega_{\delta_1}$. Each $\delta$ belongs to a maximal family $\mathcal{P}_{\delta}$ of horizontal arcs with both endpoints on $\gamma$ such that for any $\delta_1,\delta_2\in\mathcal{P}_{\delta}$, either $\delta_1\prec\delta_2$ or $\delta_2\prec\delta_1$ (this follows since two horizontal arcs cannot intersect if they are different).  

\begin{figure}[htb]
    \centering
    \includegraphics[width=7 cm]{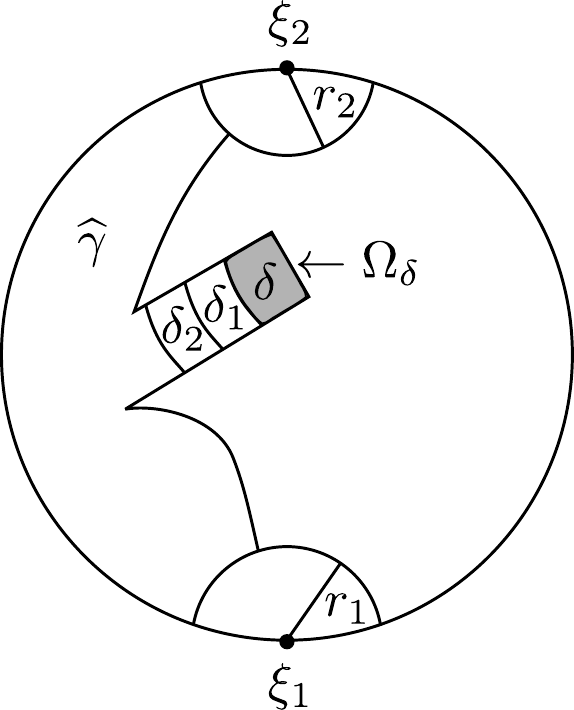}
    \caption{The partial order on $\mathcal{P}_{\delta}$.}
    \label{fig:partial-order}
\end{figure}

Since each $\delta$ cuts out an arc of $\gamma$ with interior, there are at most countably many ordered families $\{\mathcal{P}_i\}_{i=1}^{\infty}$. If a family $\mathcal{P}_i$ has a maximal element, then we denote it by $\delta^i$ and replace the arc $\gamma (\delta^i)$ with $\delta^i$. Note that this process shortens the length of the curve. It can happen that $\mathcal{P}_i$ has no maximal element. Then we choose $\delta_i\in\mathcal{P}_i$ such that the total transverse measure (the vertical measure $\int|Im(\sqrt{\varphi (z)}dz|$) of all $\delta\in\mathcal{P}_i$ for which $\delta^i\prec\delta$ is less than $\epsilon /2^i$. We replace $\gamma (\delta^i)$ with $\delta^i$. Therefore, we replaced countably many disjoint subarcs of $\gamma$ with horizontal arcs and obtained a curve $\widehat{\gamma}$. By the construction, the total transverse measure of all horizontal arcs with both endpoints on $\widehat{\gamma}$ is at most $\epsilon$. We emphasize here that a set of horizontal trajectories may intersect $\widehat{\gamma}$ more than twice, which will result in the transverse measure of a single horizontal trajectory being taken more than once. We proved that this total transverse measure with the repetitions is less than $\epsilon$.

Consider a vertical arc $\omega$ with both endpoints on $\widehat{\gamma}$. If both endpoints are not on the added horizontal arcs (i.e., they are on $\gamma\setminus\cup_i\delta^i$), then $\omega$ is also a vertical arc with both endpoints on $\gamma$. Suppose at least one endpoint of $\omega$ is on $\delta^i$. In that case, $\omega$ can be extended to a vertical arc $\omega'$ by entering $\Omega_{\delta^i}$ until it hits the original curve $\gamma$ (which must happen because the Teichm\"uller lemma \cite[Theorem 14.1, page 71]{Strebel} prevents it from intersecting the horizontal arc $\delta^i$ twice). If needed, we extend $\omega$ at the other endpoint. Therefore, for each $\omega$ there is an extension $\omega'$ with both endpoints on $\gamma$. Thus, the total transverse measure of all vertical arcs with both endpoints on $\widehat{\gamma}$ is the same as that of $\gamma$, while the transverse measure of horizontal arcs connecting $\widehat{\gamma}$ to itself is less than $\epsilon$. 

Next, we perform the analogous process on $\widehat{\gamma}$ for the vertical arcs connecting it to itself. The total transverse measure of these vertical arcs is less than $\epsilon$, as well as the total transverse measure of the horizontal arcs. For simplicity of notation, denote the new curve $\widehat{\gamma}$.

Let arcs $I_1$ and $I_2$ be the two components of $S^1\setminus ( \{ z:|z-\xi_1|\leq r_1\}\cup\{ z:|z-\xi_2|\leq r_2\})$.
Consider all horizontal trajectories that intersect $\widehat{\gamma}$. The set of horizontal trajectories that intersect $\widehat{\gamma}$ more than once has transverse measure less than $\epsilon$. Any horizontal trajectory that intersects $\widehat{\gamma}$ only once can either intersect $\{ z:|z-\xi_1|= r_1\}\cap\mathbb{D}$; or it can intersect $\{ z:|z-\xi_2|= r_2\}\cap\mathbb{D}$; or it has one endpoint in $I_1$ and the other endpoint in $I_2$. The total transverse measure of the set of all trajectories in the above cases, except the last case, is at most $2\epsilon$.

Let $|\gamma |_{vert}=\int_{\gamma} |Im(\sqrt{\varphi (z)}dz)|$ and $|\gamma |_{hor}=\int_{\gamma} |Re(\sqrt{\varphi (z)}dz)|$. The above gives
\begin{equation}
\label{eq:intersection-ineq-hor}
|\widehat{\gamma}|_{vert}\leq \mathcal{F}_h(q)(I_1\times I_2)+3\epsilon\leq i(\eta,\mathcal{F}_h(q))+3\epsilon,
\end{equation}
where $\mathcal{F}_h(q)(I_1\times I_2)$ is the total transverse measure of the leaves of the foliation $\mathcal{F}_h(q)$ with one endpoint in $I_1$ and the other in $I_2$.  

An analogous argument for the same curve $\widehat{\gamma}$ gives
\begin{equation}
\label{eq:intersection-ineq-vert}
|\widehat{\gamma}|_{hor}\leq \mathcal{F}_v(q)(I_1\times I_2)+3\epsilon\leq i(\eta,\mathcal{F}_v(q))+3\epsilon.
\end{equation}

A crucial point in this proof is that the equations (\ref{eq:intersection-ineq-hor}) and (\ref{eq:intersection-ineq-vert}) hold for the same curve $\widehat{\gamma}$ constructed above. By integrating (along $\widehat{\gamma}$) the inequality 
$$
|\sqrt{\varphi (z)}dz|\leq |Re(\sqrt{\varphi (z)}dz)|+|Im(\sqrt{\varphi (z)}dz)|
$$ 
and using (\ref{eq:intersection-ineq-hor}) and (\ref{eq:intersection-ineq-vert}), we conclude that
$$
|\widehat{\gamma}|_{q}\leq i(\eta,\mathcal{F}_v(q))+i(\eta,\mathcal{F}_h(q))+6\epsilon .
$$
Since $\epsilon>0$ was arbitrary, we can carry out the process above over any positive sequence $(\epsilon_n)_{n=1}^\infty$ tending to zero. By integrability of $q$ there exist radii $(r_1^n)_{n=1}^\infty$, $(r_2^n)_{n=1}^\infty$ tending to zero simultaneously with $\epsilon_n$, and such that the limiting $q$-length of the corresponding curves $\widehat{\gamma}^n$ is bounded below by $d_q(\xi_1,\xi_2)$. Taking $n\to\infty$, we have
$$
d_q(\xi_1,\xi_2)\leq i(\eta,\mathcal{F}_v(q))+i(\eta,\mathcal{F}_h(q)).
$$
\end{proof}

\subsection{General surfaces}
Here we prove necessity of the filling condition in Theorem \ref{thm: main}.

\begin{thm}
\label{thm:necessary_realization_first_kind}
Let $q$ be a non-trivial finite-area holomorphic quadratic differential on a Riemann surface $X=\mathbb{D}/\Gamma$. Let $\mu$ and $\nu$ be measured laminations obtained by straightening the horizontal and vertical foliations of $q$. Then, $\mu$ and $\nu$ are filling.
\end{thm}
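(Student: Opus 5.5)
We prove that for every complete simple geodesic $\eta$ on $X$ we have $i(\mu,\eta)+i(\nu,\eta)>0$, splitting into cases by the type of $\eta$. Throughout we use the equalities $i(\mathcal{F}_h(q),\eta)=i(\mu,\eta)$ and $i(\mathcal{F}_v(q),\eta)=i(\nu,\eta)$ recalled in \S\ref{sec: realizable laminations}, so it is enough to work with the foliations of $q$ and its lift $\tilde q$ to $\mathbb{D}$. The three cases are:
\begin{enumerate}
\item $\eta$ is a simple closed geodesic;
\item $\eta$ is a complete geodesic whose lift $\tilde\eta$ has endpoints $\xi_1,\xi_2$ in the limit set (for instance a geodesic in the convex core, spiraling or bi-infinite);
\item $\eta$ is a cross-cut with at least one end exiting into a funnel, half-plane or cusp region.
\end{enumerate}
In each case the aim is the analogue of Proposition \ref{prop:inter-positive}: a single curve $\widehat\gamma$ homotopic to $\eta$ (or connecting its ends) whose vertical and horizontal lengths are simultaneously controlled by the two intersection numbers up to $O(\epsilon)$. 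Its $|q|$-length is then bounded above by $i(\mu,\eta)+i(\nu,\eta)+O(\epsilon)$. Separately we bound that length below by a positive quantity.

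\emph{Case (1), closed geodesics.} Here $i(\mathcal{F},\eta)$ is an infimum over homotopic loops, and I would argue directly on $X$. Take a loop $\gamma$ homotopic to $\eta$ made of finitely many $\theta$-arcs. Run the surgery from the proof of Proposition \ref{prop:inter-positive} on $\gamma$: replace the horizontal arcs with both endpoints on the curve, then the vertical ones, working in the annular cover so that the Jordan domains $\Omega_\delta$ make sense.

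The result is a loop $\widehat\gamma$ with $|\widehat\gamma|_{vert}\le i(\mu,\eta)+\epsilon$ and $|\widehat\gamma|_{hor}\le i(\nu,\eta)+\epsilon$. The role of $d_q>0$ is played by the fact that the $q$-metric has no null-homotopically-short essential loops in the annular cover. This holds because the annular cover of a finite-area $q$ is conformally an annulus of positive modulus on which the lifted $q$ is nonzero. By a length–area argument, essential loops have $q$-length bounded below by a constant unless $q$ vanishes on the annulus, which is impossible since $q\not\equiv0$. More simply, if both intersections vanished, $\eta$ would be homotopic to loops that are almost horizontal and almost vertical at once. Letting $\epsilon\to0$ then forces $q$-length $0$ in a fixed homotopy class on the annular cover, which contradicts positivity of the extremal-length-type lower bound.

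\emph{Case (2), endpoints in the limit set.} Lift to $\mathbb{D}$. The pulled-back $\tilde q$ is not integrable on $\mathbb{D}$, so Lemma \ref{lem:positive-dist} is not directly available. Instead I would work on a fundamental-domain-adapted region. Choose small arcs near $\xi_1,\xi_2$ of small $\tilde q$-length: these exist because $q$ is integrable downstairs and the ends of $\eta$ either exit compact sets or recur. In the recurrent case I would use the compact pieces of $\eta$ and choose the cutoff curves in a fixed compact fundamental region, repeated by deck transformations.

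Then run the proof of Proposition \ref{prop:inter-positive} verbatim on an arc $\gamma$ between these cutoffs. The one new point is the lower bound on $|\widehat\gamma|_q$. Here I plan to use the simple closed/cross-cut dichotomy: if $\eta$ accumulates on a closed geodesic or minimal component, the curves $\widehat\gamma$ traverse arbitrarily long segments that fellow-travel it. Summing the bound of Case (1) over these pieces, or a uniform positive lower bound on $q$-lengths of transversals across a compact part of the convex core, gives positivity.

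\emph{Case (3), ends in funnels, half-planes or cusps.} A cusp end is handled as in Lemma \ref{lem:asymptotic}: near a puncture $q$ has at worst a simple pole, so small horocycles have small $q$-length. For ends exiting into a funnel or half-plane, the end of $\eta$ lifts to a point of $S^1$ outside the limit set. The covering map is then injective on a neighborhood of a complementary interval, so locally $\tilde q$ is integrable there. I would therefore apply Strebel's Lemma \ref{lem:positive-dist} after restricting $\tilde q$ to a half-plane region, uniformized as a disk, on which it is $L^1$.

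\emph{Main obstacle.} I expect the main obstacle to be the lower bound $|\widehat\gamma|_q\ge c>0$ in Case (2), where no integrable model is available. My first attempt will be to show that for a geodesic in the convex core, any curve homotopic to it rel ends must cross some fixed compact subsurface piece essentially. That piece carries a positive $q$-width by the length–area inequality, since $q\not\equiv0$.
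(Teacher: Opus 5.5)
There is a genuine gap. The positive lower bound on $|\widehat\gamma|_q$ is the real content of the theorem, and it is never established: the mechanisms you offer either fail or miss the hardest geodesics.

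In Case (1) the length--area argument runs the wrong way. For a conformal metric $\rho$ on an annulus it gives $(\inf_\gamma L_\rho(\gamma))^2\le \mathrm{Area}_\rho\cdot \mathrm{Ext}$, which is an \emph{upper} bound on the shortest essential loop. The condition $q\not\equiv 0$ alone does not prevent short loops. Moreover, the lift of $q$ to the infinite-sheeted annular cover is in general not integrable, so no Hardy-space input is available there either.

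For a closed geodesic meeting the interior of the convex core the bound is true, but it comes from a different source, namely the paper's Case 3. There is a simple closed geodesic $\gamma$ crossing $\eta$, and every homotopic curve must cross the compact collar $C_\gamma$. Running the surgery of Proposition~\ref{prop:inter-positive} on the crossing subarc, inside a simply connected piece of $C_\gamma$, bounds $i(\mu,\eta)+i(\nu,\eta)$ below by the $q$-distance between the two boundary curves of $C_\gamma$. Cusp ends are handled the same way using an annulus in a horoball neighbourhood. This argument needs no cutoff arcs at the endpoints. That matters, because your Case (2) template of cutoffs of small $\tilde q$-length near $\xi_1,\xi_2$ fails for recurrent ends: near a hyperbolic fixed point, for example, every such cutoff crosses a lift of a collar.

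The more serious omission concerns geodesics on, or asymptotic to, the boundary of the convex core. These are a closed geodesic bounding a funnel (your Case (1)) and the open geodesic bounding a geodesic half-plane (its lift has endpoints in $\Lambda_\Gamma$, so it falls in your Case (2)). Closed geodesics never cross the boundary of the convex core. Curves homotopic to such an $\eta$ can be pushed into the funnel or half-plane toward the ideal boundary, so they need not cross any fixed collar or compact piece. Neither your ``first attempt'' nor ``summing Case (1)'' can work there.

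The missing ingredient is Strebel's Lemma~\ref{lem:positive-dist}, applied to $q$ restricted to a simply connected $\Omega\subset X$ whose boundary contains the relevant ideal boundary arc. For a funnel, $\Omega$ is a strip between two geodesics crossing $\eta$ with endpoints on the funnel's ideal boundary. This is combined with Lemma~\ref{lem:asymptotic} to discard trajectories ending at $\xi_1,\xi_2$.

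Even in your Case (3), where you do invoke Strebel's lemma on such an $\Omega$, you overlook that $i(\mu,\eta)$ and $i(\nu,\eta)$ are infima over curves in $X$ that may leave $\Omega$. The intersection numbers computed in $\Omega$ can therefore be strictly larger. The paper closes this gap as follows. Take $I_1$ to be an ideal arc slightly larger than the one facing $\eta$. Any homotopic $\eta_1$ contains subarcs joining $\xi_j$ to $\partial\Omega\setminus I_1$. Running the surgery in a larger simply connected $\Omega_1\supset\Omega\cup\eta_1$ and using lower semicontinuity of $d_{q_\Omega}$ gives the uniform bound $m_1+m_2>0$, where $m_j=\inf_{z\in\partial\Omega\setminus I_1}d_{q_\Omega}(\xi_j,z)$.
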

\begin{proof}
    We divide the proof into four cases based on the properties of a complete simple geodesic $\eta$: either $\eta$ does not intersect the convex core of $X$ and it is not asymptotic to the boundary of the convex core, or $\eta$ does not intersect the convex core and one of its rays is asymptotic to the boundary of the convex core, or $\eta$ intersects the interior of the convex core, or $\eta$ is on the boundary of the convex core.

\vskip .2 cm
    
    \noindent {\bf Case 1.} The first case is when $\eta$ does not intersect the convex core of $X$ and it is not asymptotic to the boundary of the convex core. Then both endpoints of $\eta$, denoted by $\xi_1$ and $\xi_2$ belong to a single ideal boundary component of $X$, which is either a closed arc in the case of a funnel or an open arc in the case of a hyperbolic half-plane. Let $I$ be the closed interval on the ideal boundary $\partial_{\infty}X$ of $X$ that faces $\eta$ with endpoints $\xi_1$ and $\xi_2$. Let $I_1$ be another closed interval on the ideal boundary $\partial_{\infty}X$ that contains $I$ in its interior. 

We take a simply connected domain $\Omega$ in $X$ which contains $\eta$ and the interval $I_1\subset \partial_{\infty} X$ on its boundary. We restrict $q$ to $\Omega$ and note that the restriction $q_{\Omega}$ is of finite-area on $\Omega$. Since $\Omega$ is conformal to $\mathbb{D}$, Proposition \ref{prop:inter-positive} holds for the restriction $q_\Omega$ of $q$ to $\Omega$.   
Thus we have
\begin{equation}
\label{eq:ineq_Omega}
0<d_{q_\Omega}(\xi_1 ,\xi_2 )\leq i(\eta  ,\mathcal{F}_v(q_{\Omega}))+i(\eta ,\mathcal{F}_h(q_{\Omega})), 
\end{equation}
where $i(\eta  ,\mathcal{F}_v(q_{\Omega}))$ is the intersection number computed with respect to the homotopy class of $\eta$ in $\Omega$ and the measured foliation $\mathcal{F}_v(q_{\Omega})$ in $\Omega$. 
However, the quantity $i(\eta ,\mathcal{F}_v(q_{\Omega}))$ (which is computed in $\Omega$) could be strictly larger than the intersection number $i(\eta ,\mathcal{F}_v(q))$ on $X$, since the latter might be approximated only by homotopic curves that exit $\Omega$. 

\begin{figure}[htb]
    \centering
    \includegraphics[width=8 cm]{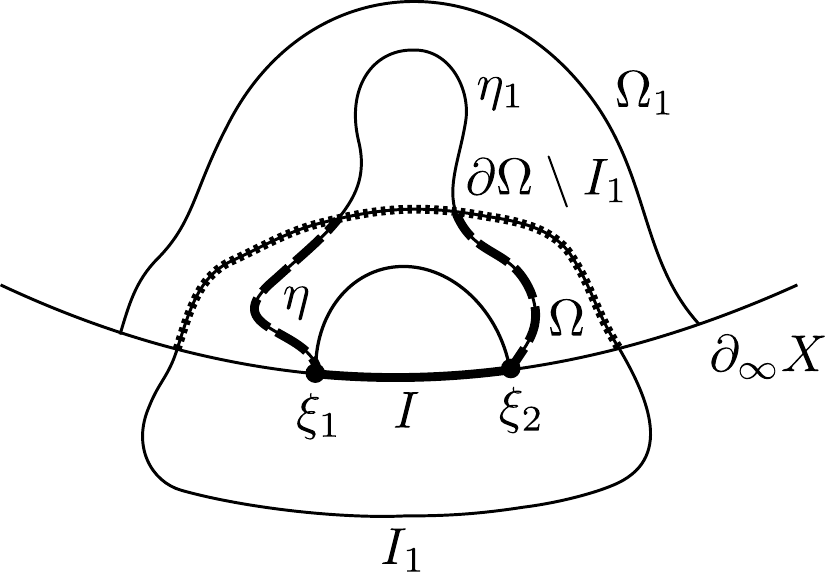}
    \caption{The domain $\Omega$ and curve $\eta_1$ outside $\Omega$ and inside $\Omega_1$.}
    \label{fig:omega}
\end{figure}

Let $\eta_1$ be an arbitrary curve homotopic to $\eta$. When $\eta_1$ is in $\Omega$, the inequality (\ref{eq:ineq_Omega}) holds and the left-hand side is positive and independent of the choice of $\eta_1$. It remains to prove that for all $\eta_1$ that leave $\Omega$, there is a single positive lower bound on the sum of the intersection numbers.

Assume $\eta_1$ is not contained in $\Omega$. Then we form a larger simply connected domain $\Omega_1\subset X$ which contains $\Omega$ and $\eta_1$ (and $\Omega_1$ will necessarily contain $I_1$, $\xi_1$, and $\xi_2$ on its boundary, see Figure \ref{fig:omega}). Using 
the construction in the proof of Proposition \ref{prop:inter-positive}, we replace $\eta_1$ with a curve $\hat{\eta}_1$ in $\Omega_1$ made by concatenation of $\theta$-arcs of $q$ such that the total transverse measure of the set of points in $\hat{\eta}_1$ that intersect a single horizontal arc more than once is less than $\epsilon$ and the same for the multiple intersections with the vertical arcs, where $\epsilon >0$ is arbitrary. 

Let $c_1$ and $c_2$ be two subarcs of $\hat{\eta}_1$ that connect $\xi_1$ and $\xi_2$ to the boundary of $\Omega$ while staying in $\Omega$. Let $z_1$ and $z_2$ be the other two endpoints of $c_1$ and $c_2$. By the proof of Proposition \ref{prop:inter-positive}, we have
\begin{equation*}
d_{q_{\Omega}}(\xi_j,z_j)\leq|c_j|_{vert}+|c_j|_{hor}\leq i(\eta_1  ,\mathcal{F}_h(q_{\Omega_1}))+i(\eta_1 ,\mathcal{F}_v(q_{\Omega_1}))+6\epsilon
\end{equation*}
for $j=1,2$. 
The points $z_1$ and $z_2$ are on $\partial\Omega\setminus I_1$. Since $d_{q}|_\Omega$ is lower semi-continuous, $\xi_j\in I$, $z_j\in \partial\Omega\setminus I_1$ and $I\cap \overline{\partial\Omega\setminus I_1}=\emptyset$,  it follows that 
$$
d_{q_{\Omega}}(\xi_j,z_j)\geq m_j:=\inf_{z\in \partial\Omega\setminus I_1}d_{q_{\Omega}}(\xi_j,z)>0$$
for $j=1,2$. 

By using the above two inequalities and letting $\epsilon \to 0$, we obtain
$$
0<m_1+m_2\leq i(\eta_1  ,\mathcal{F}_h(q_{\Omega_1}))+i(\eta_1 ,\mathcal{F}_v(q_{\Omega_1})).
$$
Since the left-hand side is independent of $\Omega_1$, $\inf_{\Omega_1\supset\Omega} i(\eta_1  ,\mathcal{F}_h(q_{\Omega_1})) =i(\eta , \mathcal{F}_h(q))$, and $\inf_{\Omega_1\supset\Omega} i(\eta_1  ,\mathcal{F}_v(q_{\Omega_1})) =i(\eta , \mathcal{F}_v(q))$ we conclude
$$
0<m_1+m_2\leq i(\eta , \mathcal{F}_h(q))+ i(\eta , \mathcal{F}_v(q)).$$

To recap, in this case, we did not find a single curve that almost realizes the intersection numbers but rather we used Proposition \ref{prop:inter-positive} to estimate the sum of the intersection numbers by the distance in the $q$-metric of the endpoints $\xi_1,\xi_2$ over larger and larger domains in $X$. For a fixed domain, the distance is positive and if a curve (between $\xi_1$ and $\xi_2$) goes deep into $X$ it still has to have subarcs connecting $\xi_1$ and $\xi_2$ to a compact arc on the boundary of the fixed domain $\Omega$, forcing a positive lower bound.

\vskip .2 cm
    
    \noindent {\bf Case 2.}
For the second case, the geodesic $\eta$ does not intersect the convex core of $X$ and one of its rays is asymptotic to a boundary component of the convex core. When this boundary component is an open geodesic, the proof follows the steps from the proof when $\eta$ is the open geodesic on the boundary of the convex core that we give later (see Case 4). When the ray of $\eta$ is asymptotic to a closed geodesic $\gamma$ on the boundary of the convex core, it eventually enters the half-collar of $\gamma$ (coming from the funnel side). Choosing two equidistant curves to $\gamma$ contained in this half-collar, $\eta$ has a subarc properly crossing the compact annular region between them. Every curve homotopic to $\eta$ also crosses this annulus. The argument that we give below in Case 3, applied to curves crossing this annulus, gives a positive lower bound on the sum of the intersection numbers.

\vskip .2 cm

\noindent {\bf Case 3.} The third case is when a simple geodesic $\eta$ intersects the convex core of $X$. This includes geodesics that have an endpoint at a puncture or on an ideal boundary, or are completely contained in the convex core.
The geodesic $\eta$ essentially intersects at least one simple closed geodesic, or it has at least one end going into a puncture (example: a pair of pants), or both things occur. 

We first treat the case where it crosses a simple closed geodesic $\gamma$. Then, it also crosses the standard collar neighborhood $C_\gamma$ of $\gamma$ and any curve ${\eta}_1$ homotopic to $\eta$ modulo its ideal endpoints also crosses the standard collar $C_\gamma$ because the intersection cannot disappear in the homotopy. 
Let $\eta_1^{\gamma}$
be a closed subarc of $\eta_1$ with endpoints on the two boundaries of the collar of $\gamma$ that otherwise stays inside of the collar. We replace $\eta_1^{\gamma}$ with a homotopic (modulo endpoints) curve that is a concatenation of $\theta$-arcs in $C_{\gamma}$ and that does not increase the horizontal and the vertical lengths. Then we choose a simply connected domain in $C_\gamma$ which contains $\eta_1^\gamma$ and two arcs on the boundary of $C_{\gamma}$ around the endpoints of 
$\eta_1^\gamma$. Using the construction in the proof of Proposition \ref{prop:inter-positive} we replace  $\eta_1^\gamma$ with a curve which is within $\epsilon >0$ of the intersection numbers with $\mathcal{F}_h(q)$ and $\mathcal{F}_v(q)$ of the curve $\eta_1^\gamma$. Then the sum of these intersection numbers is bounded below by the distance of its endpoints. There is a positive lower bound on the $q$-distance between the two boundary components of $C_{\gamma}$ and we conclude that the intersection of any $\eta_1$ with $\mathcal{F}_h(q)$ and $\mathcal{F}_v(q)$ inside $C_\gamma$ (hence, in the whole surface) is bounded below by a positive constant. 

In the case where the geodesic $\gamma$ does not cross a simple closed curve but has an end going into a puncture, we can choose two equidistant curves contained in a horoball neighbourhood of the puncture, bounding an annulus, and then every curve homotopic to $\gamma$ must cross the annulus. The argument from above shows that we have a positive intersection inside the annulus, hence in the whole surface.

\vskip .2 cm
    
    \noindent {\bf Case 4.} For this case, assume first that $\eta$ is the open geodesic boundary of a geodesic half-plane in $X$ with ideal endpoints $\xi_1$ and $\xi_2$. We denote by $I$ the ideal boundary arc facing $\eta$ which is also the ideal boundary of the half-plane bounded by $\eta$. Let $\Omega$ be a simply connected region in $X$ that has $I$ on its boundary and contains $\eta$ in its interior. The restriction $q_{\Omega}$ of $q$ to $\Omega$ is of finite-area. As in the proof of Proposition \ref{prop:inter-positive} we replace $\eta$ with a piecewise $\theta$-arc $\widehat{\eta}$ such that for some $\epsilon >0$ we have
$$
0<d_{q_\Omega }(\xi_1,\xi_2)\leq |\widehat{\eta}|_{vert} +|\widehat{\eta}|_{hor}\leq  i(\eta_\Omega ,\mathcal{F}_v(q_{Omega}))+i(\eta_\Omega ,\mathcal{F}_h(q_{Omega}))+6\epsilon .
$$
Recall that the property of the arc $\widehat{\eta}$ is that it almost realizes both intersection numbers in the domain $\Omega$. 

Let $\eta_1$ be homotopic to $\eta$ modulo endpoints. If $\eta_1$ is in $\Omega$ then the above inequality holds and there is a positive lower bound on the sum of the intersection numbers.

Assume that $\eta_1$ is not completely contained in $\Omega$ and let $\Omega_1$ be a simply connected domain in $X$ that contains $\Omega$ and $\eta_1$. Then by the construction in the proof of Proposition \ref{prop:inter-positive} we can replace $\eta_1$ with $\widehat{\eta}_1$ which satisfies the analogous inequality as above only applied to the domain $\Omega_1$.

If a horizontal trajectory of $q$ intersects $\widehat{\eta}_1$ in only one point, then it has to intersect $\widehat{\eta}$ or end in $\{\xi_1,\xi_2\}$ (the later has zero transverse measure since $\{\xi_1,\xi_2\}$ are not endpoints of a closed geodesic, see \cite[Theorem 2(c)]{MardenStrebel1}). The total measure of such horizontal trajectories that intersect $\widehat{\eta}$ in only one point is at most $|\widehat{\eta}_1|_{vert}-\epsilon$ since by Lemma \ref{lem:asymptotic} a set of trajectories of positive transverse measure cannot end at an endpoint of $\eta$ and the transverse measure of horizontal trajectories intersecting $\widehat{\eta}$ in more than one point is at most $\epsilon$. A completely analogous statement holds for the vertical trajectories that intersect $\widehat{\eta}_1$ in one point.
Thus,
\begin{equation*}
\begin{split}
i(\eta_{\Omega_1} ,\mathcal{F}_v(q_{\Omega_1}))+i(\eta_{\Omega_1} ,\mathcal{F}_h(q_{\Omega_1}))\geq |\widehat{\eta}_1|_{vert} +|\widehat{\eta}_1|_{hor}-6\epsilon\\ \geq |\widehat{\eta}|_{vert} +|\widehat{\eta}|_{hor}-8\epsilon\geq d_\Omega (\xi_1,\xi_2)-8\epsilon >0.
\end{split}
\end{equation*}
By letting $\epsilon\to 0$ we obtain a positive bound $d_\Omega (\xi_1,\xi_2)$ over all $\eta_1$ and $\Omega_1$. Therefore,
$$
i(\mathcal{F}_h(q),\eta )+i(\mathcal{F}_v(q),\eta )>0.
$$
In the same case, we now assume that $\eta$ is a closed geodesic on the boundary of the convex core. In this case, we draw a strip bounded by two geodesics that intersect $\gamma$ and have both endpoints at the ideal closed curve facing $\gamma$ (the strip can be drawn in the union of the funnel and a single geodesic pair of pants that has $\gamma$ on its boundary). Then any curve homotopic to $\gamma$ must have two subarcs in the strip  connecting the two boundary geodesics of the strip. We apply the argument from Case 3 to these subarcs. Note that the $q$ distance between the geodesics is bounded below by a positive constant because the distance between the endpoints is positive. The theorem follows.
\end{proof}

\subsection{Properties of filling pairs}\label{sec: properties}
\begin{prop}
\label{prop:component_closures}
Let $X=\mathbb{D}/\Gamma$ be a Riemann surface, let $\mu$ and $\nu$ be measured laminations on $X$ that are realized by integrable partial measured foliations, and let $\Tilde{\mu}$ and $\Tilde{\nu}$ be the lifted laminations on  $\mathbb{D}$. Assume that $\mu$ and $\nu$ fill. Let $K$ be a component of $X\backslash(|\mu|\cup |\nu|)$ and let $C$ be a lift to $\mathbb{D}$. Then,
    \begin{enumerate}
\item	if $C$ has compact closure $\overline{C}$, then $\overline{C}$ is a finite-sided geodesic polygon, with boundary components alternating between $\Tilde{\mu}$ and $\Tilde{\nu}$, and
\item if $C$ has non-compact closure $\overline{C}$, then $\overline{C}\cap S^1$ is a single point.
    \end{enumerate}
\end{prop}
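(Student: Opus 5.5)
The approach is to study $C$ as a convex region in $\mathbb{D}$ and use the filling condition, together with Lemma~\ref{lem:asymptotic}, to rule out every bad configuration. First I would check that $C$ is a convex open set. It is a component of $\mathbb{D}\setminus(|\Tilde{\mu}|\cup|\Tilde{\nu}|)$, so it is an intersection of open half-planes: for each boundary leaf we take the side of that leaf containing $C$. Its closure $\overline{C}$ in $\mathbb{D}\cup S^1$ is therefore a closed convex set. The frontier of $C$ in $\mathbb{D}$ is a union of subsegments of leaves of $\Tilde{\mu}$ and $\Tilde{\nu}$, and leaves of the same lamination are disjoint. The basic tool is the following observation. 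Suppose $\overline{C}\cap S^1$ contained two distinct points $a\neq b$. Then the geodesic $\eta$ from $a$ to $b$ lies in $\overline{C}$, so no leaf of either lamination crosses $\eta$ transversally.

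Step 1 is to show that $\overline{C}\cap S^1$ contains at most one point. Assume it contains two points $a\neq b$, and take $\eta$ as above, chosen so that it is extremal, i.e.\ a boundary geodesic of the convex hull of $\overline{C}\cap S^1$. I would argue that $\eta$ projects to a complete simple geodesic in $X$. Simplicity should follow because the $\Gamma$-translates of $C$ are either equal to $C$ or disjoint from it. If $\gamma C=C$, the translate $\gamma\eta$ is another boundary geodesic of the same hull, and two boundary geodesics of a convex hull do not cross. Since no leaf crosses $\eta$, we get $i(\mu,\eta)+i(\nu,\eta)=0$, except for leaves that share an endpoint with $\eta$ or coincide with it. Leaves sharing an endpoint carry measure $0$ by Lemma~\ref{lem:asymptotic}, unless that endpoint belongs to a lift of a closed atomic leaf. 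Those atomic configurations are a real nuisance. There I would replace $\eta$ by a nearby simple geodesic or a closed curve in $K$, or else use the atom itself as $\eta$ and check that it has zero intersection with the other lamination, which it does when it bounds $C$ and nothing crosses it. In every case the filling condition is contradicted. This proves (2) in the non-compact case, and it also shows that when $\overline{C}$ is compact, $\overline{C}\cap S^1=\emptyset$.

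Step 2 treats the compact case. Here $\overline{C}$ is a compact convex region whose boundary is made of geodesic segments from the two laminations. I must show there are finitely many sides and that the sides alternate. Finiteness comes from compactness: only finitely many leaves meet $\overline{C}$ in a nondegenerate segment, because of local finiteness in a compact set. Infinitely many boundary segments would accumulate, and a limit boundary leaf would produce a corner with infinitely many sides, which cannot happen since a leaf accumulated by boundary segments of $C$ would cross into $C$. Two consecutive sides cannot both come from $\Tilde{\mu}$, since they would be disjoint geodesics meeting at a vertex, so they must come from $\Tilde{\mu}$ and $\Tilde{\nu}$ alternately. Vertices are transverse intersections of a $\Tilde{\mu}$-leaf and a $\Tilde{\nu}$-leaf.

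The main obstacle is Step 1: producing a complete simple geodesic downstairs from $\eta$, and handling the cases where an endpoint of $\eta$ is an endpoint of an atomic leaf or a parabolic fixed point. In those cases I would first look for a nearby closed geodesic, or a cross-cut inside $K$, that still misses both laminations. Failing that, I would apply Lemma~\ref{lem:asymptotic} directly to show that the leaves sharing an endpoint with $\eta$ contribute zero measure.
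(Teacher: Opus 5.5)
Your route is the paper's. $C$ is convex. For (2), a geodesic $\eta$ joining two ideal points of $\overline{C}$ lies in $\overline{C}$, is crossed by no leaf, and projects to a complete simple geodesic that violates filling. For (1), the sides alternate because leaves of one lamination are disjoint. Taking $\eta$ to be a boundary geodesic of the hull of $\overline{C}\cap S^1$ is a tidier way to get simplicity than the paper's ``remove loops and tighten''.

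What you call the main obstacle is not one. $i(\mu,\eta)$ is the $\tilde{\mu}$-measure of the geodesics that cross the lift of $\eta$ transversely, i.e.\ that separate its endpoints. A leaf sharing an endpoint with $\eta$, or equal to $\eta$, does not separate them. So $i(\mu,\eta)=i(\nu,\eta)=0$ at once, without Lemma~\ref{lem:asymptotic}, and atoms and cusps need no separate treatment.

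The one step that fails as written is the finiteness of the sides in Step 2, because both of your stated reasons are false. First, laminations are not locally finite: uncountably many leaves can meet a compact set. Second, the lines carrying sides of $C$ are leaves, hence disjoint from the open set $C$, and any limit of them is also disjoint from $C$. So a limit leaf never ``crosses into $C$''.

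A correct argument uses filling:
\begin{enumerate}
\item Suppose sides accumulated at $p\in\partial C$. Since sides alternate, both $\tilde{\mu}$-sides and $\tilde{\nu}$-sides would shrink to $p$ from one side.
\item Their lines are supporting lines at points tending to $p$ from that side, so they all converge to the one-sided tangent geodesic $L$ of $\partial C$ at $p$.
\item Supports are closed, so $L\in|\tilde{\mu}|\cap|\tilde{\nu}|$.
\item A common leaf satisfies $i(\mu,L)=i(\nu,L)=0$, which contradicts filling.
\end{enumerate}

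You also assume that $\partial C$ is piecewise geodesic. The same limit argument proves this, once you add that two distinct supporting leaves of one lamination, at boundary points close to $p$ on one side, would cross.

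The paper itself simply asserts finiteness, so you are not missing an idea it supplies. But your justification needs to be replaced by an argument like this one.
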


\begin{proof}
We first prove that $C$ is convex. Indeed, for every geodesic $g$ in  $|\Tilde{\mu} |\cup |\Tilde{\nu}|$, $C$ lies in one of the half-spaces bounded by $g$, say $H_g^C$. Then, $$C=\bigcap_{g\in |\Tilde{\mu}|\cup |\Tilde{\nu}|} H_g^C=:B.$$ To see this: the inclusion $C\subset B$ is obvious, and for the other inclusion, if we fix $x\in C$ and take any $y\in B$, then the segment $[x,y]$ lies in $B$ and meets no geodesic in $|\mu|\cup |\nu|$, and hence $y$ is in the same connected component as $x$, i.e., $y \in C$.

If $\overline{C}$ is compact then it is bounded by finitely many arcs on $|\tilde{\mu} |\cup|\tilde{\nu} |$ and the boundary geodesics have to alternate, since $\tilde{\mu}$ and $\tilde{\nu}$ cannot intersect themselves. Item (1) in the proposition is now immediate.

To prove (2), assume that $C$ has at least two points on $S^1$. Let $\tilde{\eta}$ be the geodesic in $\mathbb{D}$ connecting the two points of the ideal boundary of $C$. Then the quotient geodesic $\pi(\Tilde{\eta})$ in $K\subset X$ is disjoint from $|\mu |\cup |\nu |$. By removing loops, we obtain a simple curve $\eta$ in $K$ that is disjoint from $|\mu |\cup |\nu |$ which has the same asymptotic behaviour (either ideal points, or recurrent behaviour). It is homotopic to a geodesic, and since geodesics realize the geometric intersection number, it does not intersect $|\mu|$ or $|\nu|$. This contradicts the filling property of $\mu$ and $\nu$ and hence proves (2). 
\end{proof}

\begin{prop}
\label{prop:endpoints_S^1}
    Let $X=\mathbb{D}/\Gamma$ be a Riemann surface, and let $\mu$ and $\nu$ be measured laminations on $X$ that are realized by integrable partial measured foliations. If $\mu$ and $\nu$ fill, then the set of endpoints of the lifts $\Tilde{\mu}$ and $\Tilde{\nu}$ to $\mathbb{D}$ are dense in $S^1$.
\end{prop}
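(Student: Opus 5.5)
We argue by contradiction. Suppose there is a nondegenerate open arc $J\subset S^1$ that contains no endpoint of any geodesic in $|\Tilde{\mu}|\cup|\Tilde{\nu}|$. Then no leaf of either lifted lamination has an endpoint in $J$. By the Jordan-curve separation property of complete geodesics in $\mathbb{D}$, each leaf $g$ either has both endpoints in $S^1\setminus \overline{J}$, or has an endpoint at one of the two endpoints of $J$. In both cases $g$ does not separate two distinct points of the interior of $J$.

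Next, pick two distinct points $a,b$ in the interior of $J$ and let $\Tilde{\eta}$ be the hyperbolic geodesic joining them. Every leaf of $\Tilde{\mu}$ or $\Tilde{\nu}$ has both endpoints in the closure of the complementary arc. A leaf crossing $\Tilde{\eta}$ transversally would have to separate $a$ from $b$, which is impossible. Hence $\Tilde{\eta}$ meets no leaf transversally. It could coincide with a leaf only if $\{a,b\}$ were the endpoints of that leaf, and this is excluded since $a,b\in J$. So $\Tilde{\eta}$ is disjoint from $|\Tilde{\mu}|\cup|\Tilde{\nu}|$. Therefore it lies in a single complementary component $C$ of $\mathbb{D}\setminus(|\Tilde{\mu}|\cup|\Tilde{\nu}|)$, which is a lift of some component $K$ of $X\setminus(|\mu|\cup|\nu|)$.

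The closure of $C$ then contains both $a$ and $b$ on $S^1$. This means $\overline{C}$ is noncompact and $\overline{C}\cap S^1$ contains at least two points, contradicting Proposition~\ref{prop:component_closures}(2). We conclude that the endpoints are dense. Alternatively, one could avoid citing the proposition by projecting $\Tilde{\eta}$ to $X$, removing loops to obtain a simple complete geodesic disjoint from $|\mu|\cup|\nu|$, and contradicting Definition~\ref{def:filling} directly. This is exactly the argument in the proof of Proposition~\ref{prop:component_closures}, so citing it is cleaner.

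The main point to check is that the lemma's hypothesis is met: the component containing $\Tilde{\eta}$ must genuinely be a component of the complement of the supports. We should also handle the degenerate case where both laminations are empty; there every geodesic misses the supports, so filling already fails. Leaves with an endpoint exactly at $\partial J$ do no harm, because we chose $a,b$ strictly inside $J$.
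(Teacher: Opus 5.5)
Your proposal is correct and follows the paper's argument: a geodesic with both endpoints in an arc that omits all endpoints stays inside a single complementary component, and that component's closure meets $S^1$ in two points, contradicting Proposition~\ref{prop:component_closures}(2). The separation details you supply (no leaf can separate $a$ from $b$) are exactly the step the paper leaves implicit.
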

\begin{proof}
    If the set of endpoints omits an interval $I\subset S^1$, then any geodesic with endpoints in $I$ determines a complementary region of $|\Tilde{\mu}|\cup |\Tilde{\nu}|$ with non-compact closure. This contradicts item (2) in the proposition above.
\end{proof}

\section{Sufficiency of the filling condition}\label{sec: filling, sufficient}

In this section we prove that filling is sufficient for holomorphic realization. As in \S \ref{sec: filling, necessary}, we first treat the case of the unit disk, and then we do the general case. We could merge into one proof, but we keep things separate (and refer to previous arguments where needed) so that the proofs are more digestible.

\subsection{Partitions on $\hat{\mathbb{C}}$}
Preparing for the main proofs, we review upper semi-continuous partitions.

\begin{defn}
    A partition $\mathcal{G}$ of $\widehat{\mathbb{C}}$ is {\it upper semi-continuous} if for any $G\in \mathcal{G}$ and any open set $U\supset G$ there is an open set $V$, which is a union of elements of $G$, such that $U\supset V\supset G$. 
\end{defn}
The usefulness of the notion comes from Moore's theorem \cite{moore}, which says that the quotient of $\widehat{\mathbb{C}}$ obtained by collapsing elements of (an upper semicontinuous decomposition by non-separating continua) $\mathcal{G}$ to points is homeomorphic to $\widehat{\mathbb{C}}$.

For $\mathcal{G}$ any partition of $\widehat{\mathbb{C}}$ and $z\in\widehat{\mathbb{C}}$, let $G(z)\in\mathcal{G}$ be the element of the partition which contains $z$. 
\begin{prop}
\label{prop:semi-c}
Let $\mathcal{G}$ be a partition of $\widehat{\mathbb{C}}$ by closed sets. 
     Then $\mathcal{G}$ is {\it upper semi-continuous} if and only if
     for any two sequences $z_n,w_n\in \widehat{\mathbb{C}}$ such that $G(z_n)=G(w_n)$, if $z_n\to z$ and $w_n\to w$, then $G(z)=G(w)$. 
\end{prop}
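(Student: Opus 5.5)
We prove the two implications separately, using only compactness of $\widehat{\mathbb{C}}$ and the fact that every element of $\mathcal{G}$ is closed. For convenience, call the sequential condition in the statement $(\ast)$.

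\emph{Upper semi-continuity implies $(\ast)$.} Let $z_n\to z$, $w_n\to w$ with $G(z_n)=G(w_n)$, and suppose for contradiction that $G(z)\neq G(w)$. Then $G(z)$ and $G(w)$ are disjoint closed subsets of the compact Hausdorff space $\widehat{\mathbb{C}}$, so we may choose disjoint open sets $U\supset G(z)$ and $U'\supset G(w)$. Upper semi-continuity gives saturated open sets $V,V'$ with $G(z)\subset V\subset U$ and $G(w)\subset V'\subset U'$. For $n$ large we have $z_n\in V$ and $w_n\in V'$. Since $V$ is a union of partition elements, $G(z_n)\subset V$, and likewise $G(w_n)\subset V'$. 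Because $G(z_n)=G(w_n)$, this element lies in $V\cap V'\subset U\cap U'=\emptyset$, a contradiction.

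\emph{$(\ast)$ implies upper semi-continuity.} Fix $G\in\mathcal{G}$ and an open set $U\supset G$. Define $V$ as the union of all elements of $\mathcal{G}$ contained in $U$. Then $V$ is saturated and $G\subset V\subset U$, so it only remains to show that $V$ is open. Equivalently, the complement $F:=\widehat{\mathbb{C}}\setminus V$, which is the union of all elements meeting $\widehat{\mathbb{C}}\setminus U$, must be closed.

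To see this, take $z_n\in F$ with $z_n\to z$. For each $n$ pick $w_n\in G(z_n)\setminus U$. Since $\widehat{\mathbb{C}}\setminus U$ is compact, after passing to a subsequence we have $w_n\to w\in\widehat{\mathbb{C}}\setminus U$. Applying $(\ast)$ gives $G(z)=G(w)$. Hence $G(z)$ meets $\widehat{\mathbb{C}}\setminus U$, so $z\in F$ and $F$ is closed.

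The only step needing care is this last one: recognizing that openness of the saturated set $V$ is the real content, and using compactness of $\widehat{\mathbb{C}}\setminus U$ to extract the convergent sequence $w_n$. Everything else is routine point-set topology.
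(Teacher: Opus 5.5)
Your proof is correct and follows essentially the paper's route: your saturated set $V$ is exactly the paper's $V=\widehat{\mathbb{C}}\setminus\operatorname{Sat}(F)$, and your sequential proof that its complement is closed is the unpacked form of the paper's observation that $\operatorname{Sat}(F)$ is the projection of the closed set $\mathcal{R}\cap(\widehat{\mathbb{C}}\times F)$ in a compact space. In the forward direction you separate $G(z)$ and $G(w)$ by normality and apply upper semi-continuity twice, whereas the paper separates the point $w$ from $G(z)$ and applies it once; this is only a cosmetic difference.
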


\begin{proof}
Assume first that $\mathcal G$ is upper semi-continuous. If $w\notin G(z)$, let $D_r(w)$ be an open disk in the spherical metric of some positive radius $r$ such that $G(z)\cap D_r(w)=\emptyset$. The choice of such $r>0$ is possible since $G(z)$ is closed by assumption and $w\notin G(z)$. Let $U$ be the complement of the closed disk $\overline{D_{r/2}(w)}$. Then
\[
G(z)\subset U,\qquad w\notin \overline{U}.
\]
By upper semi-continuity, there is an open set $V$ with $G(z)\subset V\subset U$ such that every element of $\mathcal G$ meeting $V$ is contained in $V$. For all sufficiently large $n$, we have $z_n\in V$, hence $G(z_n)\subset V$ and therefore $w_n\in V$. Thus $w\in\overline V\subset \overline U$, a contradiction.

Conversely, assume the sequential condition.
We use the decomposition $\mathcal{G}$ to define a subset of $\mathbb{C}^2$ by
\[
\mathcal{R}=\{(z,w):G(z)=G(w)\}.
\]
The set $\mathcal{R}$
is closed in $\mathbb{C}^2$. Indeed, if $(z_n,w_n)\in \mathcal{R}$ and
$(z_n,w_n)\to (z,w)$, then the sequential condition implies $(z,w)\in \mathcal{R}$.

Let $G_0\in\mathcal G$ and let $U$ be an open set containing $G_0$. Put
$F=\widehat{\mathbb C}\setminus U$ and let
\[
\operatorname{Sat}(F)
 =\{z\in\widehat{\mathbb C}:G(z)\cap F\neq\emptyset \}.
\]
We have
\[
\operatorname{Sat}(F)
 =\pi_1\bigl(\mathcal{R}\cap(\widehat{\mathbb C}\times F)\bigr),
\]
where $\pi_1$ is projection onto the first factor. Since $\mathcal{R}$ and $F$ are
closed and $\widehat{\mathbb C}^{\,2}$ is compact, $\operatorname{Sat}(F)$
is compact, hence closed. Therefore
\[
V=\widehat{\mathbb C}\setminus\operatorname{Sat}(F)
\]
is open and $G_0\subset V$. The set $V$ is the union of elements of $\mathcal{G}$ because if $G\cap V\neq\emptyset$ then $G\not\subset \operatorname{Sat}(F)$ which implies $G\cap F=\emptyset$ and, by definition of $\operatorname{Sat}(F)$, also $G\cap \operatorname{Sat}(F)=\emptyset$. Thus every open
set containing $G_0$ contains an open subset containing $G_0$ which is union of elements of $\mathcal{G}$, so
$\mathcal G$ is upper semi-continuous.
\end{proof}

\subsection{Realization on $\mathbb{D}$}
The goal of this subsection is to prove the following.
\begin{thm}
\label{thm:realization_sufficient}
   Let $\mu$ and $\nu$ be a pair of filling measured laminations on $\mathbb{D}$ that are realizable by integrable (partial) measured foliations on $\mathbb{D}$.
   
Then there exists a homeomorphism $h:S^1 \to S^1$ and a finite-area holomorphic quadratic differential $q$ such that the push-forward measured laminations $h_*(\mu )$ and $h_*(\nu )$ are homotopic to the horizontal and vertical foliations of $q$.
\end{thm}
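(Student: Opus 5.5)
We follow the construction outlined in \S\ref{subsubsec: overview}. On $\mathbb{D}$, Lemma \ref{lem:asymptotic} shows that $\mu$ and $\nu$ have no atoms and $\tilde\mu(E_p)=\tilde\nu(E_p)=0$ for every $p\in S^1$, so no modification near atomic leaves is needed here. Set $\Lambda=|\mu|\cup|\nu|$. We define a partition $\mathcal{G}$ of $\widehat{\mathbb{C}}$; its elements are obtained as follows.
\begin{enumerate}
\item Take the closure of each complementary component $C$ of $\Lambda$ in $\mathbb{D}$. By Proposition \ref{prop:component_closures}, this closure is either a compact polygon with sides alternating between $|\mu|$ and $|\nu|$, or a region whose closure meets $S^1$ in a single point.
\item Take each maximal subarc of a leaf of $\mu$ that meets no leaf of $\nu$, together with the adjacent regions it bounds, and symmetrically for $\nu$.
\item Take each point of $S^1$ together with everything asymptotic to it.
\item Take the complement $\widehat{\mathbb{C}}\setminus\overline{\mathbb{D}}$ as a single element, and every remaining point as a singleton.
\end{enumerate}
All of these elements are non-separating continua. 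We check upper semi-continuity through the sequential criterion of Proposition \ref{prop:semi-c}. By Moore's theorem the quotient is then a sphere, and the image $D$ of $\mathbb{D}$ is an open disk.

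On $D$, the images of $|\mu|$ and $|\nu|$ form two transverse foliations. Every point of $D$ either lies on exactly one $\mu$-leaf and one $\nu$-leaf crossing transversally, or is a collapsed polygon. In the latter case it becomes a singular point with $k$ prongs, where $2k$ is the number of sides of the polygon. The transverse measures descend. Density of endpoints (Proposition \ref{prop:endpoints_S^1}) and the filling condition ensure that leaves of one foliation cross leaves of the other with positive measure. More precisely, for every segment of a $\mu$-leaf, the $\nu$-measure of the $\nu$-leaves crossing it is positive, and vice versa. This is the key use of filling: if some arc carried zero transverse measure, the corresponding geodesic would have zero intersection with both laminations.

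We then cover $D$ by flow boxes: rectangles bounded by two $\mu$-arcs and two $\nu$-arcs, plus standard neighborhoods of the singular points. Each rectangle $R$ is mapped homeomorphically onto the Euclidean rectangle $[0,a]\times[0,b]$. Here $a=\nu(\text{leaves crossing } R)$ and $b=\mu(\text{leaves crossing } R)$. The coordinates are the measure-distribution functions, which are continuous and strictly monotone because there are no atoms and crossings have positive measure. Overlapping charts differ by $w\mapsto\pm w+c$, so they form a half-translation atlas. The singular points become cone points of angle $k\pi$. The result is a Riemann surface structure $Y_0$ on $D$ with a holomorphic quadratic differential $q=dw^2$, whose horizontal foliation is the image of $\mu$ and whose vertical foliation is the image of $\nu$.

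Area additivity gives
$$\int|q|=\sum_R a_Rb_R=i(\mu,\nu)<\infty.$$
We expect finiteness of $i(\mu,\nu)$ to follow from integrability of both foliations, via Cauchy--Schwarz as $i(\mathcal F,\mathcal G)\le \mathcal D(\mathcal F)^{1/2}\mathcal D(\mathcal G)^{1/2}$.

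It remains to identify $Y_0$ conformally with $\mathbb{D}$, which is \textbf{the main obstacle}. Being a simply connected open surface, $Y_0$ is either $\mathbb{C}$ or $\mathbb{D}$. It cannot be $\mathbb{C}$, since $\mathbb{C}$ carries no nonzero integrable holomorphic quadratic differential. So $Y_0\cong\mathbb{D}$.

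Next, the quotient map restricted to $\mathbb{D}$, composed with this uniformization, must extend to a homeomorphism $h$ of $S^1$. On the source side, ideal points correspond to the point-elements of type (3), and these are in bijection with $S^1$. We must show that the prime ends of $Y_0$ correspond bijectively and continuously to these points. I would attack this with extremal length: separate distinct points of $S^1$ by families of $\mu$- and $\nu$-leaves of positive measure, which gives $q$-distance bounds that prevent two source points from going to the same prime end.

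Finally, the leaves of $\mathcal F_h(q)$ have the same ideal endpoints, under $h$, as the leaves of $\mu$, and the transverse measures agree. Hence $\mathcal F_h(q)$ straightens to $h_*\mu$, and likewise $\mathcal F_v(q)$ straightens to $h_*\nu$.
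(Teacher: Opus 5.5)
Your architecture is the paper's: Moore quotient, descended transverse measured foliations, flow boxes replaced by Euclidean rectangles glued by half-translations, $\int|q|=i(\mu,\nu)<\infty$ by Minsky's inequality, and $\mathbb{C}$ excluded because it carries no nonzero integrable differential. The gap is the boundary homeomorphism $h$, which is what produces the conclusion of the theorem and which you do not prove. Ruling out $\mathbb{C}$ is not the real obstacle.

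You plan to show that the quotient map followed by $Y_0\cong\mathbb{D}$ extends to $S^1$. Your extremal-length sketch addresses at most injectivity: separating two points of $S^1$ in the $q$-metric presupposes that each $p\in S^1$ already determines a single prime end of $Y_0$. Nothing in the proposal shows any of the following:
\begin{enumerate}
\item that the map has a limit at each $p$ (a homeomorphism between open disks need not extend, and the closed disk from Moore's theorem is only a topological compactification of $Y_0$);
\item that every prime end is attained;
\item that leaf endpoints go to trajectory endpoints, which your last paragraph relies on.
\end{enumerate}

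The paper never extends the quotient map; it defines $h$ directly on endpoints. Leaves of the quotient foliations correspond to horizontal and vertical trajectories of $q$, and each trajectory of an integrable $q$ on $\mathbb{D}$ has two distinct ideal endpoints. The induced map on endpoint sets is monotone because it preserves which leaves separate which. On complementary intervals of the closure of the $\mu$-endpoints it is defined through $\nu$-endpoints. Source endpoints are dense by Proposition \ref{prop:endpoints_S^1}. Target endpoints are dense because $\mathcal{F}_h(q)$ and $\mathcal{F}_v(q)$ fill (Theorem \ref{thm:necessary_realization}), so the same proposition applies to them. The monotone map therefore extends to a homeomorphism.

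Your final sentence can also only hold up to measure zero. The $\mu$-rays cutting the corners of a strip-shaped gap of $|\nu|$ are collapsed onto $S^1/\sim$. So the two $\nu$-leaves bounding the gap can merge into a single vertical trajectory whose endpoints are not the images of theirs. This is harmless since single leaves carry no mass, but the endpoint correspondence has to be set up with it in mind.

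Second, the decomposition you hand to Moore's theorem is not admissible. The element $\widehat{\mathbb{C}}\setminus\overline{\mathbb{D}}$ is open, so it is not a continuum. Choosing $z_n,w_n$ in it with $z_n\to p$ and $w_n\to p'$ for distinct $p,p'\in S^1$ violates the sequential criterion of Proposition \ref{prop:semi-c}, and the quotient is not even Hausdorff. Exterior points must be singletons, as in the paper. This is essential: it makes $(\mathbb{D}\cup S^1)/\sim$ a closed disk bounded by the Jordan curve $S^1/\sim$, once Proposition \ref{prop:component_closures}(2) guarantees that no element contains two points of $S^1$. That is an indispensable use of filling in the topological step, and you use Proposition \ref{prop:component_closures} only to describe the pieces.

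Several further points need fixing:
\begin{enumerate}
\item Your types (1)--(3) overlap as stated, since closures of non-compact complementary regions and of rays contain points of $S^1$. After merging them, Lemma \ref{lem:asymptotic} is needed to see that no two such pieces touch the same point of $S^1$.
\item Because these pieces are collapsed onto $S^1/\sim$, the image of $\mathbb{D}$ is not an open disk. The surface carrying the flat structure is $(\mathbb{D}/\sim)\setminus(S^1/\sim)$.
\item Upper semicontinuity is only asserted in your write-up, whereas the paper checks it case by case in Lemma \ref{lem:UpperC}.
\end{enumerate}
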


Let $\mu$ and $\nu$ be as in the statement. We define a partition $\mathcal{G}_{\mu ,\nu}$ of the Riemann sphere $\widehat{\mathbb{C}}$ as follows. The first kind of elements of the partition are the closures of the components of the complement of $|\mu |\cup |\nu |$ in $\mathbb{D}\cup S^1$ (see Figure \ref{fig:partition1}). 
Next we consider the components of $|\mu |\setminus |\nu |$ and $|\nu |\setminus |\mu |$. Each component is either an arc on a geodesic (of either $|\mu |$ or $|\nu |$) bounded by two points of $|\mu |\cap |\nu |$, or a ray (on a geodesic of either $|\mu |$ or $|\nu |$) with initial point in $|\mu |\cap |\nu |$ and an ideal endpoint on $S^1$. The second kind of elements of the partition are the closures in $\mathbb{D}\cup S^1$ of these arcs and rays (see Figure \ref{fig:partition2}), excluding those contained in closures of components of the complement of $|\mu |\cup |\nu |$. The third kind of elements are the point sets of the complements of the union of the first two kinds of elements (see Figure \ref{fig:partition3}). In particular, all points of $\widehat{\mathbb{C}}\setminus (\mathbb{D}\cup S^1)$ are elements of the partition. 
Also, this kind of element includes all points of $|\mu |\cap |\nu |$ that are accumulated from both sides by geodesics of $|\mu |$ and by geodesics of $|\nu |$. Equivalently, $z=g_1\cap g_2$ for $g_1\in |\mu |$ and $g_2\in |\nu |$ is not an element of the partition if and only if either $g_1\setminus |\nu |$ has an open arc or ray with endpoint $z$, or $g_2\setminus |\mu |$ has an open arc or ray with endpoint $z$. Note that elements of $\mathcal{G}_{\mu ,\nu}$ are non-separating closed sets in $\widehat{\mathbb{C}}$.  

To see that $\mathcal{G}_{\mu,\nu}$ is a genuine partition (i.e., the elements do not intersect), we just need to show that elements of the first or second kind don't intersect other elements of the first or second kind. It is clear from the definitions that such elements don't intersect at points in $\mathbb{D}$. If we have two such components $A$ and $B$ intersecting at a point $p\in S^1$, first note that the complement contains an open subset of the disk accumulating at $p$. Indeed, two complementary components cannot meet along a single geodesic ending at $p$, since the laminations have no atoms, and for other types of components this is obvious. The components $A$ and $B$ are therefore accumulated by leaves of $|\mu|$ or $|\nu|$ all ending at $p$, and this contradicts Lemma \ref{lem:asymptotic}.

\begin{lem}
\label{lem:UpperC}
    The partition 
    $\mathcal{G}_{\mu ,\nu}$ of $\widehat{\mathbb{C}}$ is upper semi-continuous partition by non-separating continua.
\end{lem}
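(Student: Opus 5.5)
The plan is to verify upper semi-continuity through the sequential criterion of Proposition \ref{prop:semi-c}. The elements are already known to be closed, non-separating, and pairwise disjoint, and each is connected: a point, a closed geodesic arc or ray together with its ideal endpoint, or the closure of a convex region. So it remains to take $z_n\to z$, $w_n\to w$ with $G(z_n)=G(w_n)$ and show $G(z)=G(w)$. If $z=w$ there is nothing to prove. Passing to subsequences, I would split according to the kind of $G(z_n)$. Elements of the third kind contribute only $z_n=w_n$. Elements of the first and second kinds that are distinct are disjoint in $\mathbb{D}$, so I would further split by whether $G(z_n)$ is eventually a fixed element $A$ or the elements are pairwise distinct.

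If $G(z_n)=A$ for all $n$, then $z,w\in A$ because $A$ is closed, so $G(z)=G(w)=A$.

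In the essential case the $G_n:=G(z_n)$ are pairwise distinct elements of the first or second kind. The key estimate I would aim for is that their Euclidean diameters tend to $0$. This forces $z=w$, and we are done. For elements of the second kind, the segments are arcs of distinct leaves of $\tilde{\mu}$ (or $\tilde{\nu}$) lying between crossings with the other lamination. For the first kind, the regions are distinct complementary components. Suppose instead that $\operatorname{diam}(G_n)\ge c>0$ along a subsequence. Pass to a Hausdorff limit $G_\infty$, a continuum of diameter at least $c$.

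In the second kind, the limiting set lies on a limit geodesic $g$ of $|\tilde{\mu}|$ (closedness of the lamination). A nondegenerate subarc of $g$ is then approached by pairwise distinct subarcs of leaves that do not meet $|\tilde{\nu}|$.

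\begin{itemize}
\item If this subarc is in $\mathbb{D}$, transverse crossings with $|\tilde{\nu}|$ are stable, so $g$ also has a subarc avoiding $|\tilde{\nu}|$ in its interior. The region swept by the approximating leaves then gives a positive-width family of $\tilde{\mu}$-leaves crossing no $\tilde{\nu}$-leaf there. Combined with the structure of complementary regions in Proposition \ref{prop:component_closures}, this lets one produce an open set disjoint from $|\tilde{\nu}|$ foliated by $\tilde{\mu}$ alone. One would then argue that such a strip leads to a complete geodesic $\eta$ with $i(\nu,\eta)=0$ and $i(\mu,\eta)=0$ (taking $\eta$ a $\tilde{\mu}$-leaf with endpoints avoiding $\tilde{\nu}$), contradicting filling.
\item If the subarcs collapse toward $S^1$ but keep diameter $\ge c$, then the limit is an arc of $S^1$, or the leaves accumulate with a common ideal endpoint. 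The first possibility contradicts Proposition \ref{prop:endpoints_S^1}, since an interval would contain no endpoints of crossing leaves. The second contradicts Lemma \ref{lem:asymptotic}, since it would put positive measure on $E_p$ away from atoms, and $\mathbb{D}$ has no atoms.
\end{itemize}

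For the first kind I would use Proposition \ref{prop:component_closures}. Infinitely many distinct complementary regions of diameter $\ge c$ would have Hausdorff limit containing a nondegenerate geodesic segment or a boundary arc. Since the regions are disjoint and convex, they nest along a direction, and between consecutive ones there are leaves of both laminations. The limit region would then be a convex set meeting $S^1$ in two points or an arc. This yields a geodesic crossing neither lamination, and hence violates filling, exactly as in item (2) of Proposition \ref{prop:component_closures}.

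I expect the main obstacle to be the second-kind case. There one must rule out arbitrarily many long arcs of $\tilde{\mu}$ avoiding $\tilde{\nu}$ and converge to a genuine contradiction with the filling hypothesis. The danger is that the limiting arc may meet $|\tilde{\nu}|$ only at its endpoints, or be accumulated by $\tilde{\nu}$-leaves on just one side. I would handle this by first applying Lemma \ref{lem:asymptotic} to exclude accumulation at a single ideal point. After that, transversality forces the crossings in the interior of the limiting arc to persist for nearby $\tilde{\mu}$-leaves, which contradicts their being disjoint from $|\tilde{\nu}|$.
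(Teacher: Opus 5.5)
\textbf{There is a genuine gap.} Your central step fails: it is not true that pairwise distinct elements of the first or second kind have Euclidean diameters tending to $0$. Consequently the contradiction with filling that you are looking for does not exist.

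The paper's own dual regular polygonal laminations already show this. Take endpoint measures comparable to Lebesgue measure. Then $|\mu|=\bigcup_j\overline{H_j}\cap\mathbb{D}$ and $|\nu|=\bigcup_k\overline{H_k'}\cap\mathbb{D}$, the pair is filling and integrable, and $\mathbb{D}\setminus(|\mu|\cup|\nu|)$ is a single open $2n$-gon with compact closure.
\begin{enumerate}
\item Every $\mu$-leaf $g\subset H_j$ crosses $g'_{j-1}$ and $g'_j$.
\item The arc of $g$ between these two crossings lies in the open ideal polygon $\mathbb{D}\setminus|\nu|$.
\item For $g\neq g_j$ the closure of that arc is a second-kind element. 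These are exactly the arcs collapsed in the proof of Theorem \ref{thm: sufficient, disk}.
\end{enumerate}
So there are uncountably many distinct second-kind elements. As $g\to g_j$ they converge to the side of the central $2n$-gon lying on $g_j$, so their diameters stay bounded below. Upper semi-continuity still holds here, because the limit arc lies inside the first-kind element.

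Your first bullet breaks precisely at this point. A positive-width family of $\tilde\mu$-arcs that avoids $|\tilde\nu|$ locally does not produce a complete leaf avoiding $|\tilde\nu|$: each of these leaves crosses $|\tilde\nu|$ inside $H'_{j-1}\cup H'_j$. Your first-kind paragraph has the same defect. A Hausdorff limit of disjoint convex regions of diameter $\ge c$ has zero area, so it is a geodesic segment. It is not a convex set meeting $S^1$ in two points, so Proposition \ref{prop:component_closures}(2) gives you nothing.

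\textbf{How to repair it.} The right target is not that the Hausdorff limit $G_\infty$ is small, but that it lies inside a \emph{single} element. Since $z,w\in G_\infty$, that already gives $G(z)=G(w)$.

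Your stability-of-crossings observation proves this for second-kind limits. No interior point of the limit arc on $g\in|\tilde\mu|$ can lie on a $\tilde\nu$-leaf: such a leaf would cross $g$ transversally and hence cross $G_n$ for large $n$. So the interior of $G_\infty$ lies in one component of $g\setminus|\tilde\nu|$. The closure of that component is either a second-kind element or lies on the boundary of a first-kind element. Thin first-kind limits are handled similarly, using that $\mu$ and $\nu$ share no leaf.

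The paper avoids Hausdorff limits and works with saturated neighbourhoods instead.
\begin{enumerate}
\item Around $G(z)$ it takes quadrilaterals $R_k$ bounded by leaves $g_k^{l},g_k^{r}\in|\tilde\mu|$ and $h_k^{l},h_k^{r}\in|\tilde\nu|$ that accumulate onto $G(z)$.
\item No partition element can cross $\partial R_k$, so $G(z_n)\subset\overline{R_k}$ for large $n$.
\item This forces $w\in\bigcap_k\overline{R_k}=G(z)$.
\end{enumerate}
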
 

\begin{proof}
    We first note that all the components of $\mathcal{G}_{\mu ,\nu}$ are non-separating, connected and closed subsets of $\widehat{\mathbb{C}}$ by the definition. Therefore, we can apply the sequential condition from Proposition \ref{prop:semi-c}. The proof is elementary but it is split into several cases. Lemma \ref{lem:asymptotic} implies that $\mu$ and $\nu$ have no atoms and each geodesic in their support $|\mu |$ and $|\nu |$ is accumulated from at least one side by other geodesics of the support which do not have common endpoints.

    {\bf Case 1. Either $G(z)=\{ z\}\subset \mathbb{D}$ or $G(w)=\{ w\}\subset \mathbb{D}$.} Without loss of generality, assume that $G(z)=\{ z\}\subset \mathbb{D}$. Let $g_z\in |\mu |$ be the geodesic of $|\mu |$ containing $z$ and let $h_z\in |\nu |$ be the geodesic of $|\nu |$ containing $z$. We fix an orientation of $g_z$ and $h_z$. Then there exist sequences $g_k^l\in |\mu |$ and $g_k^r\in |\mu |$ which are on the left and the right side of $g_z$ and converge to $g_z$ as $k\to\infty$. Similar for sequences $h_k^l\in |\nu |$ and $h_k^r\in |\nu |$, and the geodesic $h_z$. For $k$ large enough, the geodesics $(g_k^l,g_k^r,h_k^l,h_k^r)$ determine a quadrilateral $R_k$ that contains $z$ in its interior. There exists $n(k)$ such that $z_n\in R_k$ for all $n\geq n(k)$. Then $G(z_n)\subset R_k$ since the boundary of $R_k$ is the union of the components of $\mathcal{G}_{\mu ,\nu}$. Since $G(z_n)=G(w_n)$, it follows that $w_n\in R_k$. Therefore $w_n\to z$, $z=w$, and $G(z)=G(w)$.

    {\bf Case 2. Either $G(z)$ or $G(w) $ is a closed ray (including the ideal endpoint on $S^1$) or a closed arc on a geodesic of $|\mu |$ or of $|\nu |$.} Without loss of generality, assume that is the case for $G(z)$ and $|\mu |$. When $G(z)$ is a closed arc on a geodesic $g_z\in |\mu |$, let $h^l\in |\nu |$ and $h^r\in |\nu |$ be the two geodesics through the endpoints of the closed arc $G(z)\subset g_z$. 
    Let $h_k^l\in |\nu |$ and $h_k^r\in |\nu |$ be the geodesics accumulating to $h_l$ and $h_r$. 
    Let $g_k^l$ and $g_k^r$ be the two sequences converging to $g_z$ from opposite sides. Then we consider the quadrilateral $R_k$ determined by $(g_k^l,g_k^r,h_k^l,h_k^r)$. Then, for $n$ large enough, we have that $z_n\in R_k$ and thus $G(z_n)\subset R_k$. Since $R_k$ accumulates to the arc $G(z)$ and $w_n\in G(z_n)$, we conclude that $w\in G(z)$ and $G(z)=G(w)$. When $G(z)$ is a ray on $g_z\in |\mu |$, then $R_k$ will be bounded by an arc on $h_k^l\in |\nu |$, and two rays on $g_k^l\in |\mu |$ and $g_k^r\in |\mu |$. The same argument gives $w\in G(z)$ and $G(z)=G(w)$.

    {\bf Case 3. Either $G(z)$ or $G(w) $ is an element of the first kind.} In this case, since $z$ and $w$ are limits of $z_n$ and $w_n$, they are on the boundary of the components unless they belong to the same component $ G(z)=G(w)$ when the components are constant in $n$. In the latter situation there is nothing to prove. In the former situation, the argument is the same as in Case 2. 

    {\bf Case 4. Both $z$ and $w$ are on $S^1$.} If $z_n$ or $w_n$ are outside $\mathbb{D}\cup S^1$ then $G(z_n)=G(w_n)=\{ z_n\} =\{ w_n\}$ and $z=w$. If say $z$ is on a component $G(z)$ which is not a point, then this is covered by Cases 2 and 3. The same is true for $w$. Therefore, the sequence $G(z_n)$ must accumulate to $z$. Since $w_n\in G(z_n)$, it follows that $w_n$ accumulates to $z$. Hence $z=w$ and thus $G(z)=G(w)$.

    {\bf Case 5. At least one of $z$ or $w$ is in the complement of $\mathbb{D}\cup S^1$.} 
    Then $z=w$ because $z_n=w_n$ for $n$ large enough. 
\end{proof}

\begin{figure}[htb]
    \centering
    \includegraphics[width=6 cm]{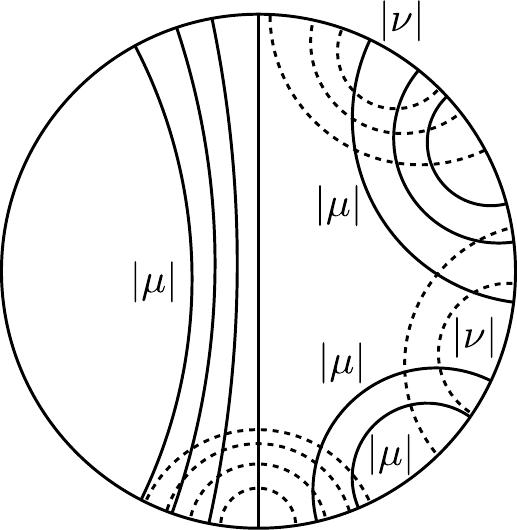}
    \caption{First kind elements of partition; $|\mu |$ full geodesics, $|\nu |$ dotted geodesics.}
    \label{fig:partition1}
\end{figure}

\begin{figure}[htb]
    \centering
    \includegraphics[width=6 cm]{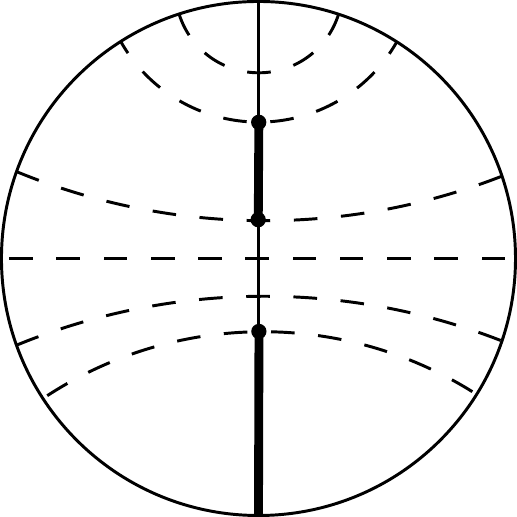}
    \caption{Second kind elements are in bold.}
    \label{fig:partition2}
\end{figure}

\begin{figure}[htb]
    \centering
    \includegraphics[width=6 cm]{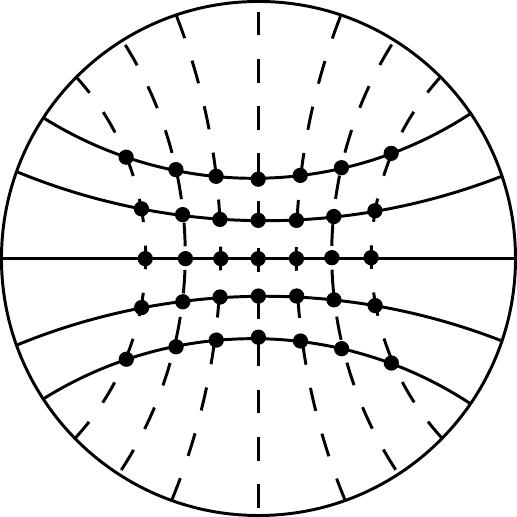}
    \caption{Third kind elements are bold points.}
    \label{fig:partition3}
\end{figure}

\begin{proof}[Proof of Theorem \ref{thm:realization_sufficient}]
Let $\sim$ be the equivalence relation on $\widehat{\mathbb{C}}$ whose equivalence classes are given by the above partition by non-separating continua. By Lemma  \ref{lem:UpperC} it is upper semi-continuous. Therefore, by Moore's theorem \cite{moore}, the quotient $\widehat{\mathbb{C}}/\sim$ is homeomorphic to $\widehat{\mathbb{C}}$.
By Proposition \ref{prop:component_closures}, no equivalence class contains two points of $S^1$, hence the restricted quotient map $S^1\to \widehat{\mathbb{C}}/\sim$ is injective. Since $S^1$ is compact and the range $\widehat{\mathbb{C}}/\sim$ is Hausdorff, the quotient map is a homeomorphism of $S^1$ onto its image (thus, the image is a Jordan curve). Since the partition in the complement of $\mathbb{D}\cup S^1$ consists of singletons and has a topological circle boundary, it follows that $(\mathbb{D}\cup S^1)/\sim$ is homeomorphic to $\mathbb{D}\cup S^1$.

We explain how $\mu$ and $\nu$ determine transverse measured foliations $\mu /\sim $ and $\nu / \sim$ on $(\mathbb{D}/\sim )\setminus (S^1 /\sim )$. 
Each geodesic in $|\mu |$ and in $|\nu |$ is collapsed to an arc in $(\mathbb{D}/\sim )\setminus (S^1 /\sim )$ with two endpoints in $S^1/\sim$ (because each geodesic in $|\mu |$ is intersected by a geodesic in $|\nu |$ so it does not collapse to a point). The endpoints of the quotient of any geodesic on $S^1/\sim$ are distinct because each geodesic is broken into more than one component of the decomposition $\mathcal{G}_{\mu ,\nu}$. When collapsing a compact component of the complement of $|\mu |\cup |\nu |$ in $\mathbb{D}$, we join several boundary geodesics at a single point, which creates a singularity. If the compact component has $N$ boundary components in $|\mu |$, and necessarily $N$ boundary components in $|\nu |$, then the collapsing forms an $N$-pronged topological singularity for the foliation obtained by quotienting $|\mu |$, and also an $N$-pronged topological singularity for the quotient of $|\nu |$ (see Figure \ref{fig:collapse_compact}). The foliations do not have any singularities at the elements of the third kind. By its construction, the quotient map $S^1\to S^1/\sim$ sends the endpoints of geodesics in $|\mu |$ and $|\nu |$ onto the endpoints of the corresponding geodesics in $|\mu /\sim |$ and $|\nu /\sim |$.

\begin{figure}[htb]
    \centering
    \includegraphics[width=9 cm]{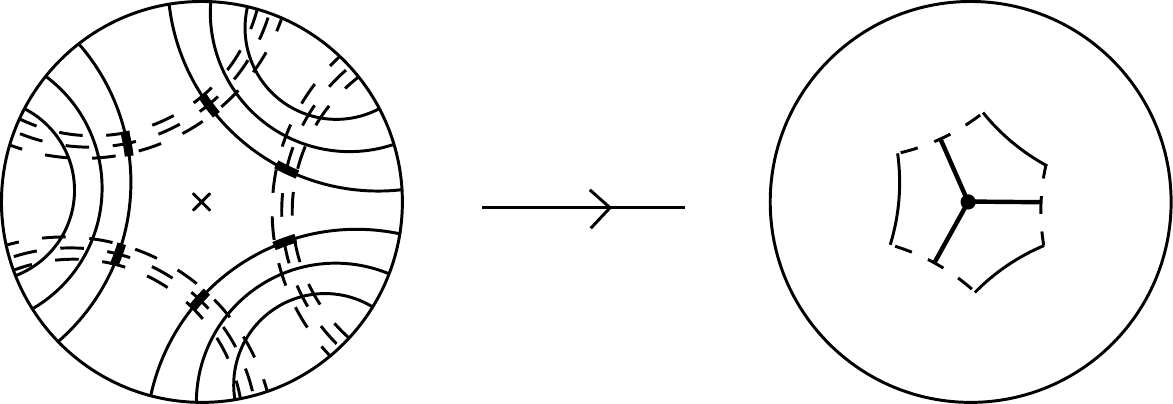}
    \caption{Collapsing compact complementary regions.}
    \label{fig:collapse_compact}
\end{figure}

Notice that 
$$
i(\mu ,\nu) =i(\mu /\sim ,\nu /\sim ) 
$$
by the definition. By the realizability of integrable partial foliations, i.e., \cite[Theorem 1.6]{Saric24} and \cite[Theorem 1.1]{Saric25}, there exist finite-area holomorphic quadratic differentials $q_{\mu}$ and $q_{\nu}$ on $\mathbb{D}$ whose horizontal foliations realize $\mu$ and $\nu$. Minsky's inequality (see \cite{SaricShima}) gives
$$
i(\mu ,\nu)^2\leq \int_{\mathbb{D}}|q_{\mu}|\int_{\mathbb{D}}|q_{\nu}|<\infty .
$$
Therefore
$$
i(\mu /\sim ,\nu /\sim ) <\infty .
$$

We partition $(\mathbb{D}/\sim )\setminus (S^1 /\sim )$ into flow rectangles for the filling foliations  $\mu /\sim$ and $\nu /\sim$. The rectangles are glued along their boundaries, and $N$ rectangles share a vertex at an $N$-pronged singularity. This picture is topological, with the lengths of the sides of the rectangles measured by the transverse measure of the foliation that crosses them. Then the intersection number $i(\mu /\sim ,\nu /\sim )$ is the sum of the areas of the rectangles.

We form a finite-area holomorphic quadratic differential from this data. Namely, we replace each rectangle of the above partition with a Euclidean rectangle of the same size, and we glue rectangles according to the gluing given by the topological picture. The gluings respect the transverse measures; therefore, they are Euclidean translations. The obtained Riemann surface $X$ has a natural holomorphic quadratic differential $q$ which is given by $dz^2$ on each rectangle. Since the sizes of the Euclidean rectangles are chosen to be equal to the sizes of the topological rectangles, it follows that $q$ is of finite-area on $X$. Note that $(\mathbb{D}/\sim )\setminus (S^1/\sim )$ is homeomorphic to $X$ by the construction, thus $X$ is homeomorphic to $\mathbb{D}$. The Riemann surface $X$ is biholomorphic to either the unit disk or to the complex plane. 
Since $q$ is a non-trivial, finite-area holomorphic quadratic differential on $X$, it follows that $X$ is biholomorphic to $\mathbb{D}$.

We now construct a homeomorphism from $S^1/\sim$ to $\partial_\infty X$ that maps the endpoints of leaves of the foliations $\mu /\sim$ and $\nu /\sim$ to the endpoints of leaves of the horizontal and the vertical foliations of $q$, respectively.
Note that there is a one-to-one correspondence between the above families of leaves. This is implied by the choice of the correspondence of the rectangles on $(\mathbb{D}/\sim )\setminus (S^1/\sim )$ and $X$, where the leaves are determined by the distances between their intersections with sides to the vertices of the rectangles.

 Each horizontal leaf of the differential $q$ limits at two distinct endpoints on the ideal boundary $\partial_{\infty}X$ (see \cite{Strebel}). Thus, the above correspondence of leaves gives a map from the pairs of endpoints of $\mu /\sim$ to the pairs of endpoints of the horizontal leaves of $q$. There is then a unique map
 from endpoints of $\mu /\sim$ to endpoints of the horizontal leaves that respects the separation properties for the triple of points on $S^1$ and $S^1/\sim$ for the natural orientations as boundaries of $\mathbb{D}$ and $(\mathbb{D}/\sim )\setminus (S^1/\sim )$; this map is necessarily monotone for the given orientations. The closure of the set of endpoints of $\mu /\sim$ may have open intervals in its complement in $S^1/\sim$ which are images of open intervals of $S^1$ in the complement of the endpoints of $|\mu |$ (and also correspond to the endpoints of the horizontal leaves of $q$). 
 By Proposition \ref{prop:endpoints_S^1}, the union of the sets of endpoints of $|\mu |$ and $|\nu |$ is dense in $S^1$, and hence the same is true for $\mu /\sim$ and $\nu /\sim$ in $S^1/\sim$. Thus, each leaf of $\nu /\sim$ with an endpoint in these complementary intervals is part of a rectangle with one side on this interval. Then we can consider the corresponding rectangle in $X$. We extend the map in the intervals by assigning the endpoints of the leaves of $\nu /\sim$ in the intervals to the endpoints of the vertical leaves of $q$ in $\partial_{\infty}X$. Therefore, the map extends to a dense subset of $S^1/\sim$, it is monotone, and the image is dense in $\partial_{\infty}X$ (density follows because $q$ is of finite-area in $X$ and hence we can apply Proposition \ref{prop:endpoints_S^1}). By density, we can extend it to a map $h_0:S^1/\sim\to\partial_{\infty}X$, which must be a homeomorphism. 
 
 By design, $h_0$ maps endpoints of trajectories of $\mu /\sim$ onto endpoints of horizontal trajectories of $q$, and similar for the endpoints of $\nu/\sim$ that lie in a complementary interval for $\mu/\sim$. The remaining trajectories of $\nu /\sim$ are determined by which trajectories of $\mu /\sim$ they cross, and these intersections are preserved under the homeomorphism $h_0$ restricted to the set of endpoints of $\mu /\sim$. The homeomorphism $h_0$ therefore has the desired properties.
 
 By pre-composing $h_0$ with the boundary homeomorphism of the homeomorphism from $\mathbb{D}\cup S^1$ onto $(\mathbb{D}\cup S^1)/\sim$ (from Moore's theorem) and post-composing with the boundary homeomorphism of a uniformization $X\to\mathbb{D}$, we obtain the homeomorphism and the finite-area holomorphic quadratic differential with the desired properties.
\end{proof}

\subsection{Realization for $X=\mathbb{D}/\Gamma$}\label{subsec: realization, general}

In this section, we prove sufficiency for general Riemann surfaces.

\begin{thm}
\label{thm:realization_first_kind}
Let $X=\mathbb{D}/\Gamma$ be a Riemann surface.
Let $\mu$ and $\nu$ be a pair of filling measured laminations on $X$ that are realizable by integrable (partial) measured foliations.

Then there exists a homeomorphism $f:X\to Y=\mathbb{D}/\Gamma_1$, whose lift to the universal covers extends to a homeomorphism of the ideal boundaries, and a finite-area holomorphic quadratic differential $q$ on $Y$ such that the push-forward measured laminations $f_*(\mu )$ and $f_*(\nu )$ are homotopic to $\mathcal{F}_h(q)$ and $\mathcal{F}_v(q)$.
\end{thm}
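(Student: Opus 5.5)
We will lift everything to the universal cover and run the construction from the proof of Theorem \ref{thm:realization_sufficient} $\Gamma$-equivariantly; the only genuinely new feature is the presence of atoms.

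\textbf{Step 1: remove the atoms.} By Lemma \ref{lem:asymptotic}, the only atoms of $\Tilde{\mu}$ and $\Tilde{\nu}$ are lifts of simple closed geodesics $\alpha$ carrying weight $m(\alpha)>0$. At present, Lemma \ref{lem:asymptotic} guarantees that every other leaf is accumulated by leaves without common endpoints, and this is exactly what the upper semicontinuity argument (Lemma \ref{lem:UpperC}) and the injectivity on $S^1$ require. We therefore modify $\mu$ near each atomic curve $\alpha$. Fix a thin collar $C_\alpha$ and replace $m(\alpha)\delta_\alpha$ by a non-atomic family of closed curves in $C_\alpha$ parallel to $\alpha$. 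Equivalently, in the universal cover we widen each lift $\Tilde\alpha$ into a $\mathrm{Stab}(\Tilde{\alpha})$-invariant band carrying Lebesgue transverse measure of total mass $m(\alpha)$. The same is done for $\nu$. These curves are no longer geodesics, so the partition $\mathcal{G}_{\mu,\nu}$ has to be defined directly from the modified supports. Convexity of complementary regions is then lost, but we only need that they are non-separating continua; the proof of Proposition \ref{prop:component_closures} should go through essentially unchanged, since the band has endpoints only at the two fixed points of the corresponding hyperbolic element. Each band is crossed by the other lamination, by the filling property applied to $\alpha$, so it gets subdivided into rectangles. In the final flat picture it becomes a flat cylinder of height $m(\alpha)$, which is the expected Strebel picture.

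\textbf{Step 2: equivariant Moore quotient.} Define $\mathcal{G}_{\tilde\mu,\tilde\nu}$ on $\widehat{\mathbb{C}}$ exactly as in the disk case; it is $\Gamma$-invariant. We reprove Lemma \ref{lem:UpperC} using the general form of Proposition \ref{prop:component_closures} for the filling property. For the non-compact complementary components with a single ideal point, we check that they shrink toward $S^1$: if $z_n\to S^1$ while $G(z_n)$ stays large, we get a complementary region with two ideal points, which is a contradiction. We then also need injectivity of $S^1\to S^1/\sim$. Moore's theorem gives a homeomorphism $\Phi:\overline{\mathbb{D}}\to\overline{\mathbb{D}}/\sim$, and $\Gamma$ acts on the quotient by homeomorphisms, freely and properly discontinuously on the interior. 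The actions are conjugate because both the collapsing and $\Phi$ respect the equivalence classes; alternatively, we only use that the quotient surface $(\mathbb{D}/\sim)/\Gamma$ is homeomorphic to $X$ via a map whose lift extends to $S^1$.

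\textbf{Step 3: flat structure and uniformization.} On $\mathbb{D}/\sim$ the descended foliations are transverse with prong singularities. We choose a $\Gamma$-invariant decomposition into flow rectangles (locally finite), replace them by Euclidean rectangles, and glue by translations, as before. This yields a simply connected Riemann surface $\Tilde{Y}$ with $\Gamma$ acting by $q$-isometries, hence holomorphically. The total $|q|$-area of a fundamental domain equals $i(\mu,\nu)$, which is finite by Minsky's inequality, as in the disk case. Since $q$ is non-zero and integrable on $\Tilde{Y}/\Gamma$, the surface $\Tilde{Y}$ is hyperbolic and not $\mathbb{C}$. Indeed, the alternative would make $\Gamma$ act by affine maps with an integrable differential, which is impossible for a nontrivial $\Gamma$ or $q$. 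Uniformizing gives $\Tilde{Y}\cong\mathbb{D}$ and $Y=\mathbb{D}/\Gamma_1$.

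\textbf{Step 4: boundary map and straightening.} The boundary homeomorphism $h_0$ is built exactly as before, using endpoint density (Proposition \ref{prop:endpoints_S^1}) and monotonicity. It is automatically $\Gamma$-equivariant by uniqueness of the monotone extension, so it descends and gives $f$. Finally, the horizontal foliation straightens to $f_*\mu$: the cylinders collapse back to the atoms $m(\alpha)\delta_{f(\alpha)}$, since the leaves in a cylinder are all homotopic to $f(\alpha)$.

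\textbf{Main obstacle.} I expect the main difficulty to be the non-compact, non-simply-connected situation. There is upper semicontinuity near the limit set versus the domain of discontinuity, where funnels and half-planes give complementary intervals of $S^1$ outside the limit set. The filling property for cross-cuts is what forces the laminations to reach those intervals densely. I would first verify Proposition \ref{prop:component_closures}(2) in that region carefully, handling cross-cuts and boundary geodesics by the same case split used in Theorem \ref{thm:necessary_realization_first_kind}.
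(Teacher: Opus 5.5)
Your overall architecture is the paper's: lift, form the $\Gamma$-invariant Moore decomposition, collapse, cut $Q$ into $\Gamma$-invariant flow boxes, replace them by Euclidean rectangles glued by translations, bound the area by $i(\mu,\nu)$ via Minsky, and build the equivariant boundary map from endpoint density and monotonicity.

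\textbf{Atoms: a genuinely different route.} The paper cuts $\overline{\mathbb{D}}$ along each atomic geodesic \emph{including its endpoints} and inserts the strip $C_g$. The circle therefore acquires two arcs of length $\mu(\{g\})$. On $(S^1)^b$ no two leaves share an endpoint, so Lemma \ref{lem:UpperC} goes through verbatim. The price is that the final boundary map $(S^1)^b\to\partial_\infty\tilde{Y}$ must collapse those arcs to give a homeomorphism of $S^1$.

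Your bands leave $S^1$ untouched, so your boundary map is a homeomorphism directly. But every band leaf ends at the two fixed points $p,p'$ of the stabilizer of $\tilde\alpha$. That is exactly the common-endpoint situation you said had to be removed. So the premise of the proof of Lemma \ref{lem:UpperC} fails for the band, and so does the paper's partition-disjointness argument via Lemma \ref{lem:asymptotic}, at $p$ and $p'$.

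This is repairable. Let $g_0$ be a $\tilde\nu$-leaf crossing the band and $\gamma$ a generator of the stabilizer. The caps cut off by $\gamma^n g_0$ shrink to $p$. No partition element crosses a $\tilde\nu$-leaf, so every element meeting such an open cap lies in the closed cap. Hence only $\{p\}$ contains $p$, and upper semicontinuity at $p$ follows. You should supply this argument rather than cite the lemma.

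\textbf{The production of $f$ does not work as written.}
\begin{enumerate}
\item Moore's theorem gives an abstract homeomorphism $\Phi$ with no equivariance, so the claim that $\Phi$ conjugates the $\Gamma$-actions is unfounded.
\item Your fallback, a homeomorphism $Q/\Gamma\cong X$ whose lift extends to $S^1$, is not proved.
\item An equivariant boundary homeomorphism does not descend to a map of surfaces by itself.
\end{enumerate}
The paper's fix is short. Once $h_0:S^1\to\partial_\infty\tilde{Y}$ is an equivariant homeomorphism conjugating $\Gamma$ to $\Gamma_1$, take its Douady--Earle extension. It is equivariant because the extension commutes with M\"obius pre- and post-composition. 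This extension then descends to $f:X\to Y$.

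You should also prove, not assert, that $\Gamma$ acts freely and properly discontinuously on $Q$. The paper does this by noting that preimages of compact subsets of $Q$ are compact, since by upper semicontinuity they avoid the circle; the property is then inherited from $\mathbb{D}$.

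\textbf{Hyperbolicity of $\tilde{Y}$.} Your argument that $\tilde{Y}\not\cong\mathbb{C}$ covers a point the paper passes over, but ``impossible for nontrivial $\Gamma$'' is not literally right: $dz^2$ is integrable on a torus. What excludes that case is that a Fuchsian group contains no $\mathbb{Z}^2$. For $\Gamma$ trivial or cyclic, neither $\mathbb{C}$ nor $\mathbb{C}^*$ carries a nonzero integrable quadratic differential.
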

The boundary homeomorphism $S^1\to S^1$ takes endpoints of leaves of the lifted laminations $\Tilde{\mu}$ and $\Tilde{\nu}$ to endpoints of horizontal and vertical leaves respectively for the lifted differential $\Tilde{q}$. Since $\Tilde{\mu}$ and $\Tilde{\nu}$ are $\Gamma$-invariant and $\Tilde{q}$ is $\Gamma_1$-invariant, the boundary homeomorphism necessarily conjugates $\Gamma$ to $\Gamma_1$. It follows that $\Gamma$ is of the first (resp. second) kind if and only if $\Gamma_1$ is, and $f$ takes funnels in $X$ to funnels in $Y$ and geodesic half-planes in $X$ to geodesic half-planes in $Y$, if any.

Before beginning the proof, we introduce the blown-up disk.
Lift $\mu$ and $\nu$ to $\Gamma$-invariant laminations $\Tilde{\mu}$ and $\Tilde{\nu}$ on $\mathbb{D}$. Given any $\Gamma$-orbit of atomic geodesics of $|\Tilde{\mu} |$ (corresponding to a simple closed curve in the support of $\mu$), choose a representative $g$ and cut $\overline{\mathbb{D}}$ along $g$ (including the endpoints of $g$), and denote the resulting boundary components of the new disconnected surface by $g_-$ and $g_+$. Glue in a strip $$C_g = g\times \Big [-\frac{\mu(\{g\})}{2},\frac{\mu(\{g\})}{2}\Big ],$$ attaching the boundary components by identifying $g_-$ with $g\times \{-\frac{\mu(\{g\})}{2}\}$ and $g_+$ with $g\times \{\frac{\mu(\{g\})}{2}\}$. The resulting object is homeomorphic to a closed disk; on the boundary circle, it has two arcs of length $\mu(\{g\})$ corresponding to the endpoints of $g$. We foliate $C_g$ by the lines $g\times \{y\}$, $y\in [-\frac{\mu(\{g\})}{2},\frac{\mu(\{g\})}{2}]$ and assign the vertical transverse measure according to length in $[-\frac{\mu(\{g\})}{2},\frac{\mu(\{g\})}{2} ]$. The leaves of $\Tilde{\nu }$ are disconnected by $C_g$. So that $\Tilde{\nu}$ still gives a lamination on this new disk, we extend geodesics in $|\Tilde{\nu}|$ that intersect $g$ by attaching the relevant vertical arcs in $C_g$. We carry out the same procedure over the whole $\Gamma$-orbit, choosing the labelings so that $(\gamma g)^{\pm}=\gamma (g^{\pm}).$ We then carry out the procedure over every $\Gamma$-orbit, and then repeat everything for $\Tilde{\nu}$, and we obtain the blown-up closed disk $\overline{\mathbb{D}}^b=\mathbb{D}^b\cup (S^1)^b$ with interior $\mathbb{D}^b$ and circular boundary $(S^1)^b$. Note that no geodesic of $|\tilde{\nu}|$ shares an endpoint with the atomic geodesic of $\tilde{\mu}$ since this would imply that $\nu$ and $\mu$ share a common atomic geodesic which is impossible by the filling property. Therefore, there is no ambiguity in the definition of $(S^1)^b$ and the extension of the laminations $\tilde{\mu}$ and $\tilde{\nu}$ to $\overline{\mathbb{D}}^b=\mathbb{D}^b\cup (S^1)^b$.

\begin{proof}[Proof of Theorem \ref{thm:realization_first_kind}]
    By construction, $\mathbb{D}^b$ comes with two (not necessarily geodesic) measured laminations ($\Tilde{\mu}$ and $\Tilde{\nu}$ in the complement of the strips, and the straight lines in the strips). By abuse of notation, we keep the same notation $\Tilde{\mu}$ and $\Tilde{\nu}$. A simple geodesic that enters the standard collar must cross it. Proposition \ref{prop:component_closures} holds essentially the same for these laminations: the complementary components are contained in the hyperbolic components of $\mathbb{D}^b$ and are convex, and the compact components have finitely many sides which are arcs alternating between $|\Tilde{\mu} |$ and $ |\Tilde{\nu} |$, while the non-compact components accumulate to exactly one point of $S^1$. The action of $\Gamma$ extends to $\overline{\mathbb{D}}^b$: since the set of atoms is $\Gamma$-invariant, the action of $\Gamma$ restricts to their complement, and hence can be transported to the complement of the strips in $\overline{\mathbb{D}}^b$. On the strips, we define, for $\gamma \in \Gamma$, $\gamma\cdot (g,x)=(\gamma g, x)$, and note that this is well-defined since $\Tilde{\mu}$ is $\Gamma$-invariant.

Choose any homeomorphism from $S^1\to S^1$ and use it to glue $\hat{\mathbb{C}}\backslash\mathbb{D}$ and $\overline{\mathbb{D}}^b$ along their boundaries. We abuse notation and just write $\hat{\C}$ for the glued surface.
Similar to the case where $\Gamma$ is trivial, we define a partition of $\hat{\mathbb{C}}$ into non-separating continua using $\Tilde{\mu}$ and $\Tilde{\nu}$.
 The first kind of subsets are the closures of the components of $(\mathbb{D}\cup S^1)\setminus (|\Tilde{\mu} |\cup |\Tilde{\nu} |)$, if any.
 The second kind of subsets are the closures of the components of $|\Tilde{\mu} |\setminus |\Tilde{\nu} |$ and  $|\Tilde{\nu} |\setminus |\Tilde{\mu} |$, excluding those components that lie on the boundary of a component of the first kind.  These second kind subsets are compact subarcs and allowed to have an endpoint on $(S^1)^b$. As before, the third kind of subsets are the point sets of the complements of the union of the first two kinds elements.

 For this partition, the proof of Lemma \ref{lem:UpperC} goes through almost word-for-word. By Moore's theorem, defining $\sim$ by $x\sim y$ if they lie in the same component, the corresponding quotient $\Pi:\hat{\mathbb{C}}\to \hat{\mathbb{C}}/\sim$ produces a space homeomorphic to $\hat{\mathbb{C}}$. As in Theorem \ref{thm:realization_sufficient}, Proposition \ref{prop:endpoints_S^1} shows that the restricted map $\Pi: (S^1)^b\to (S^1)^b/\sim$ is a homeomorphism, the only difference here is that we have to check injectivity on the additional collars $C_g$, but this is obvious since each point on an arc corresponding to $C_g$ is the endpoint of a unique leaf. Since the complement of $\overline{\mathbb{D}}^b$ consists of singletons for the partition, and the boundary is a circle, the quotient of $\overline{\mathbb{D}}^b$ is homeomorphic to $\mathbb{D}\cup S^1$.
 
An additional feature now that $X$ is not a disk is that the partition is $\Gamma$-invariant. Hence, the action of $\Gamma$ on $\overline{\mathbb{D}}^b$ descends to an action on $\overline{\mathbb{D}}^b/\sim$. We argue that the action on $Q:=(\mathbb{D}^b/\sim)\backslash ((S^1)^b/\sim)$ is properly discontinuous and free (note $\mathbb{D}^b/\sim$ may have points on $(S^1)^b/\sim$, so we have to cut it out). To see this, first note that if $E\subset Q$ is compact, then the fiber $\Pi^{-1}(E)\subset \mathbb{D}^b$ (a priori a closed subset) is compact, since it cannot touch the boundary circle (by the upper semicontinuity of the partition). Proper discontinuity then follows immediately from proper discontinuity of the action of $\Gamma$ on $\mathbb{D}^b$ (which follows from proper discontinuity on $\mathbb{D}$). For freeness, if some $\gamma\in \Gamma$ fixes a point $x\in Q$, then for any $x_0\in \Pi^{-1}(x)$, $\{\gamma^n x_0\}_{n=1}^\infty$ belongs to a compact set, and this contradicts the proper discontinuity of the action of $\Gamma$ on $\mathbb{D}^b$.

As in the case of the disk, $\Tilde{\mu}$ and $\Tilde{\nu}$ determine transverse measured foliations $\Tilde{\mu}/\sim$ and $\Tilde{\nu}/\sim$ on $Q$, which are now necessarily invariant under the action of $\Gamma$. We divide $\hat{X}:=Q/\Gamma$ into flow boxes for $\mu /\sim$ and $\nu /\sim$ that are compact and simply connected. The lift of these flow boxes to $Q$ are flow boxes for $\tilde{\mu}/\sim$ and $\tilde{\nu}/\sim$ that are compact and simply connected, and they are invariant under the action of $\Gamma$. We form a new Riemann surface $\hat{Y}$ by taking Euclidean rectangles of the side lengths equal to the transverse measures of the rectangular flow boxes of the foliations $\Tilde{\mu}/\sim$ and $\Tilde{\nu}/\sim$. We choose the rectangles such that the horizontal Euclidean arcs correspond to the leaves of the foliation $\Tilde{\mu}/\sim$ and the vertical Euclidean arcs correspond to the leaves of $\Tilde{\nu}/\sim$. Then we glue the Euclidean rectangles according to the gluing of the rectangular flow boxes on $Q$ to obtain the Riemann surface $\hat{Y}$. The Riemann surface $\hat{Y}$ supports a holomorphic quadratic differential $q_{\hat{Y}}$ given by $dz^2$ in the Euclidean charts $z$ of the Euclidean rectangles that form $\hat{Y}$. The holomorphic quadratic differential $q_{\hat{Y}}$ is well-defined on the whole surface $\hat{Y}$ since the gluings of the Euclidean rectangles are by translations, and, at each vertex, only finitely many rectangles meet. The gluing of rectangles is easily seen to be $\Gamma$-invariant, hence the action of $\Gamma$ becomes a holomorphic action on $\hat{Y}$ that preserves $q_{\hat{Y}}$. The quotient is thus a Riemann surface $Y$ homeomorphic to $\hat{X}$, and $q_{\hat{Y}}$ descends to a holomorphic quadratic differential $q_Y$.

 When the foliations $\Tilde{\mu}/\sim$ and $\Tilde{\nu}/\sim$ descend to $\hat{X}$, their intersection number agrees with $i(\mu,\nu).$ By the integrability assumptions on $(\mu,\nu)$ and Minsky's inequality, the intersection number $i(\mu ,\nu )$ is finite. Since the intersection number is invariant under homeomorphisms, $\int_Y|q_Y |=i(\mu ,\nu )<\infty$.

To complete the proof, we are left to produce a homeomorphism $f:X\to Y$ whose universal covering lift $\Tilde{f}:\Tilde{X}\to \Tilde{Y}=\hat{Y}$ extends to a homeomorphism of the ideal boundaries. We work upstairs on the universal covers, first passing through the intermediate space $\mathbb{D}^b$. There is a one-to-one correspondence between leaves of $(\Tilde{\mu}/\sim,\Tilde{\nu}/\sim)$ and the horizontal and vertical foliations of $q_{\hat{Y}}$. Also note that by integrability, each ray of a trajectory of the lift $q_{\hat{Y}}$ accumulates to a unique point on $\partial_\infty \Tilde{Y}$. From here, following the analogous proof in the case of the disk, almost verbatim, we obtain a map $(S^1)^b\to \partial_\infty \Tilde{Y}$, which takes endpoints of leaves of $(\Tilde{\mu}/\sim,\Tilde{\nu}/\sim)$ to endpoints for $q_{\hat{Y}}$ and is monotone with respect to chosen orientations. The difference now compared to the disk is that the boundary map is equivariant and not quite a homeomorphism. Indeed, it collapses the boundaries of strips corresponding to atoms to points (the endpoints of the corresponding atomic geodesic for the laminations of $q_{\hat{Y}}$) and is otherwise injective and a local homeomorphism. But we recall that if we remove the boundaries of the strips from $(S^1)^b$ and glue the endpoints, we get the original circle $S^1=\partial_\infty X$. Thus, we have an equivariant homeomorphism from $\partial_\infty X\to \partial_\infty Y$. To obtain the map $f$, we choose any equivariant homeomorphic extension of the boundary map (say, the Douady-Earle extension) and descend to a homeomorphism from $X$ to $Y$.
\end{proof}

\begin{remark}\label{rem: equivalence relation and trees}
In \S \ref{sec: trees}, we define pseudo-distances $\Tilde{d}_\mu$ and $\Tilde{d}_\nu$ on $\mathbb{D}$, which induce a map to a product of $\R$-trees. It should be satisfying to observe that two points $x,y\in \mathbb{D}$ lie in the same component of the partition we defined if and only if $(\Tilde{d}_\mu+\Tilde{d}_\nu)(x,y)=0$, or equivalently if and only if they project to the same point in the product of $\R$-trees. It follows that the quotient $\hat{X}$ from the proof of Theorem \ref{thm:realization_first_kind} is the quotient of the image in the product of $\R$-trees. We will not need to use this observation explicitly, so we omit the proof. 
\end{remark}

\section{Quasiconformal realizations}\label{sec: qc realizations}

Here we address the question of when a holomorphic realization is quasiconformal, proving Theorems \ref{thm: non qc}, \ref{thm: sufficient, first kind}, and \ref{thm: sufficient, disk}.
In \S \ref{sec: qc disk}, we focus on elementary Fuchsian groups, proving Theorem \ref{thm: non qc} part (1), and then Theorem \ref{thm: sufficient, disk}. In \S \ref{sec: qc, infinite} we consider infinitely generated Fuchsian groups of the first kind: we prove Theorem \ref{thm: non qc} part (2) and Theorem \ref{thm: sufficient, first kind}.

\subsection{Quasiconformal vs. homeomorphic realizations in the unit disk $\mathbb{D}$}\label{sec: qc disk}
\subsubsection*{Non-quasiconformal realizations}
For Theorem \ref{thm: non qc} part (1), we assume the uniqueness portion Theorem \ref{thm: main}, whose proof is given in \S \ref{sec: trees}.
\begin{proof}[Proof of Theorem \ref{thm: non qc} part (1)]
We construct an integrable quadratic differential $q$ on $\mathbb{D}$ with associated foliations $\mathcal{F}_h(q)$ and $\mathcal{F}_v(q)$ and a homeomorphism $f: S^1\to S^1$ that is not quasisymmetric and such that the pullback partial measured foliations $f^*\mathcal{F}_h(q)$ and $f^*\mathcal{F}_h(q)$ are proper and integrable. If $\mu$ and $\nu$ are the laminations that we get from tightening $f^*\mathcal{F}_h(q)$ and $f^*\mathcal{F}_v(q)$ respectively, then $\mu$ and $\nu$ clearly fill, and $f$ provides a homeomorphic realization of the laminations $\mu$ and $\nu$ that is not homotopic relative to the boundary to a quasiconformal map.  By uniqueness of the realization, this example shows that we cannot realize the pair $\mu$ and $\nu$ under a quasiconformal deformation of $\mathbb{D}$. The other types of Riemann surfaces under consideration here are annuli (either of finite or infinite modulus). To treat these surfaces, we can take our example on the disk, and simply cut away a subdisk in the case of finite modulus or cut away a point in the case of infinite modulus.

Our construction really proceeds on $R=[-1,1]\times [0,1]$, which is conformal to $\mathbb{D}$. Note that the conformal map extends to a piecewise smooth map from $\partial R$ to $S^1$ with non-smoothness at the vertices of $R$. Let $\mathcal{F}$ and $\mathcal{G}$ be the horizontal and vertical foliations respectively of $dz^2$ on $R$ (the pushforward via the Riemann map is the $q$ from above).  We define $f:R \to R$ by
\[
f(x,y) =
\begin{cases}
\big({x}^2,\, y\big),
& 0 \le x \le 1,\; 0 \le y \le 1, \\[6pt]
(x,y),
& -1 \le x < 0,\; 0 \le y \le 1.
\end{cases}
\]
Note that $f|_{\partial R}$ is not quasisymmetric around $(0,0)$ because the right-hand derivative is $0$ and the left-hand derivative is $1$.

We let $f^*(\mathcal{F})=\mathcal{F}^*$ and $f^*(\mathcal{G})=\mathcal{G}^*$ be the pull-back  partial measured foliations on $R$. Integrability of $\mathcal{G}^*$ is immediate. For integrability of $\mathcal{F}^*$, note that, as a partial measured foliation, $\mathcal{F}$ is globally encoded by the level sets of the map $v(x,y)=x$. Since the pull-back $\mathcal{F}^*$ squares the transverse measure on $0\leq x\leq 1$, the foliation $\mathcal{F}^*$ is encoded by the function 
\[
v^*(x,y) =
\begin{cases}
x^2,
& 0 \le x \le 1,\; 0 \le y \le 1, \\[6pt]
x,
& -1 \le x < 0,\; 0 \le y \le 1.
\end{cases}
\]
Then we have
$$
D(\mathcal{F}^*)=\int_0^1\int_{-1}^{1}|\nabla v^*(x,y)|^2dxdy=\frac{7}{3}<\infty .
$$
By the discussion above, $\mathcal{F}^*$ and $\mathcal{G}^*$ give a non-quasiconformally realizable example.
\end{proof}

\subsubsection*{Quasiconformal realizations}\label{subsubsec: qc realizations}
We now set up Theorem \ref{thm: sufficient, disk} about dual regular polygonal laminations $\mu$ and $\nu$.

Let $I_1,\dots, I_n$ and $J_1,\dots, J_n$, $n\geq 3$, be partitions of $S^1$ into equal length intervals, with the endpoints of $J_k$ being the midpoints of $I_{k}$ and $I_{k+1}$ ($I_{n+1}:=I_1$). The geodesic connecting the endpoints of $I_j$ is denoted $g_j$, and the halfplane bounded by $g_j$ and $I_j$ is denoted $H_j$. Similarly, for $J_k$, we have geodesics $g_k'$ and half-planes $H_k'$. The half-planes $H_j$ and $H_k'$ are foliated by geodesics $g$ such that the distance between one endpoint of $g$ and one endpoint of $I_j$ (resp. $J_k$) equals the distance between the other endpoint of $g$ and the other endpoint of $I_j$ (resp. $J_k$). The geodesics foliating the half-planes $H_j$ are the support $|\mu|$ of a tentative lamination, and the geodesics foliating the $H_k$'s are the support $|\nu|$ of another tentative lamination.

\begin{figure}[htb]
    \centering
    \includegraphics[width=8 cm]{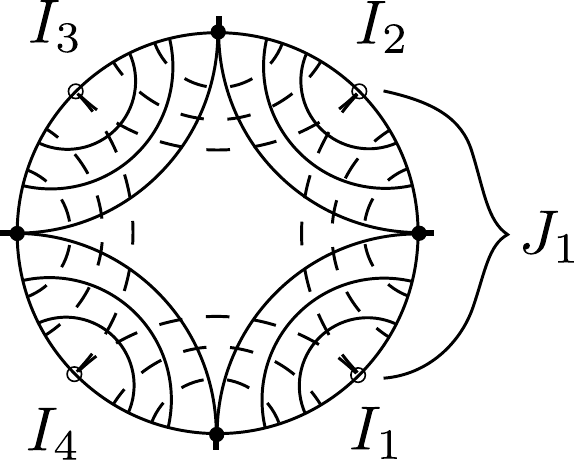}
    \caption{The full geodesics are in $|\mu |$ and the dotted geodesics are in $|\nu |$.}
    \label{fig:ex-qs}
\end{figure}

 Let $h_j:I_j\to [0,\infty)$ be square-integrable and symmetric about the midpoint of $I_j$. We define the \textit{endpoint measure} on $I_j$ by integrating $h_j(t)dt$ on subsets of $I_j$, where $dt$ is the Lebesgue measure on the circle. If $I\subset I_j$ and $G(I)$ is the set of geodesics with endpoints in $I$, then we set $$\mu(G(I))=\int_I h_j(t)dt,$$ which determines a lamination $\mu$. We define a lamination $\nu$ in the analogous fashion.
 \begin{defn}
     The lamination $\mu$ is called a regular polygonal lamination. The pair $(\mu,\nu)$ is referred to as a pair of dual regular polygonal laminations.
 \end{defn}

 \begin{prop}
    A regular polygonal lamination $\mu$ constructed from intervals $I_j$ and functions $h_j(t)$ as above is homotopic to a partial measured foliation with finite Dirichlet integral.
 \end{prop}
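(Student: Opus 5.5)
The plan is to build the foliation explicitly, one half-plane $H_j$ at a time. Then add up the Dirichlet integrals, using conformal invariance of the Dirichlet integral.

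\textbf{Step 1 (one half-plane, upper half-plane model).} Fix $j$ and choose a M\"obius map $A_j:\mathbb{D}\to\mathbb{H}$ sending $I_j$ onto $[-1,1]$ and the midpoint of $I_j$ to $0$. Such a map can be chosen so that its pole lies in the interior of the complementary arc $S^1\setminus \overline{I_j}$; hence $A_j$ is a real-analytic diffeomorphism from $\overline{I_j}$ onto $[-1,1]$ with derivative bounded above and below. Moreover $A_j$ commutes with the reflections fixing the midpoints, so the symmetric geodesics foliating $H_j$ go to the semicircles $\{|z|=r\}\cap\mathbb{H}$, $0<r\le 1$. The endpoint measure $h_j(t)\,dt$ pushes forward to a symmetric measure $k_j(s)\,ds$ on $[-1,1]$ with $k_j\in L^2$, since $|A_j'|$ is bounded above and below on $I_j$. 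Convention for the transverse measure: the $\mu$-mass of the semicircles with radius in $[a,b]$ is $\int_a^b k_j(s)\,ds$, up to the factor fixed by the convention for $G(I)$.

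\textbf{Step 2 (the foliation and its energy).} Put $K_j(r)=\int_0^r k_j(s)\,ds$ and $v_j(z)=K_j(|z|)$ on the half-disk $D^+=\{|z|\le 1\}\cap\overline{\mathbb{H}}$. Its level sets are exactly the semicircles. Its transverse measure $|dv_j|$ gives the semicircles with radius in $[a,b]$ mass $K_j(b)-K_j(a)$, which is the $\mu$-mass above. Since $|\nabla v_j|^2=k_j(|z|)^2$, polar coordinates give
\[
\int_{D^+}|\nabla v_j|^2=\int_0^1 \pi r\,k_j(r)^2\,dr\le \pi\|k_j\|_{L^2}^2<\infty.
\]
Pulling back by $A_j$ and using conformal invariance of the Dirichlet integral, we get a foliation of $H_j$ with finite energy. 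The half-planes $H_1,\dots,H_n$ have disjoint interiors, so together these give a partial measured foliation $\mathcal{F}$ with $\mathcal{D}(\mathcal{F})\le \pi\sum_j\|k_j\|_{L^2}^2<\infty$. Every leaf is a complete geodesic with two distinct endpoints, so $\mathcal{F}$ is proper. Its leaves are already geodesic, so straightening changes nothing, and $\mu_{\mathcal{F}}=\mu$ because the transverse measures agree by construction.

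\textbf{Step 3 (main obstacle: regularity).} The definition of a partial measured foliation asks for $C^1$ charts $v_i$ with surjective derivative, but $K_j$ is only absolutely continuous with $L^2$ derivative, and $k_j$ may vanish on sets of positive measure. To fix this, I would first discard the regions where no measure lives: the open set $\{k_j=0\}$ contributes nothing and is removed from the support. I would then use the freedom that leaves only need to be homotopic to the geodesics, not equal to them. Reparametrize the radial variable by a homeomorphism $\rho$ of $[0,1]$ with $K_j\circ\rho$ of class $C^1$ and with positive derivative wherever the measure is positive, and take as leaves the curves $\{|z|=\rho(s)\}$ for a smoothed family of radii. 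These curves keep the same endpoints, so the straightened lamination is unchanged. The point to check is that the energy stays comparable to $\int k_j^2$. If this direct smoothing becomes awkward, I would instead approximate $k_j$ in $L^2$ by continuous positive symmetric functions. The corresponding foliations are genuine $C^1$ ones with uniformly bounded energies, and I would pass to the limit using the realization criterion of \cite{Saric24,Saric25}: a lamination is realizable as soon as it is a limit, in the weak$^*$ sense on currents, of realizable laminations with uniformly bounded Dirichlet integrals. Lower semicontinuity of the energy together with Theorem \ref{thm: HMS} then gives an integrable $q$ whose horizontal foliation straightens to $\mu$.
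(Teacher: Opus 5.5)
Your Steps 1--2 are essentially the paper's argument. The paper also treats each $H_j$ separately and sends the symmetric geodesics to semicircles centered at $0$ in $\mathbb{H}$. It then applies $\log z$ so that they become vertical segments of a strip, sets $u_j^*(x+iy)=\int_x^{x_j}h_j(A(t))|A'(t)|\,dt$, and bounds the Dirichlet integral by $\|A'\|_\infty\|h_j\|_{L^2}^2$ via the substitution $s=A(t)$. Your polar-coordinate computation on the half-disk is the same integral in different coordinates.

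The paper stops there, tacitly treating the merely absolutely continuous $u_j^*$ as a chart. So your Step 3 is an addition, but as written it does not resolve the issue it raises.

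\textbf{Fix (a) is vacuous.} Relabelling the semicircles changes nothing. As long as the leaves are the semicircles themselves and the transverse measure is the one $\mu$ prescribes, the chart is forced to be $K_j(|z|)$ up to an additive constant, whatever parameter $\rho$ you put on the leaf space. To gain regularity you must change the \emph{shape} of the leaves while keeping their endpoints, and then the energy bound you postpone is the entire difficulty. Also, $\{k_j=0\}$ need not be open: $k_j$ can vanish on a fat Cantor set inside the support, so deleting an open set does not make the derivative positive.

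\textbf{Fix (b) is unsupported.} The closure criterion you quote is not established in this paper. As you use it, it amounts to a continuity property of straightening under locally uniform limits of differentials with bounded norm, and that needs its own proof.

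\textbf{A direct repair.} Replace $v_j$ on the half-disk $D^+$ by the harmonic function $w_j$ with the same continuous boundary values: $K_j(|s|)$ on $[-1,1]$ and $K_j(1)$ on the unit semicircle.
\begin{itemize}
\item The Dirichlet principle gives $\int_{D^+}|\nabla w_j|^2\le\int_{D^+}|\nabla v_j|^2\le\pi\|k_j\|_{L^2}^2$.
\item In the coordinate $\zeta=\log z=x+iy$, set $W(x,y):=w_j(e^{\zeta})$. Comparing $W$ with $W(x-h,y)$ by the maximum principle on the half-strip $\{x<0,\ 0<y<\pi\}$ shows that $W$ is nondecreasing in $x$. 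Hence the harmonic function $\partial_x W$ is strictly positive unless $\mu$ vanishes on $H_j$.
\item So $w_j$ is smooth with nowhere-vanishing gradient, and its level sets are graphs over $y\in(0,\pi)$.
\item Continuity of the boundary data forces $\{w_j=c\}$ to land at $\pm K_j^{-1}(c)$ whenever $K_j^{-1}(c)$ is a single point, which holds for all but countably many $c$. The exceptional level sets still land at single points, by reflecting $W$ across the boundary interval where the data is constant.
\item Therefore $|dw_j|$ gives the leaves tightening to semicircles of radius in $[K_j^{-1}(c_1),K_j^{-1}(c_2)]$ mass $c_2-c_1$, exactly as $\mu$ does.
\end{itemize}
The rest of your Step 2 then goes through, with ``the leaves are geodesics'' replaced by ``the leaves tighten to the semicircles with the same endpoints''.
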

 \begin{proof}
      The construction is given by separately defining functions $u_j$ supported in each half-plane $H_j$. We conformally map $\mathbb{D}$ onto the upper-half plane $\mathbb{H}$ so that the central point of $I_j$ is mapped to zero and the support of $|\mu|$ intersected with $H_j$ consists of semi-circles in $\mathbb{H}$ with center $0$. We then use $z\mapsto \log z$ to transform this picture into an infinite strip with boundaries $\mathbb{R}$ and $\mathbb{R}+2\pi i$. The point $0$ is mapped to $-\infty$, the geodesics of the support in $H_j$ are mapped to vertical Euclidean segments between the boundary lines, and this image foliates the part of the strip $S_j$ with $x$-coordinates in $(-\infty ,x_j]$. We define $u_j^*:S_j\to \mathbb{R}$ by
$$
u_j^*(x+iy)=\int_{x}^{x_j}h_j(A(t))|A'(t)|dt,
$$
where $A$ is the inverse of the composition of the M\" obius map that maps $\mathbb{D}$ onto $\mathbb{H}$ and the $\log z$ map. It is immediate that $u_j^*$ depends only on $x$, hence the leaves are the vertical segments. The transverse measure $d[u_j^*(x)]=h_j(A(x))|A'(x)|dx$ gives the same measure as its corresponding set in $H_j$ with respect to $d\mu =h_j|dz|$. Finally, the Dirichlet integral of $u_j^*$ satisfies
$$
D(u_j^*)=\int_{S_j}|\nabla u_j^*(x+iy)|^2dxdy=2\pi \int_{-\infty}^{x_j}|h_j(A(t))|^2|A'(t)|^2dt.
$$
Making the substitution $s=A(t)$, the above integral is bounded above by the product of the $L^\infty$ norm of $A'$ (which is finite) and the $L^2$ norm of $h_j$, hence it is finite.
We can pull back $u_j^*$ to a function on $S_j$ which gives a partial measured foliation in $H_j$. Letting $j$ range over $1,\dots, n$ and summing up the corresponding functions, we obtain a foliation tightening to $\mu$ with finite Dirichlet integral. 
 \end{proof}

We are now set up to prove Theorem \ref{thm: sufficient, disk}. The uniform comparability assumption means that there exists an $L>0$ such that for all $j$ and $I\subset I_j$ as well as $k$ and $J\subset J_k,$
\begin{equation}\label{eq:comparison_measures}
    \frac{1}{L}\leq\frac{\mu (G(I))}{|I|}\leq L, \qquad
\frac{1}{L}\leq\frac{\nu  (G(J))}{|J|}\leq L.
\end{equation}
Since $\mu(G(I)),\nu(G(J))>0$ for all $I,J$ as above, it is immediate that each geodesic of $|\mu |$ is intersected by a transverse interval of geodesics in $|\nu |$, and each geodesic in $|\nu |$ is intersected by a transverse interval of geodesics in $|\mu |$. Further, an arbitrary geodesic in $\mathbb{D}$ is intersected by a transverse interval of geodesics in either $|\mu |$ or $|\nu |$, and most of the time by both. Hence, $\mu$ and $\nu$ are filling. 

\begin{proof}[Proof of Theorem \ref{thm: sufficient, disk}]
Any pair of measured laminations defined as above satisfies the assumptions of Theorem \ref{thm:realization_sufficient} and therefore there exists a homeomorphism $f$ of $S^1$ such that $f_*(\mu )$ and $f_*(\nu )$ are realized by the horizontal and vertical foliation of a finite-area holomorphic quadratic differential $q$ on $\mathbb{D}$. We construct the finite-area holomorphic quadratic differential using charts and study the corresponding homeomorphism. According to the construction, the middle domain not foliated by $|\mu |\cup |\nu |$ is collapsed to a point, and the arcs of the geodesics of $|\mu |$ between adjacent boundary geodesics $g_{j}'$ and $g_{j_1}'$ of $|\nu|$ are collapsed to points, identifying the rays of $g_j'$ and $g_{j_1}'$. Analogously for the arcs of $|\nu |$ connecting adjacent boundary geodesics of $|\mu|$. 

The quotient space has regions corresponding to the transverse intersections of the geodesics of $|\mu |$ and $|\nu |$. These regions are bounded by one ray of some $g_j$ and one ray of some $g_{j_1}'$ (see Figure \ref{fig:block}, rays in bold). The corresponding region 
$\Omega_{j,j_1}$ has an ideal boundary arc $I_j\cap J_{j_1}$. We first construct a Euclidean chart for the region $\Omega_{j,j_1}$, which serves as a building block for the realization. We draw a horizontal arc $a$ starting at the origin and lying on the positive real axis whose Euclidean length equals to the intersection of the ray on $g_j$ with the measured lamination $\nu$, and we draw a vertical arc $b$ starting at the origin and lying on the positive $y$-axis whose Euclidean length equals to the intersection of the ray on $g'_{j_1}$ with the measured lamination $\mu$. Each point $w$ of $b$ corresponds to a geodesic $g_z$ in $|\mu |$ through the point $z$ on the ray on $g_{j_1}'$ that is part of the foliation of $H_j$. It is given by the condition that the Euclidean distance from $w$ to the top $w_0$ of $b$ equals the transverse measure $\mu$ of the geodesics of $|\mu |$ between $g_j$ and $g_z$, where $g_z$ is the geodesic of $|\mu |$ through point $z$. From $w\in b$ we draw a horizontal arc in the first quadrant whose Euclidean length equals the transverse measure $\nu$ of the geodesics of $|\nu |$ that intersect the ray $g_z\cap \Omega_{j,j_1}$. We obtain a domain $E_{j,j_1}$ in $\mathbb{C}$ such that the horizontal (vertical) arcs in $E_{j,j_1}$ correspond to the rays $|\mu |\cap\Omega_{j,j_1}$ ($|\nu |\cap\Omega_{j,j_1}$) and the corresponding transverse measures are equal. Therefore, the construction equips $E_{j,j_1}$ with the quadratic differential $dw^2$. We glue all $E_{j,j_1}$'s in the cyclic order determined by the collapsing; the vertex corresponds to a zero of the quadratic differential of order $n-2$.

\begin{figure}[htb]
    \centering
    \includegraphics[width=9 cm]{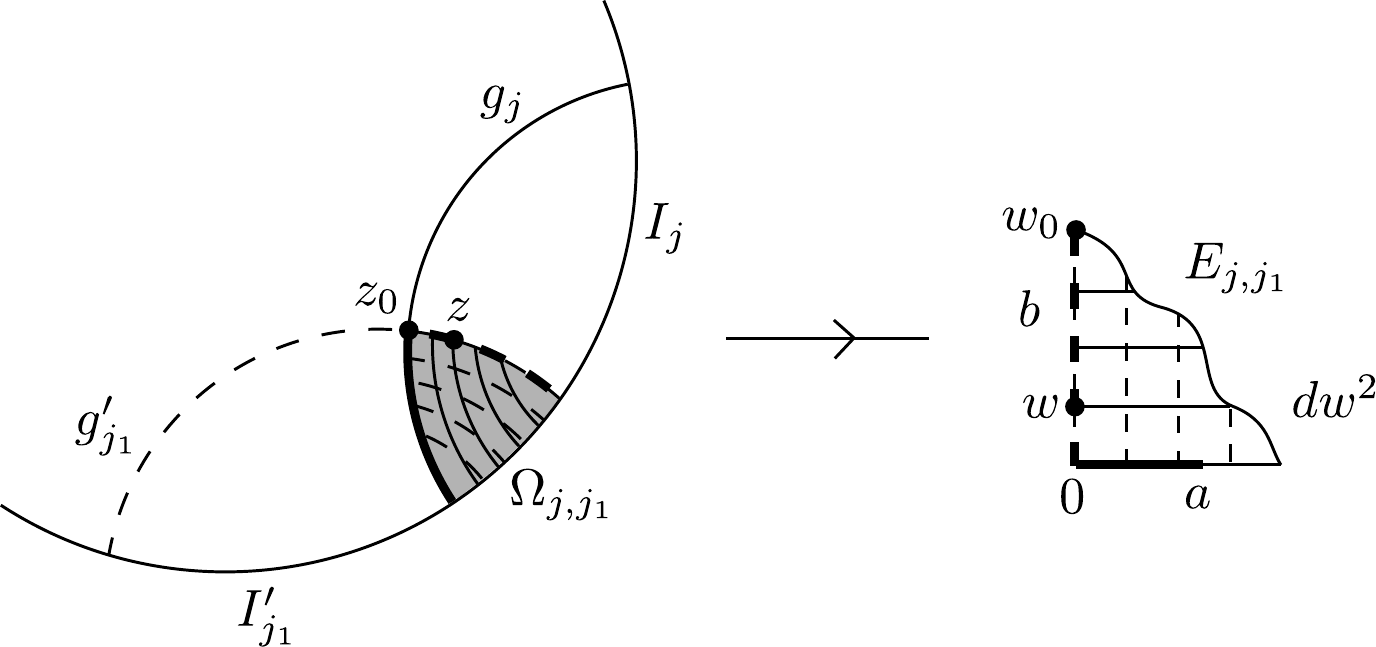}
    \caption{The building block for the realization. The boundary rays are in bold.}
    \label{fig:block}
\end{figure}

The curved part of the boundary of each $E_{j,j_1}$ is the graph of a decreasing function $f_1$ over the interval $a$ as well as the graph of a decreasing function $f_2$ over the interval $b$. This follows because the lengths of the vertical and horizontal arcs in $E_{j,j_1}$ are given by the transverse measures along the corresponding geodesic rays in $\Omega_{j,j_1}$, which are strictly decreasing when the rays approach the endpoints. The lengths along $a$ are given as the transverse measure of $|\nu |$ and the lengths along $b$ as the transverse measure of $|\mu |$. The condition (\ref{eq:comparison_measures}) implies that the function $f_2^{-1}\circ f_1$ defines a bi-Lipschitz homeomorphism of the segment $a$ onto the segment $b$. In fact, the bi-Lipschitz homeomorphism remains a bi-Lipschitz homeomorphism over adjacent building blocks.

Denote by $E$ the Riemann surface obtained by gluing the building block $E_{j,j_1}$. 
The correspondence between the geodesics $|\mu |$ and $|\nu |$ with the horizontal and vertical arcs in $E$ induces a homeomorphism of $S^1$ to the ideal boundary of $E$ (which consists of the curved boundary sides of $E_{j,j_1}$ glued along their endpoints). We show that this homeomorphism, postcomposed by the boundary map of a conformal map from $E$ to the unit disk $\mathbb{D}$, is quasisymmetric. To do so, it is enough (see \cite{GS}) to consider all triples of adjacent arcs of the same Euclidean lengths on $S^1$ (and the lengths bounded from above by some fixed constant $\delta >0$) and show that in the image $E$, the modulus of all curves connecting the image of the first interval to the image of the third interval is between two positive constants (independent of the choice of the intervals on $S^1$). 

We choose $\delta >0$ small enough such that any triple of adjacent intervals of Euclidean lengths at most $\delta$ stays either in a centered subinterval of $I_j$ of size $3/4$ of the size of $I_j$ or in a centered subinterval of $J_j$ of size $3/4$ of the size of $J_j$ Without loss of generality, we assume that the three adjacent intervals are in the centered subinterval of $I_j$. 

Under the boundary homeomorphism $S^1\to\partial E$, the image $m_1,m_2,m_3$ on $\partial E$ of the above triple of equal size intervals is bounded away from the endpoints of the image of $I_j$. Since we consider only finitely many intervals $I_j$, the distance between the images of the endpoints of the centered interval and the endpoints of $I_j$ is bounded uniformly from below for all $j$.  
 Therefore, the images of the three adjacent intervals on $\partial E$ are bounded away from the places where the curved sides meet the horizontal arc corresponding to the geodesic $g_j$, and their endpoints have real and imaginary parts proportional to each other by a uniform constant (since the functions relating boundary parametrizations are bi-Lipschitz). 
 
 We first establish that the modulus of the curves in $E$ connecting $m_1$ and $m_3$ is bounded above by a positive constant. It will be enough to consider a part of $E$ that lies above the horizontal arc corresponding to $g_j$. Let $|I|=h$ and note that the Euclidean sizes of $m_1,m_2,m_3$ are of the order $h$ by (\ref{eq:comparison_measures}). We consider a neighborhood $N(m_3)$ in $E$ of $m_3$ that consists of points at a distance to $m_3$ of the order $h$, and that does not contain points of $m_1$ (see Figure \ref{fig:upper-b}). Then any curve connecting $m_3$ to $m_1$ starts at $m_3$ and exits $N(m_3)$. We define a metric to be the Euclidean metric times $\frac{1}{h}$ in $N(m_3)$ and to be zero elsewhere. This is an allowable metric for the curve family (up to a constant multiple), and the area of $N(m_3)$ is a constant since the Euclidean area is of the order $h^2$ and the square of the metric has $\frac{1}{h^2}$ multiple. Thus there is a positive upper bound for the modulus of the family.
 
 \begin{figure}[htb]
    \centering
    \includegraphics[width=8 cm]{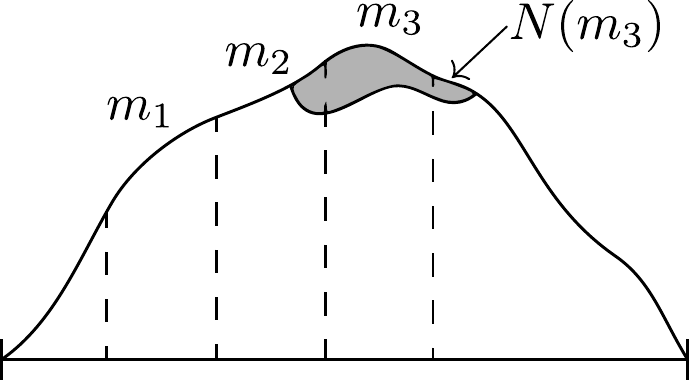}
    \caption{The bi-Lipschitz graph.}
    \label{fig:upper-b}
\end{figure}
 
 We next establish that the modulus of the curves in $E$ connecting $m_1$ and $m_3$ has a positive lower bound. Since we established that the boundary $\partial E$ is the graph of a bi-Lipschitz function and the arcs $m_1,m_2,m_3$ are a positive distance away from its endpoints, and the height of the graph is bounded below by a positive number, there exists a bi-Lipschitz map (with a constant bounded above independently of the positions of $m_1,m_2,m_3$) from the configuration below $m_1\cup m_2\cup m_3$ to a rectangle with length $3h$ and height bounded below with $m_2$ in the middle of the top side of length $h$. The modulus in the rectangle is positive since it is a single configuration. The modulus in $E$ is positive since we restrict to a subfamily of curves that stay in the preimage of the rectangle and the maps have uniformly bounded Lipschitz constants.
 
 Since we established lower and upper bounds on the modulus of the images under the homeomorphism of all symmetric triples of intervals, we conclude that the homeomorphism is a quasisymmetric map. Finally, we can extend this quasisymmetric map to a quasiconformal map of the disk.
 \end{proof}

\subsection{Quasiconformal vs. homeomorphic realizations for infinite surfaces}\label{sec: qc, infinite}
The types of laminations considered in Theorem \ref{thm: non qc} part (2) and in Theorem \ref{thm: sufficient, first kind} are the same. We first recall that any infinite topological surface always has two multicurves $\{\alpha_n\}_{n\in\mathbb{N}}$ and $\{\beta_k\}_{k\in\mathbb{N}}$ that are filling in the sense that the components of the complement of their union consist of finite-sided polygons with consecutive sides on the curve component of $\alpha$ and $\beta$ (and thus are filling in our sense). Moreover, each $\alpha_n$ intersects at most finitely many curves in 
$\{\beta_k\}_{k\in\mathbb{N}}$ and the total number of intersection points is bounded by $C>0$ independently of $n$. An analogous statement with the same bound holds for each $\beta_k$ (see Morales-Valdez \cite[Theorem 1.5]{MV}). Because of this bounded intersection property, we say that the pair of multicurves is \textit{locally finite}.

In this section we consider laminations of the form $$\mu =\sum_{n=1}^{\infty} m(\alpha_n)\delta_{\alpha_n}, \qquad \nu =\sum_{k=1}^{\infty} m(\beta_k)\delta_{\beta_k},$$ where $(m(\alpha_n))_{n=1}^\infty$ and $(m(\beta_k))_{k=1}^\infty$ are sequences of positive numbers, and $\delta_\alpha$ is the Dirac measure of a simple closed curve $\alpha$.

Let $X$ be an infinite Riemann surface with $\{\alpha_n\}_{n\in\mathbb{N}}$, $\{\beta_k\}_{k\in\mathbb{N}}$, $\mu$, and $\nu$ as above. We always use $R_{\alpha_n}$ to denote the standard hyperbolic collar around $\alpha_n$, and $R_{\beta_k}$ for the standard hyperbolic collar around $\beta_k$. The Collar Lemma \cite{buser} states that for different $n$, the $R_{\alpha_n}$'s are pairwise disjoint, and likewise for the $R_{\beta_k}$'s. We recall the integrability criterion contained in \cite[page 134, Theorem 22.1]{Strebel} (see also \cite{Saric24}).
\begin{thm}\label{thm: realization criteria modulus}
    Assume $X=\mathbb{D}/\Gamma$, with $\Gamma$ of the first kind. Then $\mu$ is realized by an integrable holomorphic quadratic differential on $X$ if 
\begin{equation}\label{eq:convergence}
    \sum_{n=1}^{\infty} \frac{m(\alpha_n)^2}{\mathrm{mod}(R_{\alpha_n})}<\infty
\end{equation}
\end{thm}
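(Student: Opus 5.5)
We construct an explicit integrable partial measured foliation $\mathcal{F}$ on $X$ whose straightening is $\mu$. Then Theorem \ref{thm: HMS} (or Strebel's theorem directly) provides the integrable holomorphic quadratic differential. The natural candidate is a foliation supported in the union of the standard collars $R_{\alpha_n}$, which are pairwise disjoint by the Collar Lemma.

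For each $n$, we uniformize the collar $R_{\alpha_n}$ conformally as a round annulus. Equivalently, we view it as a flat cylinder $\mathcal{C}_n=\{x+iy: 0\le x\le M_n,\ y\in \mathbb{R}/\mathbb{Z}\}$ with $M_n=\mathrm{mod}(R_{\alpha_n})$, normalized so that the core curve has circumference $1$. On $\mathcal{C}_n$ we define the function
$$v_n(x+iy)=\frac{m(\alpha_n)}{M_n}\,x .$$
Its level sets are the closed curves $\{x=\mathrm{const}\}$, which are all homotopic to $\alpha_n$, and its total transverse measure across the cylinder equals $m(\alpha_n)$. By conformal invariance of the Dirichlet integral, its contribution is
$$\int_{\mathcal{C}_n}|\nabla v_n|^2=\frac{m(\alpha_n)^2}{M_n^2}\cdot M_n=\frac{m(\alpha_n)^2}{\mathrm{mod}(R_{\alpha_n})}.$$
We cover each cylinder by finitely many closed Jordan domains with piecewise $C^1$ boundary, and take the restrictions of $v_n$ as the charts. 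On overlaps these charts differ only by additive constants, so together they define a partial measured foliation $\mathcal{F}=\bigcup_n \mathcal{F}_n$.

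Because the collars are disjoint and every compact set meets only finitely many of them (closed geodesics in a pants decomposition are locally finite), the covering is locally finite. The Dirichlet integral is then the sum of the contributions above, which is finite by \eqref{eq:convergence}. Properness is immediate: every leaf is a closed curve homotopic to some $\alpha_n$, so its lifts are quasi-geodesics with two distinct endpoints, namely the endpoints of the lifts of $\alpha_n$. Straightening sends the family of leaves in $\mathcal{C}_n$ to the single geodesic $\alpha_n$ with mass equal to the transverse measure $m(\alpha_n)$. Hence $\mu_{\mathcal{F}}=\sum_n m(\alpha_n)\delta_{\alpha_n}=\mu$, and Theorem \ref{thm: HMS} finishes the proof.

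The step that needs the most care is the bookkeeping, which is routine but must be checked:
\begin{enumerate}
\item the charts have surjective derivative only in the interior of the domains, which is fine;
\item the straightening of a cylinder of closed leaves gives an atom of exactly the right mass, which follows from the description of atoms in \S\ref{sec: realizable laminations}.
\end{enumerate}
The first-kind hypothesis is used to ensure that the $\alpha_n$ are genuine closed geodesics with collars and no boundary issues. Alternatively, one can cite Strebel's Theorem 22.1 on heights of cylinders, which yields the quadratic differential directly.
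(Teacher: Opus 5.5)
Your proof is correct, but your main route is not the paper's. The paper proves nothing here: it quotes Strebel's heights theorem for countably many curves, \cite[Theorem 22.1]{Strebel}, which is exactly your closing ``alternatively'' remark. Given disjoint annuli $R_{\alpha_n}$ with $\sum m(\alpha_n)^2/\mathrm{mod}(R_{\alpha_n})<\infty$, that theorem produces the finite-norm differential directly. Its closed trajectories lie in the classes of the $\alpha_n$ and sweep out ring domains of heights $m(\alpha_n)$.

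You instead build the integrable partial measured foliation by hand: level curves of $\frac{m(\alpha_n)}{\mathrm{mod}(R_{\alpha_n})}x$ in each flattened collar. Its Dirichlet integral is exactly $\sum m(\alpha_n)^2/\mathrm{mod}(R_{\alpha_n})$ by conformal invariance, and you then invoke Theorem \ref{thm: HMS}.

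What your route buys:
\begin{itemize}
\item The first step is elementary. It already gives what the paper actually uses later, namely that $\mu$ is realizable by an integrable partial measured foliation (the hypothesis of Theorem \ref{thm: main}).
\item Combined with the Dirichlet principle of \cite{SaricShima}, it recovers the norm bound $\int_X|q|\le\sum m(\alpha_n)^2/\mathrm{mod}(R_{\alpha_n})$.
\end{itemize}
The cost is that passing from foliation to differential rests on the much deeper Theorem \ref{thm: HMS}. Strebel's classical theorem hands you the ring-domain structure of $q$ explicitly. Neither argument uses the first-kind hypothesis. Your stated reason for it (existence of collars) is not where it would enter, since standard collars exist around every simple closed geodesic on any hyperbolic surface.

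One justification needs repair. Your atlas is locally finite only if the collars are, and ``closed geodesics in a pants decomposition are locally finite'' is not the right reason. In this statement $\{\alpha_n\}$ is just one half of the locally finite filling pair of \S\ref{sec: qc, infinite}, not necessarily a pants decomposition.

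A general multicurve on an infinite-type surface need not be locally finite. On $\mathbb{C}\setminus\mathbb{Z}$, the geodesics enclosing the punctures $1,\dots,n$ are pairwise disjoint, yet all of them cross a fixed compact arc between the cusps at $0$ and $1$. In that situation no locally finite atlas supported on the collars exists.

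In the present setting the construction does go through, for two reasons:
\begin{itemize}
\item The compact-polygon and bounded-intersection properties of the filling pair rule out accumulation of the $\alpha_n$. A limit arc would either lie in an open complementary polygon or force some $\beta_k$ to meet infinitely many $\alpha_n$; here you use that disjoint simple closed geodesics never enter each other's collars.
\item Disjoint collars around a locally finite family of simple closed geodesics are themselves locally finite. A wide collar meeting a compact set $K$ contains a ball of definite area near $K$. A thin collar can meet $K$ only if its core geodesic comes close to $K$.
\end{itemize}
Strebel's theorem needs no such input.
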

Of course, the analogous result holds for $\nu$. 
\subsubsection{Non-quasiconformal realizations}
Continue with $X$, $\mu$, and $\nu$ as above. We first demonstrate how quasiconformal realization can fail. Let $\textrm{Ext}_X(\beta)$ denote the extremal length of a curve $\beta$ on $X$.

\begin{thm}\label{thm: not quasiconformal}
        Let $X$ be any Riemann surface and assume that $\mu$ and $\nu$ as above are realized on $X$ by integrable partial measured foliations. If 
        \begin{equation}\label{eq: sup of big quantity}
            \sup_{k\in\mathbb{N}} \frac{m(\beta_k)\textrm{Ext}_X(\beta_k)}{i(\mu,\beta_k)}=\infty,
        \end{equation}
  then the holomorphic realization of $(\mu,\nu)$ is not quasiconformal. 
\end{thm}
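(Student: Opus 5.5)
Let $(f,Y,q)$ be the holomorphic realization of $(\mu,\nu)$ from Theorem \ref{thm: main}, and argue by contradiction: suppose $f$ is homotopic to a $K$-quasiconformal map. Then extremal lengths change by at most a factor $K$, so $\textrm{Ext}_Y(f(\beta_k))\geq K^{-1}\textrm{Ext}_X(\beta_k)$ for all $k$. The contradiction will come from an upper bound on $\textrm{Ext}_Y(f(\beta_k))$ that uses only the structure of $q$.

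The key geometric input is the structure of the vertical foliation. Since $f_*\nu=\sum_k m(\beta_k)\delta_{f(\beta_k)}$ is homotopic to $\mathcal{F}_v(q)$, and atoms of the straightened lamination come exactly from cylinders of closed leaves (cf. the discussion before Lemma \ref{lem:asymptotic} and \cite[Theorem 13.1]{Strebel}), $\mathcal{F}_v(q)$ contains, for each $k$, a flat cylinder $C_k$ foliated by closed vertical leaves homotopic to $f(\beta_k)$, of height equal to the transverse measure $m(\beta_k)$. Its circumference is the $|q|$-length of a closed vertical leaf. I would then argue that this circumference equals $i(f_*\mu,f(\beta_k))=i(\mu,\beta_k)$. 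Along a closed vertical leaf, $|\textrm{Re}\sqrt q|$ vanishes, so its length is its horizontal transverse measure, namely $|\textrm{Im}\sqrt q|$ integrated along it. A vertical leaf is a quasi-transversal for $\mathcal{F}_h(q)$, so it realizes the intersection number with the horizontal foliation (Strebel's minimality of $\theta$-curves, or \cite[Lemma 3.3]{Saric22}). Combined with $i(\mathcal{F}_h(q),\cdot)=i(f_*\mu,\cdot)$ and topological invariance of intersection numbers, this gives the claimed equality.

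Hence $\textrm{mod}(C_k)=m(\beta_k)/i(\mu,\beta_k)$. Since $C_k$ is an annulus in $Y$ with core homotopic to $f(\beta_k)$, extremal length is bounded by the reciprocal modulus:
$$\textrm{Ext}_Y(f(\beta_k))\leq \frac{1}{\textrm{mod}(C_k)}=\frac{i(\mu,\beta_k)}{m(\beta_k)}.$$
Combining with the quasiconformal distortion bound gives
$$\frac{m(\beta_k)\textrm{Ext}_X(\beta_k)}{i(\mu,\beta_k)}\leq K$$
for all $k$, which contradicts (\ref{eq: sup of big quantity}). Note that $i(\mu,\beta_k)>0$ by the filling property, since $\beta_k$ cannot meet $\nu$ essentially.

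The main obstacle is the identification of the cylinders and their circumference. I must confirm that an atom of mass $m(\beta_k)$ in the straightened vertical lamination forces a vertical ring domain of height exactly $m(\beta_k)$; this follows from the tightening description, in which the ring domain tightens to the atomic geodesic and the transverse measure is preserved. I also need the closed-leaf length to equal $i(\mu,\beta_k)$ rather than just bound it from below. If the equality is delicate, the inequality $\textrm{length}\leq$ horizontal measure of that leaf, together with the fact that closed vertical leaves minimize horizontal measure in their class, suffices. A fallback that avoids cylinders altogether is to use the definition of extremal length with the metric $|q|^{1/2}$ restricted to $C_k$.
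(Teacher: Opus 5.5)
Your proposal is correct and follows the paper's proof essentially step for step. The paper also assumes a $K$-quasiconformal realization and uses the maximal vertical cylinder of $q$ homotopic to $f(\beta_k)$, which has height $m(\beta_k)$ and circumference $L_k=i(\mu,\beta_k)$. This gives $\textrm{Ext}_Y(f(\beta_k))\le i(\mu,\beta_k)/m(\beta_k)$, and combining with $\textrm{Ext}_X(\beta_k)\le K\,\textrm{Ext}_Y(f(\beta_k))$ bounds the supremum by $K$. Your justification that the closed vertical leaves realize the horizontal intersection number in their class supplies a step the paper only asserts.
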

\begin{proof}
Assume for contradiction's sake that $f:X\to Y$ is a $K$-quasiconformal map such that the measured laminations $f_*(\mu )$ and $f_*(\nu )$ are realized by the horizontal and vertical foliation of a finite-area holomorphic quadratic differential $q$ on $Y$.

    Fix $k\in\mathbb{N}$ and set $N(k)=\{ n\in\mathbb{N}: i(\alpha_n,\beta_k)>0\}$, which is a finite set because the filling multicurves are locally finite.  Then the vertical foliation of $q$ contains a maximal cylinder $\mathcal{C}(\beta_k)$ swept out by simple closed vertical leaves of $q$ that are homotopic to $f(\beta_k)$. Denote by $L_k$ the length of each closed vertical leaf in $\mathcal{C}(\beta_k)$. Since the horizontal foliation of $q$ is homotopic to $f_*(\mu )$, it follows that
\begin{equation}
\label{eq:length_circumference}
L_k=\sum_{n\in N(k)}i(\alpha_n,\beta_k)m(\alpha_n)=i(\mu,\beta_k).
\end{equation} 
The extremal length $\textrm{Ext}_Y(f(\beta_k))$ of $f(\beta_k)$ on $Y$ is less than or equal to the extremal length of the cylinder $\mathcal{C}(\beta_k)$. Using the natural parameter of $q$, we thus obtain
\begin{equation}
\label{eq:mod-bound}
\textrm{Ext}_Y(f(\beta_k))\leq \frac{L_k}{m(\beta_k)}.
\end{equation} 
Since $f$ is quasiconformal and by (\ref{eq:mod-bound}), $$
\mathrm{Ext}(\beta_k)\leq K\mathrm{Ext}(f(\beta_k))\leq K \frac{L_k}{m(\beta_k)}.
$$
Rearranging and using (\ref{eq:length_circumference}) gives $$\frac{m(\beta_k)\textrm{Ext}_X(\beta_k)}{i(\mu,\beta_k)}\leq K.$$ Since $k$ was arbitrary, this contradicts (\ref{eq: sup of big quantity}).
\end{proof}
Now we prove the second part of Theorem \ref{thm: non qc}. 
\begin{proof}[Proof of Theorem \ref{thm: non qc} part (2)]
    We work with $X$ of the first kind, and $\mu$ and $\nu$ constructed from multicurves $\{\alpha_n\}_{n=1}^\infty$ and $\{\beta_k\}_{k=1}^\infty$ as above. By Theorem \ref{thm: not quasiconformal}, it suffices to specify weights $(m(\alpha_n))_{n=1}^\infty$ and $(m(\beta_k))_{k=1}^\infty$ satisfying the integrability condition (\ref{eq:convergence}) and such that (\ref{eq: sup of big quantity}) holds. 

    As in the proof of Theorem \ref{thm: not quasiconformal}, for each $k\in \mathbb{N}$ we set $N(k)=\{ n\in\mathbb{N}: i(\alpha_n,\beta_k)>0\}$. We also define $M(n)=\{k\in \mathbb{N}: i(\alpha_n,\beta_k)>0\}$. We set $(\epsilon_n)_{n=1}^\infty$ to be any sequence of positive numbers satisfying
\begin{equation}
\label{eq:mu_int}
\sum_{n=1}^{\infty}\epsilon_n =: C <\infty,
\end{equation}
  and define
   $$m(\alpha_n) = \min\Big \{ \epsilon_n\sqrt{\mathrm{mod}(R_{\alpha_n})}, \epsilon_n\cdot \min_{m\in M(n)}\frac{\epsilon_m^2\textrm{Ext}_X(\beta_m)}{i(\alpha_n,\beta_m)\sqrt{\mathrm{mod}(R_{\beta_m})}}\Big \}$$ and 
   $$m(\beta_k) = \epsilon_k \sqrt{\mathrm{mod}(R_{\beta_k})}.$$
The condition (\ref{eq:mu_int}) implies that (\ref{eq:convergence}) holds for both $\mu$ and $\nu$ (for $\mu$, we use the first term in the ``min").

To see that (\ref{eq: sup of big quantity}) holds, we upper bound $i(\mu,\beta_k)$, using the second term in the ``min": 
\begin{align*}
    i(\mu,\beta_k)&=\sum_{n\in N(k)}i(\alpha_n,\beta_k)m(\alpha_n)\leq \sum_{n\in N(k)}i(\alpha_n,\beta_k)\epsilon_n \Big (\min_{m\in M(n)}\frac{\epsilon_m^2\textrm{Ext}_X(\beta_m)}{i(\alpha_n,\beta_m)\sqrt{\mathrm{mod}(R_{\beta_m})}}\Big ) \\
    &\leq \sum_{n\in N(k)} \epsilon_n \epsilon_k^2\frac{\textrm{Ext}_X(\beta_k)}{\sqrt{\mathrm{mod}(R_{\beta_k})}} \leq C\epsilon_k^2\frac{\textrm{Ext}_X(\beta_k)}{\sqrt{\mathrm{mod}(R_{\beta_k})}},
\end{align*}
where in the last inequality we used $(\ref{eq:mu_int})$. Now, inserting the above bound for $i(\mu,\beta_k)$, and then the formula for $m(\beta_k)$, we obtain
$$\frac{m(\beta_k)\textrm{Ext}_X(\beta_k)}{i(\mu,\beta_k)}\geq m(\beta_k) \epsilon_k^{-2}C^{-1}\sqrt{\mathrm{mod}(R_{\beta_k})}=C^{-1}\epsilon_k^{-1}.$$ Since $\epsilon_k\to 0$ as $k\to \infty$, (\ref{eq: sup of big quantity}) indeed holds.
\end{proof}

\subsubsection*{Quasiconformal realization}

We now prove Theorem \ref{thm: sufficient, first kind}. As in the theorem statement, we have a Riemann surface $X=\mathbb{D}/\Gamma$ with laminations $$\mu =\sum_{n=1}^{\infty} m(\alpha_n)\delta_{\alpha_n}, \qquad \nu =\sum_{k=1}^{\infty} m(\beta_k)\delta_{\beta_k}$$ coming from filling and locally finite multicurves constructed as in the beginning of this subsection, but now the $\alpha_n$'s are the cuffs of a pants decomposition and the hyperbolic lengths of the $\alpha_n$'s and $\beta_k$'s are uniformly bounded above and below.

\begin{proof}[Proof of Theorem \ref{thm: sufficient, first kind}]
By \cite{Maskit}, there exist $C_1,C_2>0$ such that for any simple closed geodesic $\alpha$ on a Riemann surface $X$ 
$$
C_1\ell_X(\alpha )\leq \mathrm{Ext}(\alpha )\leq C_2\ell_X(\alpha )e^{\frac{\ell_X(\alpha )}{2}},
$$
where $\ell_X(\alpha )$ is the hyperbolic length of $\alpha$ and $\mathrm{Ext}(\alpha )$ is the extremal length. Therefore, the uniform upper and lower bounds on the hyperbolic lengths of the $\alpha_n$'s and $\beta_k$'s are equivalent to giving upper and lower bounds on the moduli of these curves. Hence, square-summability of $(m(\alpha_n))_{n=1}^\infty$ and $(m(\beta_k))_{k=1}^\infty$ implies that (\ref{eq:convergence}) holds, so that $\mu$ and $\nu$ are realized by straightening integrable partial measured foliations on $X$.

Let $f:X\to Y$ be a homeomorphic realization associated with $\mu$ and $\nu$, obtained via Theorem \ref{thm: main}. To show that $f$ is homotopic to a quasiconformal map, we will use \cite{shiga}. For each $\alpha_n$ we choose one transversely intersecting $\beta_{k_n}$. The hyperbolic lengths of the $\alpha_n$'s together with the twists of the $\beta_{k_n}$'s relative to the $\alpha_{n}$'s allows us to represent $X$ by a Fenchel-Nielsen coordinate.
By the uniform controls on the hyperbolic lengths, the twist coordinates are uniformly bounded. By \cite{shiga}, if $\{ f(\alpha_n)\}_{n=1}^\infty$ is a bounded pants decomposition of $Y$ and $\{ f(\beta_{k_n})\}_{n=1}^\infty$ has bounded relative twists, then $f$ is homotopic to a quasiconformal map. We show that $\{ f(\alpha_n)\}_{n=1}^\infty$ has these properties; to do so, we have to step back into the construction of $f:X\to Y$ (Theorem \ref{thm:realization_first_kind}) and flesh out what happens for our specific laminations.

Recall that the first step in the proof of Theorem \ref{thm:realization_first_kind} is to lift to the universal cover and replace atomic geodesics by foliated rectangles, with the total mass of the rectangles chosen appropriately, obtaining the space $\mathbb{D}^b$. Let $\Pi: \mathbb{D}^b\to Q$ be the quotient map from the proof of Theorem \ref{thm:realization_first_kind}. The complementary regions of the lifted foliations in the lifts of pairs of pants are collapsed to points and the arcs of the foliations which do not intersect the other foliations except at their endpoints are also collapsed into points. The part of $\mathbb{D}^b$ which is not collapsed is exactly the lifts of the thickenings of the points where orthogonal arcs connect the cuffs $\{\alpha_n\}_{n=1}^{\infty}$. In the quotient $\hat{X}=Q/\Gamma$, for each such point we obtain a rectangle and the rest of the surface $\hat{X}$ is obtained by gluing of these rectangles along their sides. 
The rectangles are foliated in the horizontal direction with the foliation corresponding to $\mu$ and in the vertical direction by a foliation corresponding to $\nu$. The rectangles lie inside the collar neighborhoods of cuffs $\{\alpha_n\}_{n=1}^\infty$ and each collar neighborhood contains at most $D$ rectangles, where $D$ is some uniform constant depending only on the combinatorics of the multi-curves, since there are at most a uniformly bounded number of points of orthogonal arcs from inside one pair of pants \cite[Theorem 1.5 part (1)]{MV}. Finally, to obtain $Y$ and $q$, the flow boxes on $\hat{X}$ are replaced with Euclidean rectangles of the same size and glued according to the gluings that produced the surface $\hat{X}$. 

The statement of Theorem \ref{thm: sufficient, first kind} contains the assumption that there exists $M\geq 1$ such that
\begin{equation}
\label{eq:bdd_ratio_weights}
\frac{1}{M}\leq\frac{m(\alpha_n)}{m(\beta_k)}\leq M
\end{equation}
for all $n,k$ with $i(\alpha_n,\beta_k)\neq 0$. Let $f:X\to Y$ be a realizing homeomorphism as in Theorem \ref{thm: main}. Fixing an $n$ and following the construction above, the union of at most $D$ rectangles glued along vertical sides in a cyclic order forms an annular neighborhood of $f(\alpha_n )$ in $Y$. Likewise for every $\beta_k$. By condition (\ref{eq:bdd_ratio_weights}), there is $C=C(M)$, independent of $n,k$, such that
$$
\mathrm{Ext}_Y(f(\alpha_n))\leq C, \qquad
\mathrm{Ext}_Y(f(\beta_k))\leq C.
$$
These two inequalities together with the above inequality between the extremal and hyperbolic lengths imply that
$$
\ell_Y(f(\alpha_n)),\ell_Y(f(\beta_k))\leq C_1.
$$
Since each collar of $\alpha_n$ is intersected by at least one geodesic $\beta_k$ (by the filling property) and vice versa, it follows that there is a $C_2>0$ such that for all $n,k$,
$$
C_2\leq\ell_Y(f(\alpha_n)),\ell_Y(f(\beta_k)). 
$$
Thanks to the upper and lower bounds on $\ell_Y(f(\beta_k))$, every $f(\beta_k)$ must have uniformly bounded twists along $f(\alpha_n)$. We conclude that $\{ f(\alpha_n)\}_{n=1}^\infty$ is indeed a bounded pants decomposition of $Y$ with bounded twists. By the discussion above, we are done.
\end{proof}

\section{Uniqueness and energy}\label{sec: trees}
In this section we introduce dual $\R$-trees and use them to prove the uniqueness and energy minimizing property in Theorem \ref{thm: main}, while also preparing to address the asymptotic Plateau problem. These constructions are well known on a closed surface, but here we need to work in complete generality and hence revisit the foundations.

As discussed in \ref{subsubsec: overview}, it is possible to prove the main results without $\R$-trees. Before beginning with $\R$-trees in \S \ref{sec: dual space}, we sketch the alternative proofs.

\subsection{Sketches of alternative proofs}\label{sec: without R trees}

For uniqueness, just to keep things simpler, we focus on the case $X=\mathbb{D}$. Let $\mu$ and $\nu$ be two filling laminations on $\mathbb{D}$ homotopic to foliations with finite Dirichlet integral. For any holomorphic realization $(f,Y,q)$, after applying a biholomorphism we can always set $Y=\mathbb{D}$.

\begin{proof}[Sketch of uniqueness, on the disk]
Assume we have two holomorphic realizations $(f,\mathbb{D},q)$ and $(g,\mathbb{D},r)$ of $(\mu,\nu)$. Our goal is to prove that $g\circ f^{-1}$ is homotopic to a holomorphic map that pushes $q$ to $r$.

Every point $p\in \mathbb{D}$ is the intersection point of a $q$-horizontal (regular or generalized) trajectory $\ell_1$ and a $q$-vertical (regular or generalized) trajectory $\ell_2$. The trajectories $\ell_1$ and $\ell_2$ tighten to geodesics $g_1$ and $g_2$. The geodesics $g_1$ and $g_2$
 necessarily intersect (the details for this are contained in the proof of Lemma \ref{lem: image contains the core} below). Setting $m = g|_{S^1}\circ f|_{S^1}^{-1}$, applying $m$ on the endpoints of $g_1$ and $g_2$ gives two geodesics $m_*g_1$ and $m_* g_2$ that intersect and are tightenings of intersecting $r$-horizontal and $r$-vertical trajectories $\hat{\ell}_1$ and $\hat{\ell}_2$, which also intersect (as above, see Lemma \ref{lem: image contains the core}). By the Teichm{\"u}ller lemma, $\hat{\ell}_1$ and $\hat{\ell}_2$ intersect at a unique point, say $p(\hat{\ell}_1,\hat{\ell}_2)$. 

Let $\Gamma(q)$ be the union of all critical horizontal and vertical trajectories for $q$. Every $p\in \mathbb{D}\backslash\Gamma(q)$ is the intersection of a \textit{unique} horizontal and vertical trajectory. Therefore, on $\mathbb{D}\backslash\Gamma(q)$, the corresponding trajectories $\hat{\ell}_1$ and $\hat{\ell}_2$ are uniquely determined, and
we can define 
\begin{equation}\label{def: intertwining map}
    F:\mathbb{D}\backslash\Gamma(q)\to \mathbb{D}, \qquad F(p)=p(\hat{\ell}_1,\hat{\ell}_2).
\end{equation}
We will show that $F$ extends to a M{\"o}bius transformation that transports $q$ to $r$.

To see that $F$ extends to all of $\mathbb{D}$, one shows that if $p\in \Gamma(q)$ is the intersection of at least two pairs of horizontal and vertical trajectories, say $(\ell_1,\ell_2)$ and $(\ell_1',\ell_2')$, then the intersection points $p(\ell_1,\ell_2)$ and $p(\ell_1',\ell_2')$ agree. Indeed, let $p\in \Gamma(q)$. Then $p$ is the intersection of at least two pairs of horizontal and vertical trajectories, say $(\ell_1,\ell_2)$ and $(\ell_1',\ell_2')$. By the procedure above, they give pairs of intersecting horizontal and vertical trajectories $(\hat{\ell}_1,\hat{\ell}_2)$ and $(\hat{\ell}_1',\hat{\ell}_2')$ for $r$. Since $\ell_1$ and $\ell_1'$ are both horizontal trajectories passing through $p$, they can be joined with paths of arbitrarily small horizontal transverse measure. Likewise for $\ell_2$ and $\ell_2'$. Transporting via $m$, and using the description of $\mu$ as a current from \S \ref{sec: realizable laminations}, we deduce that $\hat{\ell}_1$ and $\hat{\ell}_1'$ have zero horizontal transverse distance, and $\hat{\ell}_2$ and $\hat{\ell}_2'$ have zero vertical transverse distance. It follows that the intersection points of $(\hat{\ell}_1,\hat{\ell}_2)$ and $(\hat{\ell}_1',\hat{\ell}_2')$ have zero distance in the singular flat metric induced by $r$, and hence they are the same point.

If $\Gamma(r)$ is the set of critical trajectories for $r$, then $F(\Gamma(q))\subset F(\Gamma(r))$. Indeed, the argument from the paragraph above shows that any $p\in \Gamma(q)$ is sent to a point on at least two horizontal-vertical pairs. Another use of the transverse measure shows that that if we have the germs of two leaves at a point $p\in Z(q)$, they are sent to two distinct germs of leaves exiting $F(p)$, hence $F$ takes zeros of $q$ to zeros of $r$. 

If we repeat the construction above but reverse the roles of $q$ and $r$, we obtain a map $G:\mathbb{D}\to \mathbb{D}$ that inverts $F$ and satisfies the analogous properties. Using $G$, one thus deduces that $F(\Gamma(q))=\Gamma(r)$, and similar for the zero sets $Z(q)$ and $Z(r)$. From here we can show that $F$ is holomorphic away from $Z(q)$: $F$ takes $q$-horizontal and $q$-vertical leaves to $r$-horizontal and $r$-vertical leaves. Using the transverse measures, one can show that it has constant speed on leaves; we omit the details. We extend $F$ over $Z(q)$ using the removeable singularities theorem. 

Since we have the inverse $G$, $F$ is a M{\"o}bius transformation. By following up horizontal and vertical trajectories, we see that $F$ agrees with $m$ on the set of endpoints of horizontal and vertical trajectories. Since $\mu$ and $\nu$ are filling, by Proposition \ref{prop:endpoints_S^1} the set of such endpoints is dense in $S^1$, and hence by continuity we have $F=m$ everywhere on $S^1$. We conclude that $g\circ f^{-1}$ is homotopic to $F$, and hence we are done.
\end{proof}
To handle the general case, one has to lift to the universal covers. To define the analogue of the map $F$, one can argue more or less the same as above, except one has to define $F$ appropriately on ring domains corresponding to atomic geodesics, using the height coordinates on ring domains. 

\begin{proof}[Sketch of energy minimization]
Let $(\mu,\nu)$ be a filling pair. For any homeomorphism $h:X\to Z$ extending to a homeomorphism $\partial_\infty h$ of the ideal boundary, Minsky's inequality (see \cite{SaricShima}) gives 
\begin{equation}\label{eq: Minsky}
    i_X(\mu,\nu)^2 = i_Z((\partial_\infty h)_*\mu,(\partial_\infty h)_*\nu))^2\leq \int_{Z}|q_{\mu}|\int_{Z}|q_{\nu}|,
\end{equation}
where $q_\mu$ and $q_\nu$ are the holomorphic quadratic differentials on a fixed surface $Z$ whose foliations straighten to $(\partial_\infty h)_*\mu$ and $(\partial_\infty h)_*\nu$ respectively. We note that if $(f,Y,q)$ is a holomorphic realization of $(\mu,\nu)$, then equality is achieved by $q$ and $-q$, thus, $$\Big (\int_Y |q|\Big )^2\leq \int_{Z}|q_{\mu}|\int_{Z}|q_{\nu}|.$$
Hence, by the Dirichlet principle from \cite[Theorem 3.2]{SaricShima}, holomorphic realizations minimize the product of the Dirichlet integrals. To show minimization for the sum of the energies, since $$\mathcal{D}(\mathcal{F}_h(q))+\mathcal{D}(\mathcal{F}_v(q))=2||q||_{L^1(Y)}=2(\mathcal{D}(\mathcal{F}_h(q))\mathcal{D}(\mathcal{F}_v(q)))^{1/2},$$ we have 
that for any $h:X\to Z$ with its $q_\mu$ and $q_\nu$ as above,
$$\mathcal{D}(\mathcal{F}_h(q))+\mathcal{D}(\mathcal{F}_v(q))\leq 2\Big (\int_{Z}|q_{\mu}|\int_{Z}|q_{\nu}|\Big )^{1/2}\leq \mathcal{D}(\mathcal{F}_h(q_\mu))+\mathcal{D}(\mathcal{F}_v(q_\nu)),$$ and we deduce the minimizing property again from the Dirichlet principle. 

We argue below that any pair realizing the minimum/equality in (\ref{eq: Minsky}) comes from a holomorphic realization, i.e., we must have $q_\mu=-q_\nu$. Once this is accomplished, it follows from the uniqueness in Theorem \ref{thm: main} that minimization is strict (for both sum and product). Note that by ``strict minimization" for the product, this means strict minimization off the Teichm{\"u}ller line generated by $q$.

To see that any minimizer comes from a holomorphic realization, note that any minimizer certainly minimizes in its own quasiconformal Teichm{\"u}ller space.
Take a $1$-parameter family of quasiconformal maps starting at the minimum, take the corresponding holomorphic quadratic differentials obtained by pushing $\mu$ and $\nu$, and then differentiate the product of the $L^1$-norms. By following the computation from \cite[Theorem 3]{LakicMinimalNorm} (carried out on the unit disk, but which goes through in general), one arrives at a variational formula for a single $L^1$ norm (analogous to \cite[Theorem 1.2]{Wen} on closed surfaces). By the product rule, one gets a variational formula for the product, which shows that a minimizer must have $\frac{q_\mu}{||q_\mu||}=-\frac{q_\nu}{||q_\mu||}$, and the result follows.
\end{proof}

Going back to the proof sketch for uniqueness above, while the map $F$ was easily defined on the complement of the critical trajectories, one has to work through some details to show that it extends to a M{\"o}bius transformation. Using $\R$-trees allows one to eschew these details, at the expense of introducing auxiliary objects.
About energy minimization, compared to the sketch above, the complete proof that we will give below is more geometric in nature and in some sense gives more information.

\subsection{The dual space of a measured lamination}\label{sec: dual space}
We recall the notion of the dual space of a geodesic current, following the presentation from \cite{Didac}. This definition is predated by previous definitions of the dual space of a measured lamination (see \cite{MorganShalenFreeActions}, \cite[\S 11.12]{KapovichHyperbolicManifolds}), which are all essentially equivalent (see \cite[\S 5.1]{Didac}). Note that most presentations restrict to closed surfaces, but things go through on general surfaces with minor modifications. 
Let $X=\mathbb{D}/\Gamma$. A geodesic current $\mu$ on $X$ induces a pseudo-distance $\Tilde{d}_\mu$ on $\mathbb{D}$ by $$\tilde{d}_\mu(x,y) = \frac{1}{2}(\mu(G[x,y))+\mu(G(x,y])),$$ where $G[x,y)$ is the set of geodesics transversely intersecting the half-open geodesic from $x$ to $y$ containing the endpoint $x$, and similar for $G(x,y]$. Since $\mu$ is invariant under the action of $\Gamma$ on $\mathcal{G}(\Tilde{X})$, 
$\Tilde{d}_\mu(x,y)=\Tilde{d}_\mu(\gamma\cdot x,\gamma \cdot y)$ for all $\gamma\in \Gamma.$
\begin{defn}
We define an equivalence relation on $\mathbb{D}$ by $x\sim y$ if $\tilde{d}_\mu(x,y)=0.$ 
   The dual space associated with $\mu$ is the quotient metric space $X_\mu = \mathbb{D}/\sim$, with distance function $d_\mu$ inherited from $\tilde{d}_\mu.$
\end{defn}
We always denote the quotient projection by $\pi_\mu: \mathbb{D}\to X_\mu.$ By the $\Gamma$-invariance of $\Tilde{d}_\mu$ given above, the $\Gamma$-action on $\mathbb{D}$ descends to an action by isometries $\Gamma\to \textrm{Isom}(X_\mu,d_\mu)$, with respect to which $\pi_\mu$ is equivariant.

From now on, assume $\mu$ is a measured lamination. We record results gleaned from \cite{Didac}.
\begin{prop}\label{prop: lamination_properties}
    Let $\mu$ be a geodesic current on $X$, with dual space $X_\mu$ and projection $\pi_\mu$. The following properties hold.
    \begin{enumerate}
        \item $X_\mu$ is a $0$-hyperbolic space if and only if $\mu$ is a measured lamination.
        \item $\pi_\mu$ is continuous if and only if $\mu$ has no atoms.
        \item Assume that the set of atoms $A\subset 
        \mathcal{G}(\mathbb{D})$ of $\mu$ is precisely the set of lifts of closed geodesics supported on $\mu$. Then, $X_\mu$ splits into connected components as $$X_\mu = \Big (\bigsqcup_{g\in A}{\pi_\mu(g)}\Big )\sqcup\Big ( \bigsqcup_{V\in \pi_0(\mathbb{D}\backslash |A|)}\pi_\mu(V)\Big ).$$ 
    \end{enumerate}
\end{prop}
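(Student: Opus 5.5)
We will argue each item directly from the definition of $\tilde d_\mu$, in the spirit of \cite{Didac}. Throughout, a geodesic $g$ in $\mathcal{G}(\mathbb{D})$ crosses the segment $[x,y]$ transversely exactly when $g$ separates $x$ from $y$; the only exception is when $x$ or $y$ lies on $g$ itself. This is what the half-open conventions in $G[x,y)$ and $G(x,y]$ are designed to handle.

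For (1), suppose first that $\mu$ is a measured lamination. Then no two geodesics in $|\mu|$ cross. We check the four-point condition. For $x,y,z\in\mathbb{D}$, every geodesic of $|\mu|$ that separates two of the three points separates exactly one point from the other two. Hence $\tilde d_\mu$ restricted to any triple is a tripod metric: each leg has length equal to the $\mu$-mass of the geodesics cutting that point off. The same counting with four points shows the metric is a tree metric, so it satisfies the $0$-hyperbolic four-point condition. Boundary conventions contribute half-atoms symmetrically and do not spoil this.

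For the converse, suppose $\mu$ charges a pair of transversely crossing geodesics. Since $\mu$ is Radon, we can find small neighbourhoods $U_1,U_2$ of two crossing geodesics with $\mu(U_i)>0$ and with every element of $U_1$ crossing every element of $U_2$. Place $x,y,z,w$ in the four complementary quadrants. The four-point quantities then differ by roughly $2\min(\mu(U_1),\mu(U_2))>0$, which violates $0$-hyperbolicity.

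For (2), first assume $\mu$ has no atoms. Then $\tilde d_\mu(x,x_n)\le \mu(\{g: g\cap[x,x_n]\ne\emptyset\})$, and this tends to $\mu(\{g:x\in g\})$ by outer regularity on the shrinking compact sets. The set $E=\{g: x\in g\}$ is a circle's worth of geodesics. The main obstacle is showing $\mu(E)=0$ when there are no atoms. For a lamination this is immediate, since at most one leaf passes through $x$ and that leaf is not an atom. For a general current we would argue via the transverse measure on a small geodesic arc through $x$: the diffuse part of that measure gives no mass to the single point $x$.

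Conversely, let $g$ be an atom, and take $x\in g$ and $x_n\to x$ on one side of $g$. Then $\tilde d_\mu(x,x_n)\ge \tfrac12\mu(\{g\})>0$, so $\pi_\mu$ is discontinuous at $x$.

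For (3), we first show that each piece is connected. The atoms are lifts of closed leaves, so they are isolated and form a locally finite family. By (2), restricted to each component $V$ of $\mathbb{D}\setminus|A|$, the map $\pi_\mu$ is continuous on $V$, because only non-atomic mass is crossed inside $V$. Hence $\pi_\mu(V)$ is connected. Similarly, $\pi_\mu(g)$ is a connected segment. The second half of the argument shows that distinct pieces are at positive distance and are open-closed in $X_\mu$. Any two points in different pieces are separated by some atom $g$, so their distance is at least $\tfrac12\mu(\{g\})$. We then check that these lower bounds are locally uniform near each piece, using local finiteness of the atoms and the fact, from Lemma \ref{lem:asymptotic}, that no positive mass accumulates on them. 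It follows that the pieces are clopen, which gives the decomposition into components.
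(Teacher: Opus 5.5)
\textbf{The gap is in the last step of (3).} For (1) and (2) you prove directly what the paper imports from \cite{Didac}, and your connectivity argument in (3) is the paper's: split off $\mu_{\mathrm{at}}$, use convexity of $V$ so that $\tilde d_\mu=\tilde d_{\mu_0}$ on $V$, and apply (2) to $\mu_0$. The separation step, however, fails as written, because the pieces $\pi_\mu(V)$ need \emph{not} be open. No ``local uniformity'' can therefore be checked.

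For an example, let $\alpha_1,\alpha_2,\dots$ be disjoint simple closed geodesics that all lie on the boundary of one complementary region $R$; such configurations exist on infinite-type surfaces. Put $\mu=\sum_n m_n\delta_{\alpha_n}$ with $m_n\to 0$. For a lift $V$ of $R$, $\pi_\mu(V)$ is a single point $P$. For boundary lifts $\tilde\alpha_n\subset\partial V$ one computes $d_\mu(P,\pi_\mu(\tilde\alpha_n))=\frac12 m_n\to 0$, so $\{P\}$ is not open.

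Local finiteness of the atoms cannot rescue this. It only controls compact parts of $\mathbb{D}$, whereas $V$ is non-compact with infinitely many boundary atoms. It also need not hold: closed leaves can accumulate on the boundary of a geodesic half-plane (cf.\ \S\ref{sec:prel}). Lemma \ref{lem:asymptotic} says nothing about this.

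The correct finish is to separate by one atom at a time. Suppose two pieces are separated by $g\in A$, and let $D_1,D_2$ be the open sides of $g$. Then $\pi_\mu(D_1)$, $\pi_\mu(D_2)$ and $\pi_\mu(g)$ partition $X_\mu$, and they are pairwise at distance at least $\frac12\mu(\{g\})$. Indeed, a segment from $D_1$ to $D_2$ crosses $g$ at an interior point, so $g$ lies in both $G[x,y)$ and $G(x,y]$; a segment ending on $g$ picks $g$ up in exactly one of them. So these three sets are clopen, no connected set meets two pieces, and together with connectedness of each piece this gives the decomposition. Note also that $\pi_\mu(g)$ is a single isolated point of $X_\mu$, since no leaf crosses $g$; it is not a segment. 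The segment $I_g$ lives in the blown-up tree $T_\mu$.

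\textbf{Two smaller issues.} In (2), your sketch for general currents is circular. The measure $m(I)=\mu(G(I))$ on a short arc $k$ through $x$ has $m(\{x\})$ equal to the $\mu$-mass of the geodesics through $x$ other than the one containing $k$. So ``the diffuse part misses $x$'' assumes what is to be proved. Off closed surfaces it is in fact false: on $X=\mathbb{D}$, angular measure on the diameters is an atomless current with $\tilde d_\mu(0,y)=\pi/2$ for all $y\neq 0$. Your lamination argument is complete, and laminations are the only case used later; it uses that at most one leaf passes through $x$, that this leaf is not an atom, and continuity from above on the compact sets of geodesics meeting $\overline{B(x,r)}$.

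In the converse of (1), ``four quadrants'' is too loose. Unless the points span a convex quadrilateral, other geodesics can induce the third $2$--$2$ split and cancel the defect. Take the points in general position as vertices of a small convex quadrilateral around $g_1\cap g_2$. Then only the boxes $B\supset U_1$ and $B^\perp\supset U_2$ of geodesics crossing opposite sides induce $2$--$2$ splits. The four-point defect is then exactly $2\min(\mu(B),\mu(B^\perp))>0$, which is the box argument of \cite[Theorem 6.12]{Didac} that the paper invokes.
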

\begin{proof}
    For point (1), note that if $\mu$ is a measured lamination, then for any box of geodesics $B$, $\mu(B)\mu(B^\perp)=0$. From here one can follow the proof of Theorem 6.12 in \cite{Didac}, essentially word-for-word. Point (2) is \cite[Proposition 4.13]{Didac}, the proof of which does not use that the underlying surface is closed. For (3), by the half-open convention in the definition of $\Tilde{d}_\mu$, for any $g\in A$ and $x\not \in g$, $$\Tilde{d}_\mu(x,g)\geq \frac{\mu(\{g\})}{2}.$$ It follows that $\pi_\mu(g)$ is equal to the open ball of radius $\frac{\mu(\{g\})}{2}$ around itself, and hence each $\pi_\mu(g)$ defines an isolated point of $X_\mu$. To see that each $\pi_\mu(V)$, $V\in \pi_0(\mathbb{D}\backslash |A|)$, defines its own connected component, first split $\mu$ into its atomic and non-atomic part as $\mu=\mu_{\textrm{at}}+\mu_0$. Observe that any component $V$ is geodesically convex, since its boundary consists of complete, non-intersection geodesics. Thus, if $x,y\in V$ are connected by a geodesic segment $[x,y]$, the atomic part does not see this segment, and hence $$\Tilde{d}_\mu(x,y)=\Tilde{d}_{\mu_0}(x,y).$$ Moreover, the equivalence relation for $\mu_0$ agrees with that of $\mu$ in $V$, and it follows that $\pi_{\mu_0}(V)\subset X_{\mu_0}$ is naturally isometric to $\pi_\mu(V)\subset X_\mu$. By item (2), $\pi_{\mu_0}$ is continuous, hence $\pi_{\mu_0}(V)$ is connected, and thus $\pi_\mu(V)$ is connected. Finally, each $V\in \pi_0(\mathbb{D}\backslash |A|)$ forms its own component, since if $V_1,V_2\in \pi_0(\mathbb{D}\backslash |A|)$ are not equal then they are separated by some $g\in A$, and hence the $\Tilde{d}_\mu$-distance between any two points in $V_1$ and $V_2$ is at least $\mu(\{g\}).$
\end{proof} 
\begin{remark}
 When $\mu$ has no atoms, so for instance when $X=\mathbb{D}$ and $\mu$ comes from an integrable partial measured foliation, we can write $\tilde{d}_\mu(x,y)=\mu(G(x,y)),$ where $G(x,y)$ is the set of geodesics intersecting the open geodesic from $x$ to $y$.
\end{remark}
From now on, we assume further that $\mu$ arises from tightening an integrable proper partial measured foliation. In particular, by Lemma \ref{lem:asymptotic}, the hypothesis of item (3) in Proposition \ref{prop: lamination_properties} applies to $\mu$. By point (1), $X_\mu$ embeds isometrically inside a (connected) $\R$-tree (see \cite[Theorem 3.38]{EvansProbabilityRealTrees}), but the $\R$-tree is not unique. To obtain a canonical $\R$-tree, analogous to \cite[\S 11.12]{KapovichHyperbolicManifolds}, we replace $\mathbb{D}$ with a blown-up disk $\mathbb{D}^b$ as in \S \ref{subsec: realization, general} (this time we're blowing up with respect to just $\mu$, and not two laminations, pardon the abuse of notation). That is, for each atomic geodesic $g\in A$, we cut it out and glue in the strip $C_g = g\times  [-\frac{\mu(\{g\})}{2},\frac{\mu(\{g\})}{2} ]$ in the same fashion as in \S \ref{subsec: realization, general}.
Define a pseudo-distance $\Tilde{d}_{\mu}^b$ on $\mathbb{D}^b$ by setting, in every component of $\mathbb{D}\backslash A$, $\Tilde{d}_\mu^b=d_\mu$,  and in every $C_g$, $\Tilde{d}_\mu^b(x,y)$ equal to the Euclidean distance between the vertical coordinates of $x$ and $y$, and then gluing the pseudo-distances together according to the gluing that made up $\mathbb{D}^b$. 
We define $T_\mu$ to be the quotient of $\mathbb{D}^b$ by the resulting equivalence relation determined by $\Tilde{d}_\mu^b$. By abuse of notation, we denote the resulting distance function by $d_\mu$, and the projection by $\pi_\mu:\mathbb{D}^b\to (T_\mu,d_\mu).$ As in \S \ref{subsec: realization, general}, $\Gamma$ acts on $\mathbb{D}^b$ by the usual action on $\mathbb{D}\backslash A$ and by permuting the strips corresponding to $A$. Hence, for the same reason as with $X_\mu$, $\pi_\mu$ is equivariant for an action of $\Gamma$ by isometries on $T_\mu.$

Each strip $C_g$ maps onto a segment $I_g$ of length $\mu(\{g\})$ in $T_\mu$. There is a natural (discontinuous) map from $T_\mu\to X_\mu$, built by descending the identity map on $\mathbb{D}\backslash A$, and collapsing each segment $I_g$ to the isolated point in $X_\mu$ corresponding to $g$. Outside of the segments, the map is an isometry, and this hopefully excuses the double use of the notation $d_\mu$.
\begin{prop}
    $(T_\mu,d_\mu)$ is an $\R$-tree.
\end{prop}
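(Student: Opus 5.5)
The plan is to check the defining properties of an $\R$-tree directly: $T_\mu$ is a geodesic metric space, and it is $0$-hyperbolic (equivalently, any two segments meeting only at a common endpoint concatenate to a segment). I would reduce as much as possible to Proposition \ref{prop: lamination_properties}, item (1), which says $X_\mu$ is $0$-hyperbolic. The point of the blow-up is to repair the two defects of $X_\mu$: disconnectedness and the lack of geodesics through atoms.

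\textbf{Step 1: $d_\mu$ is a well-defined metric on $T_\mu$.} First I would check that $\Tilde{d}_\mu^b$ is a pseudo-distance on $\mathbb{D}^b$. For $x,y\in\mathbb{D}^b$, define $\Tilde{d}_\mu^b(x,y)$ as the transverse measure of the blown-up lamination crossed by the path in $\mathbb{D}^b$ corresponding to the geodesic segment from $x$ to $y$. Away from strips this is the non-atomic measure $\mu_0$; inside each strip $C_g$ crossed, it is the vertical coordinate displacement. On the atomic part this replaces the half-open convention: a geodesic crossing $g$ picks up exactly the mass between the entry and exit heights in $C_g$. The triangle inequality follows from the triangle inequality for $\Tilde{d}_{\mu_0}$, since crossing numbers are subadditive, together with the Euclidean triangle inequality in each strip. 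The result is then a metric on the quotient $T_\mu$.

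\textbf{Step 2: geodesic and connected.} By Proposition \ref{prop: lamination_properties} (2), applied to the non-atomic part $\mu_0$, $\pi_\mu$ is continuous on $\mathbb{D}^b$. On strips it is continuous by construction, and it glues continuously across the strip boundaries, since points of $g_\pm$ are identified with $g\times\{\pm\mu(\{g\})/2\}$. Hence $T_\mu=\pi_\mu(\mathbb{D}^b)$ is path connected. To produce segments, fix $x,y$ and take the path $c$ in $\mathbb{D}^b$ lifting the hyperbolic segment, running vertically through each strip it meets. Parametrize $t\mapsto \Tilde{d}_\mu^b(x,c(t))$. Because every leaf of the lamination crossed by $[x,y]$ is crossed once, geodesics being simple and pairwise intersecting at most once, this function is monotone, continuous by non-atomicity of $\mu_0$ and linearity in strips, and additive. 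Consequently $\pi_\mu\circ c$, reparametrized by $d_\mu$-arclength, is an isometric embedding of $[0,d_\mu(x,y)]$, that is, a segment.

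\textbf{Step 3: $0$-hyperbolicity.} For four points of $T_\mu$, I would lift them to $\mathbb{D}^b$ and collapse the strips back. The four-point condition then follows from the argument for Proposition \ref{prop: lamination_properties} (1), in the style of \cite[Theorem 6.12]{Didac}. That argument uses only that $\mu(B)\mu(B^\perp)=0$ for boxes $B$ of geodesics, i.e.\ that no two leaves cross. In the blown-up picture the strip leaves $g\times\{y\}$ are pairwise disjoint and disjoint from the other leaves, so the same combinatorial computation goes through: each distance is a sum of the measures of the leaf sets separating the points, and the four-point inequality reduces to inclusion relations among these separating sets. A geodesic $0$-hyperbolic space is an $\R$-tree.

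\textbf{Main obstacle.} I expect the main difficulty to lie in Step 3 at the points of $\pi_\mu(g_\pm)$ and the strip interiors. There I would explicitly handle the cases according to which strips separate which of the four points, using Lemma \ref{lem:asymptotic} to ensure that only closed-geodesic lifts carry atoms. I would also have to verify that the separation sets remain additive when a strip is partially crossed.
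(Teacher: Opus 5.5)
Your proof is correct, but it is organized differently from the paper's.

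The paper does not redo the four-point computation on the blown-up disk. It combines item (1) of Proposition \ref{prop: lamination_properties} with \cite[Theorem 3.40]{EvansProbabilityRealTrees} to conclude that each connected component of $X_\mu$ is an $\R$-tree. It uses item (3) to identify these components with the sets $\pi_\mu(V)$, for $V$ a complementary region of the atoms, and lifts them to $T_\mu$. It then views $T_\mu$ as these trees joined by the segments $I_g$. Geodesicity and the concatenation property are read off from this picture: a segment of $T_\mu$ is a segment of $X_\mu$ with subintervals of the $I_g$'s attached.

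You instead treat $\mathbb{D}^b$ uniformly as a space with walls. The walls are the non-atomic leaves together with the height lines $g\times\{s\}$ in the strips, weighted by $\mu_0$ and by Lebesgue measure on heights. Then $\Tilde{d}_\mu^b(x,y)$ is the measure of the walls separating $x$ and $y$, since walls through $x$ or $y$ are null.

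Phrased this way, the obstacle you anticipate does not arise. Any two walls are disjoint separating lines, so one of the four quadrants they cut out is empty. Hence, for any four points, at most one of the three pair-splitting types of walls carries mass, and the four-point inequality follows with no case distinction at $g_\pm$. A partially crossed strip simply contributes the heights between entry and exit.

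The paper's route is more economical: it reuses the already established $0$-hyperbolicity of $X_\mu$ and exhibits $T_\mu$ explicitly as the components of $X_\mu$ strung together along the $I_g$. However, it leaves the gluing step as evident, including for segments that cross infinitely many strips. Your route requires re-verifying the wall description of $\Tilde{d}_\mu^b$ in the blown-up setting. In exchange, it constructs segments explicitly and handles atomic and non-atomic mass, and any number of strips along a segment, by a single argument.
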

\begin{proof}
   By item (1) in Proposition \ref{prop: lamination_properties} and \cite[Theorem 3.40]{EvansProbabilityRealTrees}, the connected components of $X_\mu$ are $\R$-trees. Under the map $T_\mu\to X_\mu$, these components lift to unique subsets. When two such components $\pi_\mu(V_1)$ and $\pi_\mu(V_2)$ come from a pair $V_1$ and $V_2$ separated by a geodesic $g$, they are connected in $T_\mu$ by the segment $I_g$.
   It is thus clear that $T_\mu$ is a geodesic metric space. The second defining property of an $\R$-tree is also easily verified: segments in $T_\mu$ are exactly the connected sets obtained by attaching a segment in $X_\mu$ to subintervals of the attached intervals of $T_\mu$, and under this characterization it is immediate that the joining of two segments intersecting at an endpoint is a segment.
\end{proof}

\subsection{Foliations and maps to $\R$-trees}\label{sec: maps from foliations}
We continue with the Riemann surface $X=\mathbb{D}/\Gamma$ and the lamination $\mu$.

\subsubsection{Foliations}
First, we explain how an integrable proper partial measured foliation tightening to a lamination $\mu$ gives rise to a $W_{\textrm{loc}}^{1,2}$ map (in the sense of \cite{KorevaarSchoen}) to $(T_\mu,d_\mu)$. 

Let $\mathcal{F}$ be an integrable proper partial measured foliation on $X$, and let $\mu$ be the associated geodesic lamination. We lift $\mathcal{F}$ to a foliation $\Tilde{\mathcal{F}}$ of $\mathbb{D}$. Let $U_{\mathcal{F}}$ be the subset of $\mathbb{D}$ consisting of the points lying on non-singular trajectories of $\Tilde{\mathcal{F}}$ that accumulate to distinct endpoints of $S^1$. 

Resume the notation from the blow-up construction. If $g$ is an atom of $\mu$, recall the total transverse measure of a path crossing the set of leaves $L_g$ of $\Tilde{\mathcal{F}}$ tightening to $g$ is $\mu(\{g\})$. There is a height map $h_g: L_g\to [-\frac{\mu(\{g\})}{2},\frac{\mu(\{g\})}{2}]$ defined as follows. Choose a complete geodesic $\hat{g}$ crossing $g$, oriented so that it starts on the component of $\mathbb{D}\backslash g$ with boundary component $g_-$. Necessarily, $\hat{g}$ crosses every $\ell \in L_g$. Let $\Tilde{\mathcal{F}}_{L_g}$ be the partial measured foliation obtained by removing every leaf not in $L_g$.  Given $\ell\in L_g$, we erase the component of $\hat{g}$ that goes past $\ell$, denote the resulting curve by $\hat{g}_\ell$, and define $h_g(\ell)$ to be the $\Tilde{\mathcal{F}}_{L_g}$-transverse measure of $\hat{g}_\ell$ minus $\frac{\mu(\{g\})}{2}$. It is clear that the choice of $\hat{g}$ was not important.
\begin{defn}\label{def: foliation map}
    Define $\pi_{\mathcal{F}}:U_{\mathcal{F}}\to (T_\mu,d_\mu)$ as follows. Let $\ell$ be a leaf in $U_{\mathcal{F}}$ and let $g_\ell$ be the geodesic tightening.
    \begin{enumerate}
        \item If $g_\ell$ is not atomic for $\mu$, define $\pi_{\mathcal{F}}$ on $\ell$ by $\pi_{\mathcal{F}}(\{\ell\})=\pi_\mu(\{g_\ell\}).$ 
        \item If $g_\ell$ is atomic for $\mu$, let $C_g=g\times [-\frac{\mu(\{g\}}{2},\frac{\mu(\{g\})}{2}]$ be the associated strip in the blown-up surface $\mathbb{D}^b$, and define, in the coordinates determined by $C_g$, $\pi_{\mathcal{F}}(\{\ell\})=\pi_\mu(\{g\}\times \{h_g(\ell)\}).$
    \end{enumerate}
\end{defn}
See Figure \ref{fig:tree map} for an example on $X=\mathbb{D}$. It should be clear from the definition that $\pi_{\mathcal{F}}$ is equivariant for the same $\Gamma$-action on $(T_\mu,d_\mu)$ as $\pi_\mu$.

For an explicit description of the map $\pi_{\mathcal{F}}$, fix a foliation chart $U$ with defining function $v:U\to \mathbb{R}$. In $U$, the transverse measure between two points $x,y\in U$ is just $|v(x)-v(y)|$. When $x$ and $y$ lie on leaves that tighten to non-atomic geodesics $g_x$ and $g_y$, this is equal by definition to the $\mu$-measure of all geodesics separating $g_x$ and $g_y$ (recall from \S \ref{sec: realizable laminations}). Thus, $\pi_\mathcal{F}(x)$ and $\pi_{\mathcal{F}}(y)$ are connected by a segment in $(T_\mu,d_\mu)$ of length $|v(x)-v(y)|$. If either of $g_x$ and $g_y$ are atomic, the same result holds thanks to item (2) in Definition (\ref{def: foliation map}).
It follows that in $U\cap U_{\mathcal{F}}$, $\pi_{\mathcal{F}}$ maps into a segment of $T_\mu$, and upon identification of that segment with an interval in $\R$, $\pi_{\mathcal{F}}$ is represented by $\pm v +c$ for some constant $c$. After modifying the identification, we can assume $\pi_{\mathcal{F}}$ is represented by $v$. Since every $v:U\to \R$ is $W_{\textrm{loc}}^{1,2}$ (in the usual sense), $\pi_{\mathcal{F}}$ is $W_{\textrm{loc}}^{1,2}$ (in the Korevaar-Schoen sense). Moreover, it is easily seen that the Dirichlet energy $E(\pi_{\mathcal{F}})$ of $\pi_{\mathcal{F}}$ is equal to $1/2$ times the Dirichlet integral $\mathcal{D}(\mathcal{F})$ of $\mathcal{F}$.

When the transverse measure has enough uniform continuity properties, we can hope to extend it past $U_{\mathcal{F}}$. This is the case in the main situation of interest: when $\mathcal{F}$ is the horizontal foliation $\mathcal{F}_h(q)$ of an integrable holomorphic quadratic differential $q$ on $X$. Indeed, for a quadratic differential lifted to the universal cover, recall that all regular leaves accumulate to two distinct endpoints on $S^1$. 
Writing $\pi_{\mathcal{F}_h(q)}=:\pi_q$, we just have to define $\pi_q$ on the critical trajectories, or equivalently on the generalized trajectories. Recall from \S \ref{sec: realizable laminations} that every generalized trajectory $\ell$ accumulates to two distinct endpoints in $S^1$. Thus, we can associate $\ell$ to a unique geodesic $g_\ell$ in the support of $\mu$. In $g_\ell$ is not $\mu$-atomic, then we can define $\pi_q(\ell)=\pi_\mu(g_\ell)$, as above. If $g_\ell$ is $\mu$-atomic, then the trajectory $\ell$ is on the boundary of the lift of a ring domain, and we can assign it to the corresponding endpoint of the interval $I_{g_\ell}$. Note that a point in $\mathbb{D}$ can lie on multiple generalized trajectories, but this is not a problem because such trajectories have zero transverse distance. By the procedure of the previous paragraph, we see that, away from the zeros of $q$, $\pi_q$ is represented in a natural coordinate $z=x+iy$ for $q$ by $z\mapsto y$.
\begin{prop}\label{prop: Dirichlet principle}
    Let $q$ be an $L^1$ holomorphic quadratic differential on $X$ and let $(T_q,d_q)$ be the $\R$-tree associated with the horizontal foliation. Then the map $\pi_q:\mathbb{D}\to (T_q,d_q)$ is harmonic with Hopf differential $-\frac{1}{4}\Tilde{q}$, where $\Tilde{q}$ is the lift to $\mathbb{D}$, and for any other integrable proper partial measured foliation $\mathcal{F}$ tightening to the same lamination, $E(\pi_q)\leq E(\pi_\mathcal{F}|_{U_{\mathcal{F}}}).$
\end{prop}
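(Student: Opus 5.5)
The plan is to prove the three assertions separately: the Hopf differential, harmonicity, and the energy comparison. All local statements reduce to the description just given: away from the zero set $Z(\Tilde q)$, $\pi_q$ is represented in a natural coordinate $w=u+iv$ of $\Tilde q$ by the real function $w\mapsto v$.

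\textbf{Hopf differential.} Away from $Z(\Tilde q)$, the measurable metric $g(\pi_q)$ is the pullback $dv^2$. In the coordinate $w$ we have $g_{uu}=0$, $g_{vv}=1$ and $g_{uv}=0$. Formula (\ref{mhopf}) then gives $q(\pi_q)=\frac14(0-1-0)\,dw^2=-\frac14 dw^2=-\frac14\Tilde q$. Since $Z(\Tilde q)$ is discrete, hence of measure zero, this identity holds almost everywhere, and so $q(\pi_q)=-\frac14\Tilde q$ as measurable quadratic differentials.

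\textbf{Harmonicity.} Fix a relatively compact $U\subset\mathbb{D}$. I would first show that $\pi_q$ is locally energy minimizing near every point; since the target is an NPC space (an $\R$-tree), the energy is convex along geodesic homotopies, so local minimality upgrades to being a critical point on $U$ in the sense of Definition \ref{harmdef}.
\begin{enumerate}
\item Near a point not in $Z(\Tilde q)$, a small disk maps into a single segment, on which $\pi_q$ is the harmonic function $v$. Any competitor with the same trace can be post-composed with the nearest-point retraction of $T_q$ onto that segment. This retraction is $1$-Lipschitz and does not increase energy, so the classical Dirichlet principle for real functions gives minimality.
\item Near a zero of order $k$, a small disk $B$ maps onto a neighbourhood of a vertex in a finite $(k+2)$-pod, and $\pi_q$ is the standard map $\mathrm{Im}\big(z^{(k+2)/2}\big)$ onto the prongs, up to scaling. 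For this case I would use a removable singularity argument. The map $\pi_q$ is locally Lipschitz, it is harmonic on $B\setminus\{0\}$ by step (1), and points have zero $W^{1,2}$-capacity. Hence for a competitor $\psi$ with the same trace on $\partial B$, I can cut off $\psi$ near $0$ at arbitrarily small energy cost: interpolate to $\pi_q$ on a tiny annulus using the logarithmic cutoff and geodesic interpolation in the tree. This transfers the inequality $E(\pi_q|_B)\le E(\psi)$ from $B\setminus\{0\}$ to $B$.
\end{enumerate}
As a fallback, I would take the Korevaar--Schoen minimizer on $B$ with the same trace. Its Hopf differential is holomorphic, and comparing its energy with that of $\pi_q$ on annuli shrinking to $0$ should force the two maps to coincide.

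\textbf{Energy comparison.} As noted before the statement, $E(\pi_{\mathcal F}|_{U_{\mathcal F}})=\frac12\mathcal{D}(\mathcal F)$ for any integrable proper partial measured foliation $\mathcal F$ tightening to $\mu$, with energy computed on $X$ by equivariance. Likewise $E(\pi_q)=\frac12\mathcal{D}(\mathcal F_h(q))=\frac12\|q\|_{L^1(X)}$. The inequality is therefore exactly the Dirichlet principle $\mathcal D(\mathcal F_h(q))\le\mathcal D(\mathcal F)$ of \cite[Theorem 3.2]{SaricShima}, combined with Theorem \ref{thm: HMS}, which identifies $q$ as the unique integrable differential whose horizontal foliation tightens to $\mu$.

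I expect the main obstacle to be step (2), harmonicity at the zeros. There $\pi_q$ genuinely branches into a tree, so one cannot reduce to real-valued harmonic functions, and the capacity-zero cutoff has to be carried out carefully in the Korevaar--Schoen framework.
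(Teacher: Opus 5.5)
Your proposal is correct, and for the Hopf differential and the energy inequality it is the paper's argument: the same direct computation with $\pi_q$ represented by $w\mapsto \mathrm{Im}\,w$ in a natural coordinate, then $E=\frac12\mathcal{D}$ together with the Dirichlet principle of \cite{SaricShima}.

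For harmonicity, the paper does not argue directly; it only says the proof of \cite[Proposition 2.6]{Sagman} goes through word for word. You instead give a self-contained local argument: retraction onto a segment plus the classical Dirichlet principle away from $Z(\Tilde{q})$, and a capacity argument at the zeros. This makes the proof independent of the reference and isolates the zeros as the only non-classical point.

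If you write it out, watch two things in the cutoff step.

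\emph{Minimality on the annulus.} The cutoff compares $\pi_q$ with $\psi_\epsilon$ on the annulus $B\setminus\overline{B_{\epsilon^2}}$. So it needs $\pi_q$ to minimize on a domain that is neither small nor simply connected, and the local-to-global upgrade is genuinely used there. That upgrade is most cleanly justified by the subharmonicity of the distance between two harmonic maps into an NPC space, the same fact the paper uses in Lemma \ref{lem: uniqueness for harmonic maps}. Convexity of energy along geodesic homotopies does not by itself localize to small balls.

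\emph{Boundedness of the competitor.} A $W^{1,2}$ competitor need not be bounded. First post-compose it with the $1$-Lipschitz nearest-point retraction onto the compact subtree $\pi_q(\overline{B})$. Then $d(\pi_q,\psi)$ is bounded and the cutoff error $\int|\nabla\eta_\epsilon|^2\, d(\pi_q,\psi)^2$ is $O(1/\log(1/\epsilon))$.

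Your fallback avoids both issues and is the shortest rigorous version. Let $v$ be the Korevaar--Schoen minimizer on $B$ with the trace of $\pi_q$. Then $d(v,\pi_q)$ is continuous, nonnegative and subharmonic on $B\setminus\{0\}$. It is therefore subharmonic on $B$, since points are polar, and it has zero trace, so $v=\pi_q$.
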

\begin{proof}
The proof of  \cite[Proposition 2.6]{Sagman} goes through essentially word for word to show that $\pi_q$ is harmonic.
   In a natural coordinate for $q$, $\pi_q$ can be described as $z=x+iy\mapsto y$, which has $z$-derivative $i$. By direct computation, the Hopf differential is $-\frac{1}{4}q$. The global energy minimizing property follows from the identifications with the energies of the foliations, and the Dirichlet principle proved in \cite{SaricShima}.
\end{proof}

\begin{remark}
    Another situation in which $\pi_{\mathcal{F}}$ extends globally to $\mathbb{D}$ is when $\mathcal{F}$ is the pushforward via a diffeomorphism of the horizontal foliation of a quadratic differential on some other Riemann surface. Then $\mathcal{F}$ has the same structure as a union of regular and generalized trajectories. This example will come up in \S \ref{sec: main inequality}.
\end{remark}

\begin{figure}[htb]
    \centering
    \includegraphics[width=9 cm]{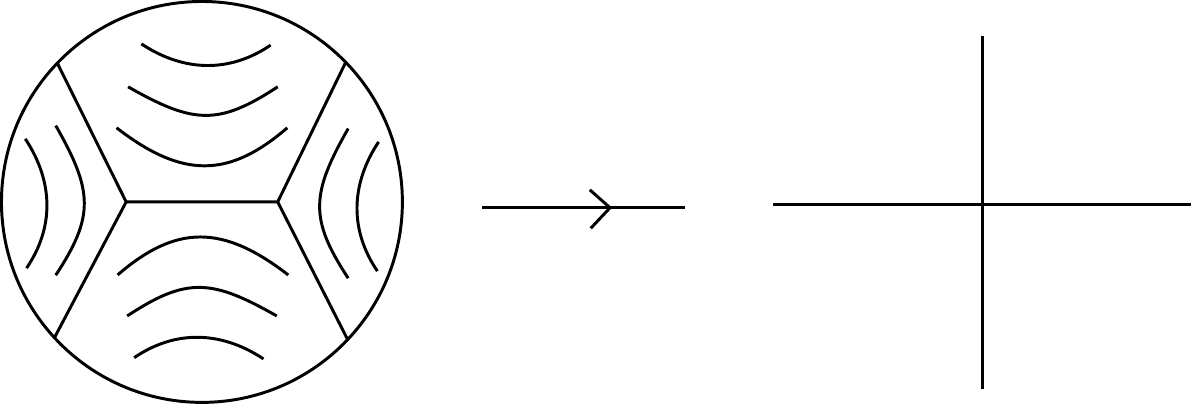}
    \caption{The map to a $\R$-tree associated with the vertical foliation of $q=(z^2-1)dz^2$ on the unit disk.}
    \label{fig:tree map}
\end{figure}

\subsubsection{Homeomorphic pushes}
Let $f:X\to Y$ be a homeomorphism to another Riemann surface that extends to a homeomorphism $\partial_\infty f: S^1\to \partial_\infty \Tilde{Y}$. We explain how $m:=\partial_\infty f$ and an integrable proper partial measured foliation $\mathcal{F}_m$ on $Y$ tightening to $m_*\mu$ determines another (partially defined) map to $(T_\mu,d_\mu)$. Note that $f$ itself is not remembered by the construction, just $m$. In some sense, the trees for $\mu$ and $m_*\mu$ are canonically identified, and we are passing through this identification to obtain a different map. Given $g$ in the support of $\mu$, pushing its endpoints via $m$ gives a geodesic $m_* g$ in the support of $m_*\mu$. Of course, atoms for $\mu$ are sent to atoms for $m_*\mu$. The strip $C_{m_*g}$ in the corresponding blown-up surface $\Tilde{Y}^b$ has the same heights as that of $C_g$ in $\mathbb{D}^b$, and there are two isometries $C_g\to C_{m_*g}$ (they are related by reflection across the midpoint). We choose our isometry $m_*^g$ so that $(m_*g)^{\pm}=m_* (g^\pm).$ We define a map $\pi_\mu^m$ on the union of the set of geodesics $\{m_* g: g\textrm{ non-atomic}\}$ with the $C_{m_*g}$'s, by $\pi_\mu^m(\{m_* g\})=\pi_\mu(\{g\})$ on non-atomic $g$, and $\pi_\mu^m|_{C_{m_*g}}=\pi_\mu|_{C_g} \circ (m_*^g)^{-1}$.

As above, the main case of interest is when $\mathcal{F}_m$ is the horizontal foliation of a holomorphic quadratic differential $q_m$. Then, totally analogous to above, $\pi_{q_m}:=\pi_{\mathcal{F}_m}$ can be globally defined on $\Tilde{Y}$ (by tightening generalized trajectories). Similar to above, it is easily seen that $\pi_{q_m}$ is energy minimizing with Hopf differential $-\frac{1}{4}q_{m}$.

\subsection{The core of a pair of filling laminations}\label{subsec: the core}
Let $X=\mathbb{D}/\Gamma$ be a Riemann surface with $L^1$ holomorphic quadratic differential $q$, and let $\mu$ and $\nu$ be the pair of laminations associated with the horizontal and vertical foliations. Since the vertical foliation of $q$ is the horizontal foliation of $-q$, we write the corresponding leaf spaces and projections as $\pi_q: \mathbb{D}\to (T_\mu,d_\mu)$ and $\pi_{-q}:\mathbb{D}\to (T_\nu,d_\nu)$ respectively. Setting $(P,d)$ to be the product metric space $(P,d)=(T_\mu\times T_\nu,d_\mu\times d_\nu)$, where $d_\mu\times d_\nu$ denotes the $\ell^2$ product metric, $\pi_q$ and $\pi_{-q}$ combine to give a map $$\Pi_q:=(\pi_q,\pi_{-q}):\mathbb{D}\to (P,d).$$ The harmonic map $\Pi_q:\mathbb{D}\to (P,d)$ is minimal, since the Hopf differential is $(-\frac{1}{4}q)+\frac{1}{4}q=0.$
\begin{defn}\label{def: core}
     The \textit{holomorphic core} of the pair $(\mu,\nu)$ is $\mathcal{C}=\Pi_q(\mathbb{D})\subset (P,d)$.
\end{defn}

\begin{remark}
If we equip $X$ with the singular flat metric $|\tilde{q}|$, then the map $\Pi_q: (\mathbb{D},|\tilde{q}|)\to (P,d)$, which is injective by the Teichm{\"u}ller lemma, is an isometry. 
\end{remark}

The lemma below about $\mathcal{C}$ is the key input toward both uniqueness and the energy minimizing property. Let $f:X\to Y$, $m=\partial_\infty f:S^1\to \partial_\infty \Tilde{Y}$ be as in the subsection above, and assume that the push-forward laminations $m_*\mu$ and $m_*\nu$ are realized by a pair of $L^1$ holomorphic quadratic differentials. The maps $\pi_{q_1}: \Tilde{Y}\to (T_\mu,d_\mu)$ and $\pi_{q_2}:\Tilde{Y}\to (T_\nu,d_\nu)$ together yield a product map $\Pi_q^m=(\pi_{q_1},\pi_{q_2}): \Tilde{Y}\to (P,d).$
\begin{lem}\label{lem: image contains the core}
    The image $\Pi_q^m(\Tilde{Y})$ contains the holomorphic core $\mathcal{C}$.
\end{lem}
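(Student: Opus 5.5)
We need to show that every point of $\mathcal{C}=\Pi_q(\mathbb{D})$ is attained by $\Pi_q^m$. Fix $p\in\mathbb{D}$. Then $p$ is the intersection of a (regular or generalized) $q$-horizontal trajectory $\ell_1$ and a $q$-vertical trajectory $\ell_2$. By definition of $\pi_q$ and $\pi_{-q}$, the point $\Pi_q(p)$ is determined by the tightenings $g_1\in|\tilde\mu|$ and $g_2\in|\tilde\nu|$, together with heights in the strips $I_{g_1}$, $I_{g_2}$ when these are atomic. The plan is to find a horizontal trajectory $\hat\ell_1$ of $q_1$ tightening to $m_*g_1$ at the corresponding height, and a horizontal trajectory $\hat\ell_2$ of $q_2$ tightening to $m_*g_2$ at the corresponding height. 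Then show $\hat\ell_1\cap\hat\ell_2\neq\emptyset$, and take $y\in\hat\ell_1\cap\hat\ell_2$. By construction of $\pi^m_\mu$ and $\pi^m_\nu$, we get $\Pi_q^m(y)=\Pi_q(p)$.

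\textbf{Step 1: $g_1$ and $g_2$ intersect transversely.} First, $\ell_1$ and $\ell_2$ meet at $p$. By the Teichm\"uller lemma, a horizontal and a vertical trajectory of $\tilde q$ meet at most once, so the two ends of $\ell_2$ lie in distinct components of $\mathbb{D}\setminus\ell_1$. Because both trajectories are proper (Marden--Strebel), the endpoints of $\ell_2$ on $S^1$ separate the endpoints of $\ell_1$. Lemma \ref{lem:asymptotic} and the filling property, via Theorem \ref{thm:necessary_realization_first_kind}, exclude shared endpoints: a shared endpoint would force a common atom, or a positive-measure family of leaves ending at one point. Hence $g_1$ and $g_2$ cross. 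Since $m$ is an orientation-preserving homeomorphism, $m_*g_1$ and $m_*g_2$ also cross.

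\textbf{Step 2: the chosen $q_1$- and $q_2$-trajectories meet.} In the non-atomic case, Theorem \ref{thm: HMS} together with \cite[Theorem 2(c)]{MardenStrebel1} provides a unique leaf $\hat\ell_1$ of $\mathcal{F}_h(q_1)$ tightening to $m_*g_1$, and likewise $\hat\ell_2$ for $q_2$. In the atomic case, take the leaf in the lifted ring domain with the prescribed height, or a boundary generalized trajectory. The main obstacle is proving $\hat\ell_1\cap\hat\ell_2\neq\emptyset$, since $q_1$ and $q_2$ are unrelated differentials and the Teichm\"uller lemma does not apply.

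I would argue topologically. The curves $\hat\ell_1$ and $\hat\ell_2$ are properly embedded arcs in $\tilde Y$ (generalized trajectories are still proper, with distinct endpoints). Their endpoint pairs are those of $m_*g_1$ and $m_*g_2$, which are distinct and interlace on $\partial_\infty\tilde Y$. Compactify to a closed disk: $\hat\ell_1$ becomes a chord separating the disk into two Jordan domains. The endpoints of $\hat\ell_2$ lie in different boundary arcs, so the connected set $\hat\ell_2$ must meet $\hat\ell_1$ by the Jordan curve theorem.

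The delicate points are the following.
\begin{enumerate}
\item Trajectories must accumulate to single points of $\partial_\infty\tilde Y$ and not to arcs. This follows from the $L^1$ hypothesis, using Strebel's Lemma \ref{lem:positive-dist} in the disk case and the lift in general.
\item In the atomic case, the chosen leaf must sit in the ring domain's lift with exactly the right height. This is handled by the height map $h_g$ and the choice of the isometry $m^g_*$ made so that $(m_*g)^\pm=m_*(g^\pm)$.
\end{enumerate}

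\textbf{Step 3: conclude.} For $y\in\hat\ell_1\cap\hat\ell_2$, the definitions of $\pi_{q_1}$ and $\pi_\mu^m$ give $\pi_{q_1}(y)=\pi_\mu(g_1)=\pi_q(p)$, with heights matched in the atomic case, and similarly $\pi_{q_2}(y)=\pi_{-q}(p)$. If $p$ lies on several generalized trajectories, the different choices have zero transverse distance, so $\Pi_q(p)$ is well defined and the argument applies to any one choice. This yields $\mathcal{C}\subset\Pi_q^m(\tilde Y)$.
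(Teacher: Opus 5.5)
Your proposal is correct and follows essentially the same route as the paper: the Teichm\"uller lemma and Lemma \ref{lem:asymptotic} force $g_1$ and $g_2$ to cross, $m$ preserves the interlacing, and the trajectories $\hat\ell_1,\hat\ell_2$ of $\tilde q_1,\tilde q_2$ (height-matched in the atomic case) meet by separation at a point mapping to $x$. One small correction to your delicate point (1): landing of lifted trajectories at single ideal points is the Marden--Strebel theorem, which the paper simply recalls. It does not follow from Lemma \ref{lem:positive-dist} via the lift, since the lift of $q$ to $\mathbb{D}$ is generally not integrable, and that lemma by itself only controls rays of finite $q$-length.
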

\begin{proof}
Let $\Tilde{q}$, $\Tilde{q}_1$, and $\Tilde{q}_2$ be the lifts of $q$, $q_1$, and $q_2$ respectively to the universal cover. Any point $x\in\mathcal{C}$ is the image of an intersection point of two regular or generalized trajectories $\ell_1$ and $\ell_2$ of $\Tilde{q}$ and $-\Tilde{q}$ respectively. We recall that, by the Teichm{\"u}ller lemma, $\ell_1$ and $\ell_2$ intersect at exactly one point. The trajectory $\ell_1$ separates $\overline{\mathbb{D}}$ into two components, and because the intersection point of the trajectories is unique and by Lemma \ref{lem:asymptotic}, the two ends of $\ell_2$ live in different components.

    Let $g_1$ and $g_2$ be the geodesic tightenings of $\ell_1$ and $\ell_2$ respectively. Just like $\ell_1$, $g_1$ cuts $\overline{\mathbb{D}}$ into two components. Since the endpoints of $g_2$ are the endpoints of $\ell_2$, these endpoints live in different components of the cutting by $g_1$. Hence, $g_1$ and $g_2$ intersect. 

    Let $a_1$ and $b_1$ be the ideal endpoints of $g_1$, and let $a_2$ and $b_2$ be the endpoints of $g_2$. Let $m_* g_1$ be the geodesic on $\Tilde{Y}$ with ideal endpoints $m(a_1)$ and $m(b_1)$, and let $m_* g_2$ be the geodesic on $\Tilde{Y}$ with endpoints $m(a_2)$ and $m(b_2)$. Since $m(a_1),m(b_1),m(a_2),$ and $m(b_2)$ have the same ordering on $\partial \Tilde{Y}\simeq S^1$ as $a_1,b_1,a_2,$ and $b_2$, the geodesics $m_*g_1$ and $m_* g_2$ intersect. Because $m_*\mu$ and $m_*\nu$ come from quadratic differentials, $m_* g_1$ and $m_* g_2$ arise from straightening unique regular or generalized trajectories $\hat{\ell}_1$ and $\hat{\ell}_2$ for $\Tilde{q}_1$ and $\Tilde{q}_2$ respectively. Using the boundary pattern again, $\hat{\ell}_1$ is forced to separate the endpoints of $\hat{\ell}_2$, and hence $\hat{\ell}_1$ and $\hat{\ell}_2$ intersect at least once (but maybe more than once). To visualize all this, see Figure \ref{fig: leaf map}.
    
    If the geodesic $g_1$ and $g_2$ are not atomic, then $\hat{\ell}_1$ is unique.
    If $g_1$ is $\mu$-atomic, then $\ell_1$ is contained in the lift of a closed ring domain of leaves (and bounded by two critical leaves) that maps onto the segment $I_{g_1}$ in $(T_\mu,d_\mu)$, and $\ell_1$ is mapped onto a unique point. The geodesic $m_*g_1$ is $m_*\mu$-atomic, and by definition of the pushforward measure, there is a lifted closed ring domain for $\Tilde{q}_1$, with leaves tightening to $m_* g_1$, mapping onto $I_{g_1}$ via $\pi_{q_1}.$ We select $\hat{\ell}_1$ to be the unique leaf mapping onto the same point as $\ell_1$. For $g_2$, we carry out the same procedure: if it's not atomic, then $\hat{\ell}_2$ is unique, and otherwise we select $\hat{\ell}_2$ in the same fashion. Now, we've chosen $\hat{\ell}_1$ and $\hat{\ell}_2$ so that $\pi_{q_1}(\{\hat{\ell}_1\})=\pi_q(\{\ell_1\})$ and $\pi_{q_2}(\{\hat{\ell}_2\})=\pi_{-q}(\{\ell_2\})$. Consequently $\Pi_q^m(\{\hat{\ell}_1\cap\hat{\ell}_2\})=x$. Since $x\in\mathcal{C}$ was arbitrary, we conclude that $\Pi_q^m(\Tilde{Y})\supset\mathcal{C}$.
\end{proof}
\begin{figure}[htb]
    \centering
    \includegraphics[width=7 cm]{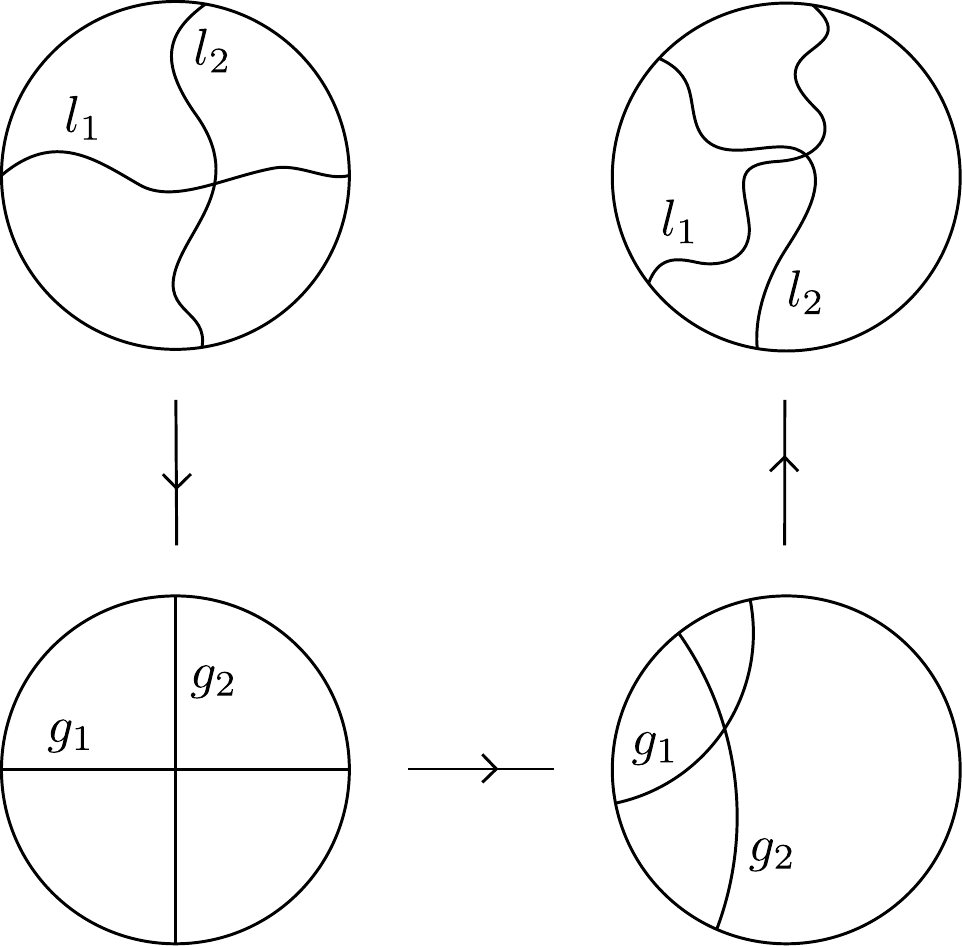}
    \caption{}
    \label{fig: leaf map}
\end{figure}

To see an example with $\Pi_q^m(\tilde{Y})\supsetneq \mathcal{C}$, consider a homeomorphism $m$ such that a trajectory $\ell_1$ of $q_1$ and a trajectory $\ell_2$ of $q_2$ intersect in $\mathbb{D}$ but the endpoints of $\ell_1$ do not separate the endpoints of $\ell_2$ on $S^1$. The image of each intersection point of $\ell_1$ and $\ell_2$ is $(\pi_{q_1}(\ell_1),\pi_{q_2}(\ell_2))\notin\mathcal{C}$ (see Figure \ref{fig:extra_intersections})
\begin{figure}[htb]
    \centering
    \includegraphics[width=4.5 cm]{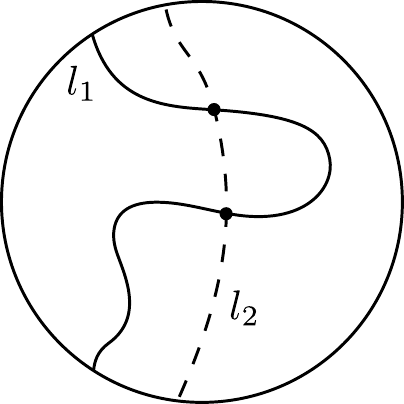}
    \caption{The two intersection points of $\ell_1$ and $\ell_2$ have their image outside $\mathcal{C}$.}
    \label{fig:extra_intersections}
\end{figure}

The assumption that $\pi_{q_1}$ and $\pi_{q_2}$ arose from foliations of quadratic differentials was not really necessary; we just needed that every geodesic in the support of $m_*\mu$ comes from tightening a leaf of the foliation, and likewise for $m_*\nu$.

Finally, the additional lemma below will be used for uniqueness.
\begin{lem}\label{lem: homeo onto the core}
    If $q_1=-q_2$, then $\Pi_q^m$ defines a bijection from $\Tilde{Y}$ onto $\mathcal{C}$.
\end{lem}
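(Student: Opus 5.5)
Write $r:=q_1=-q_2$ on $Y$, with lift $\Tilde r$. The plan is to show surjectivity and injectivity separately. Surjectivity onto $\mathcal{C}$ will follow from Lemma \ref{lem: image contains the core} together with a reverse containment. Injectivity will come from the Teichm\"uller lemma applied to $\Tilde r$.

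\textbf{Containment $\Pi_q^m(\Tilde Y)\subset\mathcal{C}$.} Fix $y\in\Tilde Y$ and choose a horizontal (regular or generalized) trajectory $\hat\ell_1$ and a vertical trajectory $\hat\ell_2$ of $\Tilde r$ through $y$. Because they are horizontal and vertical trajectories of the same differential $\Tilde r$, the Teichm\"uller lemma shows they meet only at $y$. By the argument in the first paragraph of the proof of Lemma \ref{lem: image contains the core}, using Lemma \ref{lem:asymptotic}, the endpoints of $\hat\ell_1$ then separate those of $\hat\ell_2$, so their tightenings $m_*g_1$ and $m_*g_2$ cross. Pulling back by $m^{-1}$, which preserves cyclic order, the geodesics $g_1,g_2$ cross in $\mathbb{D}$. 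They are tightenings of trajectories $\ell_1,\ell_2$ of $\Tilde q$ and $-\Tilde q$; at atomic geodesics we choose the leaf at the matching height. The same separation argument shows $\ell_1\cap\ell_2\neq\emptyset$. With the height choices made as in Lemma \ref{lem: image contains the core}, $\Pi_q(\ell_1\cap\ell_2)=\Pi_q^m(y)$, so $\Pi_q^m(y)\in\mathcal{C}$. Combined with Lemma \ref{lem: image contains the core}, this gives $\Pi_q^m(\Tilde Y)=\mathcal{C}$.

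\textbf{Injectivity.} Suppose $\Pi_q^m(y)=\Pi_q^m(y')$. Then $d_\mu(\pi_{q_1}(y),\pi_{q_1}(y'))=0$ and $d_\nu(\pi_{q_2}(y),\pi_{q_2}(y'))=0$. We want to conclude that the $|\Tilde r|$-distance between $y$ and $y'$ is zero, so that $y=y'$. The map $\pi_{q_1}$ is locally $z\mapsto \mathrm{Im}\,z$ in natural coordinates, so zero tree distance means $y$ and $y'$ lie on horizontal trajectories joined by paths of arbitrarily small vertical transverse measure; symmetrically for the vertical trajectories. The cleanest route is to show that $\Pi_{r}:=(\pi_{q_1},\pi_{q_2})$ is an isometry from $(\Tilde Y,|\Tilde r|)$, as in the Remark after Definition \ref{def: core}. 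Its injectivity there comes from the Teichm\"uller lemma: two distinct points on the same horizontal leaf have distinct vertical projections, since a leaf joining them would form a horizontal–vertical bigon.

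\textbf{Main obstacle.} The hard part is making this injectivity rigorous on generalized trajectories and at atomic ring domains. Concretely, we must check that a point lying on several critical trajectories gives the same image, and that zero transverse distance in the tree really forces zero $|\Tilde r|$-distance. I would handle this through the local description of $\pi_{q_1}$ and $\pi_{q_2}$ by natural coordinates away from zeros, together with the blow-up heights, which separate the leaves inside lifted ring domains. Zeros are isolated, so they can be treated by continuity.
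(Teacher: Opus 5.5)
This is essentially the paper's proof. Injectivity comes from the Teichm\"uller lemma, and the reverse containment $\Pi_q^m(\tilde Y)\subset\mathcal{C}$ comes from running the leaf- and height-matching construction of Lemma~\ref{lem: image contains the core} backwards; together with that lemma, this gives the bijection.

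One caution: you need only injectivity, not the isometry of the Remark. That isometry holds only in the sense of preserving lengths of curves, not for the $\ell^2$ distance of $P$ once $\tilde r$ has zeros. For example, take two points at $|\tilde r|$-distance $1$ from a simple zero, one on a horizontal prong and one on a vertical prong $3\pi/2$ apart: their flat distance is $2$, but their distance in $P$ is $\sqrt{2}$.
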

\begin{proof}
    If $q_1=-q_2$, then $\Pi_q^m$ is injective by the Teichm{\"u}ller lemma. By Lemma \ref{lem: image contains the core}, it suffices to show $\Pi_q^m(\Tilde{Y})\subset \mathcal{C}.$ The argument is the same as that of Lemma \ref{lem: image contains the core}. Any point $x=(p,q)\in \Pi_q^m(\Tilde{Y})$ is the unique intersection of two trajectories $\hat{\ell}_1$ and $\hat{\ell}_2$ of $F_h(\Tilde{q}_1)$ and $F_v(\Tilde{q}_1)$ respectively. By reversing the procedure from the proof of Lemma \ref{lem: image contains the core}, we produce leaves $\ell_1$ and $\ell_2$ of $F_h(\Tilde{q})$ and $F_v(\Tilde{q})$ respectively such that $\pi_{q}(\ell_1)=\pi_{q_1}(\hat{\ell}_1)$ and $\pi_{-q}(\ell_2)=\pi_{q_2}(\hat{\ell}_2)$. Thus, $\Pi_q$ takes the intersection point of $\ell_1$ and $\ell_2$ to $x$, and hence $x\in \mathcal{C}$.
\end{proof}

\subsection{Proof of uniqueness and energy minimization}
Let $\mu$ and $\nu$ be a pair of filling laminations arising from tightening integrable proper partial measured foliations on a Riemann surface $X$, and let $(f,Y,q)$ be a holomorphic realization.
\begin{thm}\label{thm: uniquness}
    Let $(h,Z,r)$ be another holomorphic realization. Then there exists a biholomorphism $F:Y\to Z$ extending to $\partial_\infty h\circ (\partial_\infty f)^{-1}$ on $\partial\Tilde{Y}$ such that $F_*q=r$.
\end{thm}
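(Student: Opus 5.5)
We will compare both realizations with the holomorphic core of $(f_*\mu,f_*\nu)$ on $Y$, following the outline in \S\ref{subsubsec: overview}. We replace $(\mu,\nu)$ by $(f_*\mu,f_*\nu)$ on $Y$, with $q$ realizing them, so $\mathcal{F}_h(q)$ and $\mathcal{F}_v(q)$ straighten to these laminations. Set $m:=\partial_\infty h\circ(\partial_\infty f)^{-1}:\partial_\infty\Tilde{Y}\to\partial_\infty\Tilde{Z}$. This boundary map conjugates the deck groups $\Gamma_Y$ and $\Gamma_Z$, because $h\circ f^{-1}$ is a homeomorphism.

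By hypothesis, $m_*\mu$ and $m_*\nu$ are realized by $r$ and $-r$. In the notation of \S\ref{subsec: the core}, we take $q_1=r$ and $q_2=-r$. Lemma \ref{lem: homeo onto the core} then says that $\Pi_q^m:\Tilde{Z}\to P$ is a bijection onto the core $\mathcal{C}$. The map $\Pi_q:\Tilde{Y}\to\mathcal{C}$ is also a bijection: it is injective by the Teichm\"uller lemma and surjective by definition of $\mathcal{C}$. We define
$$\Tilde{F}:=(\Pi_q^m)^{-1}\circ\Pi_q:\Tilde{Y}\to\Tilde{Z}.$$
Both maps are equivariant for the same isometric actions on $P$, once $\Gamma_Z$ is identified with $\Gamma_Y$ via $m$; this holds because $\pi_\mu^m$ was defined by transporting through $m$. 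Hence $\Tilde{F}$ is equivariant and descends to a bijection $F:Y\to Z$.

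Next we show $\Tilde{F}$ is an isometry for the singular flat metrics $|\Tilde{q}|$ and $|\Tilde{r}|$. By the remark after Definition \ref{def: core}, $\Pi_q:(\Tilde{Y},|\Tilde{q}|)\to(P,d)$ is an isometry onto its image. The same argument applied to $r$ shows $\Pi_q^m:(\Tilde{Z},|\Tilde{r}|)\to P$ is an isometry onto its image: in natural coordinates of $r$, the map $\Pi_q^m$ is $w=x+iy\mapsto(y,x)$ up to signs and translations, so it is a local isometry into $\R^2$. The two images coincide, so $\Tilde{F}$ is an isometry of singular flat surfaces. In particular it is a homeomorphism, and it sends zeros to zeros as the cone points of angle $>2\pi$.

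Away from the zeros, in natural coordinates, $\Tilde{F}$ is a Euclidean isometry. It preserves horizontal leaves (the first tree coordinate) and vertical leaves (the second coordinate), and it preserves orientation because it is a homeomorphism isotopic to a lift of $h\circ f^{-1}$. Therefore it has the form $w\mapsto \pm w+c$, which is holomorphic and pulls back $dw^2$ to $dw^2$. By removable singularities, $\Tilde{F}$ is holomorphic across the zeros, and $\Tilde{F}_*\Tilde{q}=\Tilde{r}$.

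Finally, the boundary values. $\Tilde{F}$ sends each horizontal trajectory $\ell$ of $\Tilde{q}$ to the trajectory $\hat{\ell}$ of $\Tilde{r}$ with the same tree coordinate, which straightens to $m_*g_\ell$. So the endpoints of the image trajectories are the $m$-images of the original endpoints, and likewise for vertical trajectories. These endpoints are dense by Proposition \ref{prop:endpoints_S^1}, and a biholomorphism of disks extends continuously to the boundary, so the extension of $\Tilde{F}$ equals $m$.

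\textbf{Main obstacle.} The hard step is the local isometry claim for $\Pi_q^m$ and $\Pi_q$ near points where trajectories are critical, or lie on boundaries of ring domains coming from atomic leaves. The tree maps are defined there by the blown-up strips, and the height coordinates must be checked to match: $\pi_{q_1}$ restricted to a ring domain is affine in the height, with the orientation fixed by the convention $(m_*g)^{\pm}=m_*(g^\pm)$. If the isometry claim is hard to establish there, I would first prove $\Tilde{F}$ is holomorphic off the union of critical trajectories and ring-domain boundaries, show that $\Tilde{F}$ is continuous everywhere (it is 1-Lipschitz, since $d\le$ flat distance), and then use the fact that this exceptional set is a countable union of analytic arcs, hence removable for homeomorphisms that are holomorphic off it.
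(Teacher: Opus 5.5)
Your proposal is correct and is essentially the paper's proof: define $\Tilde{F}=(\Pi_q^m)^{-1}\circ\Pi_q$ using Lemma \ref{lem: homeo onto the core}, get holomorphicity and $F_*q=r$ from both maps being (local) isometries of the singular flat surfaces onto $\mathcal{C}$, and identify the boundary values through the density of trajectory endpoints (Proposition \ref{prop:endpoints_S^1}). Two minor slips that do not affect the main line: first, $d\le$ flat distance makes $\Pi_q$ and $\Pi_q^m$ $1$-Lipschitz but gives no bound on $(\Pi_q^m)^{-1}$, so your fallback does not show $\Tilde{F}$ is $1$-Lipschitz; second, orientation should not be taken from an isotopy that you only justify afterwards---instead note that $\Tilde{F}$ is globally holomorphic or antiholomorphic, and your boundary argument (which works in either case) produces the orientation-preserving extension $m$, which rules out the antiholomorphic case.
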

\begin{proof}
    By applying $f$, we can relabel everything so that $X=Y$ and $f$ is the identity map. Let $\Pi_q:\Tilde{Y}\to P$ be the conformal harmonic map into the product of $\R$-trees, and let $\mathcal{C}$ be the holomorphic core. Setting $m:=\partial_\infty h$, we also have the harmonic map $\Pi_q^m:\Tilde{Z}\to P$, which is also conformal. By Lemma \ref{lem: homeo onto the core}, $\Pi_q^m$ is a homeomorphism onto $\mathcal{C}$. Thus, we can define the map $$F=(\Pi_q^m)^{-1}\circ \Pi_q: \Tilde{Y}\to \Tilde{Z},$$ which is equivariant for the deck group actions on $\Tilde{Y}$ and $\Tilde{Z}$, and hence descends to a map from $Y\to Z$. Since both maps $\Pi_q$ and $\Pi_q^m$ are isometries from singular flat surfaces onto $\mathcal{C}$, the map $F$ is weakly conformal. Since $Y$ and $Z$ have the same orientation, $F$ is holomorphic. Being injective and holomorphic, $F$ is in fact a diffeomorphism onto its image. The same reasoning can be applied to $$G=\Pi_q^{-1}\circ \Pi_q^m:\Tilde{Z}\to \Tilde{Y},$$ which inverts $F$, and hence $F$ is a biholomorphism.

    Finally, being a biholomorphism, $F:\Tilde{Y}\to \Tilde{Z}$ extends to a map $\partial_\infty F: \partial_\infty\Tilde{Y}\to \partial_\infty\Tilde{Z}$. By construction, the boundary extension agrees with $\partial_\infty h$ on the set of endpoints of leaves of $\mu$ and $\nu$. Since $\mu$ and $\nu$ are filling, this set is dense in $\partial \Tilde{Y}$ (Proposition \ref{prop:endpoints_S^1}). By continuity, $\partial_\infty F=\partial_\infty h$.
\end{proof}

Toward the proof of energy minimization, we take note of one basic fact.
Let $F$ be any $W_{\textrm{loc}}^{1,2}$ equivariant map from $\mathbb{D}$ to a non-positively curved geodesic metric space with measurable pullback metric $g(F)$ and finite energy. The area of $F$ is $$A(F) = \int_X \sqrt{\det g(F)}.$$ The definition is motivated by the fact that when $F$ is an embedding into a Riemann manifold, $A(F)$ returns the area of the image. In general, it's essentially a weighted Hausdorff $2$-measure (see \cite{Kirchheim}). Analogous to the case of manifolds, an application of Cauchy-Schwarz on the components of the measurable pullback metric yields 
$$A(F)\leq E(F),$$ with equality if and only if $F$ has vanishing Hopf differential.
\begin{thm}\label{thm: energy minimizing}
    Let $h:X\to Z$ be a homeomorphism to another Riemann surface $Z$ that extends to a homeomorphism $h:\partial_\infty X\to \partial_\infty Z$. Let $\mathcal{F}'$ and $\mathcal{G}'$ be any pair of integrable proper partial measured foliations tightening to $(\partial_\infty h)_*\mu$ and $(\partial_\infty h)_*\nu$ respectively. Then 
    \begin{equation}\label{eq: energy min}
        \mathcal{D}(\mathcal{F}_h(q))+\mathcal{D}(\mathcal{F}_v(q))\leq \mathcal{D}(\mathcal{F}')+\mathcal{D}(\mathcal{G}'),
    \end{equation}
 with equality if and only if $h:X\to Z$ is part of a holomorphic realization.
\end{thm}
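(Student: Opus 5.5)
Our plan is to translate the inequality into a statement about energies of equivariant maps into the product of $\R$-trees $(P,d)=(T_\mu\times T_\nu,d_\mu\times d_\nu)$, and then to use the chain ``energy $\geq$ area $\geq$ area of the holomorphic core''. First, we reduce to quadratic differentials. By Theorem \ref{thm: HMS}, there are $L^1$ holomorphic quadratic differentials $q_1,q_2$ on $Z$ whose horizontal foliations straighten to $m_*\mu$ and $m_*\nu$, where $m:=\partial_\infty h$. By the Dirichlet principle (Proposition \ref{prop: Dirichlet principle}), $\mathcal{D}(\mathcal{F}')\geq \int_Z|q_1|$ and $\mathcal{D}(\mathcal{G}')\geq\int_Z|q_2|$. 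So it suffices to prove $2\|q\|_{L^1(Y)}\leq \|q_1\|+\|q_2\|$, and to track equality. (If the right-hand side of (\ref{eq: energy min}) is infinite, there is nothing to prove.) Here we use $\mathcal{D}(\mathcal{F}_h(q))+\mathcal{D}(\mathcal{F}_v(q))=2\|q\|$.

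Next we form the equivariant product map $\Pi_q^m=(\pi_{q_1},\pi_{q_2}):\Tilde{Z}\to P$ from \S\ref{subsec: the core}. Its energy is $E(\Pi_q^m)=\tfrac12(\|q_1\|+\|q_2\|)$, since $E(\pi_{\mathcal F})=\tfrac12\mathcal{D}(\mathcal F)$. The map $\Pi_q:\Tilde{Y}\to P$ is conformal and, by the remark after Definition \ref{def: core}, an isometry from $(\Tilde Y,|\tilde q|)$ onto $\mathcal{C}$. Hence $E(\Pi_q)=A(\Pi_q)=\|q\|$, with everything computed on a fundamental domain, i.e. $\Gamma$-equivariantly. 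We then apply $A(\Pi_q^m)\leq E(\Pi_q^m)$. The remaining inequality we need is
\[
A(\Pi_q^m)\geq \mathcal{H}^2_\Gamma(\mathcal{C})=\|q\|.
\]
Here the middle term denotes the area of $\mathcal C$ modulo $\Gamma$. To get it we use Lemma \ref{lem: image contains the core}, which gives $\Pi_q^m(\Tilde Z)\supset\mathcal{C}$, together with an area formula. Away from zeros of $q_1,q_2$ (a null set), $\Pi_q^m$ is locally a $W^{1,2}$ map into $\R^2$, so the area formula (Kirchheim) gives $A(\Pi_q^m)=\int_{P/\Gamma} N(\Pi_q^m,\cdot)\,d\mathcal{H}^2\geq \mathcal{H}^2(\mathcal C/\Gamma)$. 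For the last equality, $\mathcal{H}^2(\mathcal C/\Gamma)=\|q\|$, we use the isometry $\Pi_q$ and the fact that $\mathcal{C}$ is locally a union of Euclidean charts.

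The main obstacle is making this area-formula step rigorous. The concern is that $\Pi_q^m$ is only Sobolev, so Lusin's (N) property may fail: the covering of $\mathcal{C}$ might happen on a set where the area formula undercounts. We intend to avoid this by the concrete structure of the situation. Off the critical trajectories, $\pi_{q_1}$ and $\pi_{q_2}$ are given locally by the $y$-coordinate of $\sqrt{q_1}$ and the $x$-coordinate of $\sqrt{-q_2}$ (up to sign), so $\Pi_q^m$ is locally Lipschitz there and (N) holds. The critical trajectories form a countable union of curves, whose image under the locally Lipschitz map is $\mathcal H^2$-null. Hence the preimage of $\mathcal{C}$ minus a null set is covered by good points.

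Finally, we treat equality. Equality forces three things. First, equality in the Dirichlet principle, so $\mathcal F',\mathcal G'$ are the foliations of $q_1,q_2$. Second, $A(\Pi_q^m)=E(\Pi_q^m)$, so the Hopf differential $-\tfrac14(q_1+q_2)$ of $\Pi_q^m$ vanishes and $q_2=-q_1$. This makes $(h,Z,q_1)$ a holomorphic realization, and the converse is immediate.
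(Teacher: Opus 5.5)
Your proposal is correct and follows the paper's proof essentially step for step. You pass to $q_1,q_2$ by the Dirichlet principle, combine Lemma \ref{lem: image contains the core} with Kirchheim's area formula to get $A(\Pi_q)\leq A(\Pi_q^m)$, use that energy dominates area, and in the equality case conclude $q_1=-q_2$ from the vanishing of the Hopf differential $-\tfrac14(q_1+q_2)$. Your extra worry about Lusin's (N) property can be settled more simply than you propose: each $\pi_{q_i}$ is $1$-Lipschitz for the $|\tilde q_i|$-metric, so $\Pi_q^m$ is locally Lipschitz everywhere, including on critical trajectories. Hence Kirchheim's area formula applies directly, which is what the paper's citation of \cite[Corollary 8]{Kirchheim} implicitly uses.
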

\begin{proof}
As above, assume that $X=Y$ and $f$ is the identity map.
    Let $m:=\partial_\infty h$ and let $q_1$ and $q_2$ be the integrable holomorphic quadratic differentials associated with $m_*\mu$ and $m_*\nu$ respectively, and let $\Pi_q^m:\Tilde{Z}\to P$ be the corresponding harmonic map into a product of $\R$-trees. By Lemma \ref{lem: image contains the core}, $\Pi_q^m(\Tilde{Z})\supset \mathcal{C}$. It follows from \cite[Corollary 8]{Kirchheim} that $$A(\Pi_q)\leq A(\Pi_q^m).$$ Since $\Pi_q$ is conformal and energy dominates area, $$E(\Pi_q)=A(\Pi_q)\leq A(\Pi_q^m) \leq E(\Pi_q^m).$$ The left hand side of the equation above is equal to $\frac{1}{2}(\mathcal{D}(\mathcal{F}_h(q))+\mathcal{D}(\mathcal{F}_v(q)))$. By the Dirichlet principle (Proposition \ref{prop: Dirichlet principle}), $E(\Pi_q^m)\leq \frac{1}{2}(\mathcal{D}(\mathcal{F}')+\mathcal{D}(\mathcal{G}'))$, and hence (\ref{eq: energy min}) is proved. 

    If $h:X\to Z$ is another holomorphic realization, then equality holds by uniqueness. Conversely, if equality holds in (\ref{eq: energy min}), then $A(\Pi_q^m) = E(\Pi_q^m)$, and this occurs if and only if $\Pi_q^m$ is weakly conformal. Since the $(2,0)$ component of the pullback metric for $\Pi_q^m$ is $-q_1-q_2$, this forces $q_1=-q_2.$
\end{proof}

\subsection{A question}\label{sec: a question} The proof of Theorem \ref{thm: main} is now complete. We follow it up with a question. Let $(\mu,\nu)$ be a pair of laminations on a Riemann surface $X$ homotopic to foliations with finite Dirichlet integral. Pushing the laminations to other Riemann surfaces via quasiconformal maps, the finite Dirichlet integral property is preserved. Thus, we can define an energy functional $$\mathcal{E}: T_{qc}(X)\to (0,\infty)$$ as follows. For $f:X\to Y$ representing a point in $T_{qc}(X)$, $\mathcal{E}([f])$ is the sum of the $L^1$ norms of the two quadratic differentials on $Y$ realizing the laminations $f_*\mu$ and $f_*\nu$. Equivalently, it is the sum of the Dirichlet integrals of the foliations (called the extremal length in \cite{SaricShima}), or the sum of the energies of the two harmonic maps.

By \cite[Theorem 3]{LakicMinimalNorm}, $\mathcal{E}$ has a critical point at $[f]$ if and only if the quadratic differentials are opposite (\cite[Theorem 3]{LakicMinimalNorm} concerns the unit disk, but the proof goes through in general). That is, calling them $q$ and $-q$, $(f,Y,q)$ will be a holomorphic realization. By the energy minimizing property from Theorem \ref{thm: main}, any critical point of $\mathcal{E}$ is a minimum and any critical point is unique. If $(\mu,\nu)$ is not quasiconformally holomorphically realizable, then $\mathcal{E}$ cannot be minimized in $T_{qc}(X)$ (but we can pass to another quasiconformal Teichm{\"u}ller space, and define the analogous functional).

\begin{que}\label{que: converge}
    Assume that $(\mu,\nu)$ is quasiconformally holomorphically realizable and let $h: S^1\to  S^1$ be the quasisymmetric boundary map associated with the holomorphic realization. Let $h_n: S^1\to S^1 $ be the quasisymmetric boundary maps along a minimizing sequence in $T_{qc}(X)$ for $\mathcal{E}$. Are the $h_n$'s uniformly quasisymmetric, and do they converge to $h$?
\end{que}
On closed surfaces the answer is yes, and this is Wentworth's proof of Gardiner-Masur's theorem from \cite{Wen}. We believe that the answer to Question \ref{que: converge} is yes, but we leave its pursuit for another day.

\section{Asymptotic Plateau problems}\label{sec: plateau}
Here we study the boundary behaviour of maps to $\R$-trees, and reinterpret Theorem \ref{thm: main} as an existence and uniqueness result for an asymptotic Plateau problem. Throughout this section, let $X=\mathbb{D}/\Gamma$ be a fixed Riemann surface. Everything up to \S \ref{subsec: asymptotic plateau} is vacuous if $X$ is of the first kind and the most important case to have in mind is $X=\mathbb{D}$. Throughout, let $\Lambda_\Gamma$ be the limit set of $\Gamma$, with complement $\Lambda_\Gamma^c$ in $S^1$ (for $X=\mathbb{D}$, $\Lambda_\Gamma^c=S^1$).

In the work below, we're going to take Gromov boundaries of $\R$-trees (in this context, also called the visual boundary or the space of ends). Recall that the Gromov boundary of a $\delta$-hyperbolic metric space $(M,d)$ can be defined as follows. For a basepoint $o\in M$, the Gromov product of two points $x,y\in M$ is $$(x|y)_o = \frac{1}{2}(d(x,o)+d(y,o)-d(x,y)).$$ A sequence is a Gromov sequence if $$\lim_{n,m\to\infty} (x_n|x_m)_o=\infty.$$ Two Gromov sequences are equivalent if $$\lim_{n\to\infty} (x_n|y_n)_o=\infty.$$ The Gromov boundary $\partial_\infty M$ is the set of equivalence classes of Gromov sequences, with the topology built out of basic open sets $$U([x_n],R)=\{[y_n]\in \partial_\infty M: \liminf_{n\to\infty} (x_n|y_n)_o>R\}.$$

\subsection{Boundary maps}
Let $\mu$ be a measured lamination on $X$ arising from tightening a partial measured foliation with finite Dirichlet integral. Let $\mathbb{D}^b$ be the blown-up disk specified by atoms of $\mu$ (if any). The atoms won't play much of a role--we work on $\mathbb{D}^b$ just so that the leaf-space projection $\pi_\mu: \mathbb{D}^b\to (T_\mu,d_\mu)$ is nicely defined. Since $\mathbb{D}^b$ is obtained by adding strips to $\mathbb{D}$ along geodesics (with every strip having a unique geodesic), we can attach $S^1$ to $\mathbb{D}^b$ and compactify to $\overline{\mathbb{D}}^b:=\mathbb{D}^b\cup S^1$ so that the boundary map $\overline{\mathbb{D}}^b\to \overline{\mathbb{D}}$ is the identity. Henceforth, we always identify the boundaries, and when working with $S^1$ we don't specify $\mathbb{D}$ or $\mathbb{D}^b$.

Let $(\overline{T}_\mu,d_\mu)$ be the metric completion of the leaf space $(T_\mu,d_\mu)$. We construct the canonical boundary map $$\beta:\Lambda_\Gamma^c\to \overline{T}_\mu\cup \partial_\infty \overline{T}_\mu$$ associated with $\mu$. Fixing a point $o\in \mathbb{D}^b$, for $p\in \Lambda_\Gamma^c$ let $$r_p:[0,\infty)\to \mathbb{D}^b$$ be the geodesic ray from $o$ to $p$; here, by geodesic ray, we mean the corresponding ray in $\mathbb{D}$ with added (shortest path) segments corresponding to the (flat) strips of $\mathbb{D}^b$. We will consider the maps $\pi_\mu\circ r_p: [0,\infty)\to (\overline{T}_\mu,d_\mu)$, which are essentially rays.

\begin{prop}\label{prop: canonical boundary map}
    For every $p\in \Lambda_\Gamma^c$, after deleting constant subintervals and reparametrizing by arc-length, $\pi_\mu\circ r_p$ is either
    \begin{enumerate}
        \item a finite geodesic segment, converging to a point $\beta_\mu(p)$ in $\overline{T}_\mu$, or 
        \item a geodesic ray, and sequences along the ray determine a point $\beta_\mu(p)$ in $\partial_\infty \overline{T}_\mu$. 
    \end{enumerate}
    The resulting point $\beta_\mu(p)\in \overline{T}_\mu\cup \partial_\infty \overline{T}_\mu$ is independent of the initial point $o$.
\end{prop}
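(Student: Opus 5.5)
The idea is that $\pi_\mu$ restricted to a geodesic ray is monotone in the tree. I would first show that for $s<t$ the distance $d_\mu(\pi_\mu(r_p(0)),\pi_\mu(r_p(t)))$ is non-decreasing in $t$, and that $d_\mu(\pi_\mu(r_p(s)),\pi_\mu(r_p(t)))$ equals the $\mu$-measure of the geodesics (plus strip heights) crossing $r_p$ between times $s$ and $t$. This uses that a geodesic of $|\tilde\mu|$ crosses the geodesic segment $r_p[0,t]$ at most once. So the sets $G(r_p(s),r_p(t))$ are nested and additive. In the blown-up picture, a strip $C_g$ is crossed once, contributing exactly $\mu(\{g\})$. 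Hence $t\mapsto \pi_\mu(r_p(t))$, with the distance from $\pi_\mu(o)$ as parameter, is additive along the path. Because $T_\mu$ is an $\R$-tree and the tripod condition is satisfied with equality ($d(x,z)=d(x,y)+d(y,z)$ for $y$ between $x$ and $z$), the image is a geodesic in $T_\mu$ after deleting intervals where the measure does not grow and reparametrizing by arc length. Continuity away from the strips follows from Proposition \ref{prop: lamination_properties}(2) applied on the components, and inside strips it follows by construction.

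Next I would set $L(p)=\lim_{t\to\infty} d_\mu(\pi_\mu(o),\pi_\mu(r_p(t)))=\tilde\mu(G[o,p))\in[0,\infty]$, the total measure of geodesics crossing the ray. If $L(p)<\infty$, the points $\pi_\mu(r_p(t))$ form a Cauchy family, since the distance between times $s<t$ is the tail measure, which tends to $0$. They therefore converge in the completion $\overline T_\mu$; this is case (1). If $L(p)=\infty$, the image is an isometric ray, and $(x_n|x_m)_o=\min(d(o,x_n),d(o,x_m))\to\infty$ along the ray. So any unbounded sequence of times gives an equivalent Gromov sequence and defines $\beta_\mu(p)\in\partial_\infty\overline T_\mu$; this is case (2). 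I also need that the ray does not hit $p$-asymptotic leaves in a way that breaks the once-crossing property. Geodesics with an endpoint at $p$ do not cross $r_p$ transversely, so they are irrelevant. Lemma \ref{lem:asymptotic} is not even needed here, though it guarantees that no mass is hidden there.

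For independence of $o$, I would take another basepoint $o'$ with ray $r'_p$. The rays $r_p$ and $r'_p$ are asymptotic, and the geodesics crossing one ray but not the other must cross the compact segment $[o,o']$ or separate a compact region. More precisely, the symmetric difference of $G[o,p)$ and $G[o',p)$ is contained in $G[o,o']$, which has finite measure since $\tilde\mu$ is Radon and the segment is compact. Using a triangle argument: for large $t$, pick times with $r_p(t)$ and $r'_p(t')$ joined by a short segment $\sigma_t$ whose length tends to $0$. Any geodesic crossing $\sigma_t$ must also cross $[o,o']$ or one of the ray tails. Hence $\tilde\mu(G(\sigma_t))\le \tilde\mu(\text{tail crossings})\to 0$ in case (1), which gives the same limit. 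In case (2), the distance $d_\mu(\pi_\mu r_p(t),\pi_\mu r'_p(t'))$ stays bounded by $\tilde\mu(G[o,o'])$ plus vanishing terms, so the Gromov sequences are equivalent.

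The main obstacle is this last step, in particular controlling $\tilde\mu(G(\sigma_t))\to0$ when $p\in\Lambda_\Gamma^c$ might be an endpoint of leaves of positive total measure accumulating on $p$. Here I would use Lemma \ref{lem:asymptotic}: since $p\in\Lambda_\Gamma^c$ is not a fixed point of a hyperbolic element, $\tilde\mu(E_p)=0$. Geodesics crossing $\sigma_t$ for all large $t$ must have an endpoint at $p$. By monotone convergence (the sets decrease to a subset of $E_p$ plus a set of geodesics crossing $[o,o']$ that already cross both rays), the crossing measure tends to $0$. A local-finiteness caveat remains: if $\sigma_t$ has nonzero measure, I only need finiteness, which holds by the Radon property on the compact set of geodesics meeting $\sigma_{t_0}$.
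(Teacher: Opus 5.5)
Your additivity argument along $r_p$, the dichotomy $L(p)<\infty$ versus $L(p)=\infty$, and the Cauchy-versus-Gromov-sequence step are the paper's argument. Your basepoint independence in case (1) is also sound. The region cut off by $\sigma_t$ gives $G(\sigma_t)\subset G(r_p[t,\infty))\cup G(r'_p[t',\infty))\cup E_p$, both tails have vanishing measure, and $\tilde\mu(E_p)=0$ by Lemma~\ref{lem:asymptotic}.

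\textbf{The gap is case (2).} There you assert that $d_\mu(\pi_\mu r_p(t),\pi_\mu r'_p(t'))$ stays bounded, but nothing you wrote gives this.
\begin{itemize}
\item The tail bound is vacuous when $L(p)=\infty$.
\item Up to null sets, the leaves crossing $\sigma_t$ but not $[o,o']$ are exactly the leaves crossing \emph{both} rays on opposite sides of the cut: before $t$ on $r_p$ and after $t'$ on $r'_p$, or vice versa.
\item Each fixed leaf eventually stops being of this kind. However, the sets $G(\sigma_t)$ are not nested in $t$, and when $L(p)=\infty$ they are in general not contained in any set of finite $\tilde\mu$-measure. So your monotone-convergence step does not apply.
\item The Radon property for geodesics meeting $\sigma_{t_0}$ says nothing about later $\sigma_t$.
\item Shortness of $\sigma_t$ does not help either. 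Near $S^1$ a Radon lamination can carry large mass on thin bands of nested geodesics straddling $\sigma_t$. Boundedness would need an extra input (presumably integrability) that you don't supply.
\end{itemize}

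\textbf{How to fix it.} You don't need to control the distance between hyperbolically paired points at all.
\begin{itemize}
\item Since $A(p)\Delta B(p)\subset G[o,o']$ up to a null set and $\tilde\mu(G[o,o'])<\infty$, you get $\tilde\mu(A(p)\cap B(p))=\infty$.
\item For $x=\pi_\mu(r_p(t))$ and $y=\pi_\mu(r'_p(t'))$, the Gromov product $(x|y)_{\pi_\mu(o)}$ is, up to null sets, the measure of leaves separating $o$ from both $r_p(t)$ and $r'_p(t')$.
\item This set contains the leaves of $A(p)\cap B(p)$ that cross $r_p$ before $t$ and $r'_p$ before $t'$. By monotone convergence from below, its measure tends to $\infty$ for any $t,t'\to\infty$, so the two Gromov sequences are equivalent.
\end{itemize}
This is the content of the paper's remark that the two tree paths eventually coincide. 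A leaf of $|\tilde\mu|$ in $A(p)\cap B(p)$ far enough out meets both rays at points with the same image in $T_\mu$. Beyond it, both rays cross the same linearly ordered family of leaves.
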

Granting the proposition, the map $\beta_\mu: \Lambda_\Gamma^c\to \overline{T}_\mu\cup \partial_\infty \overline{T}_\mu$ is $p\mapsto \beta_\mu(p)$.
\begin{proof}
    Since $p\in \Lambda_\Gamma^c$, by Lemma \ref{lem:asymptotic}, $\mu(E_p)=0$. Thus, up to a null-set, every geodesic in the support of $\mu$ splits $\mathbb{D}$ into two components, one which contains $p$ and one which does not. Let $A(p)$ be the set of geodesics $g$ in $\mathbb{D}$ separating $o$ from $p$. Since $r_p$ has to eventually cross any $g\in A(p)$, and every $g\in A(p)$ has to hit some $r_p(t)$, 
$$d_\mu(\pi_\mu(o),\pi_\mu(r_p(t))) \underset{t\to\infty}{\nearrow} \mu(A(p)).$$
    If $\mu(A(p))<\infty$, then $\pi_\mu\circ r_p(t)$ is Cauchy in $t$ and hence converges to a point $\beta_\mu(p)$ in $\overline{T}_\mu$. If $\mu(A(p))=\infty$, then since the distance along $\pi_\mu(r_p(t))$ is additive, any sequence that goes along the image of $\pi_\mu\circ r_p$ determines an (equivalent) Gromov sequence, and hence a point $\beta_\mu(p)$ in $\partial_\infty \overline{T}_\mu$.

    If $o'$ is another basepoint with set of geodesics $B(p)$ separating $o'$ from $p$, then up to a null set, the symmetric difference $A(p)\Delta B(p)$ is the set of geodesics separating $o$ and $o$'. Since geodesics separating the basepoints all intersect a compact segment (the geodesics separating $o$ and $o'$), the symmetric difference forms a compact set of geodesics. It follows that a change of basepoint gives two different paths in the tree that eventually coincide, and thus determine the same endpoints.
\end{proof}

We relate $\beta_\mu$ to distance functions. Previously we defined $G[x,y)$, $G(y,x]$, etc. for points $x,y\in \mathbb{D}$. Here we extend the definition and allow for $x$ and $y$ to be ideal boundary points: if $x$ is a boundary point, $G(x,y)$ will be the set of geodesics transversely intersecting the open geodesic connecting $x$ to $y$, and similar for the other notations. Using these extended definitions, we extend the original pseudo-distance $\Tilde{d}_\mu^b$ on $\mathbb{D}^b$ to $\mathbb{D}^b\cup S^1$ by allowing $\Tilde{d}_\mu^b(\cdot,\cdot)$ to take boundary points as input and to output $+\infty$.  It is easily verified that this extension is still symmetric and satisfies the triangle inequality.
\begin{lem}\label{lem: injectivity lemma}
    Let $p,q\in \Lambda_\Gamma^c$. 
    \begin{enumerate}
        \item     If $\Tilde{d}_\mu^b(p,q)<\infty$, then $\beta_\mu(p)$ and $\beta_\mu(q)$ lie in $\overline{T}_\mu$, and $d_\mu(\beta_\mu(p),\beta_\mu(q))=\Tilde{d}_\mu^b(p,q)$.
        \item If $\Tilde{d}_\mu^b(p,q)=\infty$, then $\beta_\mu(p)\neq \beta_\mu(q)$.
    \end{enumerate}
  In particular, $\beta_\mu(p)=\beta_\mu(q)$ if and only if $\Tilde{d}_\mu^b(p,q)=0.$
\end{lem}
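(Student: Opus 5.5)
The plan is to compare both $\beta_\mu(p)$ and $\beta_\mu(q)$ with the images of the rays $r_p$ and $r_q$ from a common basepoint $o$. By Proposition \ref{prop: canonical boundary map}, the basepoint does not matter, so I fix $o$ once and for all. Set $A(p)$ and $A(q)$ to be the sets of geodesics separating $o$ from $p$ and from $q$ respectively. By Lemma \ref{lem:asymptotic}, since $p,q\in\Lambda_\Gamma^c$, we have $\mu(E_p)=\mu(E_q)=0$, so we may work modulo null sets throughout. The key combinatorial observation is that, modulo these null sets, the geodesics crossing the geodesic $(p,q)$ are exactly the symmetric difference $A(p)\Delta A(q)$. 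Indeed, a geodesic not ending at $p$ or $q$ separates $p$ from $q$ if and only if it separates $o$ from exactly one of them. The half-open conventions only matter for atoms, and the blow-up strips handle those. This gives
\[
\Tilde{d}_\mu^b(p,q)=\mu(A(p)\setminus A(q))+\mu(A(q)\setminus A(p)).
\]

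In the tree, the rays $\pi_\mu\circ r_p$ and $\pi_\mu\circ r_q$ start at $\pi_\mu(o)$. They share a common initial segment of length $\mu(A(p)\cap A(q))$, up to the same null-set bookkeeping, and they diverge afterwards. For the initial segment: a geodesic in $A(p)\cap A(q)$ is crossed by both rays, and the nested order in which the rays cross the common geodesics is the same. For the divergence: once the rays have crossed all common geodesics, points on $r_p$ and $r_q$ are separated by geodesics in the symmetric difference, so the tree distance between them is additive. Concretely, for large $s,t$,
\[
d_\mu(\pi_\mu r_p(s),\pi_\mu r_q(t))=\mu(A_s(p)\setminus A(q))+\mu(A_t(q)\setminus A(p)),
\]
where $A_s(p)$ denotes the geodesics crossed by $r_p$ before time $s$. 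This follows from the definition of $\Tilde{d}_\mu$ applied to the segment from $r_p(s)$ to $r_q(t)$, again using the separation characterization.

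For (1), if $\Tilde{d}_\mu^b(p,q)<\infty$, both difference sets have finite measure. The rays could still be infinite, since $\mu(A(p)\cap A(q))$ might be infinite. To rule this out, I would note that $A(p)\cap A(q)$ consists of geodesics separating $o$ from the whole closed arc between $p$ and $q$ that avoids... and I expect this to be the main obstacle. The resolution I would try is to move the basepoint onto the geodesic $(p,q)$, which is allowed by basepoint independence. Then $A(p)\cap A(q)$ is empty up to null sets, because a geodesic separating a point of $(p,q)$ from both endpoints would have to cross $(p,q)$ twice. Hence $\mu(A(p))$ and $\mu(A(q))$ are finite, both $\beta_\mu(p)$ and $\beta_\mu(q)$ lie in $\overline{T}_\mu$, and letting $s,t\to\infty$ in the displayed formula gives $d_\mu(\beta_\mu(p),\beta_\mu(q))=\Tilde{d}_\mu^b(p,q)$.

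For (2), I again take $o$ on $(p,q)$, so that $\mu(A(p))+\mu(A(q))=\infty$ and at least one endpoint lies in $\partial_\infty\overline{T}_\mu$. If only one of them is at infinity, then $\beta_\mu(p)\neq\beta_\mu(q)$ trivially, because one lies in $\overline{T}_\mu$ and the other in the boundary. If both are at infinity, I bound the Gromov product. The images of the two rays meet only at $\pi_\mu(o)$, since there are no common separating geodesics. It follows that $(\pi_\mu r_p(s)\,|\,\pi_\mu r_q(t))_{\pi_\mu(o)}=0$ for all $s,t$, so the Gromov sequences are inequivalent. The final "in particular" follows by combining the two cases: the points are equal exactly when case (1) holds with distance $0$.
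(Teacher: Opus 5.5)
Your proposal is correct. It rests on the same identity as the paper's proof: modulo the null set of geodesics ending at $p$ or $q$, one has $A(p)\Delta A(q)=G(p,q)$. You differ from the paper in two tactical steps.

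\emph{Controlling $A(p)\cap A(q)$.} The paper keeps the basepoint $o$ arbitrary. It notes that $A(p)\cap A(q)$ is relatively compact in $\mathcal{G}(\mathbb{D})$, because such geodesics cannot cross $(p,q)$ and so must cross the compact segment from $o$ to $(p,q)$. By the Radon property, $\mu(A(p)\cap A(q))<\infty$ always, so the obstacle you flagged never actually arises. You instead use basepoint independence from Proposition \ref{prop: canonical boundary map} to put $o$ on $(p,q)$. This makes $A(p)\cap A(q)$ null and reduces the distance formula in (1) to additivity of $\Tilde{d}_\mu$ along a single geodesic line. That is cleaner, though the basepoint-change argument rests on the same Radon/compactness reasoning, so no ingredient is saved.

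\emph{Separating the points in (2).} The paper argues that $\beta_\mu(p)=\beta_\mu(q)$ would force the two tree rays from $\pi_\mu(o)$ to coincide, by uniqueness of rays in an $\R$-tree. The symmetric-difference identity then gives $\Tilde{d}_\mu^b(p,q)=0$. This formulation handles the mixed case (one point in $\overline{T}_\mu$, one at infinity) without a separate argument. Your computation that the Gromov product of the two rays is identically $0$ when $o\in(p,q)$ is a more explicit version of the same separation. It needs the case split you give, but it exhibits directly that the two Gromov sequences are inequivalent.
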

\begin{proof}
    Assume $p\neq q$, otherwise there is nothing to do. We are working with a basepoint $o\in \mathbb{D}^b$. We observe that 
    \begin{equation}\label{eq: symmetric difference}
        A(p)\Delta A(q) = G(p,q)\cup N,
    \end{equation}
where $N$ is the null set of geodesics with an endpoint at $p$ or $q$. The set $A(p)\cap A(q)$ consists of geodesics separating $o$ from both $p$ and $q$. Since $p\neq q$, this is a relatively compact subset of the space of geodesics (its endpoint pairs remain inside two distinct closed boundary arcs). Thus, since $\mu$ is a Radon measure, $\mu(A(p)\cap A(q))<\infty$. If $\mu(G(p,q))<\infty$, then $\mu(A(p)\cap A(q))<\infty$ together with (\ref{eq: symmetric difference}) imply that both $A(p)$ and $A(q)$ have finite measure, hence both $\beta_\mu(p)$ and $\beta_\mu(q)$ live in $\overline{T}_\mu$. By (\ref{eq: symmetric difference}), $d_\mu(\beta_\mu(p),\beta_\mu(q))=\Tilde{d}_\mu^b(p,q)$.

 If $\beta_\mu(p)= \beta_\mu(q)$, then their geodesic rays in $\overline{T}_\mu$ starting at the common point $o$ would coincide, because $T_\mu$ is an $\R$-tree (note uniqueness of rays with the same starting and end points holds even if the endpoint is in the Gromov boundary). By (\ref{eq: symmetric difference}), this would imply that $\Tilde{d}_\mu^b(p,q)=0$, and hence we cannot have $\Tilde{d}_\mu^b(p,q)=\infty$.
\end{proof}
\begin{remark}
    We do not make the claim that $\pi_\mu$ extends continuously to $\beta_\mu$ on $\mathbb{D}^b\cup \Lambda_\Gamma^c$. If a sequence $(p_n)_{n=1}^\infty\subset \mathbb{D}^b$ converges tangentially to $p$, then we generally do not expect $\pi_\mu(p_n)$ to converge to $\beta_\mu(p)$.
\end{remark}

\subsection{Korevaar-Schoen trace maps}\label{subsec: sobolev trace maps}
Now, keep $\mu$ and all the notations from above, and let $\mathcal{F}$ be an integrable partial measured foliation tightening to $\mu$. We assume that the map $\pi_{\mathcal{F}}$ can be globally defined on $\mathbb{D}$. The map $\pi_{\mathcal{F}}$ extends to $\Lambda_\Gamma^c$ in its own way. Fix $o\in \mathbb{D}$ (note we now don't need $\mathbb{D}^b$) and for $p\in \Lambda_\Gamma^c$, let $r_p$ be as above.
\begin{prop}\label{prop: non-tangntial limit}
For almost every $p\in \Lambda_\Gamma^c$, the limit $$\textrm{tr}\pi_{\mathcal{F}}(p):=\lim_{t\to\infty}\pi_{\mathcal{F}}\circ r_p(t)$$ exists in $(\overline{T}_\mu,d_\mu).$ The assignment $p\to \textrm{tr}\pi_{\mathcal{F}}(p)$ determines a measurable and locally $L^2$ map $$\textrm{tr}\pi_{\mathcal{F}}: \Lambda_\Gamma^c\to (\overline{T}_\mu,d_\mu),$$ which is independent of the chosen point $o$. 
\end{prop}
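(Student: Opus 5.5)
Since $\pi_{\mathcal{F}}$ is locally represented by the defining functions $v_i$ of $\mathcal{F}$, and these are $C^1$ with $\int_X|\nabla v_i|^2<\infty$, the plan is to reduce to the classical fact that a finite-Dirichlet-energy function on the disk has radial limits almost everywhere. The tree structure is used to convert variation along a ray into distance in $T_\mu$.

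The first step is to estimate the energy of $\pi_{\mathcal{F}}$ near $\Lambda_\Gamma^c$. Fix a closed arc $J\subset\Lambda_\Gamma^c$. Its preimage region $W_J=\{r_p(t): p\in J,\ t\ge 0\}$ (a sector from $o$) injects under the covering map $\mathbb{D}\to X$ except on a compact part near $o$, because $J$ lies in the domain of discontinuity. Hence, for a suitable $\epsilon$, the energy of $\pi_{\mathcal{F}}$ on $W_J\cap\{|z|>1-\epsilon\}$ is at most $\tfrac12\mathcal{D}(\mathcal{F})<\infty$.

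Next I would pass to polar coordinates $z=\rho e^{i\theta}$ centred at $o=0$ (after a M\"obius normalization, radial segments are exactly the rays $r_p$). Along each ray, the length of the path $t\mapsto\pi_{\mathcal{F}}\circ r_p(t)$ in $T_\mu$ is bounded by $\int|\partial_\rho v|\,d\rho$, with $v$ the local representative. This is because $\pi_{\mathcal{F}}$ is $1$-Lipschitz for the transverse measure: in a chart, $d_\mu(\pi_{\mathcal{F}}(x),\pi_{\mathcal{F}}(y))=|v(x)-v(y)|$, and singular trajectories form a null set. Then Cauchy--Schwarz and Fubini give
$$\int_J\Big(\int_{1-\epsilon}^1|\partial_\rho \pi_{\mathcal{F}}|\,d\rho\Big)^2 d\theta\le \epsilon\int_J\int_{1-\epsilon}^1|\nabla\pi_{\mathcal{F}}|^2\rho\,d\rho\,d\theta\cdot C<\infty .$$
So for almost every $p\in J$ the curve $\pi_{\mathcal{F}}\circ r_p$ has finite length in $T_\mu$. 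It is therefore Cauchy and converges in the completion $\overline{T}_\mu$.

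Measurability of $\mathrm{tr}\,\pi_{\mathcal{F}}$ follows because it is a pointwise a.e. limit of the continuous-in-$p$ maps $p\mapsto \pi_{\mathcal{F}}(r_p(t_n))$. (Where $\pi_{\mathcal{F}}$ is not continuous, at atoms, the blown-up strips still give a measurable map.) For local $L^2$, take the distance to the fixed point $\pi_{\mathcal{F}}(r_p(1-\epsilon))$, which varies boundedly over compact $J$. Then $d_\mu(\mathrm{tr}\,\pi_{\mathcal{F}}(p),\pi_{\mathcal{F}}(o))$ is bounded by a constant plus the length integral above, which is $L^2(J)$.

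For independence of $o$, the rays from $o$ and $o'$ to $p$ are asymptotic and converge non-tangentially to $p$. I would apply the same energy argument in a Stolz-type region, using the standard fact that finite energy gives a.e. non-tangential limits equal to the radial ones. An alternative is a direct estimate: the hyperbolic segments joining $r_p(t)$ to $r'_p(t)$ have bounded length, so their images have length controlled by a local energy that tends to $0$ for a.e. $p$. I expect this step, non-tangential versus radial limits in the metric-tree setting, to be the main obstacle. The plan is to lift the scalar argument through local representatives, using that along any path the tree distance is dominated by the integral of $|\nabla v|$.
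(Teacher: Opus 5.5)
Your arguments for a.e.\ existence, measurability and local $L^2$ are correct, and they make explicit what the paper obtains by citation. The paper restricts $\pi_{\mathcal{F}}$ to a half-plane or funnel fundamental domain $H$ in the complement of the convex hull of $\Lambda_\Gamma$. It checks that $\pi_{\mathcal{F}}$ is $W^{1,2}$ there, using finite energy plus a Poincar\'e inequality for $x\mapsto d_\mu(\pi_{\mathcal{F}}(x),Q)$. It then applies the Korevaar--Schoen trace theorem with a flow whose flow lines are the rays $r_p$. Your Cauchy--Schwarz/Fubini bound on the tree-length of $\pi_{\mathcal{F}}\circ r_p$ uses that $d_\mu$ is dominated by the transverse measure of a path. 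This is essentially the mechanism behind that theorem, and it even shows that a.e.\ image ray has finite length. (Minor point: $W_J$ injects near $J$ only when $J$ is short, e.g.\ $\gamma J\cap J=\emptyset$ for $\gamma\neq 1$, since a long arc in a funnel interval contains some $x$ and $\gamma x$. Bounded multiplicity of the covering map on $W_J$ still gives finite energy, so nothing breaks.)

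The genuine gap is independence of $o$. The paper gets this from uniqueness of the trace (independence of the transverse flow) in Korevaar--Schoen.

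Your first route rests on a false ``standard fact'': finite Dirichlet energy does not give a.e.\ non-tangential limits, even for scalar functions. In dimension two a bump of height $1$ can have arbitrarily small energy. Putting one in every Whitney box of $\mathbb{D}$, with summable energies, gives a finite-energy function with radial limits a.e.\ but no non-tangential limit anywhere. The paper's remark after the proposition also notes that non-tangential convergence is not available even for harmonic $\pi_{\mathcal{F}}$.

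Your second route fails as stated for the same reason. The energy near a short segment joining $r_p(t)$ to $r'_p(t')$ does not bound the image length of that particular segment, since $W^{1,2}$ does not embed in $C^0$ in dimension two. You have to select the connecting curves. For example, let $C_\rho=\{|z-p|=\rho\}\cap\mathbb{D}$, which has length at most $\pi\rho$. Cauchy--Schwarz and Fubini give
\[
\int_\delta^{\sqrt{\delta}}\Big(\int_{C_\rho}|\nabla v|\,ds\Big)^2\frac{d\rho}{\rho}\leq \pi\int_{B(p,\sqrt{\delta})\cap\mathbb{D}}|\nabla v|^2\leq \pi E,
\]
where $E$ is the Dirichlet integral over a neighbourhood of $J$. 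Hence some $C_\rho$ with $\delta<\rho<\sqrt{\delta}$ has image of length at most $(2\pi E/\log(1/\delta))^{1/2}$. For small $\delta$ this crosscut separates $p$ from $o$ and $o'$, so it meets both rays. Letting $\delta\to 0$ shows the two limits coincide wherever both exist, which is almost every $p$ by your first step applied to each basepoint. Alternatively, you can average the lengths of the connecting segments over $p\in J$ and extract an a.e.\ convergent subsequence of times.
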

Here, $L^2$ is in the sense of \cite{KorevaarSchoen}: for a function $u$ from a Riemannian manifold to a metric space $(M,d)$, it means that for any point $Q\in M$, the real-valued function $x\mapsto d(u(x),Q)$ is $L^2$. Using $S^1$ as the source, the measure is just the Lebesgue measure. In the proof below, we will apply the metric space version of the classical trace theorem for Sobolev spaces, \cite[Theorem 1.12.1]{KorevaarSchoen}. This result applies to functions that are $W^{1,2}$ in the sense of \cite{KorevaarSchoen} (not just $W_{\textrm{loc}}^{1,2}$), which means they are $L^2$ and have finite total energy.

\begin{proof}[Proof of Proposition \ref{prop: non-tangntial limit}]
The complement of the convex hull of $\Lambda_\Gamma$ in $\mathbb{D}$ decomposes into a union of half-planes corresponding to geodesic half-planes on $X$ and fundamental domains of geodesic funnels in $X$. Let $H$ be one such half-plane or fundamental domain, with ideal boundary $\partial_\infty H\subset S^1$. Equip $H\cup \partial_\infty H$ with the flat metric inherited from $\overline{\mathbb{D}}$. We claim that $\pi_{\mathcal{F}}$ restricts on $H$ to a $W^{1,2}$ function. Since $\pi_{\mathcal{F}}$ has finite (equivariant) energy, we just need to show that for any $Q\in\pi_{\mathcal{F}}(H)$, $h(x)= d(\pi_{\mathcal{F}}(x),Q)$ is $L^2$ on $H$ (with respect to the flat metric). Corollary 16.3 from \cite{KorevaarSchoen} shows that the Dirichlet energy of $h$ is, up to a uniform constant, pointwise bounded above by that of $\pi_{\mathcal{F}}$. Thus, since $\pi_{\mathcal{F}}$ has finite energy on $H$, the same is true for $h$. The result now follows from a standard extension of the Poincar{\'e} inequality: for any relatively compact subset $B\subset H$, $$||h||_{L^2(H)}\leq C ( E(h|_{H})^{1/2}+ ||h||_{L^2(B)})<\infty.$$
With the $W^{1,2}$ property established, we are prepared to set up \cite[Theorem 1.12.1]{KorevaarSchoen}. In $H\cup \partial_\infty H$, let
$t\mapsto \chi(\cdot,t)$ be a flow satisfying $\chi(x,0)=x\in \partial_\infty H$, preserving every truncated ray $r_p([0,\infty))\cap H$, and with infinitesimal generator transverse to $\partial_\infty H$. By \cite[Theorem 1.12.1]{KorevaarSchoen}, the limit of $\pi_{\mathcal{F}}$ along the flow exists at almost every boundary point $p\in \partial_\infty H$ and determines an $L^2$ measurable map $\textrm{tr}\pi_{\mathcal{F}}$ on $\partial_\infty H$. It follows that, at almost every $p\in \partial_\infty H$, $\lim_{t\to\infty}\pi_{\mathcal{F}}\circ r_p(t)= \textrm{tr}\pi_{\mathcal{F}}(p)$. By uniqueness in the trace theorem, this measurable map is the same if we change our original point $o$.
To obtain the global map, we patch extensions together over every component of the complement of the convex hull. 
\end{proof}
   \begin{remark}\label{rem: non-tangential limits}
   When $\pi_{\mathcal{F}}$ is harmonic, the best type of convergence one can expect is for $\pi_\mathcal{F}$ to converge to $\textrm{tr}\pi_{\mathcal{F}}$ non-tangentially almost everywhere. The assertion that $\pi_\mathcal{F}\to \textrm{tr}\pi_{\mathcal{F}}$ non-tangentially a.e. is not contained in \cite{KorevaarSchoen}, but it is expected. This is completely analogous to the situation for finite energy harmonic functions from $\mathbb{D}$ to $\mathbb{R}$, where the non-tangential limits are determined by the boundary behaviour of Poisson integrals.
   
   For an example, the height function on a comb domain $\Omega$ with infinitely many vertical teeth gives a harmonic function that does not extend continuously to its trace map. Pulling back $dz^2$ to the unit disk $\mathbb{D}$ via the Riemann map gives a quadratic differential with a foliation whose leaf-space projection does not extend continuously to the boundary.
\end{remark}

We now have two boundary maps, one associated with $\mu$ and one associated with $\mathcal{F}$. Since $\mu$ and $\mathcal{F}$ give the same endpoints on $S^1$, we expect the boundary maps to be the same in some sense. This is indeed the case.

\begin{prop}\label{prop: trace map agrees}
    For almost every $p\in \Lambda_\Gamma^c$, $\textrm{tr}\pi_{\mathcal{F}}(p)=\beta_\mu(p).$
\end{prop}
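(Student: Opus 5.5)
We compare the two limits along the same radial rays. Fix $o\in\mathbb{D}$ away from the atomic strips and let $p\in\Lambda_\Gamma^c$ be a point where $\textrm{tr}\pi_{\mathcal{F}}(p)=\lim_{t\to\infty}\pi_{\mathcal{F}}(r_p(t))$ exists (Proposition \ref{prop: non-tangntial limit}). We also require $\mu(E_p)=0$; since $p\notin\Lambda_\Gamma$, Lemma \ref{lem:asymptotic} gives this for every such $p$.

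\textbf{Step 1: a pointwise comparison.} For $x=r_p(t)$ on a regular leaf $\ell_x$ of $\tilde{\mathcal{F}}$, we compare $\pi_{\mathcal{F}}(x)$ with $\pi_\mu(r_p(t))$. By the description in \S\ref{sec: realizable laminations}, $d_\mu(\pi_{\mathcal{F}}(x),\pi_\mu(y))$ equals, up to atomic half-measures, the $\mu$-measure of the geodesics separating $g_{\ell_x}$ from $y$. The plan is to prove
$$d_\mu\big(\pi_{\mathcal{F}}(r_p(t)),\pi_\mu(r_p(t))\big)\le \mu\big(\mathcal{S}_t\big),$$
where $\mathcal{S}_t$ is the set of geodesics that separate $r_p(t)$ from the tightening $g_{\ell_{r_p(t)}}$. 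Every geodesic in $\mathcal{S}_t$ is crossed by the leaf $\ell_{r_p(t)}$ only an even number of times while still separating. Equivalently, each geodesic in $\mathcal{S}_t$ separates the point $r_p(t)$ from both endpoints of $g_{\ell_{r_p(t)}}$.

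\textbf{Step 2: the error vanishes as $t\to\infty$.} We split into the two cases of Proposition \ref{prop: canonical boundary map}.

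In case (2), where $\beta_\mu(p)\in\partial_\infty\overline{T}_\mu$, the trace is a point of $\overline{T}_\mu$, so we must rule this out for a.e. $p$. For this we use the finite total energy near $p$ together with an $L^2$ bound. By Lemma \ref{lem: injectivity lemma}, the function $p\mapsto\mu(A(p))$ differs from $d_\mu(\pi_\mu(o),\textrm{tr}\pi_{\mathcal{F}}(p))$ by at most the error from Step 1. So it suffices to show the error tends to $0$.

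In both cases the key claim is that $\mu(\mathcal{S}_t)\to0$ for almost every $p$. A geodesic in $\mathcal{S}_t$ must separate $r_p(t)$, which tends to $p$, from the endpoints of $g_{\ell_{r_p(t)}}$. Hence either the endpoints of these geodesics accumulate at $p$, and their $\mu$-mass tends to $0$ because $\mu(E_p)=0$ and $\mu$ is Radon, or the leaf through $r_p(t)$ makes a long excursion. I expect this second alternative, a nontangential failure, to be the main obstacle.

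To handle it, I would argue measure-theoretically. The trace is independent of the choice of transversal flow, so we may replace radial rays by any family of transversal curves. Following \cite[Theorem 1.12.1]{KorevaarSchoen}, I would use curves lying along leaves of the orthogonal structure, or simply average. By the Fubini/trace estimate, $\int_{\partial_\infty H}d_\mu(\textrm{tr}\pi_{\mathcal{F}}(p),\pi_{\mathcal{F}}(r_p(t)))^2\,dp\to0$.

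\textbf{Step 3: identifying the limits.} Alternatively, and more robustly, we compare the two maps on $\mathbb{D}$ directly. The map $\pi_\mu\circ(\text{id})$ is not continuous, but both $\pi_{\mathcal{F}}$ and $\pi_\mu$ agree on each leaf up to its tightening. Moreover, for a.e. $p$ the leaves with an endpoint near $p$ tighten to geodesics near $p$, because leaves and their tightenings share endpoints. So for a.e. $p$ we choose $t_n\to\infty$ with $r_p(t_n)$ on a leaf $\ell_n$ both of whose endpoints lie in a shrinking arc $J_n\ni p$. This is possible on a full-measure set, using Proposition \ref{prop:endpoints_S^1}-type density and the fact that leaves crossing $r_p$ far out cannot have an endpoint far from $p$ except on a set of small transverse measure, controlled by finiteness of energy.

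For such a leaf, $\pi_{\mathcal{F}}(r_p(t_n))=\pi_\mu(g_{\ell_n})$, and $g_{\ell_n}$ lies in the half-plane cut off by $J_n$. Therefore
$$d_\mu\big(\pi_\mu(g_{\ell_n}),\pi_\mu(r_p(s))\big)\le\mu\{g\subset \text{half-plane of }J_n\text{ crossing }r_p\}\to0$$
for a suitable $s$, using $\mu(E_p)=0$. Passing to the limit along $t_n$, and recalling that the full limit exists, gives $\textrm{tr}\pi_{\mathcal{F}}(p)=\beta_\mu(p)$. In case (2) this limit diverges, which contradicts the existence of the trace; hence case (2) occurs only on a null set.
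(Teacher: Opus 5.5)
Your argument does not close. In Step 2 you correctly locate the difficulty: the leaf through $r_p(t)$ may make long excursions, so nothing forces $\mu(\mathcal{S}_t)\to 0$. But the proposed remedy does not touch it. The trace estimate $\int d_\mu(\textrm{tr}\pi_{\mathcal{F}}(p),\pi_{\mathcal{F}}(r_p(t)))^2\,dp\to 0$ only says that $\pi_{\mathcal{F}}$ converges to its own trace, which is the content of Proposition \ref{prop: non-tangntial limit}, and it says nothing about $\pi_\mu(r_p(t))$.

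Step 3 rests on the claim that for a.e.\ $p$ there are $t_n\to\infty$ with $r_p(t_n)$ on a leaf $\ell_n$ whose \emph{both} endpoints lie in a shrinking arc $J_n\ni p$. This is false, in general on sets of positive and even full measure:
\begin{enumerate}
\item If $p$ lies in an open arc of $S^1$ containing no endpoints of $|\mu|$, no such leaf exists at all.
\item For the regular polygonal laminations of Theorem \ref{thm: sufficient, disk}, every geodesic of $|\mu|$ has endpoints symmetric about the midpoint of some $I_j$. So for every $p$ in the interior of $I_j$ other than its midpoint, no geodesic of $|\mu|$, hence no leaf, has both endpoints in a small arc around $p$. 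Yet the proposition holds at these points.
\end{enumerate}
Your remark that leaves with an endpoint near $p$ tighten to geodesics near $p$ conflates one endpoint with both. Proposition \ref{prop:endpoints_S^1} concerns density of endpoints for a filling pair and produces no nested leaves straddling $p$. Finiteness of energy gives no such control either. What you would actually need is that $\pi_\mu(g_{\ell_n})$ be close to $\beta_\mu(p)$ in the tree, not that $g_{\ell_n}$ be close to $p$ in $\overline{\mathbb{D}}$, and the proposal has no mechanism for this.

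The missing idea is a soft separation argument that needs no estimate at all.
\begin{enumerate}
\item Suppose $\textrm{tr}\pi_{\mathcal{F}}(p)\neq\beta_\mu(p)$. Choose an interior point $y$ of the tree segment (or ray) joining them, represented by a single leaf $\ell$ that does not end at $p$. This is possible by Lemma \ref{lem:asymptotic}, since the segment carries positive transverse measure. Then $y$ separates the two points in $\overline{T}_\mu\cup\partial_\infty\overline{T}_\mu$, into components $T_1$ and $T_2$.
\item Because $\ell$ and its tightening $g_\ell$ have the same ideal endpoints, the sides $C_1,C_2$ of $\ell$ and $D_1,D_2$ of $g_\ell$ can be labeled so that $\partial_\infty C_i=\partial_\infty D_i$. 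Then both $\pi_{\mathcal{F}}(C_i)$ and $\pi_\mu(D_i)$ lie in $T_i\cup\{y\}$.
\item Since $p$ is not an endpoint of $\ell$, say $p\in\partial_\infty C_1$, the ray eventually satisfies $r_p(t)\in C_1\cap D_1$. So both $\pi_{\mathcal{F}}(r_p(t))$ and $\pi_\mu(r_p(t))$, and hence both limits, lie in the closed set $T_1\cup\{y\}$. This contradicts the choice of $y$.
\end{enumerate}
Excursions of leaves are irrelevant here, because only the side of $\ell$ and of $g_\ell$ on which $r_p(t)$ lies matters. Your case (2) is handled at the same time, since the separation takes place in the compactified tree.
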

Implicitly, for the points $p$ under consideration, $\beta_\mu(p)$ is in $\overline{T}_\mu$, and not the Gromov boundary.
\begin{proof}
    Fix a basepoint $o\in \mathbb{D}$ and consider rays $r_p$ based at $o$ as above. Take a full measure subset such that
$$\lim_{t\to\infty}\pi_{\mathcal{F}}(r_p(t))=\textrm{tr}\pi_{\mathcal{F}}(p).$$ Assume for the sake of contradiction that, for some $p$ inside this subset, $\textrm{tr}\pi_{\mathcal{F}}(p)\neq \beta_\mu(p)$. Let $J\subset \overline{T}_\mu$ be the geodesic segment or ray joining $\textrm{tr}\pi_{\mathcal{F}}(p)$ and $\beta_\mu(p).$ Choose a point $y\in J$, represented by a leaf $\ell$ in $\mathbb{D}$, such that $y$ separates $\overline{T}_\mu\cup \partial_\infty \overline{T}_\mu$ into two components $T_1$ and $T_2$ and such that $p$ is not an endpoint of $\ell$; for the first condition, we just need $p$ to lie on a regular leaf, and the second condition is possible by Lemma \ref{lem:asymptotic} and because $J$ corresponds to a positive measure subset of leaves. Let $g_\ell$ be the geodesic tightening of $\ell$. $\ell$ cuts $\mathbb{D}$ into two subsets $C_1$ and $C_2$ and $g_\ell$ cuts $\mathbb{D}$ into two components $D_1$ and $D_2$, with the relations $\partial_\infty C_1=\partial_\infty D_1$, and $\partial_\infty C_2=\partial_\infty D_2$. It follows, up to relabeling $T_1$ and $T_2$, that $\pi_{\mathcal{F}}$ maps $C_1$ (resp. $C_2$) to $T_1\cup \{y\}$ (resp. $T_2 \cup \{y\}$) and $\pi_{\mu}$ maps $D_1$ (resp. $D_2$) to $T_1\cup \{y\}$ (resp. $T_2 \cup \{y\}$). Assume the labellings are such that $p\in \partial_\infty C_1=\partial_\infty D_1$. Then, for $t$ large, $r_p(t)\in C_1\cap D_1$. But then both $\pi_\mu(r_p(t))$ and $\pi_{\mathcal{F}}(r_p(t))$ lie in the same component of $T_{\mu}\backslash \{y\}$ for $t$ large. Taking $t\to \infty$ shows that $\textrm{tr}\pi_{\mathcal{F}}(p)$ and $\beta_\mu(p)$ lie in the same component of $\overline{T}_\mu\cup \partial_\infty \overline{T}_\mu\backslash \{y\},$ which gives a contradiction.
\end{proof}

Even though the boundary map $\beta_\mu$ is defined on all of $\Lambda_\Gamma^c$ as a map into $\overline{T}_\mu\cup \partial_\infty \overline{T}_\mu$, we will say that $\textrm{tr}\pi_{\mathcal{F}}$, a measurable map to $\overline{T}_\mu$, is \textit{represented by $\beta_\mu$}.

\subsection{Asymptotic Plateau problem}\label{subsec: asymptotic plateau}
Let $\mu$ and $\nu$ be two measured geodesic laminations obtained by tightening foliations on $X$ with finite Dirichlet integral. As in \S \ref{subsec: the core}, we denote the corresponding product of $\R$-trees $(T_\mu,d_\mu)\times (T_\nu,d_\nu)$ by $(P,d)$. The completion is denoted $(\overline{P},d)$. 
We identify $\overline{P}$ with its image inside the topological space $(\overline{T}_\mu\cup \partial_\infty \overline{T}_\mu)\times (\overline{T}_\nu\cup \partial_\infty \overline{T}_\nu)$, which we write as $\overline{P}\cup \partial_\infty \overline{P}$, so that $\partial_\infty \overline{P}$ denotes the complement of the image of $\overline{P}$. The space $(P,d)$ comes with a product representation $\rho=(\rho_\mu,\rho_\nu):\pi_1(X)\to \textrm{Isom}(P,d)$, which extends to $(\overline{P},d)$ and to $\overline{P}\cup \partial_\infty \overline{P}$. On $\Lambda_\Gamma^c$, we have the product of canonical maps $\beta=(\beta_\mu,\beta_\nu)$. This map is $\rho$-equivariant, where the $\pi_1(X)$-action on $S^1$ is the action induced by the deck group action on $\mathbb{D}$. For a homeomorphism between Riemann surfaces $f:X\to Y$, we write $f_*$ for the induced isomorphism of the fundamental groups. 

\begin{defn}\label{def: asymptotic Plateau}
    The asymptotic Plateau problem for $(\mu,\nu)$ asks about the existence and uniqueness of a homeomorphism of Riemann surfaces $f:X\to Y$, extending to a homeomorphism $\partial_\infty f:S^1\to \partial_\infty \Tilde{Y}$, together with a $\rho\circ f_*^{-1}$-equivariant conformal harmonic map $\Pi: \Tilde{Y}\to P$ with trace map represented by $\beta\circ (\partial_\infty f)^{-1}$. We say that $(f,Y,\Pi)$ is a solution to the asymptotic Plateau problem.
\end{defn}
Above, uniqueness means that if $(g,Z,\Theta)$ is any other solution, then there are homeomorphisms $f:X\to Y$ and $g:X\to Z$ such that $g\circ f^{-1}:Y\to Z$ is homotopic to a biholomorphism whose lift $\mathbb{D}\to\mathbb{D}$ intertwines the two maps $\Pi$ and $\Theta$. Theorem \ref{thm: main} shows that filling is equivalent to the existence of a solution. 

The asymptotic Plateau problem is actually more delicate than Theorem \ref{thm: main}. The reason is that if we have a harmonic map to a tree with given equivariance and trace map $\beta\circ (\partial_\infty f)^{-1}$, it is not clear a priori that this harmonic map is isometric to the leaf-space projection of its Hopf differential (there may be a ``folding"). Nevertheless, we're able to establish results with relative ease for a large class of Riemann surfaces.
\begin{defn}
 We say that $X$ is of Dirichlet-Liouville type if every non-negative subharmonic function with finite Dirichlet energy and constant trace map on the ideal boundary is constant.
\end{defn}
Surfaces of Dirichlet-Liouville type include: the unit disk, all parabolic Riemann surfaces, and most generally Riemann surfaces of the second kind such that the ends of the convex core resemble that of a parabolic Riemann surface.
\begin{lem}\label{lem: uniqueness for harmonic maps}
    Let $X=\mathbb{D}/\Gamma$ be of Dirichlet-Liouville type, $(T,d)$ an $\R$-tree with action $\rho:\pi_1(X)\to \textrm{Isom}(T,d)$, and let $f,g$ be two finite energy $\rho$-equivariant harmonic maps from $\mathbb{D}$ to $(M,d)$ with the same trace map. Then $f=g$.
\end{lem}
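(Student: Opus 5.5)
The plan is the standard one for uniqueness of harmonic maps into nonpositively curved spaces: show that the pointwise distance between the two maps is a nonnegative subharmonic function, and then apply the Dirichlet-Liouville hypothesis. (The target in the statement is the $\R$-tree $(T,d)$; I read $(M,d)$ as a typo for it.)

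Set $\delta(x)=d(f(x),g(x))$ for $x\in\mathbb{D}$. Since $f$ and $g$ are both $\rho$-equivariant and $\rho$ acts by isometries, $\delta(\gamma x)=d(\rho(\gamma)f(x),\rho(\gamma)g(x))=\delta(x)$. So $\delta$ is $\Gamma$-invariant and descends to a function on $X$.

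The first step is subharmonicity. For harmonic maps into NPC spaces, and trees are CAT($0$), Korevaar-Schoen show that $\delta$ is weakly subharmonic; this is their Lipschitz/convexity estimate for distance between harmonic maps. I would cite it directly, or rederive it locally. Off the measure-zero branching sets, $f$ and $g$ are locally represented by maps into segments. Along the geodesic homotopy $f_t$ between $f$ and $g$, convexity of energy together with minimality of $f$ and $g$ yields $\Delta\delta\ge 0$ weakly.

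The second step is finite energy and the trace. Using the pointwise bound $|\nabla\delta|\le|\nabla f|+|\nabla g|$ (triangle inequality together with the Korevaar-Schoen directional energy estimates, as in Corollary 16.3 used above), we get $\int_X|\nabla\delta|^2\le 2(E(f)+E(g))<\infty$. For the boundary behaviour, by Proposition \ref{prop: non-tangntial limit} the limits of $f\circ r_p$ and $g\circ r_p$ exist for a.e. $p\in\Lambda_\Gamma^c$ and coincide, since $f$ and $g$ have the same trace. Hence $\delta\circ r_p\to 0$, so the trace of $\delta$ is the constant $0$.

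The Dirichlet-Liouville hypothesis then says $\delta$ is constant, $\delta\equiv c\ge 0$. It remains to exclude $c>0$, and this is the main obstacle, since the definition only yields constancy and not vanishing. I would argue that a constant function with trace $0$ must itself be $0$, because the trace of a constant function is that constant. That settles the case where $\Lambda_\Gamma^c$ has positive measure. If $X$ is of the first kind the trace condition is vacuous, and I would instead use the tree structure directly. When $\delta\equiv c>0$, equivariant parallel harmonic maps into a tree differ by a translation along a common flat strip. Hence $f$ and $g$ map into an axis and have equal energy densities. Finally, since the convex combination $f_{1/2}$ has energy at most that of $f$ with equality, $f$ and $g$ are translates along a line preserved by $\rho$, and this has to be ruled out either from the hypotheses or by arguing that the Dirichlet-Liouville condition forces $c=0$ in this case. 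If this last step fails in general, I would at least record the conclusion $f=g$ whenever $\Lambda_\Gamma^c\neq\emptyset$ or $X=\mathbb{D}$, which is the case needed for Theorem \ref{thm: plateau_disk}.
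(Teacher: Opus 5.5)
Your argument is the paper's argument. The paper sets $F=d(f,g)$ and notes that it descends to $X$ by equivariance and has finite Dirichlet energy by the triangle inequality. It gets subharmonicity because the harmonic map $(f,g)$ into $T\times T$ pulls back the convex function $d$ to a subharmonic function; this is equivalent to your Korevaar--Schoen citation. It then observes that $F$ has zero trace and concludes $F\equiv 0$ from the Dirichlet--Liouville property.

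Your hesitation at the last step is justified, and the problem is in the statement, not in your argument. The paper passes from ``zero trace'' to ``$F\equiv 0$'' without comment, and that step silently uses $\Lambda_\Gamma^c\neq\emptyset$. When $\Gamma$ is of the first kind, the trace hypothesis is vacuous and the lemma is false. Two examples:
\begin{itemize}
\item A closed hyperbolic surface $X$ is of Dirichlet--Liouville type by the maximum principle. With $\rho$ trivial, any two distinct constant maps $f,g$ are finite energy, equivariant and harmonic, yet $f\neq g$.
\item For a nontrivial action, take $T=\R$ and a real harmonic $1$-form $\omega$ on $X$ with lift $\tilde{\omega}$. Let $\rho(\gamma)$ be translation by $\int_\gamma\omega$, and set $f(x)=\int_{x_0}^x\tilde{\omega}$ and $g=f+c$ with $c\neq 0$.
\end{itemize}
These are the translation (or fixed-segment) configurations your flat-strip analysis isolates, so that step cannot be completed by any argument.

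You should therefore drop the first-kind argument. State the lemma with the hypothesis $\Lambda_\Gamma^c\neq\emptyset$, which covers $X=\mathbb{D}$ and every surface of the second kind. Alternatively, add an irreducibility assumption on $\rho$ that excludes those configurations. Under $\Lambda_\Gamma^c\neq\emptyset$ your proof is complete as written.
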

Here, the trace map is interpreted in the same sense as Proposition \ref{prop: non-tangntial limit}. Note that the lemma goes through just fine if we replace $(T,d)$ with any complete NPC metric space, one just has to define the trace map analogously to Proposition \ref{prop: non-tangntial limit}.
\begin{proof}
   As in the proof of Proposition \ref{prop: non-tangntial limit}, if we fix a point $Q\in T$, both functions $x\mapsto d(f(x),Q)$ and $x\mapsto d(g(x),Q)$ have finite Dirichlet energy on a fundamental domain for $\Gamma$. By equivariance, the function $\Tilde{F}(x)=d(f(x),g(x))$ descends to a function $F$ on $X$. By the triangle inequality, $F$ has finite Dirichlet energy. Since harmonic maps pull back germs of convex functions to subharmonic functions, $F$ is subharmonic. Since $f$ and $g$ have the same trace map, the trace map of $F$ is zero, hence $F$ is identically zero.
\end{proof}

\begin{thm}\label{thm: asymptotic plateau existence}
 Let $\mu$ and $\nu$ be measured geodesic laminations realized by tightening foliations on the Riemann surface $X=\mathbb{D}/\Gamma$ with finite Dirichlet integral. 
 
If $\mu$ and $\nu$ are filling, the map $\beta:\Lambda_\Gamma^c\to \overline{P}\cup \partial_\infty \overline{P}$ is injective and the asymptotic Plateau problem admits a solution. If the asymptotic Plateau problem admits a solution $(f,Y,\Pi)$ such that $Y$ is of Dirichlet-Liouville type, then $\mu$ and $\nu$ are filling.
\end{thm}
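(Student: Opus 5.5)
The plan has three parts: injectivity of $\beta$, existence of a solution, and the converse. Injectivity and existence are read off from Lemma \ref{lem: injectivity lemma}, Proposition \ref{prop:component_closures} and Theorem \ref{thm: main}. For the converse, I would use Lemma \ref{lem: uniqueness for harmonic maps} to identify the components of $\Pi$ with leaf-space projections of quadratic differentials, and then apply the necessity half of Theorem \ref{thm: main}.

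\emph{Injectivity.} Let $p\neq q$ in $\Lambda_\Gamma^c$ with $\beta(p)=\beta(q)$. By Lemma \ref{lem: injectivity lemma}, applied to $\mu$ and to $\nu$, we get $\Tilde{d}_\mu^b(p,q)=\Tilde{d}_\nu^b(p,q)=0$. So the geodesic $\eta$ from $p$ to $q$ is crossed transversely by a $\Tilde{\mu}$-null and $\Tilde{\nu}$-null set of leaves, and it crosses no blown-up strip.

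First, $\eta$ cannot be a leaf of $|\Tilde{\mu}|$. If it were, filling forces leaves of $\Tilde{\nu}$ to cross it. Since $p,q\in\Lambda_\Gamma^c$, Lemma \ref{lem:asymptotic} gives $\Tilde{\nu}(E_p)=\Tilde{\nu}(E_q)=0$. The crossing leaves therefore carry positive $\Tilde{\nu}$-measure in $G(p,q)$ (the support is accumulated by crossing leaves), which is a contradiction. The same argument shows $\eta$ is not a leaf of $|\Tilde{\nu}|$.

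Hence $\eta$ meets no leaf of $|\Tilde{\mu}|\cup|\Tilde{\nu}|$ transversely; here I use that a single crossing support leaf forces positive measure nearby. So $\eta$ lies in the closure of one complementary component $C$, and $\overline{C}\cap S^1\supset\{p,q\}$. This contradicts item (2) of Proposition \ref{prop:component_closures}, since item (1) is excluded because $\overline{C}$ is non-compact.

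\emph{Existence.} Take the holomorphic realization $(f,Y,q)$ from Theorem \ref{thm:realization_first_kind}, and set $m=\partial_\infty f$. Define $\Pi:=\Pi_q^m=(\pi_{q},\pi_{-q})$ on $\Tilde{Y}$, with $\R$-trees identified with $T_\mu,T_\nu$ through $m$ as in \S\ref{sec: maps from foliations}. It is harmonic with Hopf differential $-\frac14\Tilde q+\frac14\Tilde q=0$ by Proposition \ref{prop: Dirichlet principle}, hence conformal. It is $\rho\circ f_*^{-1}$-equivariant by construction. By Proposition \ref{prop: trace map agrees}, its trace is represented by $(\beta_{m_*\mu},\beta_{m_*\nu})$. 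The remaining check is $\beta_{m_*\mu}=\beta_\mu\circ m^{-1}$. This holds because $m$ preserves the cyclic order on the circle, so it carries the separating sets $A(p)$ to $A(m(p))$ measure-preservingly, and the definition of $\pi_\mu^m$ identifies the trees.

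\emph{Converse.} Suppose $(f,Y,\Pi=(\pi_1,\pi_2))$ is a solution with $Y$ of Dirichlet-Liouville type. Theorem \ref{thm: HMS} gives integrable $q_1,q_2$ on $Y$ realizing $f_*\mu$ and $f_*\nu$. The maps $\pi_{q_1},\pi_{q_2}$ are finite-energy, have the same equivariance as $\pi_1,\pi_2$, and by Proposition \ref{prop: trace map agrees} have the same trace maps $\beta_\mu\circ m^{-1}$ and $\beta_\nu\circ m^{-1}$. Lemma \ref{lem: uniqueness for harmonic maps} then gives $\pi_i=\pi_{q_i}$. Conformality of $\Pi$ means $-\frac14(q_1+q_2)=0$, so $q_2=-q_1$, and $(f,Y,q_1)$ is a holomorphic realization. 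Theorem \ref{thm:necessary_realization_first_kind} then shows $f_*\mu$ and $f_*\nu$ fill. Filling pulls back through $f$ because $i(\cdot,\cdot)$ with complete simple geodesics is defined on the universal cover via endpoints, and $m$ preserves linking.

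The main obstacle is that Lemma \ref{lem: uniqueness for harmonic maps} requires $\pi_1,\pi_2$ to have finite energy. I would build this into Definition \ref{def: asymptotic Plateau}, taking $W^{1,2}$ solutions as in Theorem \ref{thm: plateau_disk}. Failing that, I would try to deduce it from conformality plus the trace condition. A secondary delicate point is the measure-positivity claim in the injectivity step, which relies on Lemma \ref{lem:asymptotic} to rule out mass concentrating at $p$ or $q$.
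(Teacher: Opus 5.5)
Your existence and converse arguments follow the paper's proof. For existence, you realize the pair via Theorem \ref{thm: main}, take $\Pi_q$, and read off its trace from Proposition \ref{prop: trace map agrees}. For the converse, you identify the components of $\Pi$ with $\pi_{q_1},\pi_{q_2}$ using Proposition \ref{prop: Dirichlet principle}, Proposition \ref{prop: trace map agrees} and Lemma \ref{lem: uniqueness for harmonic maps}, get $q_2=-q_1$ from conformality, and apply necessity.

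Where you genuinely differ is injectivity.
\begin{itemize}
\item The paper observes that $d(\beta(x),\beta(y))=\sqrt{\mu(G(x,y))^2+\nu(G(x,y))^2}$. It reads the two terms as $i(\mu,\gamma)$ and $i(\nu,\gamma)$ for the projected cross-cut $\gamma$, and concludes from filling and Lemma \ref{lem: injectivity lemma}.
\item You argue by contradiction from $\tilde d^b_\mu(p,q)=\tilde d^b_\nu(p,q)=0$. A support leaf crossing $\eta$ would carry positive measure, since crossing is an open condition on geodesics. Filling rules out $\eta$ being a leaf. So $\eta$ lies in a single complementary component with two ideal points, contradicting Proposition \ref{prop:component_closures}(2).
\end{itemize}
Your version is longer, but it never applies Definition \ref{def:filling} to the projection of $\eta$. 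That projection need not be simple for $X\neq\mathbb{D}$, and the paper's one-line argument tacitly assumes it is. In your route, the non-simple case is absorbed into the loop-removal step in the proof of Proposition \ref{prop:component_closures}. In the other direction, the paper's distance formula is an identity, so it also gives the converse: injectivity of $\beta$ forces positive intersection with every cross-cut, hence filling when $X=\mathbb{D}$, as used in Theorem \ref{thm: plateau_disk}. Your contradiction argument only runs one way.

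Two remarks on the converse.
\begin{itemize}
\item Requiring finite energy is indeed the intended reading. Trace maps are only defined for finite-energy maps (Proposition \ref{prop: non-tangntial limit}), and Theorem \ref{thm: plateau_disk} is phrased for $W^{1,2}$ maps. Building it into the definition is consistent with the paper.
\item The step that deserves flagging is your appeal to Theorem \ref{thm: HMS} on $Y$. It requires $f_*\mu$ and $f_*\nu$ to be realizable by integrable foliations on $Y$, and this does not follow from $f$ merely being a homeomorphism. The paper's converse needs the same comparison maps $\pi_{q_i}$ and also leaves this implicit. You should either state it as a hypothesis or justify it separately.
\end{itemize}
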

We note that in the case $X=\mathbb{D}$, the proof below shows that filling is equivalent to the injectivity of $\beta$. In general, $\beta$ being injective implies that $i(\mu,\eta)+i(\nu,\eta)>0$ for every cross-cut $\eta$.
\begin{proof}[Proof of Theorem \ref{thm: asymptotic plateau existence}]
    Let $x,y\in \Lambda_\Gamma^c$ and let $\gamma$ be a cross-cut on $X$ whose lift to $\mathbb{D}$ has endpoints $x$ and $y$. Then, the distance between points $\beta(x)$ and $\beta(y)$ in $\beta(\Lambda_\Gamma^c)$ (which can be infinite) is exactly $$\sqrt{i(\mu,\gamma)^2+i(\nu,\gamma)^2}=\sqrt{\mu(G(x,y))^2+\nu(G(x,y))^2}.$$ Thus, by Lemma \ref{lem: injectivity lemma}, filling implies that $\beta$ is injective.

    If $(\mu,\nu)$ is filling, then by Theorem \ref{thm: main} there exists a holomorphic realization $(f,Y,q)$.
    As in \S \ref{subsec: the core}, $q$ induces an equivariant conformal harmonic map $\Pi_q:\Tilde{Y}\to P$. By Proposition \ref{prop: trace map agrees}, the trace map is represented by $\beta \circ (\partial_\infty f)^{-1}$. Conversely, if we have a Plateau problem solution $(f,Y,\Pi)$ and $Y$ is of Dirichlet-Liouville type, then by Proposition \ref{prop: Dirichlet principle}, Proposition \ref{prop: trace map agrees}, and Lemma \ref{lem: uniqueness for harmonic maps}, the Hopf differentials of the components of $\Pi$ produce a holomorphic realization of $(\partial_\infty f)_*\mu$ and $(\partial_\infty f)_*\nu$. Hence, by the necessity part of Theorem \ref{thm: main}, $(\partial_\infty f)_*\mu$ and $(\partial_\infty f)_*\nu$ are filling. Pulling back to $X$, we deduce that $\mu$ and $\nu$ are filling. 
\end{proof}

\begin{thm}\label{thm: asymptotic plateau uniqueness}
 Let $X$, $\mu$, and $\nu$ be as in Theorem \ref{thm: asymptotic plateau existence}. Then, there exists at most one asymptotic Plateau solution $(f,Y,\Pi)$ such that $Y$ is of Dirichlet-Liouville type. Moreover, $\Pi$ is energy minimizing in the sense that for any other surface $Z$ of Dirichlet-Liouville type and equivariant map $\Theta:\Tilde{Z}\to P$ with trace map of the form $\beta\circ \eta^{-1}$, where $\eta$ is the extension of a homeomorphism as above, $E(Y,\Pi)\leq E(Z,\Theta).$
\end{thm}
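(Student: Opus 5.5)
Given two solutions $(f,Y,\Pi)$ and $(g,Z,\Theta)$ with $Y$ and $Z$ of Dirichlet-Liouville type, the plan is to show that each is the holomorphic realization associated with the same pushed-forward pair. Uniqueness then follows from Theorem \ref{thm: uniquness}, and energy minimization from Theorem \ref{thm: energy minimizing}.

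\textbf{Step 1: a solution is a holomorphic realization.} Write $\Pi=(\pi_1,\pi_2)$. Each $\pi_i$ is an equivariant finite-energy harmonic map to an $\R$-tree, so its Hopf differential is an integrable holomorphic quadratic differential; call them $-\frac14 q_1$ and $-\frac14 q_2$. Conformality of $\Pi$ forces $q_1=-q_2=:q$.

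On the other hand, by Theorem \ref{thm: HMS}, $m_*\mu$ and $m_*\nu$ (with $m=\partial_\infty f$) are realized by integrable differentials $r_1,r_2$ on $Y$. The maps $\pi_{r_1}$ and $\pi_{r_2}$ are harmonic and $\rho\circ f_*^{-1}$-equivariant (Proposition \ref{prop: Dirichlet principle} and the homeomorphic-push construction). By Proposition \ref{prop: trace map agrees}, their trace maps are represented by $\beta_\mu\circ m^{-1}$ and $\beta_\nu\circ m^{-1}$.

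Lemma \ref{lem: uniqueness for harmonic maps} on the Dirichlet-Liouville surface $Y$ then gives $\pi_1=\pi_{r_1}$ and $\pi_2=\pi_{r_2}$. Hence $q=r_1$ and $-q=r_2$, so $(f,Y,q)$ is a holomorphic realization with $\Pi=\Pi_q$ after transporting via $f$. The same argument makes $(g,Z,r)$ a holomorphic realization with $\Theta=\Pi^m_q$.

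One point in Step 1 needs care. Applying Lemma \ref{lem: uniqueness for harmonic maps} requires the two maps to have the same equivariance and finite equivariant energy. This holds because $\pi_i$ has finite energy by hypothesis, and $\pi_{r_i}$ has energy $\frac12\|r_i\|_{L^1}<\infty$.

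\textbf{Step 2: uniqueness.} Theorem \ref{thm: uniquness} gives a biholomorphism $F:Y\to Z$ extending $\partial_\infty g\circ(\partial_\infty f)^{-1}$ with $F_*q=r$. Since $F$ is homotopic to $g\circ f^{-1}$ relative to the ideal boundary, it remains to check that the lift of $F$ intertwines $\Pi$ and $\Theta$. In the proof of Theorem \ref{thm: uniquness}, $F=(\Pi_q^m)^{-1}\circ\Pi_q$, so $\Theta\circ \tilde F=\Pi$ by construction. Combined with Step 1, which identifies $\Pi$ and $\Theta$ with these leaf-space maps, this gives the required intertwining.

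\textbf{Step 3: energy minimization.} Let $\Theta:\tilde Z\to P$ be equivariant with trace represented by $\beta\circ\eta^{-1}$, and assume $E(\Theta)<\infty$ (otherwise there is nothing to prove). Decompose $\Theta=(\theta_1,\theta_2)$. The Korevaar--Schoen trace maps of $\theta_i$ agree with those of the leaf-space maps $\pi_{s_1},\pi_{s_2}$ of the Hubbard--Masur differentials $s_i$ realizing $\eta_*\mu$ and $\eta_*\nu$.

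The main obstacle is to show $E(\theta_i)\ge E(\pi_{s_i})$ for an arbitrary finite-energy map with the same equivariance and trace, rather than only for maps of the form $\pi_{\mathcal F}$ covered by Proposition \ref{prop: Dirichlet principle}. I would first try minimizing energy in the class of equivariant maps with fixed trace, using the Korevaar--Schoen existence theory in NPC targets. This produces a harmonic minimizer with the same trace, and Lemma \ref{lem: uniqueness for harmonic maps} on the Dirichlet-Liouville surface $Z$ identifies it with $\pi_{s_i}$, giving $E(\theta_i)\ge E(\pi_{s_i})$.

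Summing and applying Theorem \ref{thm: energy minimizing} then gives
$$E(Z,\Theta)\ge E(\pi_{s_1})+E(\pi_{s_2})=\tfrac12\big(\mathcal D(\mathcal F_h(s_1))+\mathcal D(\mathcal F_h(s_2))\big)\ge E(Y,\Pi).$$
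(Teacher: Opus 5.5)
Your Steps 1 and 2 are the paper's proof. The paper identifies each component of a solution on a Dirichlet--Liouville surface with the leaf-space projection of the realizing differential, using Proposition~\ref{prop: trace map agrees} and Lemma~\ref{lem: uniqueness for harmonic maps}. It then reads off $r$ and $-r$ from conformality and invokes uniqueness of holomorphic realizations. Your observation that $\Theta\circ\tilde F=\Pi$, because $F=(\Pi_q^m)^{-1}\circ\Pi_q$, supplies the intertwining that the definition of uniqueness requires; the paper leaves this implicit. One caveat is shared with the paper: both arguments tacitly assume that the pushed laminations are realized by integrable differentials on $Y$ and $Z$. Theorem~\ref{thm: HMS} needs this as a hypothesis, and a mere homeomorphism does not preserve integrability. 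Here you are no worse off than the paper.

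The paper's written proof stops at uniqueness. Your Step 3, which is meant to supply the energy clause, has a genuine gap exactly where you flag it. The claim that minimizing energy over equivariant maps with fixed trace \emph{produces a harmonic minimizer} is not justified. Korevaar--Schoen solve the Dirichlet problem on compact Lipschitz domains. Here the quotient $Z$ is non-compact and possibly of infinite type, and the trace is prescribed only on $\Lambda_\Gamma^c$, which is empty if $Z$ is of the first kind. Nothing then prevents a minimizing sequence from drifting toward an end of the tree, or the boundary condition from being lost in the limit. Without this step, your only comparison is Proposition~\ref{prop: Dirichlet principle}, which covers competitors of the form $\pi_{\mathcal F}$, not arbitrary finite-energy equivariant maps. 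Step 3 also assumes the differentials $s_i$ realizing $\eta_*\mu$ and $\eta_*\nu$ on $Z$ exist; this has to be derived, for instance from $E(Z,\Theta)<\infty$.

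As it stands, your argument proves the energy inequality in two cases. First, when $\Theta$ is itself harmonic of finite energy: Lemma~\ref{lem: uniqueness for harmonic maps} gives $\theta_i=\pi_{s_i}$, and Theorem~\ref{thm: energy minimizing} finishes. Second, when $Z=\mathbb{D}$: the Korevaar--Schoen Dirichlet problem on the bounded Lipschitz domain produces the minimizer, still modulo the existence of the $s_i$. For general Dirichlet--Liouville $Z$ and non-harmonic $\Theta$, you still need a global Dirichlet principle for $\pi_{s_i}$ against all finite-energy equivariant competitors with the same trace.
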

\begin{proof}
    Let $(g,Z,\Theta)$ be another asymptotic Plateau solution with $Z$ of Dirichlet-Liouville type. Then, by Proposition \ref{prop: trace map agrees} and Lemma \ref{lem: uniqueness for harmonic maps}, $\Theta$ is the product of leaf-space projections for the holomorphic quadratic differentials on $Z$ realizing the filling pair $(g_*\mu,g_*\nu)$. Since $\Theta$ is conformal, the Hopf differentials of its projections on $T_\mu$ and $T_\nu$ are of the form $r$ and $-r$, for some $L^1$ quadratic differential $r$. Thus, $(g,Z,r)$ is another holomorphic realization of $(\mu,\nu)$, and the uniqueness follows by the uniqueness statement from Theorem \ref{thm: main}. 
\end{proof}

\begin{remark}
   For Riemann surfaces of Dirichlet-Liouville type, Lemma \ref{lem: uniqueness for harmonic maps} gives a new proof of the uniqueness of the heights theorem for quadratic differentials (see \cite{Saric22}).
\end{remark}

\section{Main inequalities, harmonic maps, and minimal surfaces}\label{sec: main inequality}
For a map $f:X\to (M,d)$, in order to properly distinguish the domain, we write energy as $E(X,f):=E(f)$. If $f:\Tilde{X}\to (M,d)$ is equivariant, we write $E(X,f)$ (since integration is done over $X$).
\subsection{Main inequalities}
Theorem \ref{thm: new main} follows swiftly from the energy minimizing property in Theorem \ref{thm: main} and the Reich-Strebel energy formula, which we now introduce. Recall that the Beltrami form of a locally quasiconformal map between Riemann surfaces $f:X\to Y$ is the measurable tensor $$\mu_f(z)=\frac{f_{\overline{z}}(z)d\overline{z}}{f_z(z)dz},$$ where derivatives are taken in the weak sense. 
Let $(M,d)$ be a complete and non-positively curved geodesic metric space, and $h:\tilde{X}\to (M,d)$ a $W_{\textrm{loc}}^{1,2}$ equivariant map such that $E(X,h)<\infty$. Let $J_{f^{-1}}$ be the Jacobian of $f^{-1}$ and $q(h)$ the Hopf differential of $h$, which need not be holomorphic. Lifting $f$ to $\Tilde{f}:\Tilde{X}\to \Tilde{Y}$, one can verify the identity of functions on $X$,
\begin{align*}
     e(h\circ \Tilde{f}^{-1})&=(e(h)\circ f^{-1})J_{f^{-1}}+2(e(h)\circ f^{-1})J_{f^{-1}} \frac{(|\mu_f|^2\circ f^{-1})}{1-(|\mu_f|^2\circ f^{-1})} \\
     &-4\textrm{Re}\Big ( (q(h)\circ f^{-1})J_{f^{-1}}\frac{(\mu_f\circ f^{-1})}{1-(|\mu_f|^2\circ f^{-1})}\Big )
\end{align*}
from \cite{ReichStrebelGerstenhaberRauch} (note we've descended $e(h)$, $q(h)$, etc. to $X$) . Integrating, we obtain the proposition below.
\begin{prop}\label{prop:RS} We have the formula
\begin{equation}\label{RSorig}
    E(Y,h\circ \Tilde{f}^{-1}) -E(X,h) =  \int_X -4\textrm{Re} \Big (q(h)\cdot \frac{ \mu_f}{1-|\mu_f|^2} \Big )+ 2 e(h)\cdot \frac{|\mu_f|^2}{1-|\mu_f|^2}.
\end{equation}
\end{prop}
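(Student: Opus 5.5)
The plan is to integrate the pointwise identity for $e(h\circ\Tilde{f}^{-1})$ quoted just above over $Y$, and then change variables by $f^{-1}$ to turn the integral over $Y$ into one over $X$. Everything here is measure-theoretic bookkeeping: $e(h)$, $q(h)$ and $\mu_f$ are equivariant objects, so they descend to $X$, and the identity is an equality of locally $L^1$ densities on $Y$.

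\textbf{Step 1: the left-hand side.} Integrating $e(h\circ\Tilde f^{-1})$ over $Y$ gives $E(Y,h\circ\Tilde f^{-1})$ by definition.

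\textbf{Step 2: the right-hand side.} Each term on the right is of the form $(G\circ f^{-1})J_{f^{-1}}$, where
\[
G\in\Big\{e(h),\ e(h)\frac{|\mu_f|^2}{1-|\mu_f|^2},\ q(h)\frac{\mu_f}{1-|\mu_f|^2}\Big\}.
\]
Here $q(h)\mu_f$ is read as a function, by contracting the quadratic differential against the Beltrami form. For such terms I will use the change of variables formula
\[
\int_Y (G\circ f^{-1})J_{f^{-1}}=\int_X G .
\]
This formula is valid for a locally quasiconformal homeomorphism. Such a map is locally $W^{1,2}$, satisfies Lusin's condition (N), and has $J_{f^{-1}}>0$ almost everywhere. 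The formula can be applied chart by chart and patched with a partition of unity on $X$. After the change of variables, the first term $\int_X e(h)$ cancels $E(X,h)$, and the remaining terms give exactly (\ref{RSorig}).

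\textbf{Main obstacle: integrability.} We only know $E(X,h)<\infty$, while $|\mu_f|$ is not uniformly bounded away from $1$. So the last two terms of the identity may fail to be integrable separately, and the difference could take the form $\infty-\infty$. To handle this I would use the pointwise bound $|q(h)|\le e(h)/2$, which follows from Cauchy--Schwarz on the pullback metric. This bound gives
\[
e(h)\Big(1+\frac{2|\mu|^2}{1-|\mu|^2}\Big)-4\,\mathrm{Re}\Big(q(h)\frac{\mu}{1-|\mu|^2}\Big)\ \ge\ e(h)\frac{(1-|\mu|)^2}{1-|\mu|^2}\ \ge 0 .
\]
Hence the integrand on the right of (\ref{RSorig}) is bounded below by $-e(h)\in L^1$. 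I will therefore apply the change of variables to nonnegative functions, which is legitimate by monotone convergence. The resulting equality then holds in $(-\infty,\infty]$, with both sides simultaneously finite or infinite, and this matches the paper's convention of allowing infinite values.
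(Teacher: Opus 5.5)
Your proposal is correct and follows the paper's route: the paper simply integrates the pointwise Reich--Strebel identity over $Y$ and changes variables back to $X$. Your additional step, using $|q(h)|\le e(h)/2$ to bound the combined integrand below by $-e(h)$ and changing variables only for a nonnegative function, makes the resulting equality rigorous in $(-\infty,\infty]$ even when $\mu_f$ is not bounded away from $1$; the paper leaves this point implicit.
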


We now prove the new main inequality. The analogous result is proved for closed surfaces in \cite[\S 3.1]{MarkovicSagmanMainInequality}.
\begin{proof}[Proof of Theorem \ref{thm: new main}]
       Let $\Pi_q=(\pi_q,\pi_{-q})$ be the product of the harmonic leaf-space projections associated with $q$ and let $\mu$ and $\nu$ be the laminations arising from tightening the foliations $\mathcal{F}_h(q)$ and $\mathcal{F}_v(q)$ respectively. Let $f_1,f_2:X\to Y$ be as in the statement of the theorem, with common boundary extension $m$. The maps $f_1$ and $f_2$ determine pushforward partial measured foliations
       $(f_1)_*\mathcal{F}_h(q)$ and $(f_2)_*\mathcal{F}_v(q)$ that tighten to $m_*\mu$ and $m_*\nu$ respectively. By the energy minimizing property from Theorem \ref{thm: main}, $$\mathcal{D}(\mathcal{F}_h(q))+\mathcal{D}(\mathcal{F}_v(q))\leq \mathcal{D}((f_1)_*\mathcal{F}_h(q))+\mathcal{D}((f_2)_*\mathcal{F}_v(q)).$$
       Since the Dirichlet energy of a proper partial measured foliation is equal to half the Dirichlet energy of the corresponding map to the dual $\R$-tree, the inequality above is equivalent to $$E(X,\Pi_q)\leq E(Y,\pi_q\circ f_1^{-1})+E(Y,\pi_{-q}\circ f_2^{-1}).$$ Splitting the energy of the product maps as sums of energies, substituting in the formula (\ref{RSorig}), and then rearranging, we obtain the new main inequality.
\end{proof}

\subsection{Harmonic maps and minimal immersions}

\subsubsection{Harmonic diffeomorphism}
Following Theorem \ref{thm: new main}, the proof of Corollary \ref{cor: uniqueness_harmonic} is verbatim the same as that of Theorem 3 from \cite{MarkovicMateljevic}, once we replace the disk with an arbitrary Riemann surface. We don't provide the complete proof, but just give a sketch, referring the reader to \cite{MarkovicMateljevic} for more details.

\begin{proof}[Proof sketch of Corollary \ref{cor: uniqueness_harmonic}]
    Let $f,g:X\to (Y,\sigma)$ be the harmonic diffeomorphisms as in the statement of the corollary and let $q$ be the Hopf differential of $f$. We denote the Beltrami forms by $\mu_f$ and $\mu_g$ respectively. The lifts to the universal covers $\Tilde{f}$ and $\Tilde{g}$ extend to the same map on the ideal boundary (they agree on the limit set from the argument in \cite[Theorem 3.6]{Saric}, and they agree on the complement of the limit set by our assumption). Thus, $\Tilde{g}^{-1}\circ \Tilde{f}$ extends to the identity map on $\partial_\infty \Tilde{X}$. Since $g^{-1}\circ f$ is also a locally quasiconformal homeomorphism, Corollary \ref{cor: generalized main} can be applied to it.  Corollary \ref{cor: generalized main} and Remark \ref{rem: main other form} imply
\begin{equation}\label{eq: lemma 1}
    \int_X |q| \leq \int_X |q|\cdot\frac{1-|\mu_f|}{1+|\mu_f|}\cdot \frac{1+|\mu_g|}{1-|\mu_g|}
\end{equation}
and 
$$\int_X \sigma \cdot \frac{|\mu_f|}{1-|\mu_f|^2}\leq\int_X \sigma \cdot \frac{|\mu_f|}{1-|\mu_f|^2} \cdot \frac{1-|\mu_f|}{1+|\mu_f|}\cdot \frac{1+|\mu_g|}{1-|\mu|_g}.$$
Indeed, see the pointwise calculations from the proofs of Lemmas 1 and 2 from \S E4 of \cite{MarkovicMateljevic}. Reversing the roles of $f$ and $g$, and then summing the corresponding inequalities, one obtains 
\begin{equation}\label{eq: E5}
    \int_X \alpha \sigma \leq \int_X \beta \sigma,
\end{equation}
where $$\alpha = \frac{|\mu_f|}{1-|\mu_f|^2}+\frac{|\mu_g|}{1-|\mu_g|^2}, \hspace{1mm} \beta = \frac{|\mu_f|}{(1+|\mu_f|)^2}\frac{1+|\mu_g|}{1-|\mu_g|}+ \frac{|\mu_g|}{(1+|\mu_g|)^2}\frac{1+|\mu_f|}{1-|\mu_f|}.$$
After lifting to the universal covers, Lemma 3 from \cite{MarkovicMateljevic} directly shows that $\alpha\geq \beta$. Combining this with (\ref{eq: E5}) yields $\alpha=\beta$, which implies $|\mu_f|=|\mu_g|$. This means we have equality in (\ref{eq: lemma 1}). By stepping into the calculations used to derive (\ref{eq: lemma 1}) (see the proof of Lemma 1 from \cite{MarkovicMateljevic}), one arrives at $\mu_f=\mu_g$. Thus, $g^{-1}\circ f$ is conformal, and since $\Tilde{g}^{-1}\circ \Tilde{f}$ extends to the identity on $\partial_\infty \Tilde{X}$, we conclude that $f=g$.
\end{proof}

\subsubsection{Minimal immersions}
Similar to above, the proof of Corollary \ref{cor: minimal} is nearly identical to the proof of Theorem A from \cite{Sagman} (see also \cite{MarkovicMinimalDiffeomorphisms}). We include a proof so that we can point out where the quasiconformal assumption is used and can be weakened.

\begin{proof}[Proof of Corollary \ref{cor: minimal}]
    Let $(h_1,h_2)$ and $(g_1,g_2)$ be two minimal immersions to $(Y\times Z,\sigma_1\oplus\sigma_2)$ as in the statement of the corollary, with $\partial_\infty h_2\circ (\partial_\infty h_1)^{-1}=\partial_\infty g_2\circ (\partial_\infty g_1)^{-1}=m$. 
For $i=1,2$, set $f_i = g_i^{-1}\circ h_i.$ By our relation on the boundary maps, the extensions of $f_1$ and $f_2$ to $\partial_\infty Y$ agree. Similar to the proof above, when we take the universal lifts, we get the same map on the ideal boundary. Therefore, the new main inequality can be applied on the Beltrami forms $\mu_i$ of $f_i$. Suppose that for at least one $i$, $f_i$ is not the identity. Let $q=q_1$ be the Hopf differential of $h_1$. The Hopf differential of $h_2$ is $-q=q_2$.

Let $(X_n)_{n=1}^\infty\subset X$ be an exhaustion of $X$ by compact subsurfaces with boundary. Applying the Reich-Strebel formula (\ref{RSorig}) over the subsurface $X_n,$
$$\sum_{i=1}^2 E(f_i(X_n),g_i)-\sum_{i=1}^2E(X_n,h_i) = \sum_{i=1}^2 -4\textrm{Re}\int_{X_n}q_i \frac{\mu_i}{1-|\mu_i|^2}+ \sum_{i=1}^2 2\int_{X_n}e(h_i) \frac{|\mu_i|^2}{1-|\mu_i|^2}.$$
Since $e(h)>|q(h)|$ (a trivial application of Cauchy-Schwarz), the rightmost side strictly dominates 
$$\sum_{i=1}^2 -4\textrm{Re}\int_{X_n}q_i \frac{\mu_i}{1-|\mu_i|^2}+ \sum_{i=1}^2 4\int_{X_n}|q_i| \frac{|\mu_i|^2}{1-|\mu_i|^2}.$$
The difference between the last two expressions is the integration of a positive function over $X_n$, hence increasing with $n.$ Consequently, there exists $N>0$ such that once $n>N$, there is an $\epsilon>0$ such that $$\sum_{i=1}^2 \mathcal{E}(f_i(X_n),g_i)-\sum_{i=1}^2\mathcal{E}(X_n,h_i)\geq \sum_{i=1}^2 -4\textrm{Re}\int_{X_n}q_i \frac{\mu_i}{1-|\mu_i|^2}+ \sum_{i=1}^2 4\int_{X_n}|q_i| \frac{|\mu_i|^2}{1-|\mu_i|^2}+\epsilon.$$ Since each $q_i$ is integrable and $\mu_i$ has $L^\infty$ norm strictly less than $1$ (since $f_i$ is quasiconformal), an application of dominated convergence yields $$\sum_{i=1}^2 4\textrm{Re}\int_{X_n}q_i \frac{\mu_i}{1-|\mu_i|^2}\to \sum_{i=1}^2 4\textrm{Re}\int_{X}q_i \frac{\mu_i}{1-|\mu_i|^2}.$$ Similarly, $$\sum_{i=1}^2 4\int_{X_n}|q_i| \frac{|\mu_i|^2}{1-|\mu_i|^2}\to \sum_{i=1}^2 4\int_{X}|q_i| \frac{|\mu_i|^2}{1-|\mu_i|^2}$$ as $n\to \infty$. By the new main inequality, i.e., Theorem \ref{thm: new main}, we obtain that $$\sum_{i=1}^2 E(f_i(X_n),g_i)-\sum_{i=1}^2E(X_n,h_i)\geq \epsilon$$ for all $n>N$.  

Reversing the roles of $h_i$ and $g_i$, we repeat the argument to get a contradictory inequality. If $r_i$ is the Hopf differential of $g_i$ and $\nu_i$ is the Beltrami form of $f_i^{-1}$, the reasoning from above gives that there exist $M,\delta>0$ such that for $n>M$,
$$\sum_{i=1}^2 E(X_n,h_i)-\sum_{i=1}^2E(f_i(X_n),g_i)\geq \sum_{i=1}^2 -4\textrm{Re}\int_{f_i(X_n)}r_i \frac{\nu_i}{1-|\nu_i|^2}+ \sum_{i=1}^2 4\int_{f_i(X_n)}|r_i| \frac{|\nu_i|^2}{1-|\nu_i|^2} +\delta.$$
Using that $f_i$ is quasiconformal once again, dominated convergence yields $$\sum_{i=1}^2 4\textrm{Re}\int_{f_i(X_n)}r_i \frac{\nu_i}{1-|\nu_i|^2}\to \sum_{i=1}^2 4\textrm{Re}\int_{X}r_i \frac{\nu_i}{1-|\nu_i|^2}$$ as $n\to \infty,$ and likewise for the other term. We deduce that
$$\sum_{i=1}^2 E(X_n,h_i)-\sum_{i=1}^2E(f_i(X_n),g_i)\geq \delta$$ for $n>M$, which gives the contradiction. We conclude that both $f_i$ are the identity map, and hence $(h_1,h_2)=(g_1,g_2).$
\end{proof}

\begin{remark}\label{rem: weaken quasiconformal}
    Of course, the quasiconformal assumption was not needed for invoking the new main inequality. The assumption was used to justify that $$\sum_{i=1}^2 4\textrm{Re}\int_{X_n}q_i \frac{\mu_i}{1-|\mu_i|^2}\to \sum_{i=1}^2 4\textrm{Re}\int_{X}q_i \frac{\mu_i}{1-|\mu_i|^2}$$ and $$\sum_{i=1}^2 4\textrm{Re}\int_{f_i(X_n)}r_i \frac{\nu_i}{1-|\nu_i|^2}\to \sum_{i=1}^2 4\textrm{Re}\int_{X}r_i \frac{\nu_i}{1-|\nu_i|^2}$$ as $n\to \infty$. If the maps are not quasiconformal, but $|q_i| \frac{|\mu_i|}{1-|\mu_i|^2}$ and $|r_i| \frac{|\nu_i|}{1-|\nu_i|^2}$ are integrable (note $|\mu_i|=|\nu_i|$), then the proofs go through. 
\end{remark}

\bibliographystyle{plain}

\begin{thebibliography}{Thua}

\vskip .5cm




 \bibitem{AndersonCompleteMinimalHypersurfaces}
Michael Anderson, \emph{Complete minimal hypersurfaces in hyperbolic $n$-manifolds},
Comment. Math. Helv. \textbf{58} (1983), 264--290.
 

 
 \bibitem{BS} Ara Basmajian and Dragomir \v Sari\' c,  {\it Geodesically complete hyperbolic structures.} Math. Proc. Cambridge Philos. Soc. 166 (2019), no. 2, 219-242. 


 \bibitem{BonsanteSchlenker}
Francesco Bonsante and Jean-Marc Schlenker, \emph{Maximal surfaces and the universal Teichmüller space}, Invent. Math. \textbf{182} (2010), no.~2, 279--333.

\bibitem{buser} Peter Buser, {\it Geometry and spectra of compact Riemann surfaces.} Reprint of the 1992 edition. Modern Birkh\"auser Classics. Birkh\"auser Boston, Ltd., Boston, MA, 2010. 

\bibitem{CantwellConlonFenley}
John Cantwell, Lawrence Conlon, and Sergio R. Fenley,
\emph{Endperiodic automorphisms of surfaces and foliations},
Ergodic Theory Dynam. Systems \textbf{41} (2021), no.~1, 66--212.

\bibitem{Didac}
Luca De Rosa and Didac Martinez-Granado,
\emph{Dual spaces of geodesic currents},
J. Topol. \textbf{18} (2025), no.~4, e70045.

\bibitem{EvansProbabilityRealTrees}
S.N. Evans, \emph{Probability and real trees},
Lecture Notes in Mathematics, vol.~1920, Springer, Berlin, 2008.



\bibitem{GLPartial} Frederick P. Gardiner and Nikola Lakic, {\it A synopsis of the Dirichlet principle for measured foliations.} Infinite dimensional Teichm\" uller spaces and moduli spaces, 55–64, RIMS K\^{o}ky\^{u}roku Bessatsu, B17, Res. Inst. Math. Sci. (RIMS), Kyoto, 2010.

\bibitem{GarMas} Frederick P. Gardiner and Howard Masur,
{\it Extremal length geometry of Teichm\" uller space.}
Complex Variables Theory Appl. 16 (1991), no. 2-3, 209–237. 

\bibitem{GS} Frederick P. Gardiner and Dennis P. Sullivan, {\it Symmetric structures on a closed curve.} Amer. J. Math. 114 (1992), no. 4, 683-736.

\bibitem{GuirardelCore}
Vincent Guirardel, \emph{C\oe ur et nombre d'intersection pour les actions de groupes sur les arbres},
Ann. Sci. 'Ec. Norm. Sup'er. (4) \textbf{38} (2005), no.~6, 847--888.

\bibitem{HakSar}  Hrant Hakobyan and Dragomir \v Sari\' c {\it Limits of Teichm\"uller geodesics in the universal Teichm\"uller space.} Proc. Lond. Math. Soc. (3) 116 (2018), no. 6, 1599–1628. 

\bibitem{HMmain} John Hubbard and Howard Masur, {\it Quadratic differentials and foliations.} Acta Math. 142 (1979), no. 3-4, 221-274. 

\bibitem{HooperGridGraphs}
W. Patrick Hooper,
\emph{Grid graphs and lattice surfaces},
Int. Math. Res. Not. IMRN \textbf{2013} (2013), no.~12, 2657--2698.




\bibitem{KapovichHyperbolicManifolds}
Michael Kapovich, \emph{Hyperbolic manifolds and discrete groups},
Progress in Mathematics, vol.~183, Birkh"auser Boston, Inc., Boston, MA, 2001.

\bibitem{Ker}  Steven P. Kerckhoff, {\it The asymptotic geometry of Teichm\" uller space.} Topology 19 (1980), no. 1, 23–41. 

\bibitem{Kirchheim}
Bernd Kirchheim, \emph{Rectifiable metric spaces: local structure and regularity of the Hausdorff measure},
Proc. Amer. Math. Soc. \textbf{121} (1994), no.~1, 113--123.

\bibitem{KorevaarSchoen}
Nicholas J. Korevaar and Richard M. Schoen, \emph{Sobolev spaces and harmonic maps for metric space targets}, Comm. Anal. Geom. \textbf{1} (1993), no.~3--4, 561--659.

\bibitem{LabourieCyclicSurfaces}
Francois Labourie,
\emph{Cyclic surfaces and Hitchin components in rank $2$},
Ann. of Math. (2) \textbf{185} (2017), no.~1, 1--58.

\bibitem{LakicMinimalNorm}
Nikola Lakic, \emph{The minimal norm property for quadratic differentials in the disk},
Michigan Math. J. \textbf{44} (1997), no.~2, 299--316.

\bibitem{LandryMinskyTaylorEndperiodic}
Michael P. Landry, Yair N. Minsky, and Samuel J. Taylor,
\emph{Endperiodic maps via pseudo-Anosov flows},
Geom. Topol. \textbf{30} (2026), no.~5, 1987--2042.



\bibitem{MardenStrebel1}  Albert Marden and Kyrt Strebel, {\it On the ends of trajectories.} Differential geometry and complex analysis, 195-204, Springer, Berlin, 1985.

\bibitem{MarkovicMinimalDiffeomorphisms}
Vladimir Markovi{\'c}, \emph{Uniqueness of minimal diffeomorphisms between surfaces}, Bull. Lond. Math. Soc. \textbf{53} (2021), no.~4, 1196--1204.

\bibitem{MarkovicMateljevic}
Vladimir Markovi{\'c} and Miodrag Mateljevi{\'c}, \emph{A new version of the main inequality and the uniqueness of harmonic maps}, J. Anal. Math. \textbf{79} (1999), 315--334.

\bibitem{MarkovicSagmanMainInequality}
Vladimir Markovi{\'c} and Nathaniel Sagman, \emph{Minimal surfaces and the new main inequality},
Ann. Fenn. Math. \textbf{49} (2024), no.~1, 99--117.

\bibitem{MarkovicSagmanSmillie}
Vladimir Markovi{\'c}, Nathaniel Sagman, and Peter Smillie,
\emph{Unstable minimal surfaces in $\mathbb{R}^n$ and in products of hyperbolic surfaces}, Comment. Math. Helv. \textbf{100} (2025), no.~1, 93--121.

\bibitem{Maskit} Bernard Maskit, {\it
Comparison of hyperbolic and extremal lengths.} 
Ann. Acad. Sci. Fenn. Ser. A I Math. 10 (1985), 381–386. 

\bibitem{MeierWenger}
Damaris Meier and Stefan Wenger,
\emph{Quasiconformal almost parametrizations of metric surfaces},
J. Eur. Math. Soc. \textbf{27} (2025), no.~12, 5133--5154.

\bibitem{Minsky}  Yair N. Minsky, {\it Teichm\"uller geodesics and ends of hyperbolic 3-manifolds.} Topology 32 (1993), no. 3, 625-647.

\bibitem{moore} Robert L. Moore, {\it Concerning upper semi-continuous collections of continua,} Trans. Amer. Math. Soc. 27 (1925), no. 4, 416-428.

\bibitem{MV}  Israel Morales, and Ferr\' an Valdez, {\it Loxodromic elements in big mapping class groups via the Hooper-Thurston-Veech construction.} Algebr. Geom. Topol. 22 (2022), no. 8, 3809-3854.



\bibitem{MorganShalenDegenerationsII}
John W. Morgan and Peter B. Shalen,
\emph{Degenerations of hyperbolic structures, II: Measured laminations in $3$-manifolds}, Ann. of Math. (2) \textbf{127} (1988), no.~2, 403--456.

\bibitem{MorganShalenFreeActions}
John W. Morgan and Peter B. Shalen,
\emph{Free actions of surface groups on $\mathbb{R}$-trees},
Topology \textbf{30} (1991), no.~2, 143--154.


\bibitem{ReichStrebelGerstenhaberRauch}
Edgar Reich and Kurt Strebel, \emph{On the Gerstenhaber--Rauch principle},
Israel J. Math. \textbf{57} (1987), no.~1, 89--100.



\bibitem{Sagman} Nathaniel Sagman,  {\it Minimal diffeomorphisms with L1 Hopf differentials.} Int. Math. Res. Not. IMRN 2024, no. 13, 10088-10103.

\bibitem{SagmanSmillie}
Nathaniel Sagman and Peter Smillie, \emph{Unstable minimal surfaces in symmetric spaces of non-compact type}, to appear in Ann. of Math. (2), arXiv:2208.04885.

\bibitem{Saric} Dragomir \v Sari\' c, {\it Train tracks and measured laminations on infinite surfaces}, Trans. Amer. Math. Soc. 374 (2021), no. 12, 8903-8947. 

\bibitem{Saric22}  Dragomir \v Sari\'c, {\it The heights theorem for infinite Riemann surfaces.} Geom. Dedicata 216 (2022), no. 3, Paper No. 33, 23 pp.

\bibitem{Saric24} Dragomir \v Sari\'c, {\it Quadratic differentials and foliations on infinite Riemann surfaces.} Duke Math. J. 173 (2024), no. 10, 1883-1930.

\bibitem{Saric25} Dragomir \v Sari\'c, {\it Quadratic differentials and function theory on Riemann surfaces}, arXiv:2407.16333.

\bibitem{SaricShima}  Dragomir \v Sari\'c and Taro Shima, {\it Intersection numbers between horizontal foliations of quadratic differentials}, to appear in Trans. Amer. Math. Soc.

\bibitem{SeppiSmithToulisse}
Andrea Seppi, Graham Smith, and Jeremy Toulisse, \emph{On complete maximal submanifolds in pseudo-hyperbolic space}, arXiv:2305.15103, 2023.

\bibitem{shiga} H. Shiga, {\it On a distance defined by the length spectrum of Teichm\"uller space}, Ann. Acad. Sci. Fenn. Math.  28  (2003),  no. 2, 315-326.

 \bibitem{Strebel} Kurt Strebel {\it Quadratic differentials}, Ergebnisse der Mathematik und ihrer Grenzgebiete (3) [Results in Mathematics and Related Areas (3)], 5. Springer-Verlag, Berlin, 1984. 



\bibitem{TrebeschiCMCHypersurfaces}
Enrico Trebeschi, \emph{Constant mean curvature hypersurfaces in anti-de Sitter space},
Int. Math. Res. Not. IMRN \textbf{2024} (2024), no.~9, 8026--8066.






\bibitem{WanCMC}
Tom Y.H. Wan,
\emph{Constant mean curvature surface, harmonic maps, and universal Teichmüller space}, J. Differential Geom. \textbf{35} (1992), no.~3, 643--657.

\bibitem{Wen} Richard A. Wentworth {\it Energy of harmonic maps and Gardiner's formula.} In the tradition of Ahlfors-Bers. IV, 221–229, Contemp. Math., 432, Amer. Math. Soc., Providence, RI, 2007. 

\bibitem{Wolf} Michael Wolf {\it On realizing measured foliations via quadratic differentials of harmonic maps to $\mathbb{R}$-trees.} J. Anal. Math. (1996), 68, 107--120


\end{thebibliography}

\end{document}